\def\RSsubtxt{section~}\newref{sub}{name = \RSsubtxt}}
\def\RSthmtxt{theorem~}\newref{thm}{name = \RSthmtxt}}
\def\RSlemtxt{lemma~}\newref{lem}{name = \RSlemtxt}}
\theoremstyle{plain}
\numberwithin{equation}{section}
\numberwithin{figure}{section}
\numberwithin{table}{section}
  \def\AS{\\ }
  \def\SU{\substack}
  \def\OV{\overset}
    \newtheorem{stthm}{\protect\theoremname}[section]
    \newtheorem{spprop}{\protect\propositionname}[section]
 \def\Compatamsartbabel#1#2{%
 \def\rightmark{#1}%
 \def\leftmark{#2}%
 }
 \def\MR{\mathscr}
 \theoremstyle{definition}
 \newtheorem{sddefn}{\protect\definitionname}[section]
 \theoremstyle{remark}
 \newtheorem*{rem*}{\protect\remarkname}
 \theoremstyle{definition}
  \newtheorem{sexmexample}{\protect\examplename}[section]
 \theoremstyle{plain}
 \newtheorem{spllem}{\protect\lemmaname}[spprop]
 \newenvironment{lyxlist}[1]
   {\begin{list}{}
     {\settowidth{\labelwidth}{#1}
      \setlength{\leftmargin}{\labelwidth}
      \addtolength{\leftmargin}{\labelsep}
      }}
   {\end{list}}
 \theoremstyle{plain}
 \newcounter{LonSP}[spprop]
 \newtheorem{llemsp}[LonSP]{\protect\lemmaname}
 \theoremstyle{remark}
 \newtheorem*{claim*}{\protect\claimname}
 \theoremstyle{plain}
 \newtheorem*{cor*}{\protect\corollaryname}
 \theoremstyle{remark}
 \newtheorem{strem}[stthm]{\protect\remarkname}
 \theoremstyle{plain}
 \newtheorem{sllem}{\protect\lemmaname}[section]
 \theoremstyle{plain}
 \newtheorem*{prop*}{\protect\propositionname}
 \theoremstyle{remark}
 \newtheorem{srrem}{\protect\remarkname}[section]
 \theoremstyle{plain}
 \newtheorem*{lem*}{\protect\lemmaname}
 \theoremstyle{remark}
 \newtheorem*{note*}{\protect\notename}
\font\Special=cmb10 at 0.8ex
\font\SpecialB=cmb10 at 0.65ex
\def\LB#1{\OV{{\mbox{\Special (} }{\raisebox{-.13ex}{$\overline{\, \, \,}$}} {\mbox{\Special )}}}{L_{#1}}}
\def\LTB#1{\OV{{\mbox{\Special (} }{\raisebox{-.13ex}{$\overline{\, \, \,}$}} {\mbox{\Special )}}}{\widetilde{L_{#1}}}}
\def\PB#1{\OV{{\mbox{\SpecialB (} }{\raisebox{-.16ex}{$\overline{\,}$}} {\mbox{\SpecialB )}}}{#1}}
\def\LBB#1#2{\OV{{\mbox{\Special (} }{\raisebox{-.13ex}{$\overline{\, \, \,}$}} {\mbox{\Special )}}}{L_{#1}^{#2}}}
\def\PBG#1{\OV{{\mbox{\Special (} }{\raisebox{-.16ex}{$\overline{\, \, \,}$}} {\mbox{\Special )}}}{#1}}
\font\Starg=cmr10 at 4ex
\def\STR{\raisebox{-1.5ex}{\mbox{\Starg *}}}
\newcommand\mynobreakpar{\par\nobreak\@afterheading}
 \providecommand{\claimname}{Claim}
 \providecommand{\corollaryname}{Corollary}
 \providecommand{\definitionname}{Definition}
 \providecommand{\examplename}{Example}
 \providecommand{\lemmaname}{Lemma}
 \providecommand{\notename}{Note}
 \providecommand{\propositionname}{Proposition}
 \providecommand{\remarkname}{Remark}
 \providecommand{\theoremname}{Theorem}
\begin{document}
\Compatamsartbabel{EXTREMAL BASIS, GEOMETRICALLY SEPARATED DOMAINS}{PHILIPPE CHARPENTIER \& YVES DUPAIN}

\title{Extremal Bases, Geometrically Separated Domains and Applications}

\author{Philippe Charpentier \& Yves Dupain}
\begin{abstract}
In this paper we introduce the notion of extremal basis of tangent
vector fields at a boundary point of finite type of a pseudo-convex
domain in $\mathbb{C}^{n}$, $n\geq3$. Using this notion we define
the class of geometrically separated domains at a boundary point and
we give a description of their complex geometry. Examples of such
domains are given, for instance, by locally lineally convex domains,
domains with locally diagonalizable Levi form at a point or domains
for which the Levi form have comparable eigenvalues near a point.
Moreover we show that geometrically separated domains can be localized.
We also give an example of a non geometrically separated domains.
Next we define what we call {}``adapted pluri-subharmonic function''
and give sufficient conditions, related to extremal bases, for their
existence. Then, for these domains, when such functions exist, we
prove global and local sharp estimate for the Bergman and Szeg\"o projections.
As an application, we strengthen a result by C. Fefferman, J. J. Kohn
and M. Machedon (\cite{F-K-M-d-bar-Diag-Levi-Form}) for the local
H\"older estimate of the Szeg\"o projection removing the arbitrary small
loss in the H\"older index and giving a stronger non-isotropic estimate.
\end{abstract}

\subjclass[2000]{Primary 32T25, Secondary 32T27}

\keywords{finite type; extremal basis; complex geometry; adapted pluri-subharmonic
function; Bergman and Szeg\"o projections}

\address{Philippe Charpentier \& Yves Dupain: Universit\'e Bordeaux 1, Institut
de Math\'ematiques, 351, cours de la Lib\'eration, 33405 Talence, France}

\email{philippe.charpentier@math.u-bordeaux1.fr, yves.dupain@math.u-bordeaux1.fr}

\maketitle

\section{Introduction}

\global\long\def\brond{\MR{B}}
\global\long\def\rhotilde{\tilde{\rho}}
\global\long\def\Lrond{\MR{L}}
\global\long\def\Lbar#1{\overline{L_{#1}}}
\global\long\def\Btilde{\widetilde{\MR{B}}}
\global\long\def\Ltildep#1{\widetilde{L_{#1}}}
\global\long\def\Ftilder{\widetilde{F^{\rho}}}
\global\long\def\Hrond{\MR{H}}
\global\long\def\Ltilde{\widetilde{L}}
\global\long\def\Lbarp#1#2{\overline{L_{#1}^{#2}}}
\global\long\def\Lrondt{\widetilde{\MR{L}}}
\global\long\def\Crond{\MR{C}}

The study of the regularity with sharp estimates for the Bergman and
Szeg\"o projections for pseudo-convex domains in $\mathbb{C}^{n}$ became
very active for domains of finite type when D. Catlin proved his fundamental
characterization of subelliptic estimates (\cite{Catlin-Est.-Sous-ellipt.}).

Quite quickly, the case of domains in $\mathbb{C}^{2}$ was completely
solved by D. Catlin in \cite{Catlin-Bergman-dim-2}, A. Nagel, J.-P.
Rosay, E. M. Stein and S. Wainger in \cite{N-R-S-W-Bergman-dim-2},
M. Christ in \cite{Christ-88-CR-dimension-3}, C. Fefferman
and J. J. Kohn in \cite{Fefferman-Kohn-88-dimension-2} and J. McNeal
in \cite{McNeal-Bergman-C2-hors-diag}.

In higher dimensions, the situation is more complicated and, until
now, there are only partial results. One of the main difficulties
is the description of the geometry of the domain: there are some special
bases of the complex tangent space at the boundary playing an important
role in this description and also in the Lipschitz estimates of the
projectors. Thus the first results concern domains for which these
bases are more or less evident. For example, the class of domains
for which the Levi form have rank larger than $n-2$ was studied by
M. Machedon in \cite{Machedon-88-one-degenerate} (see also S. Cho
\cite{Cho-Bergman-94,Cho-Bergman-96}, \cite{Ahn-Cho-99-Berg-pro-rank-n-1})
and, even in that case, the situation is not so simple. An other example
is given by decoupled domains, treated by several authors (see for
example \cite{McNeal-Aspekte}, \cite{Chang-Grellier}).

A typical example where the choice of special bases is essential,
and not evident, is the case of convex domains in $\mathbb{C}^{n}$.
In \cite{McNeal-convexes-94,McNeal-unif-subel-est-convex} J. Mc Neal
introduced some special bases (called $\varepsilon$-extremal in
\cite{Bruna-Charp-Dupain-Annals})
and gave a description of the complex geometry with the construction
of a pseudo-distance near the boundary related to these bases. With
that geometry, and a construction of a ``good'' pluri-subharmonic
function, he proved sharp point-wise estimates for the Bergman kernel
and its derivatives. Using this geometry J. Mc Neal and E. M. Stein
(\cite{McNeal-Stein-Bergman} and \cite{McNeal-Stein-Szego})
proved sharp estimates for the Bergman and Szeg\"o projections.

More recently similar results were obtained, when the Levi form has
comparable eigenvalues, by K. Koenig in \cite{Koenig-Cauchy-Riemann-eigenvalues-Levi}
and S. Cho in \cite{Cho-03-Bergman-comparable-Math-Anal-Appl}, \cite{Cho-02-Bergman-comparable-Korean}.

In \cite{F-K-M-d-bar-Diag-Levi-Form} C. L. Fefferman, J. J. Kohn
and M. Machedon studied the case where the Levi form is locally diagonalizable
near a point $p_{0}$ of the boundary. They solved the $\bar{\partial}_{b}$-Neuman
problem and deduced that if $f$ is a $L^{2}(\partial\Omega)$ function
which is locally in the classical Lipschitz space $\Lambda_{\alpha}$
(near $p_{0}$) then, for all $\varepsilon>0$ it's Szeg\"o projection
$Sf$ is locally (near $p_{0}$) in $\Lambda_{\alpha-\varepsilon}$
(an application of our theory will remove the loss of $\varepsilon$
in this estimate and get, in fact, a better non-isotropic estimate).\bigskip{}

The main idea of the present paper is to introduce a general notion
of ``extremal basis'' of the complex tangent space at a boundary
point of a pseudo-convex domain in $\mathbb{C}^{n}$, $n\geq3$, generalizing
the $\varepsilon$-extremal bases of the convex case. With this notion
we define a class of pseudo-convex domains, containing all previously
studied classes, called ``geometrically separated'', for which
a good family of extremal bases exists near a point of the boundary.
The fundamental properties of an extremal basis allow one to prove that,
for these domains, there exists an associated structure of homogeneous
space on the boundary (and an extension of that structure inside the
domain) which describes the complex geometry of the domain. An important
property of domains which are geometrically separated at a boundary
point is that this structure can be nicely localized (see the end
of \secref{Notations-and-organization} for more details).

Moreover, when special pluri-subharmonic functions (called ``adapted
pluri-subharmonic functions'' in this paper) exist, this structure
is used to obtain sharp global and local estimates for the Bergman kernel,
the Bergman and Szeg\"o projection and the classical invariant
metrics. The existence of such adapted pluri-subharmonic functions
for geometrically separated domains is not evident in general. For
example, if the domain is locally convex (or more generally lineally convex),
this is done using special support functions (see
\cite{Die-For-support-function-convex,McNeal-unif-subel-est-convex})
which cannot exist, in general, without convexity. Here we prove their
existence, under an additional condition (which is satisfied, for
example, when the Levi form is locally diagonalizable) on the extremal
bases, for the domain and also for the localized one (see the end
of \secref{Notations-and-organization} for more details).

\section{Notations and organization of the paper\label{sec:Notations-and-organization}}

In all the paper, $\Omega=\{\rho<0\}$ denotes a bounded domain in
$\mathbb{C}^{n}$, $n\geq3$, with a $\MR C^{\infty}$ boundary, and
$\rho\in\MR C^{\infty}(\mathbb{C}^{n})$ is a defining function of
$\Omega$ such that $\left|\nabla\rho\right|=1$ on $\partial\Omega$.
We denote by $N=\frac{1}{\left|\nabla\rho\right|}\sum\frac{\partial\rho}{\partial\bar{z}_{i}}\frac{\partial}{\partial z_{i}}$
the unitary complex normal vector field to $\rho$ (i.e. $N\rho\equiv1$
and $\left\Vert N\right\Vert \equiv1$).

For each point $p$ of the boundary let us denote $T_{p}^{1,0}(\partial\Omega)$
the subbundle of $T_{p}(\partial\Omega)$ of tangential complex vectors
and $T_{p}^{0,1}(\partial\Omega)$ its conjugate. As usual, we will
say that a family $(L_{i})_{1\leq i\leq n-1}$ of $\MR{C}^{\infty}$
vector fields is a basis of the complex tangent space at $\partial\Omega$
in a open neighborhood $V\subset\partial\Omega$ of a point $p_{0}$
in $\partial\Omega$ if it is a basis of sections of $T^{1,0}(\partial\Omega)$
in $V$ (i.e. $L_{i}(\rho)\equiv0$ in $V$, a condition which is
independent of the defining function).

Clearly, every $\MR{C}^{\infty}$ vector field $L$ in an open neighborhood
$V\subset\partial\Omega$ can be extended to an open neighborhood $V(p_{0})\subset\mathbb{C}^{n}$
so that $L(\rho)\equiv0$ in $V(p_{0})$. Of course this extension
depends on the defining function $\rho$, but all the stated results
will be independent of such a choice. Thus, in all the paper,
the tangent vector fields considered in $V(p_{0})$ are always supposed
to annihilate $\rho$ in $V(p_{0})$, and we will use the terminology
of ``vector fields tangent to $\rho$'' for this property.

\medskip{}

Let $L$ and $L'$ be two $(1,0)$ vector fields tangent to $\rho$.
The bracket $[L,\overline{L'}]$ being tangent to $\rho$, can
be written
\[
[L,\overline{L'}]=2\sqrt{-1}c_{LL'}T+L''
\]
 where $T$ is the imaginary part of $N$ and $L''\in T_{p}^{1,0}(\partial\Omega)\oplus T_{p}^{0,1}(\partial\Omega)$.
Thus $c_{LL'}=[L,\overline{L'}](\partial\rho)=\left\langle \partial\rho;[L,\overline{L'}]\right\rangle $.
The Levi form of $\partial\Omega$ at $p$ is defined as the hermitian
form whose value at $(L,\overline{L'})$ is the number $c_{LL'}$.
The pseudo-convexity of $\Omega$ means that this hermitian form is
non-negative. If $(L_{i})_{1\leq i\leq n-1}$ is a local basis of
$(1,0)$ vector fields tangent to $\rho$, then $(c_{L_{i}L_{j}})_{i,j}$
is the matrix of the Levi form in the given basis. This matrix
will be generally denoted $\left(c_{ij}\right)_{i,j}$.

\medskip{}

Let $p_{0}\in\Omega$ and $V(p_{0})$ be a neighborhood of $p_{0}$
in $\mathbb{C}^{n}$. If $W$ is a set of $\MR{C}^{\infty}\left(V\left(p_{0}\right)\right)$
$(1,0)$ complex vector fields, then $\Lrond(W)$ denotes the set of all
lists $\Lrond=\left(L^{1},\ldots,L^{k}\right)$ such that $L^{j}\in W\cup\overline{W}$,
and, for $l\in\mathbb{N}$, $\Lrond_{l}(W)$ denotes the set of such
lists $\Lrond$ of length $\left|\Lrond\right|=k\in\left\{ 0,1,\ldots,l\right\} $.
If $W$ contains only one vector field $L$, then we will write $\Lrond(L)$ and $\Lrond_l(L)$
instead of $\Lrond(\{L\})$ and $\Lrond_l(\{L\})$.
Moreover, if $\left|\Lrond\right|=k\geq2$, we denote 
\[
\Lrond(\partial\rho)=L^{1}\ldots L^{k-2}\left(\left\langle \partial\rho,\left[L^{k-1},L^{k}\right]\right\rangle \right).
\]
Note that if $L^{k-1}$ and $L^k$ are both $(1,0)$ or both $(0,1)$ then
$\left\langle \partial\rho,\left[L^{k-1},L^{k}\right]\right\rangle$
is identically zero. Thus if $\Lrond(\partial\rho)$ is not identically zero, it is equal to
$\pm$ a derivative of the value taken by the Levi form on $(L^{k-1},L^k)$ or $(L^k,L^{k-1})$.

Let $L$ be a $\MR C^{\infty}(V(p_{0}))$ $(1,0)$ complex vector
field tangent to $\rho$ and $M\geq2$ be an integer. We define the
weight $F_{M}^{\Omega}(L,p,\delta)=F^{\Omega}(L,p,\delta)=F^{\Omega}(L)$
associated to $L$ at the point $p\in V(p_{0})$ and to $\delta>0$
by
\[
F^{\Omega}(L,p,\delta)=\sum_{\Lrond\in\Lrond_{M}(L)}\left|\frac{\Lrond(\partial\rho)(p)}{\delta}\right|^{2/\left|\Lrond\right|}=2\sum_{\widetilde\Lrond\in\Lrond_{M-2}(L)}\left|\frac{\widetilde\Lrond\left(c_{LL}\right)}{\delta}\right|^{2/\left|\widetilde\Lrond\right|+2}.
\]
where $\widetilde\Lrond\left(c_{LL}\right)=L^1\ldots L^k\left(c_{LL}\right)$ if
$\widetilde\Lrond=\left(L^1,\ldots, L^k\right)$.
Moreover, for the complex normal direction $N$ we define $L_{n}=N$
and $F^{\Omega}(L_n,p,\delta)=F^{\Omega}(N,p,\delta)=\delta^{-2}$. When there is no ambiguity
(typically when there is only one domain) we will omit the superscript
$\Omega$.

Note that, with the conditions on $\rho$, the functions $\Lrond(\partial\rho)$
restricted to $\partial\Omega$ do not depend on the choice of the
defining function $\rho$. By the finite type hypothesis, for $\delta$ small,
the weights will be large. Thus if we consider them in $\delta$-strips near the boundary,
they are intrinsically attached to the boundary of the domain and do not
depend on the choice of the defining function $\rho$.

\medskip

In all the paper the defining function $\rho$ of $\Omega$ is fixed
and the number $M$ also. When we say that some number depends on ``$\vartheta$''
and on ``the data'', we mean that it depends on ``$\vartheta$'',
$n$, $M$, and $\rho$ but neither on the point $p$ in $V(p_{0})$
nor on $\delta\leq\delta_{0}$.

If $\brond=\{L_{1},\ldots,L_{n-1}\}$ is a $\MR C^{\infty}$ basis
of $(1.0)$ vector fields tangent to $\rho$ in $V(p_{0})$, and $\Lrond\in\Lrond(\brond\cup\{N\})$,
we denote 
\[
F(p,\delta)^{\Lrond/2}=\prod_{i=1}^{n}F(L_{i},p,\delta)^{l_{i}/2},
\]
 where $l_{i}=l_{i}(\Lrond)$ is the number of times $L_{i}$ or $\Lbar i$
appears in $\Lrond$, $i\leq n-1$, and $l_{n}=l_{n}(\Lrond)$ the
number of times $N$ or $\overline{N}$ appears in $\Lrond$ (and
thus $\left|\Lrond\right|=k=\sum_{i=1}^{n}l_{i}$).\bigskip{}

The organization of the paper is as follows:

In \secref{extremal-basis} we define the notion of extremal basis
and give some examples. Then we give their basic properties and, in
\secref{extremal-basis-coordinate-system}, we prove a
fundamental property of an extremal basis at a point of finite type:
there exists a coordinate system which is adapted
to the basis in the sense that all the derivatives of the matrix
of the Levi form (in that basis) are controlled by the weights attached
to the basis. We give also some sufficient conditions of
extremality for a given basis, useful for some examples. Finally,
in \secref{localization-extremal-basis} we show how the existence
of extremal bases can be localized in the sense that, near a boundary
point $p_{0}$ of $\Omega$ of finite type, if there exist extremal
bases at every boundary points near $p_{0}$, then one can construct
a small pseudo-convex domain $D$ of finite type inside the original
domain, containing a piece of the boundary of $\Omega$ in its boundary
such that there exist extremal bases a every point of the boundary
of $D$.

In \secref{Geometrically-separated-domains} we define the notion
of geometrically separated domains at a point $p_{0}$ of its boundary
and give examples. Then we show that a geometrically separated domain
is automatically equipped with a local structure of homogeneous space
on its boundary. In \secref{Localization-geom-sep-structure} we prove
that the structure of geometrically separated domain can always be
localized (in the sense described above).

In \secref{Adapted-pluri-subharmonic-function-geom-sep} we study
the existence of pluri-subharmonic functions adapted to a given
geometrically separated domain. In particular, we prove their existence
when the domain is {}``strongly'' geometrically separated at a point
$p_{0}$ of its boundary, and we prove that, in this case, such functions
exist for the localized domain at every point of its boundary.

In the last Section (\ref{sec:Applications-to-complex-analysis})
we show that all the sharp global and local results for Bergman kernel,
Bergman and Szeg\"o projections and invariant metrics can be established
for geometrically separated domains when there exist adapted pluri-subharmonic
functions. The local sharp estimate of the Szeg\"o projection when the
Levi form is locally diagonalizable is an example of these results.

\section{Extremal bases\label{sec:extremal-basis}}

\subsection{Definition and examples}
\begin{sddefn}
\label{def:basis-extremal}Let $\Omega$ and $V(p_{0})$ defined on
\secref{Notations-and-organization}. Let $\brond=\{L_{1},\ldots,L_{n-1}\}$
be a $\MR C^{\infty}$ basis of $(1,0)$ vector fields tangent to
$\rho$ in $V(p_{0})$ and $M$ an integer. Let $p\in V(p_{0})$ and
$0<\delta$. We say that $\brond=\{L_{1},\ldots,L_{n-1}\}$ is $(M,K,p,\delta)$-extremal
(or simply $(K,p,\delta)$-extremal or $K$-extremal) if the $\Crond^{2M}$
norms, in $V(p_{0})$, of all $L_{i}$ are bounded by $K$, the Jacobian
of $\brond$ is bounded from below by $1/K$ on $V(p_{0})$, and the
two following conditions are satisfied:
\begin{enumerate}
\item [EB$_{\text{1}}$]For any vector field $L$ of the form $L=\sum_{i=1}^{n-1}a_{i}L_{i}$,
$a_{i}\in\mathbb{C}$, we have
\[
\frac{1}{K}\sum_{i=1}^{n-1}\left|a_{i}\right|^{2}F(L_{i},p,\delta)\leq F(L,p,\delta)\leq K\sum_{i=1}^{n-1}\left|a_{i}\right|^{2}F(L_{i},p,\delta).
\]

\item [EB$_{\text{2}}$]For all indexes $i,j,k$ such that $i,j<n$, $k\leq n$
and all lists $\Lrond$ of $\Lrond_{M}\left(\brond\cup\{N\}\right)$,
\[
F(L_{k},p,\delta)^{1/2}\left|\Lrond a_{\PB{i},\PB{j}}^{\PB{k}}(p)\right|\leq KF(p,\delta)^{\Lrond/2}F(L_{i},p,\delta)^{1/2}F(L_{j},p,\delta)^{1/2},
\]
 where $a_{\PB{i},\PB{j}}^{\PB{k}}$ is the coefficient of the bracket
$\left[\LB{i},\LB{j}\right]$ in the direction $\LB{k}$ (with $L_{n}=N$),
and $\LB{i}$ means $L_{i}$ or $\overline{L_{i}}$.
\end{enumerate}
\end{sddefn}

\begin{rem*}
In general this Definition depends of the choice of the defining function
$\rho$. But note that, for $p\in\partial\Omega$, it does not: it
depends only on the restriction of $\brond$ to $\partial\Omega\cap V(p_{0})$.\end{rem*}
\begin{sexmexample}
\label{exa:Example-extremal-basis}~
\begin{enumerate}
\item \emph{Locally lineally convex domains.} A first example of extremal
basis concerns the case of a locally convex domain near a point of
finite type: it can be easily shown, using the work of Mc Neal \cite{McNeal-convexes-94} (see also \cite{Hef04}),
that if $\Omega$ is convex near a point of finite type $p_{0}\in\partial\Omega$,
if the canonical coordinate system is chosen so that the last coordinate
is the complex normal at $p_{0}$, and, if $P$ is the projection
onto the complex tangent space of the defining function of $\Omega$
parallel to the last coordinate, then for each point $p$ in a small
neighborhood of $p_{0}$, and each $\delta\leq\delta_{0}$, the $P$-projection
of the first $n-1$ vectors of the Mc Neal $\delta$-extremal basis
at $p$ (c.f. \cite{Bruna-Charp-Dupain-Annals,McNeal-convexes-94})
is $(K,p,\delta)$-extremal in our sense for a constant $K$ depending
only on the data.\\
More generally, the same thing can be done for \emph{locally lineally
convex domains} using the work of Conrad, M. \cite{Conrad_lineally_convex} (recall that
$\Omega$ is said lineally convex at a point $p\in\partial\Omega$ if there is a neighborhood
$U$ of $p$ such that the intersection of the complex tangent space to $\partial\Omega$ at
$p$ with $\Omega\cap U$ is empty; see
\cite{Kiselman-lineally-convex,Diederich-Fornaess-Support-Func-lineally-cvx} for the
precise definition and a useful characterization). Some details are given in
\secref{Examples-The-lineally-convex-case}.
\item \emph{Levi form with comparable eigenvalues.} A second example is
given by a pseudo-convex domain having a point of finite type $p_{0}\in\partial\Omega$
where the eigenvalues of the Levi form are comparable (see \cite{Koenig-Cauchy-Riemann-eigenvalues-Levi,Cho-02-Bergman-comparable-Korean,Cho-03-Bergman-comparable-Math-Anal-Appl,Cho-02-metrics-comparable}).
Indeed, in \cite{Cho-03-Bergman-comparable-Math-Anal-Appl} it is
proved that any (normalized) basis of the complex tangent space is
$K$-extremal for a well controlled constant $K$.
\item \emph{Locally diagonalizable Levi form.} In \secref{sufficient-cond-of-extremality}
we will show that if at a point of finite type $p_{0}\in\partial\Omega$
the Levi form is locally diagonalizable then the basis diagonalizing
the Levi form is $K$-extremal for a constant $K$ depending only
on the data (in fact, this basis is $K$-strongly-extremal (see \defref{basis-strongly-extremal})
for every constant $\alpha>0$ with $\delta\leq\delta_{0}$, $\delta_{0}$
small depending on $\alpha$).
\item \emph{Localization.} Another important example will be given in \secref{localization-extremal-basis}:
for any $\tau>0$ there exists $M(\tau)$ such that if a family of
$(M(\tau),K,p,\delta)$-extremal bases exists in a neighborhood of
a boundary point $p_{0}$, of finite type $\tau$, of $\Omega$, then
one can construct a small smooth pseudo-convex domain $D$ contained in $\Omega$ and containing
a neighborhood of $p_{0}$ in $\partial\Omega$ in its boundary and
for which there exists an $(M(\tau),K',q,\delta)$-extremal basis at
every points $q\in\partial D$.
\end{enumerate}
\end{sexmexample}

\subsection{Basic properties of extremal bases}

The first property states that an extremal basis at $p$ can be orthogonalized
at the point $p$:
\begin{spprop}
\label{prop:extremality-and-normalization}For any $K$ there exists
a constant $K'$ depending only on $K$ and the data such that, if
$\brond$ is a basis of complex $(1,0)$ vector fields tangent to
$\rho$ in an open set $V(p_{0})$ which is $(K,p,\delta)$-extremal, then
there exists a basis $\brond'$, orthonormal at $p$ which is $(K',p,\delta)$-extremal.\end{spprop}
\begin{proof}
We can suppose that the vector fields $L_{i}$ of $\brond$ are ordered
such that $F(L_{i+1},p,\delta)\leq F(L_{i},p,\delta)$, for $i<n-1$.
Then, using the Graam-Schmidt process, we first define a basis $\brond_{1}$
by decreasing induction, $L_{i}^{1}=\sum_{j=i}^{n-1}\alpha_{i}^{j}L_{j}$,
$\alpha_{i}^{j}\in\mathbb{C}$, and $\sum\left|a_{i}^{j}\right|^{2}=1$.
The determinant condition implies that there exists $c>0$ such that
$\left|\alpha_{i}^{i}\right|>c$. Then 
\[
F(L_{i}^{1},p,\delta)\simeq_{K}\sum_{j\geq i}\left|\alpha_{i}^{j}\right|^{2}F(L_{j},p,\delta)\simeq_{K}F(L_{i},p,\delta).
\]
Now, let $L=\sum_{i}a_{i}L_{i}^{1}$ be a linear combination, with
constant coefficients, of the $L_{i}^{1}$. Then
\[
F(L,p,\delta)\simeq_{K}\sum_{k}\left|\sum_{i\leq k}a_{i}a_{i}^{k}\right|^{2}F(L_{k},p,\delta)\simeq_{K}\sum\left|a_{k}\right|^{2}F(L_{k},p,\delta),
\]
 using that $\left|\sum_{i\leq k}a_{i}a_{i}^{k}\right|\geq c\left|a_{k}\right|-\sum_{i<k}\left|a_{i}\right|$
and the fact that the $F(L_{k},p,\delta)$ are decreasing. This proves
EB$_{\text{1}}$ for $\brond_{1}$.

Note now that the decreasing property shows that property EB$_{\text{2}}$ for
$\brond$ trivially implies the same property for $\brond_{1}$ because $L_{i}^{1}$ involves
only fields $L_{j}^{1}$ for $j\geq i$.

Finally, define $\brond'$ by $L'_{i}=L_{i}^{1}/\left\Vert L_{i}^{1}\right\Vert $.
The condition on the $\MR{C}^{2M}$ norm of the vectors $L_{i}$ immediately
implies the result.
\end{proof}

Let us now prove that the mixed derivatives of the Levi form in the
directions of an extremal basis are controlled by the pure ones, that
is by the weights associated to the vector fields of the basis:
\begin{spprop}
\label{prop:control-lists}Let $\brond=\{L_{i},\,1\leq i\leq n-1\}$
be a $\Crond^{\infty}$ basis of complex $(1,0)$ vector fields tangent
to $\rho$ in $V(p_{0})$ which is $(K,p,\delta)$-extremal for a
fixed $\delta>0$. Let $\Lrond$ be a list of vector fields belonging
to $\Lrond_{M}(\brond\cup\{N\})$. Then there exits a constant $C>0$
depending only on $\Omega$ and $K$ such that $\left|\Lrond(\partial\rho)(p)\right|\leq C\delta F^{\MR{L}/2}(p,\delta)$.\end{spprop}
\begin{proof}
Recall the notation $c_{ij}=\left\langle \partial\rho,\left[L_{i},\Lbar j\,\right]\right\rangle $.
\begin{spllem}
\label{lem:lemma1-est-lists}With the previous notations (and the
definition of the coefficients $a_{ij}^{s}$ given in \defref{basis-extremal}):
\[
L_{j}c_{ik}=L_{i}c_{jk}+\sum a_{k\bar{i}}^{\bar{s}}c_{js}-\sum a_{ij}^{s}c_{sk}-\sum a_{j\bar{k}}^{\bar{s}}c_{is},
\]
\[
\Lbar jc_{ik}=\Lbar kc_{ij}+\sum a_{i\bar{k}}^{s}c_{sj}+\sum a_{\bar{j}i}^{s}c_{sk}-\sum a_{\bar{k}\bar{j}}^{\bar{s}}c_{is}.
\]
\end{spllem}
\begin{proof}
The first formula is simply obtained considering the coefficient of
$\Im\mathrm{m}N$ in the Jacobi's identity applied to the bracket $\left[L_{j},\left[L_{i},\Lbar k\,\right]\right]$,
and the second by the same way using $\left[\,\overline{L_{j}},\left[L_{i},\Lbar k\,\right]\right]$.
\end{proof}
The proof of \textit{\propref{control-lists}} is done by induction
on the length of the lists. Suppose first $\left|\Lrond\right|=2$.
Hypothesis EB$_{\text{1}}$ imply that, for all numbers $a$ and $b$
and all index $i$ and $j$,
\[
\left|\left|a\right|^{2}c_{ii}+\left|b\right|^{2}c_{jj}+a\bar{b}c_{ij}+\bar{a}bc_{ji}\right|\lesssim\delta\left(\left|a\right|^{2}F_{i}+\left|b\right|^{^{2}}F_{j}\right).
\]
Suppose both $F_{i}$ and $F_{j}$ are non zero. Taking $a=F_{j}^{1/2}F_{i}^{-1/2}\lambda$
and $b=\mu$, $\left|\lambda\right|$ and $\left|\mu\right|$ less
than $1$, the equivalence of norms in finite dimensional spaces gives
the result. If $F_{i}=0$ or $F_{j}=0$ a similar argument gives $c_{ij}=c_{ji}=0$.

To continue the proof, we need the following notation: if $\Lrond\in\Lrond(\brond\cup\{N\})$,
we denote by $l_{i}^{1}$ (resp. $l_{i}^{2}$) the number of times
$L_{i}$ (resp. $\overline{L_{i}}$) appears in $\Lrond$ (thus $l_{i}=l_{i}^{1}+l_{i}^{2}$).

For lists of greater length, we prove, at the same time, by induction,
the estimate and the following Lemma:
\begin{spllem}
\label{lem:lemma2-est-lists}Let $\Lrond$ and $\Lrond'$ be two lists
of $\Lrond_{M}(\brond\cup\{N\})$, $\Lrond(\partial\rho)=\Lrond_{1}c_{ij}$
and $\Lrond'(\partial\rho)=\Lrond'_{1}c_{kl}$, such that $l_{i}^{1}={l'}_{i}^{1}$,
$l_{i}^{2}={l'}_{i}^{2}$. Then $\Lrond(\partial\rho)\simeq\Lrond'(\partial\rho)$
in the sense that
\[
\Lrond(\partial\rho)-\Lrond'(\partial\rho)=\sum_{\left|\widetilde{\Lrond}\right|<\left|\Lrond\right|}a_{\widetilde{\Lrond}}\widetilde{\Lrond}(\partial\rho),
\]
 where $a_{\widetilde{\Lrond}}$ satisfies
$F^{\widetilde{\Lrond}/2}\left|\Lrond''a_{\widetilde{\Lrond}}\right|\lesssim\delta F^{\left(\Lrond+\Lrond''\right)/2}$,
$\forall\Lrond''\in\Lrond_{M}(\brond\cup\{N\})$,
the constant depending only on $K$ and the data.
\end{spllem}
Suppose thus the estimates and the Lemma proved for all lists of length
less than or equal to $N$.

First, we prove \lemref{lemma2-est-lists} for lists of length $N+1$.
Let us write $\Lrond(\partial\rho)=\Lrond_{1}c_{ij}$ and $\Lrond'(\partial\rho)=\Lrond_{2}c_{kl}$.
Then three cases can happen:
\begin{enumerate}
\item $(i,j)=(k,l)$;
\item $i\neq k$, $j\neq l$;
\item $i\neq k$ and $j=l$ or $i=k$ and $j\neq l$.
\end{enumerate}
The first case is a trivial consequence of EB$_{\text{2}}$. For the
second, the hypothesis on the length and case (1) imply that there
exists a list $\widetilde{\Lrond}$ such that $\Lrond_{1}c_{ij}\simeq\widetilde{\Lrond}L_{k}\overline{L_{l}}c_{ij}$
and $\Lrond_{2}c_{kl}\simeq\widetilde{\Lrond}L_{i}\overline{L_{j}}c_{kl}$,
in the sense of \lemref{lemma2-est-lists}. By \lemref{lemma1-est-lists}
and EB$_{\text{2}}$, $\overline{L_{l}}c_{ij}\simeq\overline{L_{j}}c_{il}$.
The result is obtained using another time EB$_{\text{2}}$, \lemref{lemma1-est-lists}
and the induction hypothesis. The third case is similar.

Now we prove the estimate of the Proposition for lists of length $N+1$.
Suppose that the vector fields are ordered so that there exists an
integer $n_{0}\in\{0,\ldots,n-1\}$ such that, for $k\leq n_{0}$,
$F_{k}\neq0$, and, for $n-1\geq k>n_{0}$, $F_{k}=0$. Let $L=\sum a_{j}L_{j}$,
$a_{j}=\varepsilon\lambda_{j}F_{n_{0}}^{1/2}F_{j}^{-1/2}$ if $j\leq n_{0}$
and $a_{j}=\lambda_{j}$ if $j>n_{0}$, with $\left|\lambda_{j}\right|\leq1$.
If we apply the extremality property to $F(L)$, then we obtain, for example,
for all $k\leq N-1$,
\[
\sup_{\left|\lambda_{j}\right|\leq1}\left|L^{k}\overline{L}^{N-k-1}c_{LL}\right|\lesssim\delta\varepsilon^{(N+1)/2}F_{n_{0}}^{(N+1)/2}
\]
 with the convention $F_{n_{0}}=0$ if $n_{0}=0$. Writing $L^{k}\overline{L}^{N-k-1}c_{LL}=\sum C_{\alpha\beta}\lambda^{\alpha}\bar{\lambda}^{\beta}$,
the equivalence of norms in finite dimensional spaces gives, when
$\varepsilon\rightarrow0$,
\begin{equation}
\left.\begin{array}{ll}
C_{\alpha\beta}=0 & \mbox{if there exits }j>n_{0}\mbox{ such that }\alpha_{j}+\beta_{j}>0,\\
\left|C_{\alpha\beta}\right|\lesssim\delta F^{(\alpha+\beta)/2} & \mbox{otherwise.}
\end{array}\right.\label{eq:est-listes}
\end{equation}

Let $\MR{E}_{\alpha\beta}$ be the set of lists $\Lrond$ such that
$l_{i}^{1}(\Lrond)=\alpha_{i}$ and $l_{i}^{2}=\beta_{i}$. Then $C_{\alpha\beta}=\sum_{\Lrond\in\MR{E}_{\alpha\beta}}\Lrond(\partial\rho)$.
Now, \lemref{lemma2-est-lists} and the induction hypothesis give
the required estimation for each list in $\MR{E}_{\alpha\beta}$ and
finishes the proof of the Proposition.
\end{proof}

The statement of the last Proposition is not really a statement on
the vector fields of an extremal basis but on the linear space generated
by an extremal basis. In fact the following Proposition is easily proved:
\begin{spprop}
\label{prop:control-lists-space-generated}In the conditions of \propref{control-lists},
there exists a constant $C$ such that, if ${L'}_{j}$, $1\leq j\leq k$
are vector fields belonging to the linear space generated by the extremal
basis $(L_{i})_{i}$ then for every $\Lrond\in\Lrond_{M}({L'}_{1},\ldots,{L'}_{k})$,
if ${L'}_{j}$ or $\overline{{L'}_{j}}$ appear $l'_{j}$ times in $\Lrond$,
$\left|\Lrond(\partial\rho)(p)\right|\lesssim\delta\prod_{j}F({L'}_{j},p,\delta)^{l'_{j}/2}$.
\end{spprop}

\subsection{Adapted coordinates system for points of finite $1$-type\label{sec:extremal-basis-coordinate-system}}

\subsubsection{Definition of an adapted coordinate system and statement of the main
result}

Let $p_{0}\in\partial\Omega$ and $V(p_{0})$ a neighborhood of $p_{0}$
in $\mathbb{C}^{n}$.
\begin{sddefn}
\label{def:basis-and-coordinates-adapted}A basis $\brond=(L_{1},\ldots,L_{n-1})$
of sections of $(1,0)$ complex tangent vector fields to $\rho$ in
$V(p_{0})$ and a coordinate system in $\mathbb{C}^{n}$, $z=\Phi_{p}^{\delta}(Z)$,
are called $(M,K,\delta)$-\emph{adapted} (or simply $(K,\delta)$-\emph{adapted)
at the point} $p$ in $V(p_{0})$ if $\Phi_{p}^{\delta}$ and $(\Phi_{p}^{\delta})^{-1}$
are polynomial (of degree less than $(2M)^{n-1}$) diffeomorphisms
of $\mathbb{C}^{n}$ centered at $p$ (i.e. $\Phi_{p}^{\delta}(p)=0$)
satisfying (with the notation $F_{i}=F_{i}(p,\delta)=F(L_{i},p,\delta)$):
\begin{enumerate}
\item The coefficients of the polynomials of $\Phi_{p}^{\delta}$ and $(\Phi_{p}^{\delta})^{-1}$
(and the Jacobians of $\Phi_{p}^{\delta}$ and $(\Phi_{p}^{\delta})^{-1}$)
are bounded by $K$;
\item For all $\left|\alpha\right|\leq2M$, $\frac{\partial^{\alpha}\left(\rho\circ(\Phi_{p}^{\delta})^{-1}\right)(0)}{\partial{z'}^{\alpha}}=\frac{\partial^{\alpha}\left(\rho\circ(\Phi_{p}^{\delta})^{-1}\right)(0)}{\partial\overline{z'}^{\alpha}}=0$,
$z'=(z_{1},\ldots,z_{n-1})$;
\item If $L_{i}=\sum a_{i}^{j}\frac{\partial}{\partial z_{i}}$, then $a_{i}^{j}(0)=\delta_{ij}$
and, for all $\Lrond\in\Lrond_{M}(\brond\cup\{N\})$,
\[
\left|\Lrond a_{i}^{j}\right|\leq K\mbox{ in }\Phi_{p}(V(p_{0}))\mbox{ and }F_{j}^{1/2}\left|\Lrond a_{i}^{j}(0)\right|\leq KF_{i}^{1/2}F^{\Lrond/2};
\]
\item For all $(\alpha,\beta)$, $\left|\alpha+\beta\right|\leq M$, $\left|\frac{\partial^{\alpha\beta}\left(\rho\circ(\Phi_{p}^{\delta})^{-1}\right)(0)}{\partial z^{\alpha}\partial\bar{z}^{\beta}}\right|\leq K\min\left\{ \delta F^{(\alpha+\beta)/2},1\right\} $.
\end{enumerate}
\end{sddefn}

One of our main goals is to prove the following existence Theorem:
\begin{stthm}
\label{thm:existence-coordinate-system}Suppose $p_{0}$ is of finite
$1$-type $\tau$, and choose an integer $M$ larger than $2\left(\frac{2\left(\frac{\tau}{2}\right)^{n-1}+1}{2}\right)^{n-1}$.
For any positive constant $K$, there exist a constant $\delta_{0}>0$,
a neighborhood $V(p_{0})$, both depending on the data, and a constant
$K'$ depending on $K$ and the data such that if $\brond=\{L_{i},\,1\leq i\leq n-1\}$
is a $\MR C^{\infty}$ basis of $(1,0)$ complex vector fields tangent
to $\rho$ in $V(p_{0})$ which is $(M,K,p,\delta)$-extremal at a
point $p\in V(p_{0})\cap\partial\Omega$, then there exists a coordinate
system $(z_{i})_{1\leq i\leq n}$ centered at $p$ which is $(K',\delta)$-adapted
to $\brond$.
\end{stthm}
The proof is divided in two steps: for the first one, in the next Section, we work without
the assumption of finite type and we construct an adapted coordinate
system using modified weights; in the second one, which is \secref{existence-coord-syst-finite-type},
we use the finite type hypothesis to deduce the Theorem.

\subsubsection{Construction of an adapted coordinate system\label{sec:Construction-of-adapted-coord-system}}

In this Section we suppose that the integer $M$ is fixed. Let $p\in V(p_{0})$
and $\delta>0$. Suppose $\brond=(L_{1},,L_{n-1})$ is a basis of
$(1,0)$ vector fields tangent to $\rho$ in $V(p_{0})$, satisfying
the following properties:
\begin{lyxlist}{00}
\item [{(A)}] The $\MR{C}^{2M}(V(p_{0}))$ norms of all $L_{i}$ are bounded
by $K$ and $\brond$ is ordered so that $F(L_{i+1},p,\delta)\leq F(L_{i},p,\delta)$.
\item [{(B)}] Let $p\in W(p_{0})\Subset V(p_{0})$ and $\delta>0$. Denoting
$\widetilde{F_{i}}=F_{i}+1=F(L_{i},p,\delta)+1$:

\begin{lyxlist}{00}
\item [{(B$_{\text{1}}$)}] For all list $\Lrond\in\Lrond_{M}(\brond\cup\{N\})$,
$\left|\Lrond(\partial\rho)(p)\right|\leq K\delta\widetilde{F}(p,\delta)^{\Lrond/2}$;
\item [{(B$_{\text{2}}$)}] $\brond$ satisfies condition EB$_{\text{2}}$
of \defref{basis-extremal} with the $F(L_{s},p,\delta)$ replaced
by the $\widetilde{F_{s}}$.
\end{lyxlist}
\end{lyxlist}

Then under these hypotheses, we have:
\begin{spprop}
\label{prop:construction-coordinate-system}There exists a constant
$K'$ depending on $K$, $M$ and the data (but neither on $p$ nor
on $\delta$) such that there exists a $(M,K',\delta)$-adapted coordinate
system to $\brond$ at $p$ in the sense of \defref{basis-and-coordinates-adapted},
the weights $F(L_{i},p,\delta)$ being replaced by $\widetilde{F_{i}}$.\end{spprop}
\begin{proof}
In \cite{Charpentier-Dupain-Geometery-Finite-Type-Loc-Diag} (Prop
3.2, p. 85) we proved that hypothesis (A) implies the existence of
a coordinate system $\Phi_{p,\delta}$ satisfying conditions (1) and
(2) of \defref{basis-and-coordinates-adapted} and
\begin{equation}
\left\{ \begin{array}{c}
\mbox{For }j<i<n,\mbox{ and }\alpha=(\alpha_{1},\ldots,\alpha_{n-1})\in\mathbb{N}^{n-1}\mbox{ such that }\left|\alpha\right|\leq M,\,\alpha_{p}=0\mbox{ if }p>i\mbox{ or }p\leq j,\\
\frac{\partial^{\alpha}a_{i}^{j}(0)}{\partial{z'}^{\alpha}}=0.
\end{array}\right.\label{eq:choice-der-coeff-fields}
\end{equation}

We now prove that under condition (B) the two last properties of \defref{basis-and-coordinates-adapted}
(with the $\widetilde{F_{i}}$) are satisfied. This follows quite
closely the ideas of p. 87-90 of \cite{Charpentier-Dupain-Geometery-Finite-Type-Loc-Diag},
but, as the context here is more general and as it is a fundamental
tool, we write it completely.

Let $\Lrond\in\Lrond(\brond\cup\{N\})$ be considered as a differential
operator. Denoting by $D^{\alpha\beta}$ the derivative $\frac{\partial^{\alpha+\beta}}{\partial z^{\alpha}\partial\bar{z}^{\beta}}$
in the coordinate system $z=\Phi_{p}^{\delta}$, it is easy to see
that, if $\left|\Lrond\right|=S$,
\[
\Lrond=\sum_{\SU{m\in\mathbb{N}^{n}\AS1\leq\left|m\right|\leq S}}\sum_{\alpha_{i}+\beta_{i}=m_{i}}c_{\alpha\beta}^{\Lrond}D^{\alpha\beta}
\]
 where
\[
c_{\alpha\beta}^{\Lrond}=c_{\alpha\beta}=\sum_{p=1}^{S}\sum*\PB{a_{j_{1}}^{i_{1}}}\cdots\PB{a_{j_{p}}^{i_{p}}}\prod_{k=p+1}^{S}D^{s_{k}}\left(\PB{a_{j_{k}}^{i_{k}}}\right)
\]
 where the summation in the second formula is taken over the derivatives
associated to the multiindex $s_{k}$ satisfying $\sum_{k=p+1}^{S}s_{k}+(m_{1},\cdots,m_{n})=\sum_{k=1}^{S}\chi(i_{k})$,
$\sum_{k=1}^{S}\chi(j_{k})=(l_{1},\cdots,l_{n-1},l_{n})$ and the
coefficients $*$ are absolute constants. The following Lemma is then
easily established:
\begin{llemsp}
\label{lem:Lemma-1-coord-sys-estimate-c-alpha-beta}If for all $s\in\mathbb{N}^{n}$,
$\left|s\right|\leq S$, we have $\left|D^{s}a_{j}^{i}(0)\right|\lesssim_{K_{1}}\widetilde{F}^{s/2}\widetilde{F_{j}}^{1/2}\widetilde{F_{i}}^{-1/2}$, then
\begin{equation}
\left|c_{\alpha\beta}(0)\right|\lesssim\widetilde{F}^{\Lrond/2}\widetilde{F}^{-\frac{\alpha+\beta}{2}}.\label{eq:estimate-c-alpha-beta-Lemma1}
\end{equation}

\end{llemsp}
To fix notations, recall that if $f$ is a $\MR{C}^{2}$ function
and $L$ and $L'$ are two vector fields, then $\left\langle \partial\bar{\partial}f;L,\bar{L}\right\rangle =\overline{L'}Lf+\left[L,\overline{L'}\right]\left(\partial f\right)$,
and, in particular, if $L\rho=0$, $\left\langle \partial\bar{\partial}\rho;L,\overline{L}\right\rangle =\left[L,\overline{L}\right]\left(\partial\rho\right)=c_{LL}$,
where $c_{LL}$ is the coefficient of the Levi form in the direction
$L$. In all the proof that follows, we denote $\left[L_{i},\overline{L_{j}}\right]\left(\partial\rho\right)=c_{ij}$.

To state the second Lemma let us introduce the notation $\tilde{\rho}=\rho\circ(\Phi_{p}^{\delta})^{-1}$:
\begin{llemsp}
\quad\mynobreakpar
\begin{enumerate}
\item For every multiindex $l$, $\left|l\right|\leq2M$, we have $\left|D^{l}\tilde{\rho}(0)\right|\lesssim\delta\widetilde{F}^{l/2}$,
where $D^{l}$ is any derivative $\frac{\partial^{\left|l\right|}}{\partial z^{\alpha}\partial{\bar{z}}^{\beta}}$
with $\left|\alpha+\beta\right|=l$.
\item For every multiindex $m\neq(0,\cdots,0)$, $\left|m\right|<M$, and
every $i,j$, $\left|D^{m}\PB{a_{i}^{j}}(0)\right|\lesssim\widetilde{F}^{m/2}\widetilde{F}_{i}^{1/2}\widetilde{F}_{j}^{-1/2}$.
\end{enumerate}
\end{llemsp}
\begin{proof}
Note first that, for (2), it suffices to get the estimate for $D^{m}a_{i}^{j}(0)$
and that the estimate (1) (resp. (2)) is trivial if $l_{n}>0$ (resp.
$m_{n}>0$) (recall $F_{n}=\delta^{-2}$ and the fact that the $\MR{C}^{2M}$ norms
of the fields $L_{i}$ are controlled). We then suppose
$l_{n}=m_{n}=0$. The proof is done by induction: the induction hypothesis
$\MR{P}_{k_{0}}$ is the two conclusions of the proposition for $\left|l\right|\leq k_{0}$
and $\left|m\right|<k_{0}$.

Remark first that $\MR{P}_{k_{0}}$ and the first property of $\MR{P}_{k_{0}+1}$
imply the second property of $\MR{P}_{k_{0}+1}$ for $j=n$: this
is evident if $i=j=n$ and, if $i<j=n$, $L_{i}r\equiv0$ implies
\[
a_{i}^{n}=\left(\frac{\partial\tilde{\rho}}{\partial z_{n}}\right)^{-1}\sum_{k=1}^{n-1}a_{i}^{k}\frac{\partial\tilde{\rho}}{\partial z_{k}},
\]
 and the result is clear because $\frac{\partial\tilde{\rho}}{\partial z_{k}}(0)=0$
for $k<n$.

Moreover, note also that, the weights $\widetilde{F}_{i}$, $i\leq n-1$,
being {}``decreasing'', the second inequality of $\MR{P}_{k_{0}}$
is trivial if $i\leq j<n$ and if $i=n$. Thus it suffices to prove
this inequality when $j<i<n$.

Let us now prove $\MR{P}_{k_{0}}$ by induction. The case $k_{0}=1$
is trivial. Let us study first the case $k_{0}=2$. By definition
of the coordinate system, $\frac{\partial^{2}\tilde{\rho}}{\partial z_{i}\partial z_{j}}(0)=0$,
and, using the notations and remarks stated before the statement of
the Lemma, we have
\begin{equation}
a_{i}^{i}\overline{a_{j}^{j}}\frac{\partial^{2}\tilde{\rho}}{\partial z_{i}\partial\bar{z_{j}}}=c_{i\bar{j}}-\sum_{(k,p)\neq(i,j)}a_{i}^{k}\overline{a_{j}^{p}}\frac{\partial^{2}\tilde{\rho}}{\partial z_{k}\partial\bar{z}_{p}}\label{eq:relation-c-{i}{j-bar}-derivees-secondes}
\end{equation}
 which implies $\frac{\partial^{2}\tilde{\rho}}{\partial z_{i}\partial\bar{z}_{j}}(0)=c_{i\bar{j}}(0)$
and gives the first inequality by definition of $F$. To prove the
second inequality, let us look at the definition of the functions
$a_{\PB{i}\PB{j}}^{\PB{k}}$. Writing the bracket $[L_{i},\overline{L_{p}}]$
with the coordinate system and taking the component of $\frac{\partial}{\partial z_{j}}$,
we get
\begin{equation}
\sum_{k'=1}^{n-1}a_{i\bar{p}}^{k'}a_{k'}^{j}=-\sum_{k=1}^{n}\overline{a_{p}^{k}}\frac{\partial}{\partial\bar{z}_{k}}\left(a_{i}^{j}\right)-c_{i\bar{p}}a_{n}^{j}.\label{eq:comp-en-dz-{j}-crochet-L-{i}L-{p}bar}
\end{equation}
 Extracting the term $\frac{\partial}{\partial\bar{z}_{p}}\left(a_{i}^{j}\right)$
and taking all at zero we obtain $\frac{\partial}{\partial\bar{z}_{p}}\left(a_{i}^{j}\right)(0)=a_{i\bar{p}}^{j}(0)$
and the inequality follows from (B$_{\text{2}}$) hypothesis.

We have now to consider $\frac{\partial a_{i}^{j}}{\partial z_{q}}$.
If $q\leq j$, the inequality comes from the decreasing property of
$\widetilde{F}_{k}$, and, if $j<q\leq i$, this derivative is
zero at the origin by the properties of the coordinate system. Suppose
then $j<i<q$. Looking at the Lie bracket $[L_{i},L_{q}]$ and taking
the component of $\frac{\partial}{\partial z_{j}}$, we obtain
\begin{equation}
a_{i}^{i}\frac{\partial}{\partial z_{i}}\left(a_{q}^{j}\right)-a_{q}^{q}\frac{\partial}{\partial z_{q}}\left(a_{i}^{j}\right)=\sum_{k\neq q}a_{q}^{k}\frac{\partial}{\partial z_{k}}\left(a_{i}^{j}\right)-\sum_{k\neq i}a_{i}^{k}\frac{\partial}{\partial z_{k}}\left(a_{q}^{j}\right)+\sum_{p=1}^{n-1}a_{iq}^{p}a_{p}^{j},\label{eq:crochet-de-Lie-[L-{i},M-{q}]-sur-dz-{j}}
\end{equation}
 and then, at the origin, $\frac{\partial}{\partial z_{q}}\left(a_{i}^{j}\right)(0)=\frac{\partial}{\partial z_{i}}\left(a_{q}^{j}\right)(0)-a_{iq}^{j}(0)=-a_{iq}^{j}(0)$,
by the properties of the coordinate system, and the conclusion comes
again from (B$_{\text{2}}$). This proves $\MR{P}_{2}$.

Let us now suppose $\MR{P}_{k_{0}}$ verified ($k_{0}<2M$). Let $D^{\tilde{l}}$
be a derivative of order $k_{0}+1$. If $D^{\tilde{l}}$ is purely
holomorphic or anti-holomorphic, then $D^{\tilde{l}}\tilde{\rho}(0)=0$.
Then we suppose $D^{\tilde{l}}=D^{l}\frac{\partial}{\partial z_{i}}\frac{\partial}{\partial\bar{z_{j}}}$,
and we denote by $\widetilde{\Lrond}=\Lrond L_{i}\overline{L_{j}}$
a list of vectors fields associated to $D^{\tilde{l}}$ (in the obvious
sense that, if $\partial/\partial z_{i}$ (resp. $\partial/\partial\bar{z_{i}}$)
appears $l_{i}$ (resp $\bar{l_{i}}$) times in $D^{l}$ then $L_{i}$
(resp. $\overline{L_{i}}$) appears $l_{i}$ (resp $\bar{l_{i}}$)
times in $\Lrond$). Applying (\ref{eq:relation-c-{i}{j-bar}-derivees-secondes}),
we get
\begin{eqnarray}
D^{l}\left(\frac{\partial^{2}\tilde{\rho}}{\partial z_{i}\partial\bar{z_{j}}}\right)(0) & = & \Lrond c_{i\bar{j}}(0)-\sum_{\SU{l_{1}\neq0\AS l_{1}+l_{2}=l}}*D^{l_{1}}\left(a_{\bar{i}}^{i}\overline{a_{j}^{j}}\right)D^{l_{2}}\left(\frac{\partial^{2}\tilde{\rho}}{\partial z_{i}\partial\overline{z_{j}}}\right)(0)\nonumber \\
 &  & -\sum_{(k,p)\neq(i,j)}D^{l}\left(a_{i}^{k}\overline{a_{j}^{p}}\frac{\partial^{2}\tilde{\rho}}{\partial z_{k}\partial\overline{z_{p}}}\right)(0)\label{eq:derivees-listes-c-{i}{j}}\\
 &  & -\sum_{\left|\alpha'\right|+\left|\beta'\right|<k_{0}-1}c_{\alpha'\beta'}D^{\alpha'\beta'}(c_{i\bar{j}})(0),\nonumber 
\end{eqnarray}
 with $*=0\textrm{ or }1$. The first term of the right hand side of
(\ref{eq:derivees-listes-c-{i}{j}}) satisfies the desired inequality
(i.e. $\lesssim\delta\widetilde{F}^{l/2}\widetilde{F}_{i}^{1/2}\widetilde{F}_{j}^{1/2}$
in modulus) by (B$_{\text{1}}$). For the second, $l_{1}$ being non
$0$, we can apply the induction hypothesis to $D^{l_{2}}\left(\frac{\partial^{2}\tilde{\rho}}{\partial z_{i}\partial\overline{z_{j}}}\right)(0)$
to get the right estimate. The third term is of the same nature because,
for $(k,p)\neq(i,j)$, $a_{i}^{k}\overline{a_{j}^{p}}(0)=0$. If we
replace $c_{i\bar{j}}$ by its expression in (\ref{eq:relation-c-{i}{j-bar}-derivees-secondes}),
the induction hypothesis $\MR{P}_{k_{0}}$implies directly (for $s<k_{0}-1$):
\[
\left|D^{s}c_{i\bar{j}}(0)\right|\lesssim\delta F^{s/2}F_{i}^{1/2}F_{j}^{1/2},
\]
and then, using \lemref{Lemma-1-coord-sys-estimate-c-alpha-beta}
for $S=k_{0}$ (whose hypothesis are also verified by the induction
hypothesis $\MR{P}_{k}$), we prove that the last term in (\ref{eq:derivees-listes-c-{i}{j}})
satisfies also the right estimate.

We finish now proving the second inequality of $\MR{P}_{k_{0}+1}$.
It suffices to consider the case $j<i<n$. Let us first look at a
derivative $D^{m}$ of the form $D^{m}=D^{s}\frac{\partial}{\partial\bar{z_{p}}}$,
$\left|s\right|=k_{0}-1$. Using formula (\ref{eq:comp-en-dz-{j}-crochet-L-{i}L-{p}bar}),
we can write
\[
D^{m}a_{i}^{j}=D^{s}\left(\sum_{t=1}^{n-1}\divideontimes a_{i\bar{p}}^{t}a_{t}^{j}-\sum_{t\neq p}\divideontimes\overline{a_{p}^{t}}\frac{\partial}{\partial\bar{z_{t}}}(a_{i}^{j})+\divideontimes c_{i\bar{p}}a_{n}^{j}\right)=D^{s}(A)-D^{s}(B)+D^{s}(C),
\]
 where $\divideontimes$ is equal to $\frac{1}{a_{p}^{p}}$. In $D^{s}(B)$,
to get a non zero term at $0$, $\overline{a_{p}^{t}}$ must be derivated
because $p\neq t$; this gives derivatives of $\frac{\partial}{\partial\bar{z_{k}}}(a_{i}^{j})$
of order $<k_{0}-1$ which are well controlled by the induction hypothesis
and then $\left|D^{s}(B)(0)\right|\lesssim\widetilde{F}^{m/2}\widetilde{F}_{i}^{1/2}\widetilde{F}_{j}^{1/2}$.

Consider now the terms $D^{s}\left(\divideontimes a_{i\bar{p}}^{t}a_{k}^{j}\right)$. 
\begin{claim*}
For $\left|l\right|\leq k$, $D^{l}\left(a_{i\PB{p}}^{t}\right)\lesssim F_{i}^{1/2}F_{p}^{1/2}F_{t}^{-1/2}F^{l/2}$.\end{claim*}
\begin{proof}
[Proof of the Claim]We do it by induction on $\left|l\right|$. (B$_{\text{2}}$)
proves the result for $\left|l\right|=0$. Assume the claim proved
for $\left|l\right|<k'\leq k_{0}-1$ and suppose $\left|l\right|=k'$.
Then,
\[
D^{l}a_{i\PB{p}}^{t}(0)=\Lrond^{l}a_{i\PB{p}}^{t}(0)+\sum_{\left|s'\right|<l}c_{s'}(0)D^{s'}a_{i\PB{p}}^{t}(0).
\]
 But, by (B$_{\text{2}}$), 
\[
\left|\Lrond^{l}a_{i\bar{p}}^{t}(0)\right|\lesssim F^{l/2}F_{i}^{1/2}F_{p}^{1/2}F_{t}^{-1/2},
\]
and for the second term of the previous identity, we have $\left|s'\right|<l$
and we can apply the induction hypothesis and \lemref{Lemma-1-coord-sys-estimate-c-alpha-beta}
whose hypotheses are satisfied, using $\MR{P}_{k_{0}}$, because $\left|l\right|\leq k_{0}$.
\end{proof}
Then the estimate of $D^{s}\left(\divideontimes a_{i\bar{p}}^{k}a_{k}^{j}\right)$
follows from the induction hypothesis $\MR{P}_{k_{0}}$ because $\left|s\right|<k_{0}$.
Thus 
\[
\left|D^{s}(A)(0)\right|\lesssim\widetilde{F}^{m/2}\widetilde{F}_{i}^{1/2}\widetilde{F}_{j}^{1/2}.
\]

Finally, the terms $D^{s}\left(\divideontimes c_{i\bar{p}}a_{n}^{j}\right)$
satisfy also the good estimates because $a_{n}^{j}(0)=0$ and, for
$\left|s'\right|<k_{0}-1$, we have seen that $\left|D^{s'}(c_{i\bar{p}})(0)\right|\lesssim\delta\widetilde{F}^{s'/2}\widetilde{F}_{i}^{1/2}\widetilde{F}_{p}^{1/2}$,
and, the derivatives of $a_{n}^{j}$ are controlled by the induction
hypothesis $\MR{P}_{k_{0}}$.

To finish, we have to consider the case where $D^{m}$ is a holomorphic
derivative. Note that the inequality is trivial if $i\leq j$ or if
there exists $k\leq j$ such that $m_{k}\neq0$. Suppose then, for
all $k\leq j$, $m_{k}=0$ and $j<i<n$. Let $q$ be the largest index
such that $m_{q}>0$. If $q\leq i$, we have $D^{m}a_{i}^{j}(0)=0$
by the properties of the coordinate system. If $q>i$, then write $D^{m}=D^{s}\frac{\partial}{\partial z_{q}}$.
To conclude it suffices then to use (\ref{eq:crochet-de-Lie-[L-{i},M-{q}]-sur-dz-{j}}),
the first Claim and the fact that $D^{s}\frac{\partial}{\partial z_{i}}\left(a_{q}^{j}\right)(0)=0$
also by the properties of the coordinates system. This completes the
proof of the Lemma.
\end{proof}
To finish the proof of \propref{construction-coordinate-system},
it suffices to note that, in addition to the estimates of the coefficients
$c_{\alpha\beta}^{\Lrond}$ given by \lemref{Lemma-1-coord-sys-estimate-c-alpha-beta},
we also have, for $\left|\alpha+\beta\right|\leq2M$,
\begin{equation}
D^{\alpha\beta}=\sum_{1\leq\left|\Lrond\right|\leq\left|\alpha+\beta\right|}d_{\Lrond}^{\alpha\beta}\Lrond,\label{eq:relation-lists-derivatives}
\end{equation}
 with $\left|d_{\Lrond}^{\alpha\beta}(0)\right|\lesssim\widetilde{F}^{(\alpha+\beta)/2}(p,\delta)\widetilde{F}^{-\Lrond/2}$.
\end{proof}

For an extremal basis we have thus proved (using \propref{control-lists}):
\begin{cor*}
If $\brond$ is $(M,K,p,\delta)$-extremal, for $\delta$ small enough,
there exists a coordinate system $(M,K'(K),\delta)$-adapted to $\brond$
in the sense of \defref{basis-and-coordinates-adapted} with the weights
$F_{i}$ replaced by $\widetilde{F_{i}}=F_{i}+1$.
\end{cor*}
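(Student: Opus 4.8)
The plan is to read this off directly from \prettyref{prop|construction-coordinate-system}: that proposition already manufactures an adapted coordinate system (with the weights $\widetilde{F_{i}}=F_{i}+1$) out of the hypotheses (A), (B$_{\text{1}}$), (B$_{\text{2}}$), so all that remains is to verify that a $(M,K,p,\delta)$-extremal basis satisfies these three hypotheses, up to a harmless reordering and with a constant controlled by $K$ and the data.

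First I would dispose of (A): extremality provides the bound $K$ on the $\MR{C}^{2M}(V(p_{0}))$ norms of the $L_{i}$, and since EB$_{\text{1}}$, EB$_{\text{2}}$, the $\MR{C}^{2M}$ bound and the lower bound on the Jacobian are all unaffected by a permutation of the $L_{i}$, one may reorder $\brond$ so that $F(L_{i+1},p,\delta)\leq F(L_{i},p,\delta)$ for $i<n-1$ without changing $K$. Hypothesis (B$_{\text{1}}$) is then exactly \prettyref{prop|control-lists}: it gives $\left|\Lrond(\partial\rho)(p)\right|\lesssim\delta F^{\Lrond/2}(p,\delta)\leq\delta\widetilde{F}(p,\delta)^{\Lrond/2}$ for every $\Lrond\in\Lrond_{M}(\brond\cup\{N\})$, the last inequality because $\widetilde{F_{i}}\geq F_{i}$.

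The one place needing a (routine) argument is (B$_{\text{2}}$), i.e.\ upgrading EB$_{\text{2}}$ to the same estimate with every occurrence of $F$ replaced by $\widetilde{F}$; this is the only step where the shift from $F_{i}$ to $F_{i}+1$ really plays a role. Since the bracket coefficients $a_{\PB{i},\PB{j}}^{\PB{k}}$ are obtained by Cramer's rule from the components of $L_{1},\ldots,L_{n-1},N$ and their first derivatives, with a denominator bounded below by a fixed power of $1/K$, the $\MR{C}^{2M}$ bounds on the fields force $\left|\Lrond a_{\PB{i},\PB{j}}^{\PB{k}}(p)\right|\leq C(K)$ for all $\left|\Lrond\right|\leq M$, where $C(K)$ depends only on $K$ and the data. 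Using $\widetilde{F_{k}}^{1/2}\leq F_{k}^{1/2}+1$, EB$_{\text{2}}$, and the elementary facts that each $\widetilde{F_{s}}\geq1$ and $\widetilde{F_{s}}\geq F_{s}$ (so that $\widetilde{F}(p,\delta)^{\Lrond/2}\widetilde{F_{i}}^{1/2}\widetilde{F_{j}}^{1/2}$ is at once $\geq1$ and $\geq F^{\Lrond/2}(p,\delta)F_{i}^{1/2}F_{j}^{1/2}$), one obtains
\[
\widetilde{F_{k}}^{1/2}\left|\Lrond a_{\PB{i},\PB{j}}^{\PB{k}}(p)\right|\leq F_{k}^{1/2}\left|\Lrond a_{\PB{i},\PB{j}}^{\PB{k}}(p)\right|+\left|\Lrond a_{\PB{i},\PB{j}}^{\PB{k}}(p)\right|\leq(K+C(K))\,\widetilde{F}(p,\delta)^{\Lrond/2}\widetilde{F_{i}}^{1/2}\widetilde{F_{j}}^{1/2},
\]
which is precisely (B$_{\text{2}}$) with constant $K+C(K)$.

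With (A), (B$_{\text{1}}$), (B$_{\text{2}}$) in hand, \prettyref{prop|construction-coordinate-system} applies for $\delta\leq\delta_{0}$ (the smallness of $\delta$ only serving to keep everything inside $V(p_{0})$ and to guarantee $\widetilde{F_{n}}=\delta^{-2}+1\simeq\delta^{-2}$) and produces a coordinate system $(M,K'',\delta)$-adapted to $\brond$ at $p$ with the weights $\widetilde{F_{i}}$, where $K''$ depends only on $K$, $M$ and the data; putting $K'(K)=K''$ ends the argument. I do not foresee a genuine obstacle here — the content is just the reduction to \prettyref{prop|construction-coordinate-system} together with \prettyref{prop|control-lists} — the only mildly delicate point being the absorption of $C(K)$ in the verification of (B$_{\text{2}}$).
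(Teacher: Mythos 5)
Your proposal is correct and follows essentially the same route as the paper: the corollary is a reduction to Proposition \ref{prop|construction-coordinate-system} once (A), (B$_{\text{1}}$), (B$_{\text{2}}$) are checked, with (A) and (B$_{\text{1}}$) coming directly from the definition of extremality and \prettyref{prop|control-lists}. Your explicit verification of (B$_{\text{2}}$) — splitting $\widetilde{F_{k}}^{1/2}\leq F_{k}^{1/2}+1$, using EB$_{\text{2}}$ on the first piece and the a priori $\MR{C}^{2M}$/Cramer bound $\left|\Lrond a_{\PB{i},\PB{j}}^{\PB{k}}(p)\right|\leq C(K)$ together with $\widetilde{F}^{\Lrond/2}\widetilde{F_{i}}^{1/2}\widetilde{F_{j}}^{1/2}\geq1$ on the second — is exactly the step the paper leaves implicit, and it is sound.
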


\subsubsection{Proof of \thmref{existence-coordinate-system}\label{sec:existence-coord-syst-finite-type}}

If $p_{0}$ is a point of finite $1$-type $\tau$, then, by a Theorem
of D'Angelo (see \cite{DAngelo-type-fini-82,Catlin-Est.-Sous-ellipt.})
there exists a neighborhood $U(p_{0})$ such that, if $p\in\partial\Omega\cap U(p_{0})$,
then $p$ is of finite $1$-type less than $\tau'=2\left(\frac{\tau}{2}\right)^{n-1}$.
We assume that $V(p_{0})\subset U(p_{0})$. Then, if $\brond$ is a $(M,K,p,\delta)$-extremal
basis, by the Corollary of \propref{construction-coordinate-system}
we have a coordinate system $\Phi_{p,\delta}$ adapted to $\brond$
in terms of the $\widetilde{F_{i}}$. Suppose $M$ larger than $2\left(\frac{\tau'}{2}\right)^{n-1}$.
Then, considering the manifold $\zeta\mapsto(0,\ldots0,\zeta,0,\ldots,0)$,
$\left|\zeta\right|\leq\sigma$, Theorem 3.4 of \cite{Catlin-Est.-Sous-ellipt.}
(applied with a suitable constant $\sigma$) gives us a derivative
of $\tilde{\rho}=\rho\circ\Phi_{p,\delta}$ which is bounded from
below by a constant depending only on the data. The last property
of \defref{basis-and-coordinates-adapted} shows thus that $\widetilde{F_{i}}(p,\delta)\gtrsim\delta^{-2/M}$
with a constant depending only on the data, and, of course, the same
is true for $F_{i}(p,\delta)$.

This proves the following essential Proposition:
\begin{spprop}
\label{prop:finite-type-imply-large-F(L-p-delta)}Let $p_{0}\in\partial\Omega$
be a point of finite $1$-type $\tau$. Let $M=M(\tau)=\left[2\left(\frac{\tau}{2}\right)^{n-1}\right]+1$.
Then for any integer $K$ there exist a real number $\delta_{0}>0$
and a constant $C$, depending on $K$ and the data, such that, if
there is a coordinate system $(M,K,\delta)$-adapted to a basis
$\brond=(L_{1},\ldots,L_{n-1})$ at $p_{0}$, then $F_{M}(L_{i},p_{0},\delta)\geq C\delta^{-2/M}$.
In particular, if $\tau'=2\left(\frac{\tau}{2}\right)^{n-1}$ and
$M'=M'(\tau)=\left[2\left(\frac{\tau'}{2}\right)^{n-1}\right]+1$,
for any integer $K$ there exists a neighborhood $V(p_{0})$, a real
number $\delta_{0}>0$ and a constant $C$ (depending on $\tau$,
$\Omega$ and $K$) such that, for $p\in V(p_{0})\cap\partial\Omega$
and $0<\delta\leq\delta_{0}$, if there is a coordinate system
$(M',K,\delta)$-adapted to a basis $\brond=(L_{1},\ldots,L_{n-1})$
at $p$, then $F(L_{i},p,\delta)\geq C\delta^{-2/M'}$.
\end{spprop}
This proves completely \thmref{existence-coordinate-system}.
\begin{strem}
Note that the proofs show that if a basis $\brond$ satisfies only
properties (A) and (B) of the beginning of \secref{Construction-of-adapted-coord-system},
then, under the assumption of finite $1$-type, the conclusions of
\propref{finite-type-imply-large-F(L-p-delta)} and \thmref{existence-coordinate-system}
are still valid.
\end{strem}
A simple consequence (which will be used in \secref{localization-extremal-basis})
of the minoration of the weights $F_{i}$ is the following:
\begin{sllem}
\label{lem:multiplication-fields-by-function-Cinfinity}Suppose the
point $p_{0}$ of finite $1$-type $\tau$. For any $K$, there exist
two constants $C$ and $\delta_{0}$, depending only on $K$, $\tau$
and the data, such that if $\brond=\{L_{i}^{p,\delta},\, i<n\}$ is
$(K,p,\delta)$-extremal, $p\in V(p_{0})\cap\partial\Omega$, and
$(\alpha_{i})$ is a family of $\MR{C}^{\infty}$ functions, of $\MR{C}^{2M}$
norm $\leq K$ and $1/K\leq\left|\alpha_{i}\right|\leq K$, then the
basis $\brond_{1}=\{L_{i}\}$, where $L_{i}=\frac{1}{\alpha_{i}}L_{i}^{p,\delta}$,
is $(C,p,\delta)$-extremal, and, moreover, $F\left(\sum a_{i}L_{i},p,\delta\right)\simeq_{C}F\left(\sum a_{i}L_{i}^{p,\delta},p,\delta)\right)$,
$a_{i}\in\mathbb{C}$.
\end{sllem}

\subsubsection{Associated polydiscs and pseudo-balls for finite type points\label{sec:polydisc-associated-extremal-basis}}

In this Section we assume $p_{0}$ is of finite $1$-type $\tau$ and
we choose $M=M'(\tau)$. Now we will associate to an adapted coordinate
system some special ``polydiscs'' and give some related properties.
\begin{sddefn}
\label{def:definition-polydissc-ext-basis}Let $W(p_{0})\Subset V(p_{0})$
small enough. Suppose that for some point $p\in W(p_{0})\cap\partial\Omega$
and $0<\delta$ there is a basis $\brond(p,\delta)=\left\{ L_{i}^{p,\delta}\right\} $
of $(1,0)$ vector fields tangent to $\rho$ in $V(p_{0})$ satisfying
conditions (A) and (B) (of \secref{Construction-of-adapted-coord-system})
and let $\Phi_{p}^{\delta}=\Phi_{p}$ be a coordinate system which
is $(K,\delta)$-adapted to $\brond(p,\delta)$. Then the functions
$F(L_{i},p,\delta)=F_{i}(p,\delta)$ do not vanish and, for $0<c<1$,
we denote 
\[
\Delta_{c}(p,\delta)=\{z\in\mathbb{C}^{n}\mbox{ such that }\left|z_{i}\right|<cF_{i}^{-1/2},\,1\leq i\leq n\},
\]
and 
\[
B^{c}(p,\delta)=\Phi_{p}^{-1}(\Delta_{c}(p,\delta))\cap V(p_{0}).
\]

\end{sddefn}
Taylor's formula, \propref{control-lists} and \thmref{existence-coordinate-system}
lead easily to the following properties (denoting $L_{i}=L_{i}^{p,\delta}$):
\begin{spprop}
\label{prop:properties-vector-coord-weights-balls-3-5}There exist
three constants $c_{0}$, $K_{0}$ and $\delta_{0}$, depending only
on $K$ and the data, such that the following properties hold:
\begin{enumerate}
\item If $L_{i}=\sum a_{i}^{j}\frac{\partial}{\partial z_{j}}$ and $\frac{\partial}{\partial z_{j}}=\sum b_{j}^{i}L_{i}$,
$\left|\alpha+\beta\right|\leq M$, for $z\in\Delta_{c_{0}}(p,\delta)$,
\begin{eqnarray*}
\left|D^{\alpha\beta}a_{i}^{j}(z)\right| & \leq & K_{0}F^{(\alpha+\beta)/2}(p,\delta)F_{i}^{1/2}(p,\delta)F_{j}^{-1/2}(p,\delta),\\
\left|D^{\alpha\beta}b_{i}^{j}(z)\right| & \leq & K_{0}F^{(\alpha+\beta)/2}(p,\delta)F_{i}^{1/2}(p,\delta)F_{j}^{-1/2}(p,\delta).
\end{eqnarray*}

\item If $\Lrond\in\Lrond_{M}\left(\brond(p,\delta)\cup\{N\}\right)$, $\left|\Lrond\right|=S$,
and $D^{T}$ is a derivative in the coordinate system $(z)$ with
$\left|T\right|\leq M$, then $\Lrond=\sum_{\left|s\right|\leq S}c_{s}D^{s}$,
$D^{T}=\sum_{\left|\Lrond'\right|\leq\left|T\right|}d_{\Lrond'}\Lrond'$,
and, for $z\in\Delta_{c_{0}}(p,\delta)$ and $q=\Phi_{p}(z)$ we have
\begin{eqnarray*}
\left|c_{s}(z)\right| & \leq & K_{0}F^{(\Lrond-s)/2}(p,\delta),\\
\left|d_{\Lrond'}(q)\right| & \leq & K_{0}F^{(\Lrond-\Lrond')/2}(p,\delta).
\end{eqnarray*}

\item For $L=\sum a_{i}L_{i}$, $a_{i}\in\mathbb{C}$, for all $q\in B^{c_{0}}(p,\delta)$,
$\frac{1}{2}F(L,p,\delta)\leq F(L,q,\delta)\leq2F(L,p,\delta)$.
\item For all list $\Lrond$, $\left|\Lrond\right|\leq M$ belonging to
$\Lrond_{M}(\brond)$ and all point $q\in B^{c}(p,\delta)$,

\begin{enumerate}
\item $\left|\Lrond(\partial\rho)(q)\right|\leq K_{0}\delta F(p,\delta)^{\Lrond/2}$,
\item with the notation introduced in EB$_{\text{2}}$ in \defref{basis-extremal},
\[
\left|\Lrond a_{\PB{i}\PB{j}}^{\PB{k}}(q)\right|\leq K_{0}F^{\Lrond/2}(p,\delta)F_{i}^{1/2}(p,\delta)F_{j}^{1/2}(p,\delta)F_{k}^{-1/2}(p,\delta).
\]

\end{enumerate}
\item $\rho(B^{c}(p,\delta))\subset[-\frac{1}{2}\delta,\frac{1}{2}\delta]$.
\end{enumerate}
\end{spprop}

The proofs are almost straightforward computations.

\smallskip{}

In \secref{Geometrically-separated-domains} we will need to use two
other kinds of {}``pseudo-balls'' and we will prove that they are
closely related to the {}``polydisc'' $B^{c}$:
\begin{sddefn}
\label{def:definition-pseudo-balls-curves-exp}Suppose that $\brond=(L_{1},\ldots,L_{n-1})$
is a basis satisfying conditions (A) and (B) (at a point of finite
$1$-type).
\begin{enumerate}
\item Denote $\mathcal{Y}_{i}=\Re\mathrm{e}L_{i}$ and $\mathcal{Y}_{i+n}=\Im\mathrm{m}L_{i}$,
$1\leq i\leq n$ (recall $L_{n}=N$). Then we denote by $B_{\MR{C}}^{c}(\brond,p,\delta)$
the set of points $q\in V(p_{0})$ for which there exists a piecewise
$\MR C^{1}$ curve $\varphi:[0,1]\rightarrow V$ such that $\varphi(0)=p$,
$\varphi(1)=q$ and $\varphi'(t)=\sum a_{i}\mathcal{Y}_{i}(\varphi(t))$,
with $\max(\left|a_{i}\right|,\left|a_{i+n}\right|)\leq cF^{-1/2}(L_{i},p,\delta)$,
$0<c<1$.
\item $\mathrm{exp}_{p}$ denoting the exponential map based at $p$ associated
to the vector fields $\mathcal{Y}_{i}$ (defined in (1)), for $0<c<1$, we put
\[
B_{\mathrm{exp}}^{c}(p,\delta)=\left\{ q=\mathrm{exp}_{p}(u_{1},\ldots,u_{2n}),\mbox{ such that }\max(\left|u_{i}\right|,\left|u_{i+n}\right|)\leq cF_{i}(p,\delta)^{-1/2}\right\} \cap V(p_{0}).
\]

\end{enumerate}
\end{sddefn}

The terminology used in \defref{basis-extremal} is justified by the
following property:
\begin{spprop}
\label{prop:maximality-extremal-balls}Let $\brond=\{L_{1},\ldots,L_{n-1}\}$
be a basis (of $(1,0)$ complex vector fields, tangent to $\rho$
in $V(p_{0})$) satisfying conditions (A) and (B) (for example if
it is $K$-extremal) at $p\in W(p_{0})\cap\partial\Omega$. Let $\brond^{1}=\{L_{1}^{1},\ldots,L_{n-1}^{1}\}$
be another basis in $V(p_{0})$ such that, for all $i$, $L_{i}^{1}=\sum a_{i}^{j}L_{j}$,
$a_{i}^{j}\in\mathbb{C}$, $\sum\left|a_{i}\right|^{2}=1$. Then there
exists a constant $A$ depending only on $K$, $\tau$ and the dimension
$n$ such that $B_{\MR{C}}^{c}(\brond^{1},p,\delta)\subset B_{\MR{C}}^{Ac}(p,\delta)$.
\end{spprop}
The proof of this Proposition immediately follows from property (B).

\subsection{Sufficient conditions of extremality\label{sec:sufficient-cond-of-extremality}}

In this Section we always assume that $p_{0}$ is a point of finite
$1$-type $\tau$ and choose $M=M(\tau)$.

Here and in \secref{PSH-func-strong-geom-sep} we will need a stronger
control on certain derivatives of the coefficients of the Levi form.
Thus we introduce the following condition: suppose $\brond$ is a
basis of $(1,0)$ vector fields tangent to $\rho$ in $V(p_{0})$.
We say that it satisfies condition B($\alpha$), $\alpha>0$, if
for all lists $\Lrond\in\Lrond_{M-2}\left(\brond\right)$ we have
\begin{lyxlist}{00.00.00}
\item [{B($\alpha$)}] for $i\neq j$, $1\leq i,j\leq n-1$, $\left|\Lrond c_{ij}(p)\right|\leq\alpha\delta F(p,\delta)^{\Lrond/2}F(L_{i},p,\delta)^{1/2}F(L_{j},p,\delta)^{1/2}$.
\end{lyxlist}

Note that B($\alpha$) together with conditions (A) and (B) implies
a new condition on the brackets of the vector fields:
\begin{sllem}
\label{lem:lemma32}Suppose $\brond$ satisfies conditions (A) and
(B). Then there exist two constants $K_{1}=K_{1}(K,M,n)$ and $\delta_{0}$ depending
on $K$, $\alpha$ and the data such that, for all $i\neq k$, $i,\, k<n$,
$j\leq n$ and all $\Lrond\in\Lrond_{M}(\brond\cup\{N\})$, if $\brond$
satisfies B($\alpha$) at $(p,\delta)$, $p\in W(p_{0})$, $0<\delta\leq\delta_{0}$, then
\[
\left|\Lrond a_{\PB{i}\PB{k}}^{\PB{j}}(p)\right|\leq K_{1}\alpha F(p,\delta)^{\Lrond/2}F_{i}(p,\delta)^{1/2}F_{k}(p,\delta)^{1/2}F_{j}(p,\delta)^{-1/2}.
\]
\end{sllem}
\begin{proof}
To simplify the notations we write the proof for $a_{j\bar{k}}^{\bar{j}}$.
Choose $\delta_{0}$ so that $C\delta_{0}^{-2/M}>\alpha^{-1}$, where
$C$ is the constant of \propref{finite-type-imply-large-F(L-p-delta)}.
Note that the property is trivial if $l_{n}\neq0$ or if $l_{n}=0$
and $j=n$ ($a_{i\bar{k}}^{\bar{n}}=\frac{1}{2}c_{ik}$ and $a_{ik}^{\bar{n}}=0$),
thus we suppose $l_{n}=0$ and $j<n$. As the property is also trivial
if $j$ or $k$ is $\geq i$, we have to study only the case when
$j<\min(i,k)$.

To simplify the notations, we introduce the following spaces of functions:
\[
\STR_{0}=\{\varepsilon,\,\varepsilon a_{\PB{i}\PB{j}}^{\PB{k}},\,\varepsilon c_{\PB{i}\PB{j}},\textrm{ where }\varepsilon\in\{-1,0,1,-\sqrt{-1},\sqrt{-1}\}\},
\]
 and
\[
\widetilde{\STR}_{k+1}=\bigcup_{i}\LB{i}(\STR_{k})\cup\STR_{k}\textrm{ and }\STR_{k+1}=\left\{ \sum_{i=1}^{3}f_{i},\, f_{i}\in\widetilde{\STR}_{k+1}\right\} .
\]
 The elements of $\STR_{k}$ will be generically denoted by $*_{k}$.

The Jacobi identity applied to the bracket $\left[L_{j},\left[L_{i},\overline{L_{k}}\right]\right]$
implies
\[
a_{i\bar{k}}^{\bar{j}}c_{jj}+L_{j}c_{ik}+\sum_{p\neq j}a_{j\bar{k}}^{\bar{p}}c_{jp}-a_{j\bar{k}}^{\bar{i}}c_{ii}-L_{i}c_{jk}-\sum_{p\neq i}a_{j\bar{k}}^{\bar{p}}c_{ip}-a_{ij}^{k}c_{kk}-\sum_{p\neq k}a_{ji}^{p}c_{pk}=0
\]
 which we write $a_{i\bar{k}}^{\bar{j}}c_{jj}=*_{0}c_{ii}+*_{0}c_{kk}+h$.
Then, by induction on the length $l$ of a list $\Lrond\in\Lrond_{M}(L_{j})$,
it is easy to show that
\[
a_{i\bar{k}}^{\bar{j}}\Lrond c_{jj}=\Lrond h+\sum_{\Lrond'\in\Lrond_{\left|\Lrond\right|}(L_{j})}\left(*_{l}\Lrond'c_{ii}+*_{l}\Lrond'c_{kk}\right)+\sum_{\Lrond'\in\Lrond_{\left|\Lrond\right|-1}(L_{j})}*_{l}\Lrond'c_{jj},
\]
and choosing $\Lrond$ so that $\left|\Lrond c_{jj}(p)\right|\gtrsim\delta F(p,\delta)^{\left(\left|\Lrond\right|+2\right)/2}$,
the Lemma is easily proved using the control on the lists and the
hypothesis.
\end{proof}
\medskip{}

Now we first prove that conditions B($\alpha$), (A) and (B) imply
the extremality of the basis and then that \lemref{lemma32} implies
a better control on mixed lists. This result will be important in
\secref{Adapted-pluri-subharmonic-function-geom-sep}.
\begin{sllem}
\label{lem:Lemma-sufficients-conditions-extremality}Suppose that
$\brond=(L_{1},\ldots,L_{n-1})$ is a basis of $(1,0)$ vector fields
in $V(p_{0})$ satisfying conditions (A) and (B) at a point $p\in V(p_{0})\cap\partial\Omega$
for a fixed $\delta$.

Then there exists a function $\alpha(K)$,
depending on $K$ and the data, such that, if $\brond$ satisfies B($\alpha$)
for $\alpha\leq\alpha(K)$, there exists a constant $K_{1}$, depending
on $K$, $M$ and $n$, such that:

If $\Lrond^{0}\in\Lrond_{M}(\brond)$ satisfies
$\left|\Lrond^{0}c_{ii}(p)\right|\geq\frac{1}{K}\delta F_{i}(p,\delta)F(p,\delta)^{\Lrond/2}$
then there exists $k_{0}$, $2k_{0}+2\leq\left|\Lrond\right|$, such
that $\Re\mathrm{e}\left(\left(L_{i}\overline{L_{i}}\right)^{k_{0}}c_{ii}\right)(p)>\frac{1}{K_{1}}\delta F_{i}(p,\delta)^{(2k_{0}+2)/2}$.
In particular, 
\[
F_{i}(p,\delta)\geq\frac{1}{K'}\sum_{\SU{\Re\mathrm{e}\left(\left(L_{i}\overline{L_{i}}\right)^{k}c_{ii}\right)(p)>0\AS2k+1\leq M}}\left[\frac{\Re\mathrm{e}\left(\left(L_{i}\overline{L_{i}}\right)^{k}c_{ii}\right)(p)}{\delta}\right]^{\frac{2}{2k+2}},
\]
 where $K'$ is a constant depending only on $K$ and the data.\end{sllem}
\begin{proof}
First we fix the notations used in the proof. We know that there is a coordinate system
$\Phi_{p}^{\delta}$ adapted to $\brond$. We denote by $\left(z_{i}\right)$ theses coordinates.
Let $D^{\alpha\beta}$ denote the derivative
$\frac{\partial^{\left|\alpha+\beta\right|}}{\partial z^{\alpha}\partial\bar{z}^{\beta}}$
with respect to $\left(z_{i}\right)$, and if $\Lrond$ is a list of vector fields
let $D^{\Lrond}$ be the derivative $D^{\alpha\beta}$
with $\alpha_{i}=l_{i}^{1}(\Lrond)$ and $\beta_{i}=l_{i}^{2}(\Lrond)$
(notation of \lemref{lemma2-est-lists}).

In the proof we will use
a general result on derivatives of positive functions proved in \secref{Appendix}.

Suppose $\Lrond\in\Lrond_{M}(\brond)$ is such that $\Lrond(\partial\rho)=\Lrond^{0}c_{ii}$
and $\left|\Lrond(\partial\rho)(p)\right|\gtrsim_{K}\delta F_{i}(p,\delta)F(p,\delta)^{\Lrond/2}$.
Then we can write
\[
\Lrond(\partial\rho)=D^{\Lrond^{0}}c_{ii}+\sum_{\left|\alpha+\beta\right|<\left|\Lrond^{0}\right|}c_{\alpha\beta}D^{\alpha\beta}c_{ii}
\]
 with $\left|c_{\alpha\beta}\right|\lesssim_{K}F^{\Lrond^{0}/2}F^{-(\alpha+\beta)/2}$.

Thus there exists a derivative $D^{\alpha\beta}$ satisfying $\left|D^{\alpha\beta}c_{ii}(0)\right|\gtrsim_{K}\delta F_{i}F^{(\alpha+\beta)/2}$
and $\left|\alpha+\beta\right|\leq\left|\Lrond^{0}\right|$ and $\alpha_{n}+\beta_{n}=0$
(indeed, if $\alpha_{n}+\beta_{n}\geq1$, $\left|c_{\alpha\beta}(0)\right|\lesssim_{K}F^{\Lrond^{0}/2}F^{-(\alpha+\beta)/2}\leq\delta F^{\Lrond^{0}/2}$,
and, as $\left|D^{\alpha\beta}c_{ii}\right|\lesssim_{K}1$, $\left|c_{\alpha\beta}D^{\alpha\beta}c_{ii}\right|\ll\delta F^{\Lrond/2}$).
Then applying \lemref{deriv-positives-functions} to the function
$g(z)=\delta F_{i}^{-1}(p,\delta)c_{ii}\circ\Phi_{p,\delta}^{-1}(z')$,
where $z'=\left(cF_{1}^{-1/2}z_{1},\ldots,cF_{n-1}^{-1/2}(p,\delta)z_{n-1},0\right)$
with $c\leq c_{0}$, $c_{0}$ given by \propref{properties-vector-coord-weights-balls-3-5},
we conclude that there exists a derivative $D^{\alpha^{1}\beta^{1}}$,
satisfying $\alpha_{j}^{1}=\beta_{j}^{1}$, $\forall j$, $\alpha_{n}^{1}=\beta_{n}^{1}=0$,
such that $D^{\alpha^{1}\beta^{1}}c_{ii}(0)\geq_{K}F_{i}F^{(\alpha^{1}+\beta^{1})/2}$.

Writing $\Lrond'=\left(\overline{L_{i}}L_{i}\right)^{\alpha_{i}^{1}}\prod_{j\neq i,\, j<n}\left(\overline{L_{j}}L_{j}\right)^{\alpha_{j}^{1}}$
and $\Lrond'c_{ii}=D^{\alpha^{1}\beta^{1}}c_{ii}+\sum_{\left|\alpha+\beta\right|<\left|\Lrond'\right|}c_{\alpha\beta}D^{\alpha\beta}c_{ii}$,
by induction we conclude that there exists a differential operator
$\Lrond^{1}$ of the form $\Lrond^{1}=\left(\overline{L_{i}}L_{i}\right)^{\alpha_{i}}\prod_{j\neq i,\, j<n}\left(\overline{L_{j}}L_{j}\right)^{\alpha_{j}}$
such that $\Re\mathrm{e}\left(\Lrond^{1}c_{ii}\right)(p)\gtrsim_{K}\delta F^{\Lrond^{1}/2}F_{i}$.
Suppose there exists $j\neq i$ such that $\alpha_{j}\neq0$. Then
\[
\Lrond^{1}c_{ii}=\Lrond'\overline{L_{j}}L_{j}c_{ii}=\Lrond'\overline{L_{j}}\left(-\gamma_{i}^{j}c_{jj}+L_{k}c_{jk}+\left(a_{jk}^{i}-a_{\bar{i}j}^{\bar{i}}\right)c_{ii}-\sum_{p\neq i}\left(a_{\bar{i}j}^{\bar{p}}c_{ip}-a_{ji}^{p}c_{pi}\right)+\sum_{p\neq j}\gamma_{i}^{\bar{p}}c_{ip}\right).
\]
 The controls of the coefficients $a_{ij}^{p}$ and of the lists $\Lrond c_{kp}$,
$k\neq p$ (by condition (B)), imply, for $\alpha$ sufficiently
small (depending only on $K$), that
\[
\left|\Lrond'\overline{L_{j}}c_{jj}\right|\gtrsim_{K}\delta F^{\Lrond'/2}F_{j}^{3/2}\mbox{ and }\left|\gamma_{i}^{j}\right|\gtrsim_{K}F_{i}F_{j}^{-1/2}.
\]

Repeating the initial procedure, we conclude that there exists a list
$\Lrond''\in\Lrond(\brond)$, {}``completely even'', $\left|\Lrond''\right|\leq\left|\Lrond'\right|$
such that $\left|\Lrond''c_{jj}\right|\gtrsim_{K}\delta F^{\Lrond''/2}F_{j}$.
Consider then
\[
\Lrond''\overline{L_{j}}c_{ii}=\Lrond''\left(-\gamma_{i}^{j}c_{jj}+L_{k}c_{jk}+\left(a_{jk}^{i}-a_{\bar{i}j}^{\bar{i}}\right)c_{ii}-\sum_{p\neq i}\left(a_{\bar{i}j}^{\bar{p}}c_{ip}-a_{ji}^{p}c_{pi}\right)+\sum_{p\neq j}\gamma_{i}^{\bar{p}}c_{ip}\right).
\]
 Thus $\left|\Lrond''c_{jj}\gamma_{i}^{j}\right|\gtrsim\delta F^{\Lrond''/2}F_{j}^{1/2}F_{i}$,
and, by similar arguments, for $\alpha$ sufficiently small, we conclude
that there exists a list $\Lrond^{2}$, $\left|\Lrond^{2}\right|<\left|\Lrond^{0}\right|$such
that $\Lrond^{2}c_{ii}\gtrsim_{K}\delta F^{\Lrond^{2}/2}F_{i}$, and
we can repeat the procedure. The Lemma is thus proved by induction.\end{proof}
\begin{spprop}
There exist constants $\alpha_{0}$ and $K'$ depending on $K$ and
the data such that if the basis $\brond$ satisfies (A), (B) and B($\alpha$)
for $\alpha\leq\alpha_{0}$ at $(p,\delta)$, $p\in V(p_{0})$, then
$\brond$ is $(K',p,\delta)$-extremal.\end{spprop}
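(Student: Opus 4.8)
The strategy is to verify, in turn, the requirements of \prettyref{def|basis-extremal}: the $\Crond^{2M}$ bounds on the $L_{i}$, the lower bound on the Jacobian of $\brond$, and conditions EB$_{\text{1}}$ and EB$_{\text{2}}$. The first is hypothesis (A). Since $\brond$ satisfies (A) and (B), \prettyref{prop|construction-coordinate-system} provides a coordinate system adapted to $\brond$ at $p$, with constant depending only on $K$ and the data and with the weights $F_{i}$ replaced by $\widetilde{F_{i}}=F_{i}+1$; and because $p_{0}$ is of finite $1$-type one has $F_{i}(p,\delta)\gtrsim\delta^{-2/M}$ (this is \prettyref{prop|finite-type-imply-large-F(L,p,delta)}, which, as observed just after \prettyref{thm|existence-coordinate-system}, holds whenever (A) and (B) do), so for $\delta\leq\delta_{0}$ small enough $F_{i}\geq1$ and hence $\widetilde{F_{i}}\simeq F_{i}$. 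Thus the adapted coordinate system works with the true weights $F_{i}$; the identities $a_{i}^{j}(0)=\delta_{ij}$ together with the bounds on the $a_{i}^{j}$ and on the Jacobian of the coordinate change give, after possibly shrinking $V(p_{0})$, the lower bound $1/K'$ on the Jacobian of $\brond$, and condition (B$_{\text{2}}$)---which is EB$_{\text{2}}$ written with the $\widetilde{F_{s}}$---yields EB$_{\text{2}}$ with the $F_{s}$ up to a constant. So only EB$_{\text{1}}$ remains.

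Both sides of EB$_{\text{1}}$ are multiplied by $|\lambda|^{2}$ when the coefficients $(a_{i})$ are replaced by $(\lambda a_{i})$, so I may normalise $\sum_{i}|a_{i}|^{2}=1$, hence $|a_{i}|\leq1$. For the upper inequality, take a list $\Lrond\in\Lrond_{M}(L)$ with $L=\sum a_{i}L_{i}$; then $\Lrond(\partial\rho)=\pm L^{1}\cdots L^{|\Lrond|-2}(c_{LL})$ with $c_{LL}=\sum_{i,j}a_{i}\bar{a}_{j}c_{ij}$, and developing each $L^{s}$ in the basis and pulling the constants out expresses $\Lrond(\partial\rho)(p)$ as a bounded sum of terms of the form (a product of the $a_{i}$ and $\bar{a}_{i}$) times $\Lrond'(c_{ij})(p)$, with $\Lrond'$ a list in $\brond$. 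Since $\Lrond'(c_{ij})=(\Lrond',L_{i},\overline{L_{j}})(\partial\rho)$, hypothesis (B$_{\text{1}}$) bounds each term by $K\delta\,\widetilde{F}^{\Lrond'/2}\widetilde{F_{i}}^{1/2}\widetilde{F_{j}}^{1/2}$; grouping the $|\Lrond|$ many weight factors as $|a_{l}|\widetilde{F_{l}}^{1/2}\leq(\sum_{m}|a_{m}|^{2}\widetilde{F_{m}})^{1/2}$ gives $|\Lrond(\partial\rho)(p)|\leq C\delta(\sum_{m}|a_{m}|^{2}\widetilde{F_{m}})^{|\Lrond|/2}$, and raising to the power $2/|\Lrond|$ and summing over the (finitely many) lists yields $F(L,p,\delta)\leq K'\sum_{m}|a_{m}|^{2}F_{m}$. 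This direction uses only (A), (B) and the finite type.

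The heart of the proof is the lower inequality $F(L,p,\delta)\geq\frac{1}{K'}\sum_{i}|a_{i}|^{2}F_{i}$, which is where B($\alpha$) is essential; by finiteness it suffices, taking $i_{0}$ to maximise $|a_{i}|^{2}F_{i}$, to exhibit one list $\Lrond$ with $|\Lrond(\partial\rho)(p)/\delta|^{2/|\Lrond|}\gtrsim|a_{i_{0}}|^{2}F_{i_{0}}$. By \prettyref{lem|Lemma-sufficients-conditions-extremality}, applied to a list realising $F_{i_{0}}$, there is $m$ with $2m+2\leq M$ and $\Re\mathrm{e}((L_{i_{0}}\overline{L_{i_{0}}})^{m}c_{i_{0}i_{0}})(p)\gtrsim_{K}\delta F_{i_{0}}^{\,m+1}$; I take $\Lrond$ to be the list producing $(L\overline{L})^{m}c_{LL}$, and aim at $|(L\overline{L})^{m}c_{LL}(p)|\gtrsim|a_{i_{0}}|^{2m+2}\delta F_{i_{0}}^{\,m+1}$, from which $|(L\overline{L})^{m}c_{LL}(p)/\delta|^{2/(2m+2)}\gtrsim(|a_{i_{0}}|^{2m+2}F_{i_{0}}^{\,m+1})^{1/(m+1)}=|a_{i_{0}}|^{2}F_{i_{0}}$ follows. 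In the expansion of $(L\overline{L})^{m}c_{LL}$, writing $c_{LL}=\sum_{j}|a_{j}|^{2}c_{jj}+\sum_{j\neq l}a_{j}\bar{a}_{l}c_{jl}$ and developing each $L\overline{L}$ in the basis, the ``pure $i_{0}$'' term equals $|a_{i_{0}}|^{2m+2}(L_{i_{0}}\overline{L_{i_{0}}})^{m}c_{i_{0}i_{0}}$, of real part $\gtrsim|a_{i_{0}}|^{2m+2}\delta F_{i_{0}}^{\,m+1}$; every term involving an off-diagonal $c_{jl}$, $j\neq l$, is at most $C\alpha$ times that size by B($\alpha$), \prettyref{prop|control-lists} and \prettyref{lem|lemma32}, so choosing $\alpha_{0}\leq\alpha(K)$ small ($\alpha(K)$ as in \prettyref{lem|Lemma-sufficients-conditions-extremality}) makes those harmless; and the remaining terms---mixed-direction lists applied to the various $c_{jj}$---are controlled by the key fact that $c_{LL}=\langle\partial\bar{\partial}\rho;L,\overline{L}\rangle\geq0$ by pseudo-convexity, that is, that in the adapted coordinates the restriction of $\rho\circ(\Phi_{p}^{\delta})^{-1}$ to the complex line through $0$ in the direction $v=(a_{1},\dots,a_{n-1},0)$ of $L(0)$ is subharmonic, so that \prettyref{lem|deriv-positives-functions} applied to the (non-negative, suitably rescaled) $(v,\bar{v})$-Hessian of that restriction forces the positive even diagonal derivative above to dominate.

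The step I expect to be the main obstacle is precisely this last control: a priori a large pure derivative of the summand $|a_{i_{0}}|^{2}c_{i_{0}i_{0}}$ of $c_{LL}$ could be cancelled by contributions of other summands $|a_{j}|^{2}c_{jj}$---several of which may be of comparable size, their weights being possibly realised by derivatives of different orders, while derivatives of the individually non-negative $c_{jj}$ carry no definite sign. The resolution is that $c_{LL}$ is non-negative \emph{as a whole}, so applying \prettyref{lem|deriv-positives-functions} to $c_{LL}$ itself, rather than to its summands, forces its positive even directional derivatives to dominate all the others, while B($\alpha$) makes the genuinely off-diagonal part of $c_{LL}$ a negligible error. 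Choosing $K'$ large and $\alpha_{0},\delta_{0}$ small in terms of $K$ and the data then completes the verification.
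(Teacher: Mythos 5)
Your overall plan mirrors the paper's: handle the ancillary conditions and EB$_2$ from (A), (B) and the adapted coordinate system, get the upper half of EB$_1$ directly from (B), take $i_0$ maximising $\left|a_i\right|^2F_i$, invoke \prettyref{lem|Lemma-sufficients-conditions-extremality} for a large positive pure derivative $\Re\mathrm{e}\left(L_{i_0}\overline{L_{i_0}}\right)^{m}c_{i_0i_0}(p)$, and then try to beat the cancellations inside $(L\overline{L})^{m}c_{LL}$. You also correctly identify the dangerous terms (off-diagonal $c_{jl}$, which B$(\alpha)$ and \prettyref{lem|lemma32} make $O(\alpha)$, and mixed-direction lists applied to the various $c_{jj}$).

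The gap is in the final step. You propose to control the remaining cancellations by applying \prettyref{lem|deriv-positives-functions} to $c_{LL}$ (the non-negative restriction of $\partial\bar\partial\rho$ to the complex line in direction $v$). But that lemma takes as \emph{input} a derivative $D^{\alpha^0\beta^0}g(0)$ that is already known to be large and returns a lower-order pure Laplacian of $g$ that is large and non-negative. After the natural rescaling, any derivative of $g$ at $0$ is precisely a quantity of the form $(L\overline{L})^{k}c_{LL}(p)/\big(\delta\big(\sum\left|a_i\right|^2F_i\big)^{k+1}\big)$, so ``some derivative of $g(0)$ is $\gtrsim1$'' is exactly the lower bound on $F(L,p,\delta)$ you are trying to prove; the application is circular. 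Knowing that the \emph{pure $i_0$ piece} of $(L\overline{L})^{m}c_{LL}$ is large is not the same as knowing that $(L\overline{L})^{m}c_{LL}$ itself is large. The paper avoids this by exploiting non-negativity of the \emph{individual} $c_{jj}$, not of $c_{LL}$: it formulates an explicit descent Claim (in $k_0$), whose proof splits according to whether a competing pure $j$-term is very negative --- in which case \prettyref{lem|Lemma-sufficients-conditions-extremality} applied to $c_{jj}$ produces a large positive pure derivative at a strictly smaller order, allowing descent --- or a genuinely mixed term is large --- in which case (B) forces the coefficient product $\prod\left|a_{j_k}\right|$ to be essentially maximal, pinning down which indices are involved. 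That custom descent, rather than a direct use of \prettyref{lem|deriv-positives-functions} on $c_{LL}$, is the missing ingredient; your sketch names the right tools but does not assemble them in a way that closes the argument.
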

\begin{proof}
We may suppose the basis ordered so that the weights $F_{i}=F(L_{i},p,\delta)$
are ordered decreasingly. Let $L=\sum_{i=1}^{n-1}a_{i}L_{i}$, $a_{i}\in\mathbb{C}$,
$\sum\left|a_{i}\right|^{2}=1$ so that $c_{LL}=\sum_{i=1}^{n-1}\left|a_{i}^{2}\right|c_{ii}$.
Denote $F(L)=F(L,p,\delta)$. By hypothesis (B) it is clear that $F(L)\lesssim_{K}\sum\left|a_{i}\right|^{2}F_{i}$.

To show the converse inequality, we prove the following assertion:
\begin{claim*}
For every constant $K>0$, there exists a constant $K_{1}$, depending
on $K$ and the data, such that:

if $i_{0}\in\{1,\ldots,n-1\}$ and $k_{0}\in\{1,\ldots,M\}$ are such
that $\left|a_{i_{0}}\right|^{2}F_{i_{0}}(p)\geq\frac{\sum\left|a_{i}\right|^{2}F_{i}(p)}{K}$
and $\Re\mathrm{e}\left(L_{i_{0}}\overline{L_{i_{0}}}\right)^{k_{0}}c_{i_{0}i_{0}}(p)>\delta\frac{F_{i_{0}}^{k_{0}+1}(p)}{K}$,
then:
\begin{itemize}
\item either $\Re\mathrm{e}\left(L\bar{L}\right)^{k_{0}}c_{L\bar{L}}>\delta\frac{\left(\sum\left|a_{i}\right|^{2}F_{i}(p)\right)^{k_{0}+1}}{K_{1}}$,
\item or there exist $i_{1}$ and $k_{1}<k_{0}$ such that $\left|a_{i_{1}}\right|^{2}F_{i_{1}}(p)\geq\frac{\sum\left|a_{i}\right|^{2}F_{i}(p)}{K_{1}}$
and $\Re\mathrm{e}\left(L_{i_{1}}\overline{L_{i_{1}}}\right)^{k_{1}}c_{i_{1}i_{1}}(p)>\delta\frac{F_{i_{1}}^{k_{1}+1}(p)}{K_{1}}$.
\end{itemize}
\end{claim*}
\begin{proof}
[Proof of the Claim]We have
\begin{equation}
\left(L\bar{L}\right)^{k_{0}}c_{L\bar{L}}=\sum\left|a_{i}\right|^{2k_{0}+2}\left(L_{i}\overline{L_{i}}\right)^{k_{0}}c_{ii}+\sum\alpha_{\MR{L}}\MR{L}(\partial\rho),\label{eq:claim-proof-extremal-diag}
\end{equation}
 where the second sum contains lists of length $2k_{0}+2$ containing
$L_{i}$ or $\overline{L_{i}}$ for, at least, two different values
of $i$. As
\[
\left|a_{i_{0}}\right|^{2k_{0}+2}\Re\mathrm{e}\left(L_{i_{0}}\overline{L_{i_{0}}}\right)^{k_{0}}c_{i_{0}i_{0}}(p)>\delta\frac{\left(\sum\left|a_{i}\right|^{2}F_{i}(p)\right)^{k_{0}+1}}{K^{k_{0}+2}},
\]
 the conclusion is clear except in the two following cases:
\begin{itemize}
\item in the first sum of (\ref{eq:claim-proof-extremal-diag}), there is a term whose real part is
$<-A=-\delta\frac{\left(\sum\left|a_{i}\right|^{2}F_{i}(p)\right)^{k_{0}+1}}{CK^{k_{0}+2}}$;
\item in the second sum of (\ref{eq:claim-proof-extremal-diag}), there is a term which is,
in modulus, bigger than $A$, with a constant $C$ depending only on $M$ and the coefficients $a_{i}$.
\end{itemize}

Suppose first that there exists an index $i\neq i_{0}$ such that
$\left|a_{i}\right|^{2k_{0}+2}\Re\mathrm{e}\left(L_{i}\overline{L_{i}}\right)^{k_{0}}c_{ii}(p)<-A$.
This implies first $\left|a_{i}\right|^{2}F_{i}(p)\geq\frac{\sum\left|a_{i}\right|^{2}F_{i}(p)}{K'_{1}}$
and secondly $\Re\mathrm{e}\left(L_{i}\overline{L_{i}}\right)^{k_{0}}c_{ii}(p)<-\delta\frac{1}{K''_{1}}F_{i}^{k_{0}+1}$.
By \lemref{Lemma-sufficients-conditions-extremality} there exists
$k_{1}<k_{0}$ such that $\Re\mathrm{e}\left(L_{i}\overline{L_{i}}\right)^{k_{1}}c_{ii}(p)>\delta\frac{1}{K'''_{1}}F_{i}^{k_{1}+1}$.
Thus the second assertion of the Claim is verified.

Suppose now that there is a term $\alpha_{\MR{L}}\MR{L}(\partial\rho)$
in the second sum of (\ref{eq:claim-proof-extremal-diag}) satisfying
$\left|\alpha_{\MR{L}}\MR{L}(\partial\rho)\right|>A$. Denote by $l_{i}$
the number of times the vector fields $L_{i}$ and $\overline{L_{i}}$
appear in $\MR{L}$. If $l_{k}\neq0$, hypothesis (B) implies immediately
$\left|a_{k}\right|^{2}F_{k}\gtrsim\sum\left|a_{i}\right|^{2}F_{i}$
and $\left|\MR{L}\right|(\partial\rho)\gtrsim\delta\prod F_{i}^{l_{i}/2}$.
\end{proof}
\end{proof}
\begin{cor*}
Suppose that $p_{0}\in\partial\Omega$ is a point of finite type $\tau$
where the Levi form is locally diagonalizable. Then there exists a
neighborhood $V(p_{0})$ of $p_{0}$ and constants $K$ and $\delta_{0}>0$
such that at every point $p$ of $V(p_{0})\cap\partial\Omega$ and
for every $0<\delta\leq\delta_{0}$, the basis diagonalizing the Levi
form is $(M,p,\delta)$-extremal (with $M=M'(\tau)$).\end{cor*}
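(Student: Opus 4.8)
The plan is to deduce the Corollary from the Proposition preceding it by exhibiting, near $p_0$, a basis that satisfies hypotheses (A), (B) and B($\alpha$) for all $\alpha>0$. By local diagonalizability there is, on a neighborhood $V(p_0)$ of $p_0$, a $\MR{C}^\infty$ basis $\brond=(L_1,\dots,L_{n-1})$ of $(1,0)$ vector fields tangent to $\rho$ for which the Levi matrix $(c_{ij})$ is diagonal, i.e. $c_{ij}\equiv 0$ for $i\neq j$ and $c_{ii}\geq 0$ by pseudo-convexity. Shrinking $V(p_0)$, such a basis has bounded $\MR{C}^{2M}$ norm and Jacobian bounded below by some $1/K$; re-ordering the $L_i$ at each point $(p,\delta)$ so that $F(L_{i+1},p,\delta)\leq F(L_i,p,\delta)$ gives property (A) (this re-ordering, and, if one wishes, the normalization $L_i\mapsto L_i/\|L_i\|$, leave the Levi matrix diagonal and do not affect the conclusion, extremality in the sense of \prettyref{def|basis-extremal} being an ordering-free notion). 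Moreover B($\alpha$) holds for \emph{every} $\alpha>0$, because $c_{ij}\equiv 0$ as a function for $i\neq j$, so that $\Lrond c_{ij}\equiv 0$ for every list $\Lrond$ and the left-hand side of B($\alpha$) vanishes.

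So everything reduces to checking condition (B), that is (B$_1$) and (B$_2$), and here diagonality collapses all the list estimates onto the diagonal coefficients $c_{ii}$: if $\Lrond\in\Lrond_M(\brond\cup\{N\})$ and $\Lrond(\partial\rho)=\Lrond_1 c_{ij}$ with $i\neq j$, then $\Lrond(\partial\rho)\equiv 0$, so only lists with $\Lrond(\partial\rho)=\Lrond_1 c_{ii}$ matter; when $\Lrond_1$ involves only $L_i$ and $\overline{L_i}$ the bound $|\Lrond_1 c_{ii}(p)|\leq\delta F(L_i,p,\delta)^{|\Lrond|/2}$ is precisely the definition of $F(L_i,p,\delta)$, so (B$_1$) is automatic for those. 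For the remaining lists, and for (B$_2$), I would run the same simultaneous induction on list length used in the proofs of \prettyref{prop|control-lists} and of \prettyref{sec|Construction-of-adapted-coord-system}: the Jacobi identities of \prettyref{lem|lemma1-est-lists} — valid for any basis of tangent fields, with no reference to extremality — let one trade a derivative of $c_{ii}$ in a transverse direction $L_k$, $k\neq i$, or $N$ against a sum of bracket coefficients $a$ times $c_{ii}$ or $c_{kk}$ plus strictly shorter lists, the bracket coefficients being controlled inductively by the same weights; alternatively these estimates in the locally diagonalizable case are already present in \cite{Charpentier-Dupain-Geometery-Finite-Type-Loc-Diag}. (Throughout one works with the weights $\widetilde F_i=F_i+1$; once (A) and (B) are granted, finite $1$-type and \prettyref{prop|finite-type-imply-large-F(L,p,delta)}, via the Remark after \prettyref{thm|existence-coordinate-system}, give $F_i\gtrsim\delta^{-2/M}$, so $\widetilde F_i\simeq F_i$ for $\delta\leq\delta_0$ and the two versions agree.)

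With (A), (B) and B($\alpha_0$) established, the preceding Proposition yields a constant $K'$, depending only on $K$ and the data, such that $\brond$ is $(K',p,\delta)$-extremal at every $p\in V(p_0)\cap\partial\Omega$ and every $0<\delta\leq\delta_0$ — which is the assertion (with this $K'$ playing the role of the constant $K$ of the statement). I expect the verification of (B) to be the only real obstacle: although diagonality kills all off-diagonal Levi coefficients identically, the two halves (B$_1$) and (B$_2$) are intertwined — estimating a transverse derivative of $c_{ii}$ requires controlling the bracket coefficients, and controlling those requires (B$_1$) — so they cannot be treated separately but must be carried through a single common induction on the length of the lists, which is the technical device already used repeatedly in this section.
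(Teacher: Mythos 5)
Your proof is correct and follows essentially the same route as the paper: B($\alpha$) holds for all $\alpha>0$ because diagonality makes $c_{ij}\equiv 0$ for $i\neq j$, properties (A) and (B) for the diagonalizing basis are taken from \cite{Charpentier-Dupain-Geometery-Finite-Type-Loc-Diag}, and the preceding Proposition then gives extremality. The paper's own proof is exactly this, stated in one sentence; your extra sketch of how (B) could be re-derived by induction is a useful gloss but not a departure.
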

\begin{proof}
Properties (A) and (B) were proved in \cite{Charpentier-Dupain-Geometery-Finite-Type-Loc-Diag},
and, by definition the basis diagonalizing the Levi form satisfies B($\alpha$)
for all $\alpha>0$.
\end{proof}
\bigskip{}

\begin{sddefn}
\label{def:basis-strongly-extremal}$\brond$ is called \emph{$(K,\alpha,p,\delta)$-strongly-extremal}
if it is $(K,p,\delta)$-extremal and, if, it satisfies B($\alpha$)
at $(p,\delta)$.
\end{sddefn}

Note that the first part of \propref{control-lists} says that every
$(K,p,\delta)$-extremal basis is \emph{$(K,\alpha,p,\delta)$}-strongly-extremal
for some large positive number $\alpha$ depending on $K$ and $\Omega$.
Thus this is an extra hypothesis only for small $\alpha$.

The next Proposition shows that for a strongly extremal basis some
derivatives of the diagonal terms of the Levi matrix satisfy a better
control:
\begin{spprop}
\label{prop:automatic-prop-of-strong-extremality}Suppose $p_{0}$ is
of finite $1$-type $\tau$ and let $M=M'(\tau)$. Then there exists a
neighborhood $V(p_{0})$ of $p_{0}$ with the following property:

for $\alpha>0$, there exist constants $\delta_{0}=\delta_{0}(\alpha,\mathrm{data})$ and
$K'=K'(K,\mathrm{data})$ such that:

if $\brond$ is a $(K,\alpha,p,\delta)$-strongly-extremal
basis, ordered so that $F_{i}$ are decreasing, then for all lists
$\Lrond\in\Lrond_{2M}(\brond)$ such that there exists $j>i$ with $l_{j}\neq0$ we have
$\left|\Lrond c_{ii}(p)\right|\leq K'\alpha F(p,\delta)^{\Lrond/2}F_{i}(p,\delta)$.\end{spprop}
\begin{proof}
Let $\Lrond=\Lrond'\LB{j}\LB{p}\Lrond''$ with $j\leq i$ and write
\[
\Lrond c_{ii}=\Lrond'\LB{p}\LB{j}\Lrond''c_{ii}+\sum\Lrond'\left(a_{\PB{j}\PB{p}}^{k}L_{k}+a_{\PB{j}\PB{p}}^{\bar{k}}\overline{L_{k}}\right)\Lrond''c_{ii}.
\]
 Then successive application of \lemref{lemma32} show that there
exists a list $\widetilde{\Lrond}=\widetilde{\Lrond'}L_{j}$ such
that, for all $k$, $\widetilde{l_{k}}=l_{k}$ and $\left|\widetilde{\Lrond}c_{ii}-\Lrond c_{ii}\right|\leq K_{2}\alpha F^{\Lrond/2}F_{i}$.

Now the result is trivial, applying once again \lemref{lemma32}, \lemref{lemma1-est-lists}
and the hypothesis B($\alpha$).\end{proof}
\begin{spprop}
\label{prop:3.9-automatic-lists-inside-strong-extremality}If the
basis $\brond$ is $(K,\alpha,p,\delta)$-strongly extremal, the conclusion
of \propref{automatic-prop-of-strong-extremality} is still valid
at each point $q\in B^{c_{0}}(p,\delta)$ with $\alpha$ replaced
by $2\alpha$ for $\delta\leq\delta(\alpha)$ ($\delta(\alpha)$ depending
on $\alpha$, $K$ and the data).
\end{spprop}

\subsection{Localization of extremal bases\label{sec:localization-extremal-basis}}

\subsubsection{Definition of the local domain\label{sec:definition-of-the-local-domain}}
\begin{sddefn}
\label{def:local-domain}Let $\Omega$ be a bounded pseudo-convex
domain in $\mathbb{C}^{n}$. Suppose that $P_{0}$ is a boundary point
of $\Omega$ and $W(P_{0})\Subset V(P_{0})$ are neighborhoods of $P_{0}$.
Let $O$ be a point of the real normal to $\partial\Omega$ at $P_{0}$
and denote by $d$ the distance from $O$ to $P_{0}$. Let us denote
by $(z_{i})_{1\leq i\leq n}$ the coordinate system obtained translating
the origin at $O$.

Let $\mu>0$ and $\psi(z)=\varphi\left(\left|z\right|^{2}\right)$
where
\[
\varphi(x)=\left\{ \begin{array}{ll}
0 & \mbox{if }x\leq\mu^{2},\\
K_{0}e^{-1/(x-\mu^{2})} & \mbox{if }x\geq\mu^{2},
\end{array}\right.
\]
 with $\frac{4}{3}d\leq\mu\leq2d$.

Let us denote $r(z)=\rho(z)+\psi(z)$. Then $d$ is chosen small enough
and $K_{0}$ large enough such that, in particular:
\begin{itemize}
\item $D=\{r(z)<0\}\subset W(P_{0})$ and $r$ is a defining function of
$D$;
\item $D$ have a $\MR C^{\infty}$ boundary and is pseudo-convex;
\item At each point of $\partial\Omega\setminus\partial D$, the boundary
of $D$ is strictly pseudo-convex;
\item In the closure of $B(0,2\mu)$ the vector $z$ (in the coordinate
system centered at $0$) is not tangent to $\rho$ (i.e. $\sum_{i=1}^{n}\frac{\partial\rho}{\partial z_{i}}z_{i}\neq0$
everywhere in the closure of $B(0,2\mu)$).
\end{itemize}
\end{sddefn}

The fact that such a domain always exits for any $d>0$ small and
$K_{0}>0$ large is based on the construction of R. Gay and A. Sebbar
in \cite{Gay-Sebbar-division-extension} (Th\'eor\`eme 2.1). Simply, note
that, on $\partial D\setminus\partial\Omega$, the function $r$ is
strictly pluri-subharmonic if $K_{0}$ is large enough and $\mu$
small enough (the hessian of $\rho$ is $\mathrm{O}\left(\varphi\left(\left|z\right|^{2}\right)\right)$).
Moreover, if $P_{0}$ is of finite type, then all the boundary points
of $D$ are of finite type because the order of contact of $\partial\Omega$
with $\partial D$ is infinite at the points of $\partial\left(\partial\Omega\cap\partial D\right)$.

The goal of this Section is to prove the following:
\begin{stthm}
\label{thm:existence-ext-basis-local-domain}Suppose that $P_{0}$
is a point of finite $1$-type $\tau$ of $\partial\Omega$ and choose
$M'(\tau)$ (c. f. \propref{finite-type-imply-large-F(L-p-delta)}).
Let $\delta>0$ and $K>0$. If at every point of $\partial\Omega\cap V(P_{0})$
there is a $(K,p,\delta)$-extremal basis then one can construct
the domain $D$ contained in $V(P_{0})$ so that, at every point $p'$
of its boundary there exists a $(K',p',\delta)$-extremal basis with
$K'$ depending only on $K$ and the data.
\end{stthm}

The proof of this theorem is done in the two following sections.

\subsubsection{Preliminary remarks\label{sec:Preliminary-remarks-extremal-basis-local-domain-D}}

We fix now some general notations.

Let $\pi$ be the $\MR C^{\infty}$ projection of $V(P_{0})\cap\bar{\Omega}$
onto $\partial\Omega$ defined with the integral curves of the real
normal to $\rho$. It is clear that there exists a neighborhood
$\mathcal U$ of $\partial\Omega\cap V(p_{0})$ such that $\pi$ is a $\MR C^{\infty}$ diffeomorphism of
$\partial D\cap\mathcal U$ onto an open set of  $\partial\Omega\cap\mathcal U$.

If $L$ is a $\MR C^{\infty}$ vector field, defined on an open set
$U$ of $\partial D\cap\mathcal U$, tangent to $\partial D$, we associate
to it a vector field $L^{\rho}$, defined in the open set $\pi(U)\subset\partial\Omega$,
tangent to $\partial\Omega$ using $\pi$ (considered as a $\MR C^{\infty}$ diffeomorphism of $U$
onto $\pi(U)$) as follows: if $L=\sum a_{i}\frac{\partial}{\partial z_{i}}$,
considering it as an application of $U$ into $\mathbb{C}^{n}$, we
denote by $L\circ\pi^{-1}$ the vector field in $\pi(U)$ defined
by $L\circ\pi^{-1}=\sum a_{i}\circ\pi^{-1}\frac{\partial}{\partial z_{i}}$,
and
\begin{equation}
L^{\rho}=L\circ\pi^{-1}-\beta N,\label{eq:L-rho-associated-to-L}
\end{equation}
 where $N$ is the complex unitary normal to $\rho$ and $\beta=L\circ\pi^{-1}(\rho)$.

Clearly, $L\mapsto L^{\rho}$ is an isomorphism from $T_{\partial D\cap U}^{1,0}$
onto $T_{\partial\Omega\cap\pi(U)}^{1,0}$ ($V(P_{0})$ and $\mathcal U$ sufficiently
small), and thus, we also consider $L$ associated to $L^{\rho}$
by $L=L^{\rho}\circ\pi+\left(\beta\circ\pi\right)N\circ\pi$.
As $L$ is tangent to $\partial D$ and $(L^{\rho}\circ\pi)(\rho)$
is identically zero on $\partial\Omega$, we have
\begin{equation}
\beta\circ\pi(z)=\frac{-\left\langle L^{\rho}\circ\pi,z\right\rangle \varphi'(\left|z\right|^{2})}{(N\circ\pi)(\rho)+\left\langle N\circ\pi,z\right\rangle \varphi'(\left|z\right|^{2})}+k,\label{eq:formula-beta-composed-rho}
\end{equation}
 where $k$ is a $\MR C^{\infty}$ function whose derivatives of order
less than $M$ are $O(\varphi(\left|z\right|^{2}))$ with constants
controlled by the $\MR{C}^{2M}$ norm of $L$, and, if $L=\sum a_{i}\frac{\partial}{\partial z_{i}}$
(in the coordinate system of \defref{local-domain}), $\left\langle L,z\right\rangle $
denotes the usual scalar product $\sum a_{i}\overline{z_{i}}$, and
$\left\langle L,L'\right\rangle =\sum a_{i}a_{i}'$.

With the previous notations, let $P$ be a point of $\partial D$
such that $\psi(P)=0$ (thus $P\in\partial D\cap\partial\Omega$)
and let $V(P)\subset\mathcal U$ be a neighborhood of $P$ such that $\pi$ is a diffemorphism
of $V(P)\cap\partial D$ onto $V(P)\cap\partial\Omega$.

Let $p\in\partial D\cap V(P)$. Essentially, the construction of the
extremal basis $\brond$ at $p$ for $D$ is done using a suitable
basis $\brond^{\rho}$ of the tangent space of $\partial\Omega$ near
the point $\pi(p)$ translated at $p$ (using $\pi$) then projected
onto the tangent space of $\partial D$, to get a basis $\Btilde$
which will be used (in the next section) to define the basis $\brond$.

Currently, we only look at the relation between the weights of the basis
$\Btilde$ and $\brond^{\rho}$.

Thus, if $\Btilde=\{\Ltildep 1,\ldots,\Ltildep{n-1}\}$ is a basis
of $T_{\partial D}^{1,0}$ in $V(P)\cap\partial D$, with our notations,
the basis $\brond^{\rho}=\{L_{1}^{\rho},\ldots,L_{n-1}^{\rho}\}$
of $T_{\partial\Omega}^{1,0}$ (in $V(P)\cap\partial\Omega$) is given
by
\begin{equation}
L_{i}^{\rho}=\Ltildep i\circ\pi^{-1}-\beta_{i}N,\label{eq:L-i-up-to-rho}
\end{equation}
with $\beta_{i}=\Ltildep i\circ\pi^{-1}(\rho)$, and
\begin{equation}
\Ltildep i=L_{i}^{\rho}\circ\pi+(\beta_{i}\circ\pi)N\circ\pi.\label{eq:L-i-rho-down-to-r}
\end{equation}
 with
\begin{equation}
\beta_{i}\circ\pi=\frac{-\left\langle L_{i}^{\rho}\circ\pi,z\right\rangle \varphi'(\left|z\right|^{2})}{(N\circ\pi)(\rho)+\left\langle N\circ\pi,z\right\rangle \varphi'(\left|z\right|^{2})}+k.\label{eq:beta-i-using-L-i-rho-up}
\end{equation}

Let us calculate the weights $F(\Ltildep i,p,\delta)$ in terms of
the weights $F(L_{i}^{\rho},\pi(z),\delta)$ and the derivatives of
$\varphi$. We suppose that $L_{i}^{\rho}$ are normalized. Writing
$\widetilde{c_{ij}}=\left[\Ltildep i,\overline{\Ltildep j}\,\right](\partial r)$
and $c_{ij}^{\rho}=\left[L_{i}^{\rho},\overline{L_{j}^{\rho}}\right](\partial\rho)$,
using that $(N\circ\pi)(\rho)$ is identically $1$ on $\partial\Omega$,
a simple computations shows
\begin{equation}
\begin{array}{rcl}
\widetilde{c_{ij}} & = & c_{ij}^{\rho}\circ\pi+\left\langle L_{i}^{\rho}\circ\pi,L_{j}^{\rho}\circ\pi\right\rangle \varphi'(\left|z\right|^{2})+\left\langle L_{i}^{\rho}\circ\pi,z\right\rangle \overline{\left\langle L_{j}^{\rho}\circ\pi,z\right\rangle }\varphi''(\left|z\right|^{2})+\\
 &  & +\varphi'(\left|z\right|^{2})\sum_{k=1}^{n-1}\left(*\left\langle L_{k}^{\rho}\circ\pi,z\right\rangle +*\left\langle \overline{L_{k}^{\rho}}\circ\pi,z\right\rangle \right)+k,\\
 & = & c_{ij}^{\rho}\circ\pi+\varphi'(\left|z\right|^{2})\left(\left\langle L_{i}^{\rho}\circ\pi,L_{j}^{\rho}\circ\pi\right\rangle +h\right)+\left\langle L_{i}^{\rho}\circ\pi,z\right\rangle \overline{\left\langle L_{j}^{\rho}\circ\pi,z\right\rangle }\varphi''(\left|z\right|^{2})+k,
\end{array}\label{eq:calculus-cij}
\end{equation}
 where all the derivatives of $k$ are $\mathrm{O}(\varphi(\left|z\right|^{2}))$
and the functions $*$ have a bounded $\MR{C}^{M}$ norm, the constants
depending only on $\Omega$ and the $\MR{C}^{2M}$ norms of the $\widetilde{L_{i}}$.

As $L_{i}^{\rho}$ are normalized, we also have
\begin{equation}
\begin{array}{rcl}
\widetilde{c_{ii}} & = & c_{ii}^{\rho}\circ\pi+\varphi'(\left|z\right|^{2})+\left|\left\langle L_{i}^{\rho}\circ\pi,z\right\rangle \right|^{2}\varphi''(\left|z\right|^{2})+\varphi'(\left|z\right|^{2})\sum_{k=1}^{n-1}\left(*\left\langle L_{k}^{\rho}\circ\pi,z\right\rangle +*\left\langle \overline{L_{k}^{\rho}}\circ\pi,z\right\rangle \right)+k\\
 & = & c_{ii}\circ\pi+\varphi'(\left|z\right|^{2})(1+h)+\left|\left\langle L_{i}^{\rho}\circ\pi,z\right\rangle \right|^{2}\varphi''(\left|z\right|^{2})+k
\end{array}\label{eq:c-ii-tilde-versus-c-ii}
\end{equation}
and $d$ is chosen small enough such that the $\MR{C}^{M}$ norm of
$h$ is small.

Now we need to introduce a new notation. Let $L$ be a $\MR C^{\infty}(\partial D\cap V(P))$
vector field tangent to $\partial D$. For $z\in\partial D\cap V(P)$
let us define
\[
\widetilde{F^{\varphi}}(L,z,\delta)=\sum_{k=1}^{M/2}\left(\frac{\varphi^{(k)}(\left|z\right|^{2})}{\delta}\right)^{1/k}+\left|\left\langle L^{\rho}\circ\pi(z),z\right\rangle \right|^{2}\sum_{2}^{M}\left|\frac{\varphi^{(k)}(\left|z\right|^{2})}{\delta}\right|^{2/k}+\delta^{-1/M}.
\]

\begin{sllem}
\label{lem:evaluation-F-tilde-phi}For $\delta$ and $V(P)$
small enough and for $z\in\partial D\cap V(P)$, we have:
\[
\widetilde{F^{\varphi}}(L,z,\delta)\simeq\frac{\varphi'(\left|z\right|^{2})}{\delta}+\left|\left\langle L^{\rho}\circ\pi,z\right\rangle \right|^{2}\frac{\varphi''(\left|z\right|^{2})}{\delta}+\delta^{-1/M}.
\]
\end{sllem}
\begin{proof}
It suffices to consider the case when $\left|z\right|^{2}=\mu^{2}+x>\mu^{2}$.
Note that, for $V(P)$ small, $\varphi^{(k)}(\mu^{2}+x)\simeq Ke^{-1/x}x^{-2k}$
and $\left(\frac{1}{x}\right)^{2k}\leq e^{1/Mx}$, for $k\leq M$.

Suppose $\left(\frac{\varphi^{(k)}(\mu^{2}+x)}{\delta}\right)^{1/k}>\delta^{-1/M}$
and $e^{-1/x}<\delta$. Then
\[
\left(\frac{\varphi^{(k)}(\mu^{2}+x)}{\delta}\right)^{1/k}\simeq\left(\frac{K_{0}e^{-1/x}}{\delta}\right)^{1/k}\frac{1}{x^{2}}\lesssim K_{0}^{1/2}\delta^{-1/kM}\leq\delta^{-1/M},
\]
 for $\delta$ small. Thus, for $\delta\leq\delta_{0}(K_{0})$, $\left(\frac{\varphi^{(k)}(\mu^{2}+x)}{\delta}\right)^{1/k}>\delta^{-1/M}$
implies $e^{-1/x}>\delta$ and $\sum_{1}^{M/2}\left(\frac{\varphi^{(k)}(\mu^{2}+x)}{\delta}\right)^{1/k}\simeq\frac{\varphi'(\mu^{2}+x)}{\delta}$.

Similarly, $\left(\frac{\varphi^{(k)}(\mu^{2}+x)}{\delta}\right)^{2/k}>\delta^{-1/M}$
implies $e^{-1/x}>\delta$ and $\sum_{2}^{M}\left(\frac{\varphi^{(k)}(\mu^{2}+x)}{\delta}\right)^{2/k}\simeq\frac{\varphi''(\mu^{2}+x)}{\delta}$.
\end{proof}

Thus, we denote
\[
F^{\varphi}(L,z,\delta)=\frac{\varphi'(\left|z\right|^{2})}{\delta}+\left|\left\langle L^{\rho}\circ\pi,z\right\rangle \right|^{2}\frac{\varphi''(\left|z\right|^{2})}{\delta}+\delta^{-1/M},
\]
$F_{i}^{\varphi}=F_{i}^{\varphi}(z,\delta)=F^{\varphi}(\widetilde{L_{i}},z,\delta)$,
$1\leq i\leq n-1$ and $F_{n}^{\varphi}=\delta^{-2}$. Let
$\widetilde{L_{n}}$ denotes the unitary complex normal to $r$ (the
defining function of $D$) and $L_{n}^{\rho}$ the unitary complex
normal to $\rho$.
\begin{spprop}
\label{prop:comparison-lists-D-lists-up-5.1}Let $\Lrondt$ be a list
of $\Lrond_{M}\left(\widetilde{\brond}\cup\left\{ \widetilde{L_{n}}\right\} \right)$
and $\Lrond^{\rho}$ be the list obtained replacing $\LTB{i}$ in
$\Lrond$ by $\LBB{i}{\rho}$. Then, reducing $V(P)$ if necessary,
on $\partial D\cap V(P)$ we have ($\tilde{l}_{i}$ denoting the number
of times the vector fields $\Ltildep i$ or $\overline{\Ltildep i}$
appears in $\Lrondt$):
\begin{enumerate}
\item $\left|\Lrondt(c_{ij}^{\rho}\circ\pi)-(\Lrond^{\rho}c_{ij}^{\rho})\circ\pi\right|\lesssim\delta\prod_{k=1}^{n}\left(F_{k}^{\varphi}\right)^{\tilde{l}_{k}/2}$,
for $\left|\Lrondt\right|\geq2$,
\item $\left|\Lrondt\varphi(\left|z\right|^{2})\right|\lesssim\delta\prod_{i=1}^{n}\left(F_{i}^{\varphi}\right)^{\tilde{l}_{i}/2}$,
$\left|\widetilde{\Lrond}\right|\geq2$,
\end{enumerate}
the constants depending only on $\Omega$ and the $\MR{C}^{M+2}$
norms of the $\widetilde{L_{i}}$.\end{spprop}
\begin{proof}
These properties are trivially satisfied if $\tilde{l_{n}}\neq0$,
thus we suppose $\tilde{l}_{n}=0$. Using (\ref{eq:calculus-cij})
and the fact that if $f$ is a $\MR{C}^{\infty}$ function on $\partial\Omega\cap V(P)$
and if $L^{\rho}\rho\equiv0$ then
$\left(L^{\rho}\circ\pi\right)\left(f\circ\pi\right)-\left(L^{\rho}f\right)\circ\pi=O_{f}(\varphi)$
on $\partial D\cap V(P)$, the Proposition is an easy consequence
of (\ref{eq:beta-i-using-L-i-rho-up}) and the following Lemma:
\begin{sllem}
\label{lem:list-psi}Let $\Lrond^{\rho,\pi}$ be a list of
$\Lrond_{M}\left\{ L_{i}^{\rho}\circ\pi,\, i\leq n-1\right\}$
of length
$\geq1$. Then $\left|\Lrond^{\rho,\pi}\psi\right|\lesssim\delta\prod_{i=1}^{n-1}\left(F_{i}^{\varphi}\right)^{l_{i}/2}$.
\end{sllem}
\begin{proof}
[Proof of \lemref{list-psi}]By induction, we have
\[
\Lrond^{\rho,\pi}\psi=\Lrond^{\rho,\pi}\left(\varphi\left(\left|z\right|^{2}\right)\right)=\sum_{l=1}^{\left[\frac{m-1}{2}\right]}*\varphi^{(l)}\left(\left|z\right|^{2}\right)+\sum_{l=\left[\frac{m+1}{2}\right]}^{m}\alpha_{l}\varphi^{(l)}\left(\left|z\right|^{2}\right),
\]
 with 
\[
\alpha_{m-k}=\sum_{\SU{\Lrond^{*}=\{W_{1}^{*},\ldots,W_{m^{*}}^{*}\}\subset\Lrond^{\rho,\pi}\AS m^{*}\leq m-2k}}*\prod_{W_{i}^{*}\in\Lrond^{*}}\left\langle W_{i}^{*},\PB{z}\right\rangle ,
\]
where $\left\langle W_{i}^{*},\PB{z}\right\rangle $ denotes $\left\langle W_{i}^{*},z\right\rangle $
if $W_{i}^{*}$ is of type $(0,1)$ and $\left\langle W_{i}^{*},\bar{z}\right\rangle $
if not, and the $\MR{C}^{2M}$ norms of the functions $*$ are controlled
by the $\MR{C}^{2M}$ norms of the vector fields $\widetilde{L_{i}}$.
Now, the proof of \lemref{evaluation-F-tilde-phi} shows that
\[
\sum_{l=1}^{\left[\frac{m-1}{2}\right]}\frac{*\varphi^{(l)}\left(\left|z\right|^{2}\right)}{\delta}\lesssim\left(\delta^{-1/M}+\frac{\varphi'\left(\left|z\right|^{2}\right)}{\delta}\right)^{m/2},
\]
and it is enough to see that $\left|\alpha_{l}\varphi^{(l)}\left(\left|z\right|^{2}\right)\right|\lesssim\delta\left(F^{\varphi}\right)^{\Lrond/2}$,
for $l\in\left\{ \left[\frac{m+1}{2}\right],\ldots,m\right\} $. If
$l=m$, this follows from \lemref{evaluation-F-tilde-phi}; suppose $l=m-k$,
$k\geq1$.

Suppose $\left|\frac{\varphi^{(m-k)}\left(\left|z\right|^{2}\right)}{\delta}\right|^{2/(m-k)}\geq\delta^{-1/M}$.
By \lemref{evaluation-F-tilde-phi} $\left|\frac{\varphi^{(m-k)}\left(\left|z\right|^{2}\right)}{\delta}\right|^{2/(m-k)}\leq\frac{\varphi''\left(\left|z\right|^{2}\right)}{\delta}$.
Let $\Lrond^{*}\subset\Lrond$ of length $m^{*}=m-2k=\sum_{i=1}^{n-1}l_{i}^{*}$.
The corresponding term in $\alpha_{m-k}$ is bounded by
\begin{eqnarray*}
*\left(\frac{\varphi''\left(\left|z\right|^{2}\right)}{\delta}\right)^{(m-k)/2}\prod_{i}\left|\left\langle L_{i}^{\rho}\circ\pi,z\right\rangle \right|^{l_{i}^{*}} & = & *\left(\frac{\varphi''\left(\left|z\right|^{2}\right)}{\delta}\right)^{k/2}\prod_{i=1}^{n-1}\left(\frac{\varphi''\left(\left|z\right|^{2}\right)}{\delta}\left|\left\langle L_{i}^{\rho}\circ\pi,z\right\rangle \right|^{2}\right)^{l_{i}^{*}/2}\\
 & \lesssim & *\left(\frac{\varphi'\left(\left|z\right|^{2}\right)}{\delta}\right)^{k}\prod_{i=1}^{n-1}\left(F_{i}^{\varphi}\right)^{l_{i}^{*}/2},
\end{eqnarray*}
 because the hypothesis implies $\left(\frac{\varphi''\left(\left|z\right|^{2}\right)}{\delta}\right)^{1/2}\lesssim\frac{\varphi'\left(\left|z\right|^{2}\right)}{\delta}$.
\end{proof}
To finish the proof of \propref{comparison-lists-D-lists-up-5.1}
note that, for $\left|\widetilde{\Lrond}\right|\geq1$,
\[
\left|\widetilde{\Lrond}\left(\beta_{i}\circ\pi\right)(z)\right|\leq F^{\varphi}(z,\delta)^{\widetilde{\Lrond}/2}F_{i}^{\varphi}(z,\delta)^{1/2},
\]
and use (\ref{eq:beta-i-using-L-i-rho-up}).
\end{proof}

Finally the relations between the weights associated to $\Btilde$
and to $\brond^{\rho}$are as follows.

Let $\Ltilde$ a holomorphic vector field on $\partial D$ tangent
to $\partial D$ near $p$ and $L^{\rho}$ the associated vector field
tangent to $\partial\Omega$. Then
\begin{spprop}
\label{prop:relation-lists-tilde-lists-rho}For $V(P)$ sufficiently
small, we have, if $\frac{1}{K}\leq\left\Vert \Ltilde\right\Vert \leq K$,
\[
F(\Ltilde,z,\delta)\simeq F(L^{\rho},\pi(z),\delta)+F^{\varphi}(\Ltilde,z,\delta),
\]
 with constants depending on the $\MR C^{2M}$ norm of $\Ltilde$,
$K$ and the data.\end{spprop}
\begin{proof}
From \propref{comparison-lists-D-lists-up-5.1} it easily follows
that $F(\Ltilde,z,\delta)\lesssim F(L^{\rho}\circ\pi,z,\delta)+F^{\varphi}(\Ltilde,z,\delta)$.
Let us then see that there exists a list $\Lrondt$ composed of $\widetilde{L}$
and $\overline{\Ltilde}$ such that $\Lrondt\widetilde{c_{\Ltilde\Ltilde}}\simeq\delta\left(F(L^{\rho}\circ\pi,z,\delta)+F^{\varphi}(\Ltilde,z,\delta)\right)^{(\left|\Lrondt\right|+2)/2}\stackrel{\mathrm{def}}{=}\delta F^{(\left|\Lrondt\right|+2)/2}$.
If $\frac{\varphi'}{\delta}+\left|\left\langle L^{\rho}\circ\pi(z),z\right\rangle \right|^{2}\frac{\varphi''}{\delta}\simeq F$, then
$c_{\widetilde{L}\widetilde{L}}$ do it. Suppose $\frac{\varphi'}{\delta}+\left|\left\langle L^{\rho}\circ\pi(z),z\right\rangle \right|^{2}\frac{\varphi''}{\delta}\ll F$.
Then, there exists a list $\Lrond^{\rho}$ such that $\left|\Lrond^{\rho}c_{L^{\rho}L^{\rho}}(\pi(z))\right|\simeq\delta F^{(\left|\widetilde{\Lrond}\right|+2)/2}$.
Then calculating $\Lrondt\widetilde{c_{\Ltilde\Ltilde}}$ in term
of $\Lrond^{\rho}(c_{L^{\rho}L^{\rho}})\circ\pi$, the result follows
\propref{comparison-lists-D-lists-up-5.1}, (\ref{eq:c-ii-tilde-versus-c-ii})
and the properties of the functions $h$ and $k$.

\bigskip{}

\end{proof}

\subsubsection{Extremal bases on $D$\label{sec:Extremal-basis-on-local-domain-D} }

In this Section, we assume that $p_{0}$ is of finite type
$\tau$, $M=M'(\tau)$ and that, at all points $q$ of $V(P_{0})\cap\partial\Omega$
and for all $\delta>0$, $0<\delta\leq\delta_{0}$, there exists a
$(K,q,\delta)$-extremal basis. Then we will show that at all points $p$
of $\partial D$ and for all $\delta>0$ there exists a $(K',p,\delta)$-extremal
basis (for $D$) with a constant $K'$ controlled by $K$ and the
data.

\smallskip

If $P$ is a point of $\partial D$ such that $\left|P\right|>\mu$
then $\partial D$ is strictly pseudo-convex near $P$ and the construction
of extremal basis in $V(P)\cap\partial D$ is trivial (for $V(P)$
small). If $\left|P\right|<\tau$ then $V(P)\cap\partial D$ is contained
in $\partial\Omega$ and the existence of extremal basis is the hypothesis.
Thus, we have only to consider neighborhood of points $P\in\partial D$
such that $\left|P\right|=\mu$ (that is points $P$ in the boundary
of $\partial\Omega\cap\partial D$).

As we said before, the final extremal basis for $D$, at $p\in V(P)\cap\partial D$,
will be obtained extending a basis $\widetilde{\brond}$ defined on
$V(P)\cap\partial D$ which is a projection onto the tangent space
to $r$ of a translation of a basis $\brond^{\rho}$, at $\pi(p)$,
tangent to $\rho$.

Formula (\ref{eq:c-ii-tilde-versus-c-ii}) shows that the expressions
$\left\langle L_{i}^{\rho}\circ\pi,z\right\rangle $ plays an important
role: we have to take into account the vector fields which are orthogonal
to $z$. In particular, to construct an extremal basis on $\partial D$,
we cannot simply translate an extremal basis on $\partial\Omega$
and project it onto the tangent space to $\partial D$, because, even
if the basis $(L_{i}^{\rho})$ is extremal, we may have $\left\langle L_{i}^{\rho}\circ\pi,z\right\rangle \neq0$,
for all $i$, and there are linear combinations of the $L_{i}^{\rho}\circ\pi$
which are orthogonal to $z$.

From now the point $p$ and the positive number $\delta$ are fixed. We
suppose we have a $(K,\pi(p),\delta)$-extremal (for $\rho$) basis
$\brond^{\Omega}=\{L_{1}^{\Omega},\ldots,L_{n-1}^{\Omega}\}$ at the
point $\pi(p)$ (the $L_{i}^{\Omega}$ being $\MR{C}^{\infty}$ in
$V(P)$), such that the vectors $L_{i}^{\Omega}(\pi(p))$ are orthogonal
(c.f. \propref{extremality-and-normalization}) and we construct the
basis $\brond^{\rho}=\{L_{1}^{\rho},\ldots,L_{n-1}^{\rho}\}$ using
it. The weights associated to $\brond^{\Omega}$ are denoted $F_{i}^{\Omega}=F_{i}^{\Omega}(\pi(p),\delta)=F^{\Omega}(L_{i}^{\Omega},\pi(p),\delta)$,
and we suppose $F_{i+1}^{\Omega}\leq F_{i}^{\Omega}$, for $i\leq n-2$,
changing the order of $L_{i}^{\Omega}$ if necessary.

Recall that the canonical coordinate system is centered at the point
$O$ of \defref{local-domain}, thus $\left|z(P)\right|=\mu$.

For simplicity of notations, we denote $q=\pi(p)$ (thus $p=\pi^{-1}(q)$,
$\pi$ being considered as a diffeomorphism between open sets of the
boundaries of $\Omega$ and $D$).

Let
\[
\Hrond_{n-1}=\left\{ W=\sum a_{i}L_{i}^{\Omega},\, a_{i}\in\mathbb{C},\,\sum\left|a_{i}\right|^{2}=1,\mbox{ such that }\left\langle W(q),p\right\rangle =0\right\} .
\]
 Let $W_{n-1}=\sum a_{i}^{n-1}L_{i}^{\Omega}\in\Hrond_{n-1}$ such
that $\sum_{i=1}^{n-2}\left|a_{i}^{n-1}\right|^{2}F(L_{i}^{\Omega},q,\delta)=\inf_{W=\sum a_{i}L_{i}^{\Omega}\in\Hrond_{n-1}}\sum_{i=1}^{n-2}\left|a_{i}\right|^{2}F(L_{i}^{\Omega},q,\delta)$,
and define
\begin{equation}
L_{n-1}^{\rho}=\left\{ \begin{array}{ll}
L_{n-1}^{\Omega} & \mbox{if }\sum_{i=1}^{n-2}\left|a_{i}^{n-1}\right|^{2}F(L_{i}^{\Omega},q,\delta)\geq\frac{\varphi''(\left|p\right|^{2})}{\delta}\left|\left\langle L_{n-1}^{\Omega}(q),p\right\rangle \right|^{2},\\
W_{n-1} & \mbox{otherwise}.
\end{array}\right.\label{eq:def-L-rho-n-1}
\end{equation}

Suppose $L_{n-l}^{\rho}$ are defined for $1\leq l\leq k-1<n$. Let $\mathcal{H}_{n-k}=\mathcal{H}_{n-1}\cap\left[\MR{E}(L_{n-1}^{\rho},\ldots,L_{n-k+1}^{\rho})\right]^{\perp}$,
$\MR{E}(L_{n-1}^{\rho},\ldots,L_{n-k+1}^{\rho})$ being the linear
space spaned by $L_{n-1}^{\rho}$, ..., $L_{n-k+1}^{\rho}$, the orthogonality
being taken at $q$. Let $W_{n-k}=\sum_{i=1}^{n-1}a_{i}^{n-k}L_{i}^{\Omega}$
a vector in $\mathcal{H}_{n-k}$ minimizing $\sum_{i=1}^{n-k-1}\left|a_{i}\right|^{2}F(L_{i}^{\Omega},q,\delta)$
for vectors $\sum_{i=1}^{n-1}a_{i}L_{i}^{\Omega}\in\mathcal{H}_{n-k}$.
Let $T_{n-k}$ be a vector field, of norm $1$ at $q$, in $\MR{G}_{n-k}=\MR{E}(L_{n-1}^{\Omega},\ldots,L_{n-k}^{\Omega})\cap\left[\MR{E}(L_{n-1}^{\rho},\ldots,L_{n-k+1}^{\rho})\right]^{\perp}$.
Then $L_{n-k}^{\rho}$is defined by
\[
L_{n-k}^{\rho}=\left\{ \begin{array}{ll}
T_{n-k} & \mbox{if }\sum_{i=1}^{n-k-1}\left|a_{i}^{n-k}\right|^{2}F(L_{i}^{\Omega},q,\delta)\geq\frac{\varphi''(\left|p\right|^{2})}{\delta}\left|\left\langle T_{n-k}(q),p\right\rangle \right|^{2},\\
W_{n-k} & \mbox{otherwise.}
\end{array}\right.
\]

Note that $\{L_{i}^{\rho}(q),\,1\leq i\leq n-1\}$ is orthonormal.
We will note later that if the dimension of $\MR{G}_{n-k}$ is strictly
greater than $1$ then $F^{\rho\varphi}(L_{n-k}^{\rho})$ (see below)
is, up to a multiplicative constant, independent of the choice of
$T_{n-k}$.

The next two Lemmas prove some important properties of the vector fields
$L_{i}^{\rho}$. Let us denote $\brond^{\rho}=\left\{ L_{i}^{\rho},\, i<n\right\} $
and $L_{n}^{\rho}$ the unitary complex normal to $\rho$.

For $L=\sum_{i=1}^{n-1}a_{i}L_{i}^{\rho}$, $a_{i}\in\mathbb{C}$,
let us denote
\[
F^{\rho\varphi}(L)=F(L,q,\delta)+\frac{\varphi'\left(\left|p\right|^{2}\right)}{\delta}+\left|\left\langle L(q),p\right\rangle \right|^{2}\frac{\varphi''\left(\left|p\right|^{2}\right)}{\delta},
\]
$F_{i}^{\rho\varphi}=F^{\rho\varphi}(L_{i}^{\rho})$, $1\leq i\leq n-1$,
$F_{n}^{\rho\varphi}=\frac{1}{\delta^{2}}$ and $\left(F^{\rho\varphi}\right)^{\Lrond/2}=\prod_{i}\left(F_{i}^{\rho\varphi}\right)^{l_{i}/2}$,
if $\Lrond$ is a list of $\Lrond_{M}(\brond^{\rho}\cup\left\{ L_{n}^{\rho}\right\} )$,
with the usual notation for $l_{i}$.

We will show that, up to constants, the vector fields $L_{i}^{\rho}$
give the successive minima of the functions $F^{\rho\varphi}(L)$
for $L=\sum a_{i}L_{i}^{\Omega}$, $\sum\left|a_{i}^{2}\right|=1$.
\begin{sllem}
\label{lem:comparison-F-rho-phi-F-Omega-5.3}There exits a constant
$K'$ depending only on $K$ such that:
\begin{enumerate}
\item If $L=\sum a_{i}L_{i}^{\Omega}$, $\sum\left|a_{i}\right|^{2}=1$,
is orthogonal, at $q$, to the space generated by $L_{j}^{\rho}$,
$i+1\leq j\leq n-1$, $i\leq n-1$, then $F^{\rho\varphi}(L)\geq\frac{1}{K'}F^{\rho\varphi}(L_{i}^{\rho})$;
\item $F^{\rho\varphi}(L_{i}^{\rho})\geq\frac{1}{K'}F^{\rho\varphi}(L_{i+1})$,
$i<n-1$;
\item $F^{\rho\varphi}(L_{i}^{\rho})\geq\frac{1}{K'}F(L_{i}^{\Omega},q,\delta)$,
$i<n$.
\end{enumerate}
\end{sllem}
\begin{proof}
Note first that if property (2) is satisfied for $i\geq k$ then property
(3) is also satisfied for $i\geq k$. Indeed, more generally, if $L$
is orthogonal to the vectors $L_{j}^{\rho}$, $i+1\leq j\leq n-1$,
and if property (2) is satisfied for $i+1,\ldots,n-1$, then
\begin{equation}
F^{\rho\varphi}(L)\gtrsim\max\left\{ F^{\Omega}(L),F^{\Omega}(L_{i+1}^{\rho})\ldots,F^{\Omega}(L_{n-1}^{\rho})\right\} \gtrsim F^{\Omega}(L_{i}^{\Omega},q,\delta)=F(L_{i}^{\Omega},q,\delta)=F_{i}^{\Omega},\label{eq:comparison-weights-ext-basis-local}
\end{equation}
 because the $L_{j}^{\rho}$ and $L$ are orthogonal and the basis
$\left(L_{i}^{\Omega}\right)_{i}$ is extremal.

Now we show that if $L=\sum a_{i}L_{i}^{\Omega}$, $\sum\left|a_{i}\right|^{2}=1$,
then $F^{\rho\varphi}(L)\gtrsim F_{n-1}^{\rho\varphi}$.

If $L_{n-1}^{\Omega}\in\mathcal{H}_{n-1}$, then $L_{n-1}^{\rho}=L_{n-1}^{\Omega}$
and $F_{n-1}^{\rho\varphi}=F(L_{n-1}^{\Omega},q,\delta)+\frac{\varphi'\left(\left|p\right|^{2}\right)}{\delta}$
which gives the result. Suppose thus $L_{n-1}^{\Omega}\notin\mathcal{H}_{n-1}$.
We separate the two cases of (\ref{eq:def-L-rho-n-1}):

Suppose we are in the first case ($L_{n-1}^{\rho}=L_{n-1}^{\Omega}$).
If $L\in\mathcal{H}_{n-1}$, then the inequality is an immediate consequence
of the extremality (EB$_{\text{1}}$) of $\brond^{\Omega}$. Suppose
$L\notin\mathcal{H}_{n-1}$. Then we can write $L=\alpha\left(L_{n-1}^{\rho}+\gamma H\right)$
with $H\in\mathcal{H}_{n-1}$. Writing $H=\sum a'_{i}L_{i}^{\Omega}$,
we have
\[
F^{\rho\varphi}(L)\simeq\left|\alpha\right|^{2}\left[\sum_{i=1}^{n-2}\left|\gamma a'_{i}\right|^{2}F(L_{i}^{\Omega},q,\delta)+\left|1+\gamma a'_{n-1}\right|^{2}F(L_{n-1}^{\Omega},q,\delta)\right]+\frac{\varphi'\left(\left|p\right|^{2}\right)}{\delta}+\left|\alpha\right|^{2}\frac{\varphi''\left(\left|p\right|^{2}\right)}{\delta}\left|\left\langle L_{n-1}^{\Omega}(q),p\right\rangle \right|^{2},
\]
and as $\sum_{i=1}^{n-2}\left|a'_{i}\right|^{2}F(L_{i}^{\Omega},q,\delta)\geq\sum_{i=1}^{n-2}\left|a_{i}^{n-1}\right|^{2}F(L_{i}^{\Omega},q,\delta)\geq\frac{\varphi''\left(\left|p\right|^{2}\right)}{\delta}\left|\left\langle L_{n-1}^{\Omega}(q),p\right\rangle \right|^{2}$,
we obtain
\[
F^{\rho\varphi}(L)\gtrsim\left|\alpha\right|^{2}\left(1+\left|\gamma\right|^{2}\right)\frac{\varphi''\left(\left|p\right|^{2}\right)}{\delta}\left|\left\langle L_{n-1}^{\Omega}(q),p\right\rangle \right|^{2}\gtrsim_{K}\frac{\varphi''\left(\left|p\right|^{2}\right)}{\delta}\left|\left\langle L_{n-1}^{\Omega}(q),p\right\rangle \right|^{2},
\]
because, by equivalence of norms in finite dimensional spaces, $\left|\alpha\right|^{2}\left(1+\left|\gamma\right|^{2}\right)\gtrsim_{K}1$.
The extremality of $\brond^{\Omega}$ implies $F(L,q,\delta)\gtrsim F(L_{n-1}^{\Omega},q,\delta)$,
and the inequality is proved.

Let us now look to the case $L_{n-1}^{\rho}=W_{n-1}$.
The result is trivial if $L\in\MR{H}_{n-1}$, thus we suppose $L\notin\MR{H}_{n-1}$. Using
the same decomposition as before, we get
\[
F^{\rho\varphi}(L)\gtrsim\left|\alpha\right|^{2}\left(1+\left|\gamma\right|^{2}\right)\sum_{i=1}^{n-2}\left|a_{i}^{n-1}\right|^{2}F(L_{i}^{\Omega},q,\delta)\gtrsim\sum_{i=1}^{n-2}\left|a_{i}^{n-1}\right|^{2}F(L_{i}^{\Omega},q,\delta),
\]
 and, as $F^{\rho\varphi}(L)\gtrsim F(L,q,\delta)\gtrsim F(L_{n-1}^{\Omega},q,\delta)$,
we have $F^{\rho\varphi}(L)\gtrsim F(W_{n-1},q,\delta)$.

The induction is as follows. Suppose $F^{\rho\varphi}(L_{i+1}^{\rho})\gtrsim F^{\rho\varphi}(L_{i+2}^{\rho})\gtrsim\ldots\gtrsim F^{\rho\varphi}(L_{n-1}^{\rho})$
and that for all $L=\sum a_{i}L_{i}^{\Omega}$, $\sum\left|a_{i}\right|^{2}=1$,
orthogonal to the $L_{j}^{\rho}$, $i+2\leq j\leq n-1$, then $F^{\rho\varphi}(L)\gtrsim F^{\rho\varphi}(L_{i+1}^{\rho})$.
Let $L=\sum a_{i}L_{i}^{\Omega}$, $\sum\left|a_{i}\right|^{2}=1$,
orthogonal to the $L_{j}^{\rho}$, $i+1\leq j\leq n-1$. Suppose $T_{i}$
is chosen. If $T_{i}\in\MR{H}_{i}$ then $L_{i}^{\rho}=T_{i}$, $F^{\rho\varphi}(T_{i})\lesssim F_{i}^{\Omega}+\frac{\varphi'}{\delta}$
and, using (\ref{eq:comparison-weights-ext-basis-local}), $F^{\rho\varphi}(L)\gtrsim F^{\rho\varphi}(L_{i}^{\rho})$.
Suppose now $T_{i}\notin\MR{H}_{i}$. If $L_{i}^{\rho}=T_{i}$ then, decomposing
$L=\alpha\left(T_{i}+\gamma H\right)$ as in the first step, we obtain
$F^{\rho\varphi}(L)\gtrsim\frac{\varphi''}{\delta}\left|\left\langle T_{i}(q),p\right\rangle \right|^{2}$
and we use (\ref{eq:comparison-weights-ext-basis-local}). Otherwise
$L_{i}^{\rho}=W_{i}$ and, again, the same decomposition gives
$F^{\rho\varphi}(L)\gtrsim\sum_{j=1}^{n-i-1}\left|a_{i}\right|^{2}F(L_{i}^{\Omega},q,\delta)$
and we conclude with (\ref{eq:comparison-weights-ext-basis-local}).

Finally we obtain $F^{\rho\varphi}(L)\gtrsim F^{\rho\varphi}(L_{i}^{\rho})$
(which proves the statement about the choice of $T_{n-k}$), and,
as $L_{i}^{\rho}$ is orthogonal to $L_{i+1}^{\rho},\ldots,L_{n-1}^{\rho}$,
the induction hypothesis imply $F^{\rho\varphi}(L_{i}^{\rho})\gtrsim F^{\rho\varphi}(L_{i+1}^{\rho})$
and finishes the proof.
\end{proof}

We now estimate the brackets of the vector fields $L_{i}^{\rho}$,
$i<n$, at the point $q$.
\begin{sllem}
\label{lem:brackets-L-rho-5.4}Let $\left[\LBB{k}{\rho},\LBB{s}{\rho}\right]=\sum_{t=1}^{n}b_{\PB{k}\PB{s}}^{t}L_{t}^{\rho}+\sum_{t=1}^{n}b_{\PB{k}\PB{s}}^{\bar{t}}\Lbarp t{\rho}$.
For all lists $\Lrond$, of $\Lrond_{M}\left(\brond^{\rho}\cup\left\{ L_{n}^{\rho}\right\} \right)$,
we have 
\[
\left|\Lrond\left(b_{\PB{k}\PB{s}}^{\PB{t}}\right)(q)\right|<K'\left(F^{\rho\varphi}\right)^{\Lrond/2}\left(F_{k}^{\rho\varphi}\right)^{1/2}\left(F_{s}^{\rho\varphi}\right)^{1/2}\left(F_{t}^{\rho\varphi}\right)^{-1/2}
\]
with $K'$ depending only on $K$ and the data.\end{sllem}
\begin{proof}
Note that the Lemma is trivial if $l_{n}(\Lrond)\geq1$ and if $F_{t}^{\rho\varphi}\lesssim\frac{\varphi''(\left|p\right|^{2})}{\delta}$
(because $F_{k}^{\rho\varphi}$ and $F_{s}^{\rho\varphi}$ are both
$\geq$ to $\frac{\varphi'(\left|p\right|^{2})}{\delta}$ and $\frac{\varphi''(\left|p\right|^{2})}{\delta}\geq\delta^{-2/M}$
implies $\left|\frac{\varphi'(\left|p\right|^{2})}{\delta}\right|^{2}\geq\left|\frac{\varphi''(\left|p\right|^{2})}{\delta}\right|$).
Moreover, we also have $F_{t}^{\rho\varphi}\lesssim F(L_{t}^{\rho},q,\delta)+\frac{\varphi''(\left|p\right|^{2})}{\delta}$,
and, if $L_{t}^{\rho}=T_{t}$, then, by the definition of $T_{t}$ and
the extremality of $\brond^{\Omega}$, $F(L_{t}^{\rho},q,\delta)\lesssim F(L_{t}^{\Omega},q,\delta)$,
and, if $L_{t}=W_{t}$, then $F(L_{t}^{\rho},q,\delta)\lesssim F(L_{t}^{\Omega},q,\delta)+\frac{\varphi''(\left|q\right|^{2})}{\delta}$.

Thus, it suffices to prove that if $l_{n}=0$
\[
\left|\Lrond\left(b_{\PB{k}\PB{s}}^{t}\right)(q)\right|\lesssim\left(F^{\rho\varphi}\right)^{\Lrond/2}\left(F_{k}^{\rho\varphi}\right)^{1/2}\left(F_{s}^{\rho\varphi}\right)^{1/2}\left(F(L_{t}^{\Omega},q,\delta)\right)^{-1/2}.
\]

Let us write $L_{k}^{\rho}=\sum\alpha_{k}^{i}L_{i}^{\Omega}$ and
$L_{k}^{\Omega}=\sum\beta_{k}^{i}L_{i}^{\rho}$. Using the notation
$\left[\LBB{i}{\Omega},\LBB{j}{\Omega}\right]=\sum_{i=1}^{n}a_{\PB{i}\PB{j}}^{m}L_{m}^{\Omega}+\sum_{i=1}^{n}a_{\PB{i}\PB{j}}^{\bar{m}}\Lbarp t{\Omega}$,
a computation gives, if $t<n$,
\[
b_{\PB{k}\PB{s}}^{t}=\sum_{m}\left(\sum_{i,j}\PBG{\alpha_{k}^{i}}\PBG{\alpha_{s}^{j}}a_{\PB{i}\PB{j}}^{m}\right)\beta_{m}^{t},
\]
 with $\beta_{m}^{t}=\frac{1}{\det(\alpha)}\sum_{\sigma}\varepsilon_{\sigma}\prod_{i}\alpha_{i}^{\sigma(i)}$,
where $\sigma$ describes the set of permutations from $\{1,\ldots,n-1\}\setminus\{t\}$
onto $\{1,\ldots,n-1\}\setminus\{m\}$, and
\[
b_{\PB{k}\PB{s}}^{n}=\sum_{i,j}\PBG{\alpha_{k}^{i}}\PBG{\alpha_{s}^{j}}C_{\PB{i}\PB{j}}
\]
 with $C_{\PB{i}\PB{j}}=\left[\LBB{i}{\Omega},\LBB{j}{\Omega}\right](\partial\rho)$
(note that this notation gives $c_{ij}=C_{i\bar{j}}$).

First, we prove that, if $t<m$, then $\left|\beta_{m}^{t}\right|\lesssim\left(F_{k}^{\rho\varphi}\right)^{1/2}\left(F_{s}^{\rho\varphi}\right)^{1/2}\left(F(L_{t}^{\Omega},q,\delta)\right)^{-1/2}$
for any $k$ and $s$. In that case, there exists an index $i>t$
such that $\sigma(i)\leq t$; if $L_{i}^{\rho}=T_{i}$ then $\alpha_{i}^{\sigma(i)}=0$,
and if $L_{i}^{\rho}=W_{i}$ then 
\[
\left|\alpha_{i}^{\sigma(i)}\right|\leq\left[\frac{\varphi''(\left|p\right|^{2})}{\delta}\left(F(L_{\sigma(i)}^{\Omega},q,\delta)\right)^{-1}\right]^{1/2}\leq\left(\frac{\varphi''(\left|p\right|^{2})}{\delta}\right)^{1/2}\left(F(L_{t}^{\Omega},q,\delta)\right)^{-1/2}\leq\left(F_{k}^{\rho\varphi}\right)^{1/2}\left(F_{s}^{\rho\varphi}\right)^{1/2}\left(F(L_{t}^{\Omega},q,\delta)\right)^{-1/2},
\]
 because $F_{m}^{\rho\varphi}\gtrsim\delta^{-1/M}+\frac{\varphi'(\left|p\right|^{2})}{\delta}$
and $\frac{\varphi''(\left|p\right|^{2})}{\delta}\geq\delta^{-2/M}$
implies $\left(\frac{\varphi'(\left|p\right|^{2})}{\delta}\right)^{2}\geq\frac{\varphi(\left|p\right|^{2})}{\delta}$.

To finish the proof, it suffices to remark that the extremality of
$\brond^{\Omega}$ implies
\[
\left|\alpha_{k}^{i}\right|\lesssim F(L_{k}^{\rho},q,\delta)^{1/2}F(L_{i}^{\Omega},q,\delta)^{-1/2},
\]
 and
\begin{eqnarray*}
\left|\Lrond\left(a_{\PB{i}\PB{j}}^{m}\right)\right| & \lesssim & \prod F(L_{k}^{\Omega},q,\delta)^{l_{k}/2}F(L_{i}^{\Omega},q,\delta)^{1/2}F(L_{j}^{\Omega},q,\delta)^{1/2}F(L_{m}^{\Omega},q,\delta)^{-1/2}\\
 & \lesssim & \left(F^{\rho\varphi}\right)^{\Lrond/2}F(L_{i}^{\Omega},q,\delta)^{1/2}F(L_{j}^{\Omega},q,\delta)^{1/2}F(L_{t}^{\Omega},q,\delta)^{-1/2},
\end{eqnarray*}
by \lemref{comparison-F-rho-phi-F-Omega-5.3}, for $t\geq m$.
\end{proof}

Then, with the notations introduced before, we consider the basis
at $p$ (for $D$) 
\[
\Btilde=\{\Ltildep 1,\ldots,\Ltildep{n-1}\}\mbox{ with }\Ltildep i=\frac{1}{\left\Vert L_{i}^{\rho}\circ\pi\right\Vert }\left(L_{i}^{\rho}\circ\pi+(\beta_{i}\circ\pi)N^{\rho}\circ\pi\right).
\]

Note that \lemref{comparison-F-rho-phi-F-Omega-5.3} and \lemref{brackets-L-rho-5.4}
are proved for the vector fields $L_{i}^{\rho}$ but it is easy to
see that they are also valid for the vector fields $L_{i}^{\rho}/\left\Vert L_{i}^{\rho}\right\Vert $.

To simplify the notations, in the remainder of the proof, the vector
fields $\frac{L_{i}^{\rho}}{\left\Vert L_{i}^{\rho}\right\Vert }$
will be denoted by $L_{i}^{\rho}$, and the function $\frac{\beta_{i}}{\left\Vert L_{i}^{\rho}\right\Vert }$
will be denoted $\beta_{i}$ so that $\Ltildep i=\left(L_{i}^{\rho}\circ\pi+(\beta_{i}\circ\pi)N^{\rho}\circ\pi\right)$.
\begin{spprop}
The basis $\Btilde$ is $(K',p,\delta)$-extremal for a constant $K'$
depending only on $K$ and the data.\end{spprop}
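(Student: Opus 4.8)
The plan is to push everything down to the point $q=\pi(p)\in\partial\Omega$ by means of the auxiliary basis $\brond^{\rho}=\{L_{1}^{\rho},\dots,L_{n-1}^{\rho}\}$ and the modified weights $F^{\rho\varphi}$, exploiting the dictionary already set up in this sub-section. The first step is to translate weights: for any $(1,0)$ field $\Ltilde$ tangent to $r$ near $p$ with $1/K\le\left\Vert\Ltilde\right\Vert\le K$ and controlled $\Crond^{2M}$ norm, \prettyref{prop|relation-lists-tilde-lists-rho} gives $F(\Ltilde,p,\delta)\simeq F(L^{\rho},q,\delta)+F^{\varphi}(\Ltilde,p,\delta)$, where $L^{\rho}$ is the field associated to $\Ltilde$; since $F^{\varphi}(\Ltilde,p,\delta)=\frac{\varphi'(|p|^{2})}{\delta}+\bigl|\langle L^{\rho}(q),p\rangle\bigr|^{2}\frac{\varphi''(|p|^{2})}{\delta}+\delta^{-1/M}$ (recall $\pi(p)=q$) and the minoration $F(L^{\rho},q,\delta)\gtrsim\delta^{-2/M}$ furnished by \prettyref{prop|finite-type-imply-large-F(L,p,delta)} (applied to the extremal basis $\brond^{\Omega}$ at $q$) absorbs the term $\delta^{-1/M}$, this reads $F(\Ltilde,p,\delta)\simeq F^{\rho\varphi}(L^{\rho})$. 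Because $\Ltilde\mapsto L^{\rho}$ is linear, a combination $\sum a_{i}\Ltildep{i}$ with $\sum|a_{i}|^{2}=1$ has controlled norm and corresponds to $\sum a_{i}L_{i}^{\rho}$; hence $F(\Ltildep{i},p,\delta)\simeq F_{i}^{\rho\varphi}$ and $F\!\bigl(\sum a_{i}\Ltildep{i},p,\delta\bigr)\simeq F^{\rho\varphi}\!\bigl(\sum a_{i}L_{i}^{\rho}\bigr)$, all constants depending only on $K$ and the data.

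With this reduction, condition EB$_{\text{1}}$ for $\Btilde$ becomes the linear--algebraic statement $F^{\rho\varphi}\!\bigl(\sum a_{i}L_{i}^{\rho}\bigr)\simeq\sum|a_{i}|^{2}F_{i}^{\rho\varphi}$. This is exactly the conclusion of the Gram--Schmidt mechanism used to prove EB$_{\text{1}}$ for $\brond_{1}$ in the proof of \prettyref{prop|extremality-and-normalization}, fed now by \prettyref{lem|comparison-F-rho-phi-F-Omega-5.3}: part~(2) of that lemma says the weights $F_{i}^{\rho\varphi}$ are decreasing in $i$, and part~(1) says each $L_{i}^{\rho}$ realises, up to a fixed constant, the minimum of $F^{\rho\varphi}$ over the unit combinations of $\brond^{\Omega}$ orthogonal at $q$ to $L_{i+1}^{\rho},\dots,L_{n-1}^{\rho}$; the decreasing property is what lets one absorb the cross terms in the upper bound exactly as there (the rank--one piece $\bigl|\langle\cdot(q),p\rangle\bigr|^{2}\varphi''/\delta$ causes no trouble because $\bigl|\sum a_{i}c_{i}\bigr|^{2}\le(n-1)\sum|a_{i}|^{2}|c_{i}|^{2}$, and the piece $\varphi'/\delta$ is constant on the unit sphere).

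For EB$_{\text{2}}$ I would express the structure functions of $\Btilde$ in terms of those of $\brond^{\rho}$. Writing $\Ltildep{i}=L_{i}^{\rho}\circ\pi+(\beta_{i}\circ\pi)N^{\rho}\circ\pi$ and substituting $L_{t}^{\rho}\circ\pi=\Ltildep{t}-(\beta_{t}\circ\pi)N^{\rho}\circ\pi$ after expanding the bracket $[\LTB{i},\LTB{j}]$, one sees that the coefficient $a_{\PB{i}\PB{j}}^{\PB{k}}$ of $[\LTB{i},\LTB{j}]$ along $\LTB{k}$ is, modulo the $O(\varphi)$--errors produced by converting the derivation $\pi_{*}$ (as in $(L^{\rho}\circ\pi)(f\circ\pi)-(L^{\rho}f)\circ\pi=O_{f}(\varphi)$) and by passing between the complex normals of $r$ and of $\rho$, equal to the coefficient $b_{\PB{i}\PB{j}}^{\PB{k}}$ of $[\LBB{i}{\rho},\LBB{j}{\rho}]$ along $\LBB{k}{\rho}$, plus contributions coming from the functions $\beta_{t}$. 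One then applies a list $\Lrondt\in\Lrond_{M}(\Btilde\cup\{\Ltildep{n}\})$ and converts it into a list on $\partial\Omega$ exactly as in \prettyref{prop|comparison-lists-D-lists-up-5.1} (each of the at most $M$ conversions costing only a bounded factor and an $O(\varphi)$ term, the case $\tilde l_{n}\neq0$ being trivial): the $b$--part is then controlled by \prettyref{lem|brackets-L-rho-5.4} in terms of the $F^{\rho\varphi}$, and the $\beta_{t}$--part by the estimate $\bigl|\Lrondt(\beta_{i}\circ\pi)(z)\bigr|\lesssim F^{\varphi}(z,\delta)^{\Lrondt/2}F_{i}^{\varphi}(z,\delta)^{1/2}$ obtained at the end of the proof of \prettyref{prop|comparison-lists-D-lists-up-5.1}, together with $F_{i}^{\varphi}(p,\delta)\lesssim F_{i}^{\rho\varphi}$. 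Rewriting the resulting bound with $F(\Ltildep{m},p,\delta)$ in place of $F_{m}^{\rho\varphi}$ yields EB$_{\text{2}}$. Finally the $\Crond^{2M}$--norm of each $\Ltildep{i}$ is controlled because the $L_{i}^{\rho}$ are \emph{constant} combinations of the $L_{i}^{\Omega}$, the maps $\pi$ and $N^{\rho}$ are fixed and smooth, and all derivatives of $\varphi$ are bounded on $\overline{B(0,2\mu)}$ (so $\beta_{i}$ is smooth with controlled norm); the Jacobian of $\Btilde$ is bounded below because $\{L_{i}^{\rho}(q)\}_{i<n}$ is orthonormal and the projection onto $T^{1,0}(\partial D)$ composed with $\pi$ is an isomorphism with controlled norms; and by the remark following \prettyref{def|basis-extremal} the particular smooth extension of the $\Ltildep{i}$ off $\partial D$ is irrelevant.

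The main obstacle is the bookkeeping in EB$_{\text{2}}$: one must verify that each correction term produced by the $(\beta_{t}\circ\pi)N^{\rho}\circ\pi$ pieces and by pushing the list of vector fields through $\pi$ is absorbed into the target bound ($\delta$ times the appropriate product of $F^{\rho\varphi}$--weights) and, crucially, that the $F^{\varphi}$--weights introduced whenever $\beta_{i}$ is differentiated never exceed the corresponding $F_{i}^{\rho\varphi}$ — this is precisely what the defining dichotomy in the construction of $L_{n-k}^{\rho}$ (comparing $\sum|a_{i}^{n-k}|^{2}F(L_{i}^{\Omega},q,\delta)$ with $\frac{\varphi''(|p|^{2})}{\delta}\bigl|\langle T_{n-k}(q),p\rangle\bigr|^{2}$) guarantees. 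Everything else is routine once \prettyref{prop|relation-lists-tilde-lists-rho}, \prettyref{lem|comparison-F-rho-phi-F-Omega-5.3}, \prettyref{lem|brackets-L-rho-5.4} and \prettyref{prop|comparison-lists-D-lists-up-5.1} are in hand.
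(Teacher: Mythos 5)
Your reduction via \prettyref{prop|relation-lists-tilde-lists-rho} to the statement $F^{\rho\varphi}\bigl(\sum a_{i}L_{i}^{\rho}\bigr)\simeq\sum|a_{i}|^{2}F_{i}^{\rho\varphi}$ is correct and is indeed the first step of the paper's argument, and your outline of EB$_{2}$ (expand the bracket coefficients, push lists through $\pi$, invoke \prettyref{prop|comparison-lists-D-lists-up-5.1} and \prettyref{lem|brackets-L-rho-5.4}) matches what the paper does. But your proof of EB$_{1}$ has a genuine gap. You assert that it ``is exactly the conclusion of the Gram--Schmidt mechanism used to prove EB$_{1}$ for $\brond_{1}$ in the proof of \prettyref{prop|extremality-and-normalization}.'' That mechanism has two indispensable inputs, and neither is available here. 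First, it starts from a basis that is \emph{already} extremal for the weights in question, and proves that a Gram--Schmidt orthogonalization remains extremal. Here the natural starting basis $\brond^{\Omega}$ is \emph{not} $F^{\rho\varphi}$-extremal: writing $L=\sum a_{i}L_{i}^{\Omega}$ and $c_{i}=\langle L_{i}^{\Omega}(q),p\rangle$, the rank-one term contributes $\bigl|\sum a_{i}c_{i}\bigr|^{2}\varphi''/\delta$ to $F^{\rho\varphi}(L)$ but $\sum|a_{i}|^{2}|c_{i}|^{2}\varphi''/\delta$ to $\sum|a_{i}|^{2}F^{\rho\varphi}(L_{i}^{\Omega})$, and these can differ by an arbitrary factor when there is cancellation in $\sum a_{i}c_{i}$. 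Fixing exactly this failure is the purpose of the construction of $\brond^{\rho}$. Second, the Gram--Schmidt argument uses the lower-triangular form $L_{i}^{1}=\sum_{j\geq i}\alpha_{i}^{j}L_{j}$ to get the lower bound $\bigl|\sum_{i\le k}a_{i}\alpha_{i}^{k}\bigr|\ge c|a_{k}|-\sum_{i<k}|a_{i}|$; but the $L_{i}^{\rho}$ are \emph{not} a triangular change of basis of $\brond^{\Omega}$ — $W_{n-1}$, say, is a generic unit combination of all the $L_{j}^{\Omega}$, and so is $W_{n-k}$ — so this bound is unavailable.

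The Cauchy--Schwarz inequality you invoke only delivers the \emph{upper} bound $F^{\rho\varphi}\bigl(\sum a_{i}L_{i}^{\rho}\bigr)\lesssim\sum|a_{i}|^{2}F_{i}^{\rho\varphi}$ (which the paper dispatches in one line via \prettyref{eq|F-of-X-pluY-Omega}), and the minimizing property in \prettyref{lem|comparison-F-rho-phi-F-Omega-5.3}(1) does not by itself yield the \emph{lower} bound: that lemma controls $F^{\rho\varphi}$ on unit vectors orthogonal to $L_{i+1}^{\rho},\dots,L_{n-1}^{\rho}$, but there is no monotonicity of $F^{\rho\varphi}$ under the orthogonal projection onto $\mathrm{span}(L_{1}^{\rho},\dots,L_{i}^{\rho})$ because of the rank-one piece, so one cannot pass from $\sum_{j\le i}a_{j}L_{j}^{\rho}$ to the full sum. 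The actual proof requires the term-by-term induction $F\bigl(\sum_{i\le n-k}\alpha_{i}\Ltildep{i}\bigr)\simeq F\bigl(\sum_{i\le n-k-1}\alpha_{i}\Ltildep{i}\bigr)+|\alpha_{n-k}|^{2}F(\Ltildep{n-k})$, with a case split on whether $L_{n-k}^{\rho}=T_{n-k}$ or $W_{n-k}$, the decomposition $L=\alpha(L_{n-k}^{\rho}+\gamma H)$ with $H\in\mathcal{H}_{n-k}$, and, in the $W_{n-k}$ case, a proof by contradiction against the minimality of $W_{n-k}$ (introducing the auxiliary vector $W=X^{\rho}+\alpha_{n-k}L_{n-k}^{\rho}+\mu T_{n-k}$ with $\langle W(q),p\rangle=0$ and showing $F(W,q,\delta)\ll F(W_{n-k},q,\delta)$). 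Your proposal replaces all of this with a reference to a lemma whose hypotheses do not hold here, so the core of EB$_{1}$ is missing.
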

\begin{proof}
We first prove condition EB$_{\text{1}}$, that is, if $\alpha_{i}$
are complex numbers then
\[
F\left(\sum_{i=1}^{n-1}\alpha_{i}\Ltildep i,p,\delta\right)\simeq\sum_{i=1}^{n-1}\left|\alpha_{i}\right|^{2}F\left(\Ltildep i,p,\delta\right).
\]
 By induction, it suffices to see that, for all $k$,
\[
F\left(\sum_{i=1}^{n-k}\alpha_{i}\Ltildep i,p,\delta\right)\simeq F\left(\sum_{i=1}^{n-k-1}\alpha_{i}\Ltildep i,p,\delta\right)+\left|\alpha_{n-k}\right|^{2}F\left(\Ltildep{n-k},p,\delta\right).
\]

To simplify notations we write $\widetilde{X}=\sum_{i=1}^{n-k-1}\alpha_{i}\Ltildep i$
and $X^{\rho}=\sum_{i=1}^{n-k-1}\alpha_{i}L_{i}^{\rho}$. By \propref{relation-lists-tilde-lists-rho},
we have to prove that
\begin{multline}
F(X^{\rho}+\alpha_{n-k}L_{n-k}^{\rho},q,\delta)+\frac{\varphi''(\left|p\right|^{2})}{\delta}\left|\left\langle \left(X^{\rho}+\alpha_{n-k}L_{n-k}^{\rho}\right)\circ\pi(p),p\right\rangle \right|+\frac{\varphi'(\left|p\right|^{2})}{\delta}\\
\simeq F(X^{\rho},q,\delta)+\left|\alpha_{n-k}\right|^{2}F(L_{n-k}^{\rho},q,\delta)+\frac{\varphi''(\left|p\right|^{2})}{\delta}\left(\left|\left\langle (X^{\rho}\circ\pi)(p),p\right\rangle \right|^{2}+\left|\alpha_{n-k}\right|^{2}\left|\left\langle (L_{n-k}^{\rho}\circ\pi)(p),p\right\rangle \right|^{2}\right)+\frac{\varphi'(\left|p\right|^{2})}{\delta}.\label{eq:proof-prop-B-tilde-extremal-A}
\end{multline}

Indeed, if $\beta(q)=\frac{\left\Vert \sum_{i=1}^{t}\alpha_{i}L_{i}^{\rho}\right\Vert (q)}{\left\Vert \sum_{i=1}^{t}\alpha_{i}L_{i}^{\rho}\right\Vert (p)}$, then the $\MR{C}^{M}$ norm of $\beta^{-1}$ is controlled by $K$ and $F\left(\beta^{-1}\sum_{i=1}^{t}\alpha_{i}\widetilde{L_{i}},p,\delta\right)\simeq_{K}F\left(\sum_{i=1}^{t}\alpha_{i}\widetilde{L_{i}},p,\delta\right)$.

Note that if $Y$ and $Z$ are two linear combinations (with
constant coefficients) of the $L_{i}^{\Omega}$, by extremality, $F(Y+Z,q,\delta)\leq K^{2}\left[F(Y,q,\delta)+F(Z,q,\delta)\right]$,
and then 
\begin{equation}
F(Y+Z,q,\delta)\geq\frac{1}{K^{2}}F(Y,q\delta)-F(Z,q,\delta).\label{eq:F-of-X-pluY-Omega}
\end{equation}
This implies that the left hand side of (\ref{eq:proof-prop-B-tilde-extremal-A})
is $\lesssim$ than the right hand side, and we have only to prove the
converse inequality. To do it, we consider separately the two possibilities
for $L_{n-k}^{\rho}$.

Suppose first $L_{n-k}^{\rho}=T_{n-k}$.

If the right hand side of (\ref{eq:proof-prop-B-tilde-extremal-A})
is equivalent to $F(X^{\rho},q,\delta)+\left|\alpha_{n-k}\right|^{2}F(L_{n-k}^{\rho},q,\delta)$,
by (\ref{eq:F-of-X-pluY-Omega}), we have only to consider the case
when $F(X^{\rho},q,\delta)\simeq\left|\alpha_{n-k}\right|^{2}F(L_{n-k}^{\rho},q,\delta)$.
Using that $F(T_{n-k},q,\delta)\lesssim F(L_{n-k}^{\Omega},q,\delta)$,
\lemref{comparison-F-rho-phi-F-Omega-5.3} gives the result. 

Suppose now that the right hand side of (\ref{eq:proof-prop-B-tilde-extremal-A})
is equivalent to 
\[
\frac{\varphi''(\left|p\right|^{2})}{\delta}\left(\left|\left\langle (X^{\rho}\circ\pi)(p),p\right\rangle \right|^{2}+\left|\alpha_{n-k}\right|^{2}\left|\left\langle (L_{n-k}^{\rho}\circ\pi)(p),p\right\rangle \right|^{2}\right).
\]
 Then, we only have to consider the case when $\left\langle (X^{\rho}\circ\pi)(p),p\right\rangle =-(1+\varepsilon)\alpha_{n-k}\left\langle (L_{n-k}^{\rho}\circ\pi)(p),p\right\rangle $,
with $\varepsilon$ small. Then if $W$ is the vector field $X^{\rho}+(1+\varepsilon)\alpha_{n-k}L_{n-k}^{\rho}$
normalized at $q$, $W\in\mathcal{H}_{n-k}$ and thus $F(W,q,\delta)\geq\frac{\varphi''}{\delta}\left|\left\langle T_{n-k}(q),p\right\rangle \right|^{2}=\frac{\varphi''}{\delta}\left|\left\langle L_{n-k}^{\rho}(q),p\right\rangle \right|^{2}$.
Then $F(X^{\rho},q,\delta)\gtrsim\frac{1}{K^{2}}\left(\frac{\varphi''}{\delta}\left|\left\langle L_{n-k}^{\rho}(q),p\right\rangle \right|^{2}\right)-2F(L_{n-k}^{\rho},q,\delta)$,
and the conclusion follows.

To finish, suppose that $L_{n-k}^{\rho}=W_{n-k}$.

If the right hand side of (\ref{eq:proof-prop-B-tilde-extremal-A})
is equivalent to $\frac{\varphi''(\left|p\right|^{2})}{\delta}\left(\left|\left\langle (X^{\rho}\circ\pi)(p),p\right\rangle \right|^{2}+\left|\alpha_{n-k}\right|^{2}\left|\left\langle (L_{n-k}^{\rho}\circ\pi)(p),p\right\rangle \right|^{2}\right)$,
there is nothing to do because $\left\langle L_{n-k}^{\rho}\circ\pi(p),p\right\rangle =0$.

Suppose then that the right hand side of (\ref{eq:proof-prop-B-tilde-extremal-A})
is equivalent to $F(X^{\rho},q,\delta)+\left|\alpha_{n-k}\right|^{2}F(L_{n-k}^{\rho},q,\delta)$.
As before, the conclusion is evident except if $F(X^{\rho},q,\delta)\simeq\left|\alpha_{n-k}\right|^{2}F(L_{n-k}^{\rho},q,\delta)$.
Suppose 
\[
F(X^{\rho}+\alpha_{n-k}L_{n-k}^{\rho},q,\delta)+\frac{\varphi'(\left|p\right|^{2})}{\delta}\ll\left|\alpha_{n-k}\right|^{2}F(W_{n-k},q,\delta).
\]
 Note that $\left\langle T_{n-k}(q),p\right\rangle \neq0$, and we
can define $W=X^{\rho}+\alpha_{n-k}L_{n-k}^{\rho}+\mu T_{n-k}$ such
that $\left\langle W(q),p\right\rangle =0$. Then by \lemref{comparison-F-rho-phi-F-Omega-5.3},
\[
\left|\alpha_{n-k}\right|^{2}F(W_{n-k},q,\delta)\gg F(L_{n-k}^{\Omega},q,\delta),
\]
and (extremality of $\brond^{\Omega}$) $\left|\left\langle T_{n-k}(q),p\right\rangle \right|^{2}\frac{\varphi''\left(\left|p\right|^{2}\right)}{\delta}>\frac{1}{K}(F(W_{n-k},q,\delta)-KF(L_{n-k}^{\Omega},q,\delta)$.
From this we deduce $\left|\mu\right|\ll\left|\alpha_{n-k}\right|$
and $W$ is of norm almost $1$ at $q$. Then
\begin{eqnarray*}
F(W,q,\delta) & \leq & K^{2}\left(F(X^{\rho}+\alpha_{n-k}L_{n-k}^{\rho},q,\delta)+\left|\mu\right|^{2}F(T_{n-k},q,\delta)\right)\\
 & \ll & \left|\alpha_{n-k}\right|^{2}\left(F(W_{n-k},q,\delta)+F(L_{n-k}^{\Omega},q,\delta)\right),
\end{eqnarray*}
 because $T_{n-k}\in\MR{E}\left(L_{n-k}^{\Omega},\ldots,L_{n-1}^{\Omega}\right)$,
and thus $F(W,q,\delta)\ll F(W_{n-k},q,\delta)$ which contradicts
the definition of $W_{n-k}$.

To see that $\widetilde{\brond}$ satisfy EB$_{\text{2}}$, a simple
computation shows that it suffices to apply \lemref{brackets-L-rho-5.4}
and \propref{comparison-lists-D-lists-up-5.1}.
\end{proof}

Then, by \lemref{multiplication-fields-by-function-Cinfinity} we
conclude:
\begin{spprop}
\label{prop:final-extremal-basis-local-domain}The basis $\brond$
previously defined by $\brond=\left\{ L_{i},\ldots,l_{n-1}\right\} $,
with $L_{i}=L_{i}^{\rho}\circ\pi+\left(\beta_{i}\circ\pi\right)N^{\rho}\circ\pi$
is $(K',p,\delta)$-extremal for a constant $K'$ depending on the
constant $K$ of extremality of $\brond^{\Omega}$ and the data.
\end{spprop}
Now the proof of \thmref{existence-ext-basis-local-domain} is complete.

\section{Geometrically separated domains\label{sec:Geometrically-separated-domains}}

\subsection{Definition and examples\label{sec:geom-sep-def-examples}}
\begin{sddefn}
\label{def:def-geometrically-separable-domain}Let $\Omega=\{\rho<0\}$
be a bounded pseudo-convex domain with ${\MR{C}}^{\infty}$ boundary
($\nabla\rho\neq0$ in a neighborhood of $\partial\Omega$). We say that
$\Omega$ is \emph{$K$-geometrically separated} at $p_{0}\in\partial\Omega$
if $p_{0}$ is a point of finite $1$-type $\tau$ and there exist
two neighborhoods of $p_{0}$, $W(p_{0})\Subset V(p_{0})$, a constant
$\delta_{0}>0$, a constant $K>0$, an integer $M$ larger than $\tau+1$
and a basis $\brond^{0}=\{L_{1}^{0},\ldots,L_{n-1}^{0}\}$ of $(1,0)$
vector fields tangent to $\rho$ in $V(p_{0})$, whose $\MR{C}^{2M}$
norm are bounded by $K$ and their ``determinant'' bounded from below
by $1/K$, and a positive real number $\delta_{0}$ such that:

For each point $p\in W(p_{0})\cap\partial\Omega$ and each $\delta$,
$0<\delta<\delta_{0}$, there exits a $(M,K,p,\delta)$ extremal basis
$\brond(p,\delta)=\{L_{1}^{p,\delta},\ldots,L_{n-1}^{p,\delta}\}$
such that, for each $i$, the vector field $L_{i}^{p,\delta}$ can
be written (on $V(p_{0})$) $L_{i}^{p,\delta}=\sum_{j}a_{i}^{j}L_{j}^{0}$
with $a_{i}^{j}\in\mathbb{C}$, $\sum\left|a_{i}\right|^{2}=1$. In
other words, the $L_{i}^{p,\delta}$ are normalized vector fields
belonging to the vector space $E_{0}$ generated by $\brond^{0}$.
\end{sddefn}
A notable property (that will not be used later) of these domains
is that the weights $F_{i}$ satisfy a better estimate than the one
given in \propref{finite-type-imply-large-F(L-p-delta)}:
\begin{prop*}
Suppose $\Omega$ is geometrically separated at $p_{0}$ (of type
$\tau$). Then for $V(p_{0})$ and $\delta_{0}$ sufficiently small,
there exists a constant $C>0$ depending only on $K$ and $\Omega$,
such that the extremal basis $\brond(p,\delta)=\left\{ L_{i}^{p,\delta},\,1\leq i\leq n-1\right\} $,
$p\in W(p_{0})\cap\partial\Omega$, $0<\delta<\delta_{0}$, satisfies
$F_{M}(L_{i}^{p,\delta},p,\delta)\geq C\delta^{-2/\tau+1}$, for all
$i$ and all $\delta\in\left[0,\delta_{0}\right]$, with $M=\left[\tau\right]+1$.\end{prop*}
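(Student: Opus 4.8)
\smallskip
\noindent The plan is to reduce the statement, via the definition of the weights, to a pointwise fact about short commutators, then to prove that fact at the centre $p_{0}$ alone (where the $1$-type is exactly $\tau$), and finally to transport it to the nearby points by a compactness argument that geometric separation makes available. First I would note that $F_{M}(L,p,\delta)=\sum_{\Lrond\in\Lrond_{M}(L)}\left|\Lrond(\partial\rho)(p)/\delta\right|^{2/\left|\Lrond\right|}$ is a sum of non-negative terms, so it is enough to exhibit, for $L=L_{i}^{p,\delta}$ and all admissible $(p,\delta,i)$, one list $\Lrond_{0}\in\Lrond_{M}(L)$ with $\left|\Lrond_{0}\right|\le[\tau]+1$ and $\left|\Lrond_{0}(\partial\rho)(p)\right|\ge c_{0}$, where $c_{0}>0$ depends only on $K$ and the data. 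Granting this and choosing $\delta_{0}\le c_{0}$, one gets $F_{M}(L_{i}^{p,\delta},p,\delta)\ge(c_{0}/\delta)^{2/\left|\Lrond_{0}\right|}\ge(c_{0}/\delta)^{2/(\tau+1)}=c_{0}^{2/(\tau+1)}\delta^{-2/(\tau+1)}$, since $c_{0}/\delta\ge1$ and $\left|\Lrond_{0}\right|\le[\tau]+1\le\tau+1$; this is also where the value $M=[\tau]+1$ in the statement comes from.

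Next I would establish the pointwise fact \emph{at} $p_{0}$. For a $(1,0)$ field $L$ tangent to $\rho$ and a point $q$ put $t(L,q)=\min\{\left|\Lrond\right|:\Lrond\in\Lrond_{M}(L),\ \Lrond(\partial\rho)(q)\ne0\}$, finite since $\Omega$ is of finite type near $p_{0}$. Because $\Omega$ is pseudo-convex and $p_{0}$ is of $1$-type $\tau$, the directional commutator type is dominated by the $1$-type, $t(L,p_{0})\le\tau$ for every such $L$ (by the classical chain of inequalities ``commutator type $\le$ regular type $\le$ $1$-type'' valid for pseudo-convex domains; cf.\ Bloom--Graham and D'Angelo), hence $t(L,p_{0})\le[\tau]$; in particular this holds for every \emph{normalized field of} $E_{0}$, i.e.\ every $L=\sum_{j}a_{j}L_{j}^{0}$ with $\sum_{j}\left|a_{j}\right|^{2}=1$. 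It is crucial that this is extracted only at $p_{0}$: at a general nearby point the $1$-type may jump up to $2(\tau/2)^{n-1}$ by D'Angelo's openness theorem, which is precisely the loss present in \prettyref{prop|finite-type-imply-large-F(L,p,delta)}.

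Finally I would transport this to the nearby points. For each fixed list $\Lrond$ the map $(L,q)\mapsto\Lrond(\partial\rho)(q)$ is continuous ($L$ in the $\MR C^{2M}$ topology, $q$ ranging over $\partial\Omega$ near $p_{0}$), so $\{(L,q):t(L,q)\le[\tau]\}$ is open; by the previous step it contains the \emph{compact} set $\{(\sum_{j}a_{j}L_{j}^{0},\,p_{0}):\sum_{j}\left|a_{j}\right|^{2}=1\}$ --- and it is exactly geometric separation (the $L_{i}^{p,\delta}$ being normalized vectors of the \emph{finite-dimensional} space $E_{0}$) that makes this set compact. Shrinking $V(p_{0})$, $W(p_{0})$ and $\delta_{0}$ accordingly, we obtain $t(L_{i}^{p,\delta},p)\le[\tau]$ for all $p\in W(p_{0})\cap\partial\Omega$, $0<\delta\le\delta_{0}$ and all $i$, and, by compactness once more, a uniform lower bound $\left|\Lrond_{0}(\partial\rho)(p)\right|\ge c_{0}>0$ for a minimizing list $\Lrond_{0}$; this is what the first step required. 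The main obstacle is the sharp bound $t(L,p_{0})\le\tau$ together with the observation that geometric separation reduces the whole problem to the single point $p_{0}$; once these are in place, the remaining upper-semicontinuity and compactness bookkeeping, and the elementary manipulation of the weights, are routine.
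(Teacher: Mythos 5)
Your first and last steps track the paper's own proof in spirit: the reduction to a uniform lower bound $\left|\Lrond(\partial\rho)(p)\right|\ge c_{0}>0$ for some list of length at most $[\tau]+1$, and the transport from $p_{0}$ to nearby points by compactness of the unit sphere of the finite--dimensional space $E_{0}$ --- which is exactly where geometric separation is used; the paper does the same thing, phrased as extracting a convergent subsequence from a hypothetical sequence $\left(L_{i}^{p_{m},\delta_{m}}\right)_{m}$ violating the bound. The gap is step 2, which carries all the weight. You assert $t(L,p_{0})\le\tau$ for every normalized $L\in E_{0}$ on the strength of a ``classical chain of inequalities commutator type $\le$ regular type $\le$ $1$-type'', citing Bloom--Graham and D'Angelo. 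No such \emph{directional} inequality is off the shelf. Bloom--Graham compare the $1$-type with the \emph{pointwise} commutator type (iterated brackets of \emph{all} tangential $(1,0)$ and $(0,1)$ fields), not with the commutator type $t(L,p)$ along one fixed $L$; the equality of \emph{directional} types you need is Bloom's theorem \cite{Bloom-81-finite-type-C3}, proved for pseudo-convex hypersurfaces in $\mathbb{C}^{3}$ only (and the paper invokes it only in a $\mathbb{C}^{3}$ example). In $\mathbb{C}^{n}$, $n\ge4$, the iterated brackets of $L$ and $\overline{L}$ spill into the other complex tangent directions, whose derivatives of $\rho$ are a priori uncontrolled and can cancel the leading pure $L,\overline{L}$ derivative provided by the regular contact order; so the vanishing of the commutator weight $F_{M}(L,p_{0},\delta)$ does not, by any classical theorem, contradict $\Delta_{1}(p_{0})=\tau$.

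The paper closes this step by a genuinely different mechanism, and this is exactly what your proof omits: it writes the limit field $L=\sum b_{k}L_{k}^{p_{0}}$ in the extremal basis at $p_{0}$, uses EB$_{\text{1}}$ at $(p_{0},\delta)$ to single out a basis vector $L_{k}^{p_{0}}$ with vanishing weight, and then applies property (4) of the $\delta$-adapted coordinate system of Definition \ref{def|basis-and-coordinates-adapted} --- whose existence is Theorem \ref{thm|existence-coordinate-system}, resting on EB$_{\text{1}}$, EB$_{\text{2}}$ and Catlin's theorem from \cite{Catlin-Est.-Sous-ellipt.} --- to convert that vanishing into the vanishing to order $M=[\tau]+1>\tau$ of $\rho\circ(\Phi_{p_{0}}^{\delta})^{-1}$ along the $z_{k}$-axis, i.e.\ along a polynomial regular curve, which does contradict the $1$-type. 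The adapted-coordinate argument, powered by the extremality conditions, is precisely what replaces the inequality you treated as classical; without it your proof is missing the non-routine core of the Proposition.
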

\begin{proof}
Suppose there exists a sequence of points $p_{m}$ converging to $p_{0}$,
a sequence $\delta_{m}$ in $]0,\delta_{0}[$ and an integer $i\leq n-1$
such that, denoting $\brond(p_{m},\delta_{m})=\left(L_{1}^{m},\ldots,L_{n-1}^{m}\right)$
the $(M,K,p_{m},\delta_{m})$-extremal basis at $p_{m}$, we have
$\sum_{\Lrond\in\Lrond_{M}(L_{i}^{m})}\left|\Lrond(\partial\rho)(p_{m})\right|\leq1/m$.
Then $L_{i}^{m}=\sum a_{i}^{j}(p_{m})L_{j}^{0}$, $\sum\left|a_{i}^{j}(p_{m})\right|^{2}=1$,
and we may suppose that the sequences $n\mapsto a_{i}^{j}(p_{m})$
converge to complex numbers $a^{j}$ satisfying $\sum\left|a^{j}\right|^{2}=1$.
Then, by uniform convergence, the vector field $L=\sum a^{j}L_{j}^{0}$
satisfies $F_{M}(L,p_{0},\delta)=0$, for all $\delta$. But, we have
$L=\sum b_{k}L_{k}^{p_{0}}$, $\sum\left|b_{k}\right|^{2}\geq_{K}1$,
and, by extremality $F(L,p_{0},\delta)\simeq_{K}\sum\left|b_{k}\right|^{2}F_{M}(L_{k}^{p_{0}},p_{0},\delta)$,
thus there exists $k$ such that $F_{M}(L_{k}^{p_{0}},p_{0},\delta)=0$,
i. e. $\sum_{\Lrond\in\Lrond_{M}(L_{k}^{p_{0}})}\left|\Lrond(\partial\rho)(p_{0})\right|=0$.
Then, by (4) of \defref{basis-and-coordinates-adapted} this contradicts
the definition of the $1$-type.
\end{proof}

Thus, in all the paper, for a geometrically separated domain at a
boundary point $p_{0}$, the integer $M$ could be changed to $\left[\tau\right]+1$.
As this change gives no advantage, we will keep $M=M'(\tau)$ and
then we can apply directly the results of the preceding Sections.
\begin{srrem}
\label{rem:extremal-basis-inside-vs-boundary}Suppose $\Omega$ is
geometrically separated at $p_{0}\in\partial\Omega$. Let $p$ be
a point of $\overline{\Omega}\cap W(p_{0})$. If $\pi$ is the projection
onto $\partial\Omega$ defined in \secref{Preliminary-remarks-extremal-basis-local-domain-D}
let $q=\pi(p)$. Then, reducing $W(p_{0})$ and $\delta_{0}$ if necessary,
if $-\frac{1}{3}\rho(p)<\delta<\delta_{0}$, the basis $B(q,\delta)=(L_{1}^{q,\delta},\ldots,L_{n-1}^{q,\delta})$
is clearly $(2K,p,\delta)$-extremal, and $F_{M}(L_{i}^{q,\delta},p,\delta)\geq C'\delta^{-2/\tau+1}$
for a constant $C'$ depending only on $K$ and the data. \emph{Thus
we will always assume that a geometrically separated domain is equipped,
by definition, with extremal bases of the form given in the definition,
at every point of} $V(p_{0})\cap\overline{\Omega}$ \emph{for} $-\frac{1}{3}\rho(p)<\delta<\delta_{0}$.
\end{srrem}
This is clear, because if $\Lrond\in\Lrond_{M}(\brond)$, then $\left|\Lrond(\partial\rho)(p)-\Lrond(\partial\rho)(\pi(p))\right|=O(\delta)$,
where $O$ depends only on $K$ and $\Omega$. Then EB$_{\text{1}}$
is satisfied because $F_{i}(p,\delta)\geq C\delta^{-2/M}$ with $C$
depending only on $\Omega$ and EB$_{\text{2}}$ is also satisfied
because $F_{k}(p,\delta)\leq\delta^{-2}$ ($\delta_{0}$ small enough).
\begin{sexmexample}
\label{exa:Example-geom-sep-domains}~
\begin{enumerate}
\item The three first examples of extremal basis given in \exaref{Example-extremal-basis}
immediately show that, if $p_{0}$ is a point of finite type of $\partial\Omega$
then $\Omega$ is geometrically separated at $p_{0}$, under one of
the following four conditions:

\begin{enumerate}
\item $\partial\Omega$ is convex near $p_{0}$, or, more generally, lineally
convex near $p_{0}$ (see \secref{Examples-The-lineally-convex-case});
\item The eigenvalues of the Levi form are comparable at $p_{0}$;
\item The Levi form is locally diagonalizable at $p_{0}$.
\item Near $p_{0}$, $\partial\Omega$ belongs to the class introduced by
M. Derridj in \cite{Derridj-Holder-blocs-1999}.
\end{enumerate}
\item Moreover, we will see in \secref{Localization-geom-sep-structure}
that, if $\Omega$ is geometrically separated at $p_{0}$ then the
local domain $D$ defined in \secref{definition-of-the-local-domain}
is geometrically separated at every point of its boundary.
\end{enumerate}
\end{sexmexample}
\begin{sexmexample}
The domain $\Omega=\left\{ z\in\mathbb{C}^{3}\mbox{ such that }\Re\mathrm{e}z_{1}+\left|z_{2}\right|^{6}+\left|z_{3}\right|^{6}+\left|z_{2}z_{3}\right|^{2}<0\right\} $
studied by G. Herbort in \cite{Herbort-logarithm} is not geometrically
separated at $(0,0)$ (see \secref{example-non-geometrically-separated}
for details).
\end{sexmexample}

\subsection{Structure of homogeneous space}

First recall that we define in \secref{polydisc-associated-extremal-basis}
the {}``polydisc'' $B^{c}(\brond,p,\delta)$ (\defref{definition-polydissc-ext-basis})
and the {}``pseudo-balls'' $B_{\mathrm{exp}}^{c}(\brond,p,\delta)$
and $B_{\MR{C}}^{c}(\brond,p,\delta)$ (\defref{definition-pseudo-balls-curves-exp}).

In general, we will just denote by $B_{\mathrm{exp}}^{c}(p,\delta)$
and $B_{\MR{C}}^{c}(p,\delta)$ the pseudo-balls $B_{\mathrm{exp}}^{c}(\brond,p,\delta)$
and $B_{\MR{C}}^{c}(\brond,p,\delta)$ omitting $\brond$, but recall
that, if $\delta_{1}\neq\delta_{2}$, the balls $B_{\mathrm{exp}}^{c}(p,\delta_{1})$
and $B_{\mathrm{exp}}^{c}(p,\delta_{2})$ are not necessarily constructed
with the same basis.

Then by the methods used in \cite{Charpentier-Dupain-Geometery-Finite-Type-Loc-Diag}
(based on the Campbell-Hausdorf formula and the ideas of \cite{Nagel-Stein-Wainger}),
reducing $W(p_{0})$ if necessary, one can prove the following properties
of the balls:
\begin{spprop}
\label{prop:comparison-exp-balls-curves-polydisc}There exist constants
$c_{0}$, $\delta_{0}$, $\alpha$, $\beta$ and $\gamma$ such that,
for $p\in W(p_{0})\cap\partial\Omega$, $\delta\leq\delta_{0}$ and
$c\leq c_{0}$, we have $B_{\mathrm{exp}}^{\alpha c}(p,\delta)\subset B^{c}(p,\delta)\subset B_{\mathrm{exp}}^{\beta c}(p,\delta)$
and $B_{\mathrm{exp}}^{c}(p,\delta)\subset B_{\MR{C}}^{c}(p,\delta)\subset B_{\mathrm{exp}}^{\gamma c}(p,\delta)$.
\end{spprop}
The importance of this Proposition to construct the structure of homogeneous
space is the following: to be able to use Taylor's formula, we have
to work with a coordinates system, which is easy in the sets $B^{c}(p,\delta)$;
the hypothesis of geometric separation and \propref{properties-vector-coord-weights-balls-3-5}
imply that the sets associated to curves are associated to a pseudo-distance;
and, finally, the sets associated to the exponential map are used
to prove that all these sets are equivalent.
\begin{proof}
[Ideas of the proof of \propref{comparison-exp-balls-curves-polydisc}]It
is similar to the proofs of Proposition 3.4 (p. 96) and Lemma 3.16
(p. 101) of \cite{Charpentier-Dupain-Geometery-Finite-Type-Loc-Diag}.
Thus we will only give the main articulations.

The first inclusion comes easily from the control of the coefficients
of the vector fields $L_{i}$ in the coordinate system $(z_{i})$
in the polydisc (\propref{properties-vector-coord-weights-balls-3-5}).
The second one is more complicated.

Let $\mathrm{exp}_{p}$ be the exponential map based at $p$ relatively
to the vector fields $\mathcal{Y}_{i}$ (real an imaginary parts of
the $L_{i}$). Let $\Psi^{p}=\left(\Psi_{i}^{p}\right)_{i=2,\ldots,2n}=\left(\mathrm{exp}_{p}\right)^{-1}$.
We establish the following estimate on the derivatives of the functions
$\Psi_{i}^{p}$: there exist constants $\beta$ and $K_{1}$, depending
on $K$ and the data, such that
\begin{equation}
\mbox{if }q=\mathrm{exp}_{p}(u),\,\max\left\{ \left|u_{i}\right|,\left|u_{i+n}\right|\right\} \leq\beta F_{i}(p,\delta)^{-1/2}\mbox{ then }\left|\mathcal{Y}_{k}\Psi_{j}^{p}(q)\right|\leq K_{1}F_{k}(p,\delta)^{1/2}F_{j}(p,\delta)^{-1/2},\label{eq:proof-comparison-balls}
\end{equation}
 with the notation of \defref{definition-pseudo-balls-curves-exp}.

To prove this, we estimate the derivatives of the exponential map.
Considering, for $u\in\mathbb{R}^{n}$, the vector field $\mathcal{Y}_{u}=\sum u_{i}\mathcal{Y}_{i}$,
the derivatives of $\mathrm{exp}_{p}$ are estimated via the Campbell-Hausdorff
formula. Let $q=q(u)=\mathrm{exp}_{p}(u)$, $\left|u\right|\leq u_{0}$,
\[
\left|d\mathrm{exp}_{p}\left(\frac{\partial}{\partial u_{i}}\right)(u)-\mathcal{Y}_{i}(q)+\sum_{k=2}^{M}\alpha_{k}\left[\mathcal{Y}_{u},\left[\ldots\left[\mathcal{Y}_{u},\mathcal{Y}_{i}\right]\ldots\right]\right](q)\right|\leq C\left|u\right|^{M+1},
\]
 where $\alpha_{k}$ are universal constants corresponding to brackets
of length $k$ (see Lemma 1 (p. 97) of \cite{Charpentier-Dupain-Geometery-Finite-Type-Loc-Diag}).
The brackets are then estimated with \propref{properties-vector-coord-weights-balls-3-5}
and thus (\ref{eq:proof-comparison-balls}) is easily obtained. The second
inclusion of the Proposition is then easily proved.

The equivalence between the sets defined with the exponential map
and the curves is a quite simple consequence of (\ref{eq:proof-comparison-balls}).\end{proof}
\begin{spprop}
\label{prop:pseudodistance-pseudoballs}Let $\Omega$ be a bounded
pseudo-convex domain $K$-geometrically separated at $p_{0}\in\partial\Omega$.
Let $B$ denote one of the sets $B_{\MR{C}}^{c}$, $B_{\mathrm{exp}}^{c}$ or $B^{c}$.
Then there exists a constant $c_{0}>0$, depending on $K$ and the
data such that, for all $c\leq c_{0}$, the sets $B\left(\brond(p,\delta),p,\delta\right)$
are associated to a pseudo-distance in the following sense: there
exists a constant $C$ depending on $K$ and the data (but not on
$c$) such that, if $p\in W(p_{0})\cap\partial\Omega$ and $\delta\leq\delta_{0}$,
and if $q\in B\left(\brond(p,\delta),p,\delta\right)\cap\partial\Omega$,
then
\[
B(\brond(q,\delta),q,\delta)\subset B(\brond(p,\delta),p,C\delta).
\]
\end{spprop}
\begin{rem*}
If we define $\gamma$, on $W(p_{0})\cap\partial\Omega$, by 
\begin{equation}
\gamma(p,q)=\inf\left\{ \delta\mbox{ such that }q\in B(\brond(p,\delta),p,\delta)\right\} ,\label{eq:definition-pseudo-distance}
\end{equation}
then $\gamma$ is a real pseudo-distance.\end{rem*}
\begin{lem*}
\quad\mynobreakpar
\begin{enumerate}
\item For all $A>0$ there exists $B$ depending on $A$ and $K$ such
that
\[
B_{\MR{C}}^{Ac}(\brond(q,\delta),q\delta)\subset B_{\MR{C}}^{c}(\brond(q,B\delta),q,B\delta).
\]
\item For all $B>0$ there exists $C$ depending on $B$ such that
\[
B_{\MR{C}}^{c}(\brond(q,B\delta),q,B\delta)\subset B_{\MR{C}}^{Cc}(\brond(q,\delta),q,\delta).
\]
\end{enumerate}
\end{lem*}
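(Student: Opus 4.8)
The plan is to decouple the passage from $B_{\MR{C}}^{c}(\brond(q,\delta),q,\delta)$ to $B_{\MR{C}}^{c}(\brond(q,B\delta),q,B\delta)$ into two independent moves — a change of scale with the basis frozen, and a change of extremal basis with the scale frozen — and to estimate each separately before chaining them. For the scale move, fix a $(1,0)$ field $L$ tangent to $\rho$ and the point $q$; reading off $F(L,q,\lambda\delta)=\sum_{\Lrond\in\Lrond_{M}(L)}\lambda^{-2/|\Lrond|}\left|\Lrond(\partial\rho)(q)/\delta\right|^{2/|\Lrond|}$ and using $2/M\le 2/|\Lrond|\le1$ for lists of length between $2$ and $M$, one gets, term by term and with constants depending only on $\lambda$ and $M$,
\[
\lambda^{-1}F(L,q,\delta)\le F(L,q,\lambda\delta)\le\lambda^{-2/M}F(L,q,\delta)\qquad(\lambda\ge1).
\]
Substituting the $-1/2$-powers of these into the coefficient bounds of \prettyref{def|definition-pseudo-balls-curves-exp} yields, for \emph{any} basis $\brond$ and $\lambda\ge1$,
\[
B_{\MR{C}}^{c}(\brond,q,\delta)\subseteq B_{\MR{C}}^{\lambda^{-1/M}c}(\brond,q,\lambda\delta),\qquad B_{\MR{C}}^{c}(\brond,q,\lambda\delta)\subseteq B_{\MR{C}}^{\lambda^{1/2}c}(\brond,q,\delta),
\]
the intermediate objects being legitimate since their defining formulas involve only the numbers $F(L_{i},q,\lambda\delta)$, which are positive by the finite-type lower bound (the Proposition opening \prettyref{sec|geom-sep-def-examples}) together with the displayed comparison.

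For the basis move I would use that, since $\Omega$ is $K$-geometrically separated at $p_{0}$, at every scale $\sigma$ the vectors of $\brond(q,\sigma)$ are unit vectors of the fixed space $E_{0}$ spanned by $\brond^{0}$, whose ``determinant'' is $\ge 1/K$; hence for any two scales $\sigma,\sigma'$ the change-of-basis matrix between $\brond(q,\sigma')$ and $\brond(q,\sigma)$ and its inverse are bounded by a constant depending only on $K$. Since $\brond(q,\sigma)$ is $(M,K,q,\sigma)$-extremal it satisfies conditions (A) and (B) at $(q,\sigma)$, so \prettyref{prop|maximality-extremal-balls} — whose proof, as remarked there, follows at once from (B) and is unaffected by replacing unit combinations with combinations whose coefficients are bounded above and below, at the price of a $K$-dependent constant (\prettyref{lem|multiplication-fields-by-function-Cinfinity} supplies this rescaling if preferred) — produces a constant $A_{1}=A_{1}(K,\mathrm{data})$ with $B_{\MR{C}}^{c}(\brond(q,\sigma'),q,\sigma)\subseteq B_{\MR{C}}^{A_{1}c}(\brond(q,\sigma),q,\sigma)$ for every $\sigma'$. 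The key point is that the enlarged ball on the right is always built from the basis that is extremal at the very scale $\sigma$ being used.

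Assembling for part 1 (one may assume $A\ge1$, the case $A<1$ being trivial by monotonicity in the radius): applying the first scale inclusion with $\lambda=B$ and then the basis inclusion at scale $B\delta$,
\[
B_{\MR{C}}^{Ac}(\brond(q,\delta),q,\delta)\subseteq B_{\MR{C}}^{AB^{-1/M}c}(\brond(q,\delta),q,B\delta)\subseteq B_{\MR{C}}^{A_{1}AB^{-1/M}c}(\brond(q,B\delta),q,B\delta),
\]
and the choice $B=(A_{1}A)^{M}\ge1$ makes the last radius exactly $c$, with $B$ depending only on $A$ and $K$. For part 2 with $B\ge1$, the second scale inclusion with $\lambda=B$ followed by the basis inclusion at scale $\delta$,
\[
B_{\MR{C}}^{c}(\brond(q,B\delta),q,B\delta)\subseteq B_{\MR{C}}^{B^{1/2}c}(\brond(q,B\delta),q,\delta)\subseteq B_{\MR{C}}^{A_{1}B^{1/2}c}(\brond(q,\delta),q,\delta),
\]
so $C=A_{1}B^{1/2}$ works; when $0<B<1$ one instead applies the first scale inclusion with $\lambda=1/B>1$ (so the scale grows from $B\delta$ to $\delta$) and then the basis inclusion at scale $\delta$, getting $C=A_{1}$. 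As usual, $\delta_{0}$ must be shrunk so that every scale invoked, in particular $B\delta$, stays below the original $\delta_{0}$.

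I expect the only genuine subtlety to be the matching of scales in the basis move: \prettyref{prop|maximality-extremal-balls} compares two bases at one and the same $(q,\sigma)$ and insists that the ``winning'' (enlarged) ball be built from the basis that is extremal there, so the two uses above must be performed at scales $B\delta$ (for part 1) and $\delta$ (for part 2) respectively — which is exactly what the scale move of the first paragraph is arranged to make possible. Everything else is routine bookkeeping of constants.
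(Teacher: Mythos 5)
Correct, and essentially the same proof as the paper's: both rest on the scaling inequality $F(L,q,\lambda\delta)\le\lambda^{-2/M}F(L,q,\delta)$ ($\lambda\ge1$) together with extremality-based control of the change of basis between $\brond(q,\delta)$ and $\brond(q,B\delta)$. The paper fuses these into a single direct estimate of the coefficients $\beta_i^k$ of $L_i(q,\delta)$ in the frame $\brond(q,B\delta)$, whereas you decouple them into a scale move and a basis move (the latter delegated to \prettyref{prop|maximality-extremal-balls}, with the harmless renormalization of the unit-norm hypothesis correctly noted); the resulting $B=(A_1A)^M$ matches the paper's $(AK(n-1))^M$ up to the form of the constant.
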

\begin{proof}[Proof of the Lemma]
Let us denote by $L_{i}(q,\delta)$ (resp $L_{i}(q,B\delta)$) the
vector fields of $\brond(q,\delta)$ (resp. $\brond(q,B\delta)$).
By the hypothesis on $\Omega$, we have $L_{i}(q,\delta)=\sum_{k}\beta_{i}^{k}L_{k}(q,B\delta)$,
with $\beta_{i}^{k}$ constants. By extremality,
\begin{eqnarray*}
\left|\beta_{i}^{k}\right| & \leq & KF(L_{i}(q,\delta),q,B\delta)^{1/2}F(L_{k}(q,B\delta),q,B\delta)^{-1/2}\\
 & \leq & KB^{-1/M}F(L_{i}(q,\delta),q,\delta)^{1/2}F(L_{k}(q,B\delta),q,B\delta)^{-1/2},
\end{eqnarray*}
 which proves the first part of the Lemma with $B=(AK(n-1))^{M}$.
The second part is proved similarly with $C=(BK(n-1))^{M}$.
\end{proof}

\begin{proof}[Proof of \propref{pseudodistance-pseudoballs}]
To prove the assertion on the pseudo-distance in the Proposition,
by \propref{comparison-exp-balls-curves-polydisc}, it is enough to
prove that, there exists a constant $K_{0}$ such that if $q,q'\in B_{\MR{C}}^{c}(\brond(p,\delta),p,\delta)$
then $q'\in B_{\MR{C}}^{K_{0}c}(\brond(q,\delta),q,\delta)$. But
there exists $\varphi$, $\MR C^{1}$ piecewise smooth, such that
$\varphi(0)=q$, $\varphi(1)=q'$ and, almost everywhere, $\varphi'(t)=\sum_{i=1}^{2n}a_{i}(t)\mathcal{Y}_{i}(\varphi(t))$,
with $\max\left\{ \left|a_{i}(t)\right|,\left|a_{i+n}(t)\right|\right\} \leq2cF(L_{i}(p,\delta),p,\delta)\leq4cF(L_{i}(p,\delta),q,\delta)$,
if we choose $c$ small enough (\propref{properties-vector-coord-weights-balls-3-5}).
Now, as in the Lemma, writing $L_{i}(p,\delta)=\sum\alpha_{i}^{k}L_{k}(q,\delta)$
(with $\alpha_{i}^{k}$ constants) and using extremality, we easily conclude
$q'\in B_{\MR{C}}^{K_{0}c}(\brond_{1}^{q,\delta},q,\delta)$.
\end{proof}

Let us now define the ``pseudo-balls'' centered at points of $\Omega\cap W(p_{0})$,
denoted $^{\pi}\! B^{c}(q,\delta)$ (resp. $^{\pi}\! B_{\MR{C}}^{c}(q,\delta)$,
$^{\pi}\! B_{\mathrm{exp}}^{c}(q,\delta)$) by
\[
^{\pi}\! B^{c}(q,\delta)=\left\{ q'\in V(p_{0})\mbox{ such that }\pi(q')\in B^{c}\left(\brond(\pi(q),\delta),\pi(q),\delta\right)\mbox{ and }\rho(q')\in[\rho(q)-c\delta,\rho(q)+c\delta]\right\} .
\]

Then:
\begin{stthm}
\label{thm:homogeneous-space-geometrically-separable}Let $\Omega$
be a pseudo-convex domain geometrically separated at $p_{0}\in\partial\Omega$.
There exists a constant $c_{0}>0$, depending on $K$ and the data,
such that, for all $c\leq c_{0}$, the sets $B(q,\delta)$ define
a structure of ``homogeneous space'' on $W(p_{0})\cap\bar{\Omega}$
in the following sense: there exists a constant $C$, depending only
on $K$ and the data (not on $c$) such that, if $q_{1}\in W(p_{0})\cap\bar{\Omega}$,
$\delta<\delta_{0}$, and $q_{2}\in B(q_{1},\delta),$ we have
\[
B(q_{2},\delta)\subset B(q_{1},C\delta)
\]
 and
\[
\mathrm{Vol}\left(B(q,2\delta)\right)\leq C\mathrm{Vol}\left(q,\delta)\right),
\]
 $B$ denoting one of the sets $^{\pi}\! B_{\MR{C}}^{c}$, $^{\pi}\! B_{\mathrm{exp}}^{c}$
or $^{\pi}\! B^{c}$.\end{stthm}
\begin{proof}
The first assertion follows immediately the Proposition. To prove
the second assertion, we use the fact that both $B_{\MR{C}}^{c}\left(\brond(p,\delta),p,\delta\right)$
and $B_{\mathrm{exp}}^{c}\left(\brond(p,\delta),p,\delta\right)$
are equivalent to $B^{c}\left(\brond(p,\delta),p,\delta\right)$,
the fact that the coordinate system associated to the extremal basis
have a Jacobian uniformly bounded from above and below and the preceding
Lemma.\end{proof}
\begin{srrem}
\label{rem:directional-pseudobals}~
\begin{enumerate}
\item For $p\in\partial\Omega$, the sets $^{\pi}\! B^{c}(q,\delta)\cap\partial\Omega$
(for each definition) are the pseudo-balls of a structure of homogeneous
space on $\partial\Omega\cap W(p_{0})$.
\item On $\partial\Omega$, as in \cite{N-R-S-W-Bergman-dim-2}, we could
define equivalent pseudo-balls using complex tangent curves.
\item It is not difficult to see that the pseudo-balls of the structure
of homogeneous space can also be defined directionnally: they are
equivalent to the sets $B_{\mathrm{dir}}^{c}(p,\delta)$ defined as
to be the set of points $q$ of the form $q=\mathrm{exp}_{p}(a,b)$,
where $\mathrm{exp}_{p}$ is the exponnential map associated to the
vector field $a\Re\mathrm{e}L+b\Im\mathrm{m}L$, $L$ being a vector
field of the linear space $E$, the coefficients $a$ and $b$ satisfying
$\max(|a|,|b|)\leq cF(L,p,\delta)^{-\nicefrac{1}{2}}$.
\end{enumerate}
\end{srrem}

\subsection{Localization\label{sec:Localization-geom-sep-structure}}

Suppose that $\Omega$ is $K$-geometrically separated at $p_{0}\in\partial\Omega$,
and consider the domain $D$ constructed in \secref{definition-of-the-local-domain}
near that point. Then $D$ is $K$-geometrically separated at each
point of $\partial\Omega\cap\partial D$, and, by strict pseudo-convexity,
the same is true on $\partial D\setminus\overline{\partial\Omega\cap\partial D}$.

Suppose that $P$ is a point of the boundary of $\partial\Omega\cap\partial D$,
and let $p$ be a point of $V(P)\cap\partial D$ and $\delta$ small
enough (with the notations of the previous Section). Let us denote
by $\brond(p,\delta)=\left\{ L_{1}^{p,\delta},\ldots,L_{n-1}^{p,\delta}\right\} $
the extremal basis given by \propref{final-extremal-basis-local-domain}
and by $\brond^{0,\Omega}=\left\{ L_{1}^{0,\Omega},\ldots,L_{n-1}^{0,\Omega}\right\} $
the basis denoted $B^{0}$ in \defref{def-geometrically-separable-domain}.
Then, by the construction of $\brond(p,\delta)$ made in the previous
Section, we have $L_{i}^{p,\delta}=L_{i}^{\rho}\circ\pi-\beta(L_{i}^{\rho})N^{\rho}\circ\pi$
with $L\mapsto\beta(L)$ linear. Thus, if we define $\brond^{0,D}=\left\{ L_{1}^{0,D},\ldots,L_{n-1}^{0,D}\right\} $
by $L_{i}^{0,D}=L_{i}^{0,\Omega}\circ\pi-\beta(L_{i}^{0,\Omega})N^{\rho}\circ\pi$,
then we see that the vector fields of $\brond(p,\delta)$ are linear combinations
(with constant coefficients) of the vector fields of $\brond^{0,D}$.
Thus, we have proved the following result:
\begin{stthm}
If $\Omega$ is $K$-geometrically separated at $p_{0}\in\partial\Omega$,
then the domain $D$ defined in \defref{local-domain} is $K'$-geometrically
separated (at every point of its boundary) for a constant $K'$ depending
only on $K$ and the data.\end{stthm}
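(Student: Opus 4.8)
The plan is to deduce the statement from \prettyref{thm|existence-ext-basis-local-domain} by inspecting the \emph{explicit form} of the extremal basis built in its proof. I would split $\partial D$ into three parts. On $\partial D\setminus\overline{\partial\Omega\cap\partial D}$ the boundary of $D$ is strictly pseudo-convex (\prettyref{def|local-domain}), so near each of its points any fixed normalized basis of $(1,0)$ fields tangent to $r$ serves at once as the extremal basis and as the fixed basis, the constants being governed by the (uniform, on that compact set) lower bound of the Levi form; geometric separation is then immediate there. On the interior of $\partial\Omega\cap\partial D$ one has $\psi\equiv0$, hence $r\equiv\rho$ locally, so the neighbourhoods, the extremal bases $\brond(p,\delta)$ and the fixed basis $\brond^{0,\Omega}$ furnished by the $K$-geometric separation of $\Omega$ apply verbatim to $D$. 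The whole content therefore concentrates near a point $P$ of the edge $\partial(\partial\Omega\cap\partial D)$, where the task is to produce one fixed basis $\brond^{0,D}=\{L_{1}^{0,D},\ldots,L_{n-1}^{0,D}\}$ of $(1,0)$ fields tangent to $r$ near $P$, of $\mathcal{C}^{2M}$ norm bounded by $K'(K,\mathrm{data})$ and Jacobian bounded below by $1/K'$, inside whose $\mathbb{C}$-span $E_{0}^{D}$ every extremal basis for $D$ supplied by \prettyref{prop|final-extremal-basis-local-domain} lies once its coefficients are rescaled to unit $\ell^{2}$-norm.

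To obtain $\brond^{0,D}$ I would unwind \prettyref{sec|Extremal-basis-on-local-domain-D}. The construction there starts from a $(K,\pi(p),\delta)$-extremal basis $\brond^{\Omega}$ at $q=\pi(p)$; by the $K$-geometric separation of $\Omega$ this basis may be taken inside the $\mathbb{C}$-span of the fixed basis $\brond^{0,\Omega}$, and orthogonalizing and reordering it at the single point $q$ (\prettyref{prop|extremality-and-normalization}) are constant-coefficient operations. The vectors $L_{i}^{\rho}$ are then produced by further linear algebra at $q$ only --- the minimizers $W_{n-k}$ of the quadratic functionals $F^{\rho\varphi}$ and the unit vectors $T_{n-k}$ --- so each $L_{i}^{\rho}$ is a constant-coefficient combination of $\brond^{0,\Omega}$, the constants depending on $p,\delta$ but not on the coordinate. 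Finally, by \prettyref{eq|L-i-up-to-rho} and \prettyref{eq|L-i-rho-down-to-r}, the output of \prettyref{prop|final-extremal-basis-local-domain} is a scalar multiple, by a function comparable to $1$ of controlled $\mathcal{C}^{2M}$ norm, of $L_{i}^{\rho}\circ\pi-\beta(L_{i}^{\rho})N^{\rho}\circ\pi$, where $L\mapsto\beta(L)$ is $\mathbb{C}$-linear; hence, defining $L_{k}^{0,D}=L_{k}^{0,\Omega}\circ\pi-\beta(L_{k}^{0,\Omega})N^{\rho}\circ\pi$, this output is a scalar multiple of a constant-coefficient combination of $\brond^{0,D}$. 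Rescaling the constants to unit $\ell^{2}$-norm and absorbing every remaining scalar function (all comparable to $1$ with controlled derivatives) through \prettyref{lem|multiplication-fields-by-function-Cinfinity}, one obtains a $(K',p,\delta)$-extremal basis all of whose members are normalized vectors of $E_{0}^{D}$ --- precisely the shape demanded by \prettyref{def|def-geometrically-separable-domain}.

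It remains to check the ancillary requirements. First, $\brond^{0,D}$ is a genuine basis of sections of $T_{\partial D}^{1,0}$ near $P$, because $L\mapsto L\circ\pi-\beta(L)N^{\rho}\circ\pi$ is the isomorphism $T_{\partial D}^{1,0}\to T_{\partial\Omega}^{1,0}$ already used in \prettyref{sec|Extremal-basis-on-local-domain-D}; its $\mathcal{C}^{2M}$ norm and Jacobian are controlled since those of $\brond^{0,\Omega}$ are and the derivatives of $\pi$, of $N^{\rho}$ and of $\beta$ are controlled (see \prettyref{eq|formula-beta-composed-rho}). Second, $M=M'(\tau)$ is admissible for $D$: every boundary point of $D$ is of finite type bounded by $\tau'=2(\tau/2)^{n-1}$, the order of contact of $\partial D$ with $\partial\Omega$ being infinite along the edge. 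A final compactness argument covers $\partial D$ by finitely many such neighbourhoods and retains the largest constant $K'$. The only genuinely delicate point, I expect, is exactly this bookkeeping --- verifying that every coefficient arising in passing from $\brond^{0,\Omega}$ to $\brond^{\rho}$ is a constant, and that $\brond^{0,D}$ really inherits uniform norm and nondegeneracy bounds; all the substantive analytic work has already been carried out in \prettyref{sec|localization-extremal-basis}.
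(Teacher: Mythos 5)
Your proof is essentially the paper's proof, made more explicit. Both proceed by the same three-part partition of $\partial D$ (strictly pseudoconvex piece, interior of $\partial\Omega\cap\partial D$, neighbourhood of the edge), and in the delicate third case both identify the same key point: the extremal basis produced by \prettyref{prop|final-extremal-basis-local-domain} is of the form $L_{i}^{\rho}\circ\pi\mp\beta(L_{i}^{\rho})N^{\rho}\circ\pi$ with $L\mapsto\beta(L)$ $\mathbb{C}$-linear and $L_{i}^{\rho}$ a constant-coefficient combination of $\brond^{0,\Omega}$, so that defining $\brond^{0,D}$ by pushing $\brond^{0,\Omega}$ through the same linear isomorphism of tangent bundles yields the fixed basis demanded by \prettyref{def|def-geometrically-separable-domain}. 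One small imprecision: the output of \prettyref{prop|final-extremal-basis-local-domain} is already a \emph{constant}-coefficient combination of $\brond^{0,D}$ (the division by $\Vert L_{i}^{\rho}\circ\pi\Vert$ belongs to the intermediate basis $\Btilde$, and \prettyref{lem|multiplication-fields-by-function-Cinfinity} is invoked inside that section precisely to remove it), so there is no residual scalar \emph{function} to absorb — only a constant rescaling to unit $\ell^{2}$ norm of the coefficients, which preserves extremality trivially. Your attention to the ancillary checks (that $\brond^{0,D}$ has controlled $\mathcal{C}^{2M}$ norm and Jacobian via \prettyref{eq|formula-beta-composed-rho}, that the finite-type bound propagates because the order of contact along the edge is infinite, and the final compactness argument) makes explicit what the paper leaves implicit; these are the correct things to verify.
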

\begin{rem*}
Recall that every point of $\partial D$ is of finite $1$-type.
\end{rem*}

\section{Adapted pluri-subharmonic function for geometrically separated domains\label{sec:Adapted-pluri-subharmonic-function-geom-sep}}

\subsection{Definition and examples\label{sec:Definition-adapted-PSH-func-geom-sep}}
\begin{sddefn}
\label{def:PSH-adapted-geom-separated}Let $\Omega$ be geometrically
separated at $p_{0}$. Let $E$ be the vector space generated by $\brond^{0}\cup\{N\}$,
and, if $L=\sum_{i=1}^{n-1}b_{i}L_{i}^{0}+b_{n}N=L_{\tau}+b_{n}N\in E$
denotes, for $\delta\leq\delta_{0}$, $F(L,q,\delta)=F(L_{\tau},q,\delta)+\frac{\left|b_{n}\right|^{2}}{\delta^{2}}$.

A $\MR{C}^{3}$ pluri-subharmonic function in $\Omega$, $H_{\delta}$,
is said to be $\beta$-adapted to $\brond^{0}$ at $p_{0}$ if there
exists a constant $\beta$ such that the following properties hold:
\begin{enumerate}
\item $\left|H_{\delta}\right|\leq1$ on $\Omega$;
\item For $q\in W(p_{0})\cap\Omega\cap\left\{ \rho\geq-2\delta\right\} $
and for all vector fields $L\in E$,
\[
\left\langle \partial\bar{\partial}H_{\delta};L,\overline{L}\right\rangle (q)\geq\frac{1}{\beta}F(L,q,\delta);
\]

\item For $q\in W(p_{0})\cap\Omega\cap\left\{ \rho\geq-2\delta\right\} $
and for all lists $\Lrond\in\Lrond_{3}(E)$,
\[
\left|\Lrond H_{\delta}\right|(q)\leq\beta\prod_{L\in\Lrond}F(L,q,\delta)^{1/2}.
\]

\end{enumerate}
\end{sddefn}

\begin{srrem}
Note that (3) implies in particular that, for all $\Lrond\in\Lrond_{3}(\brond(\pi(q),\delta)\cup\{N\})$,
\[
\left|\Lrond H_{\delta}\right|(q)\lesssim F(\brond(\pi(q),\delta),q,\delta)^{\Lrond/2}.
\]
\end{srrem}
\begin{sddefn}
A bounded pseudo-convex domain $\Omega$ is called ``$K$-completely
geometrically separated'' at $p_{0}$ if it is $K$-geometrically
separated and, there exists $\delta_{0}>0$ such that, for all $0<\delta\leq\delta_{0}$,
there exists a pluri-subharmonic function $H_{\delta}$ which is $K$-adapted to $\brond^{0}$
at $p_{0}$.
\end{sddefn}
\begin{sexmexample}
\label{exa:Example-PSH-adaped-to-geom-sep}
\quad\mynobreakpar
\begin{enumerate}
\item If the boundary of $\Omega$ is locally convex near $p_{0}$ (a point
of finite type), then it is proved in \cite{McNeal-convexes-94,McNeal-unif-subel-est-convex}
that it is completely geometrically separated at $p_{0}$. More generally,
using the results of \cite{Diederich-Fornaess-Support-Func-lineally-cvx}
it can be shown that if $\Omega$ is locally lineally convex near
$p_{0}$ (see \cite{Kiselman-lineally-convex}) then it is completely
geometrically separated at $p_{0}$ (see \secref{Examples-The-lineally-convex-case}
for some details on the construction). Moreover, when the boundary
of $\Omega$ is locally convex, resp. locally lineally convex, near
$p_{0}$, the local domain $D$ can be chosen convex, resp. lineally
convex, (choosing $d$ small enough and $K_{0}$ large enough) and
thus, in both cases, it is completely geometrically separated at every
point of its boundary. 
\item In \cite{Cho-02-Bergman-comparable-Korean,Cho-02-metrics-comparable,Cho-03-Bergman-comparable-Math-Anal-Appl},
it is proved that, at a point of finite type, if the eigenvalues of
the Levi form are comparable at $p_{0}$ then it is also completely
geometrically separated at $p_{0}$.
\item In the next Section, we prove that geometrically separated domains
whose extremal bases are strongly extremal with a sufficiently small
$\alpha$ are completely geometrically separated, and, moreover that,
for those domains, the local domain defined in \secref{localization-extremal-basis}
is completely geometrically separated at every point of its boundary.
In particular, this applies when the Levi form is locally diagonalizable
at $p_{0}$.
\item It can also be proved that if a domain is of the type considered by
M. Derridj in \cite{Derridj-Holder-blocs-1999} near a boundary point
$p_{0}$ then it is completely geometrically separated at $p_{0}$.
\end{enumerate}
\end{sexmexample}

\subsection{The case of geometrically separated domains with strongly extremal
bases\label{sec:PSH-func-strong-geom-sep}}

In this Section we prove the two following Theorems:
\begin{stthm}
\label{thm:PSH-function-for-geom-separable-with-strong-ext-basis}Suppose
$\Omega$ is $K$-geometrically separated at $p_{0}\in\partial\Omega$.
Then there exists a constant $\alpha_{0}$, depending on $K$ and
the data, such that, if for all $p\in W(p_{0})\cap\partial\Omega$
and $\delta\leq\delta_{0}$, the bases $\brond(p,\delta)$ are $(K,\alpha,p,\delta)$-strongly
extremal (c.f. \defref{basis-strongly-extremal}) with $\alpha\leq\alpha_{0}$
then it is completely geometrically separated at $p_{0}$.
\end{stthm}
The second theorem deals with the local domain $D$ constructed in \secref{definition-of-the-local-domain},
and, in fact contains the first one:
\begin{stthm}
\label{thm:stron-geom-sep-complete-local-domain}Suppose that $\Omega$
is $K$-geometrically separated at $p_{0}\in\partial\Omega$. There
exists a constant $\alpha_{1}$, depending on $K$ and the data such
that, if for all $p\in W(p_{0})\cap\partial\Omega$ and $\delta\leq\delta_{0}$,
the bases $\brond(p,\delta)$ are $(K,\alpha,p,\delta)$-strongly
extremal with $\alpha\leq\alpha_{1}$, then the local domain constructed
in \secref{definition-of-the-local-domain} is $K'$-completely geometrically
separated at every point of its boundary for a constant $K'$ depending
only on $K$ and $\Omega$.
\end{stthm}
We will prove in details the first Theorem and only give the modifications
needed to obtain the second one.

\subsubsection{\label{sec:Proof-of-thm-5.1}Proof of \thmref{PSH-function-for-geom-separable-with-strong-ext-basis}}

Here we suppose that the bases $\brond(p,\delta)$, $p\in W(p)\cap\partial\Omega$,
$\delta\leq\delta_{0}$, are $(K,\alpha,p,\delta)$-strongly extremal
for a constant $\alpha$ not yet fixed. During the proof, we will
impose successive conditions on $\alpha$ (depending on $K$, $M$
and $n$) to be able to construct the good pluri-subharmonic function.
The existence of $\alpha$ will be clear at the end of the proof but
we will not give an explicit value. Now, we fix $\delta>0$.

The ideas of construction are comparable to those developed in
\cite{Charpentier-Dupain-Geometery-Finite-Type-Loc-Diag}
(following ideas of \cite{Catlin-Est.-Sous-ellipt.}) but the technical
proofs are slightly different. On the one hand the basis are local instead of
global and we have to construct local ``almost pluri-subharmonic''
functions and then add them using the structure of homogeneous space
instead of constructing directly a global function. On the other hand,
the control of lists following our hypothesis are weaker than those
following the local diagonalizability of the Levi form. Thus, for
reader's convenience, we will write the proof with enough details.

Let us first introduce some new notations: $\delta$ being fixed, we denote by $Q^{c}(p,\delta)$ the points $q$
in $W(p_{0})$ such that $\pi(q)$ belongs to the polydisc $B^{c}(p,\delta)$,
associated to the extremal basis $\brond(p,\delta)=(L_i^{p,\delta})_i$
(see \defref{definition-polydissc-ext-basis}). If $L$ is a vector
field in $E$ (the vector space generated by $\brond^{0}$ and $N$),
we write it $L=L_{\tau}+a_{n}N$, where $L_{\tau}$ is tangent to $\rho$.
$\Omega$ being geometrically separated we can write $L_{\tau}=\sum_{i=1}^{n-1}a_{i}^{p}L_{i}^{p,\delta}$
($a_{i}^{p}\in\mathbb{C}$). As usual, $c_{ii}^{p}$ will denote the
coefficient of the Levi form associated to the vector field $L_{i}^{p,\delta}\in\brond(p,\delta)$,
and $\Omega_{\varepsilon}=\left\{ -\varepsilon<\rho<0\right\} $.

With these notations, we now we state a local result and show how it leads to
\thmref{PSH-function-for-geom-separable-with-strong-ext-basis}.
For the proof we need only estimates in the strip
$\Omega_{3\delta}=\left\{ -3\delta\leq\rho\leq0\right\} $,
but in \secref{Proof-of-PSH-local-domain} we will need corresponding results
in a larger domain, and thus we state the local result for the sets
$Q^{c}(p,\delta)$:
\begin{spprop}
\label{prop:local-almost-PSH-function}For all constants $C>1$ there
exist constants $\alpha_{0}$ (depending only on $K$, $c$, $C$
and the data), $\beta$ and $\gamma_{1}$ such that if the bases $\brond(p,\delta)$
are $(K,\alpha,p,\delta)$-extremal with $\alpha\leq\alpha_{0}$,
then for all $\delta\leq\delta(\alpha_{0})$ (depending on $\alpha_{0}$,
$K$ and the data) and all point $p\in W(p_{0})\cap\partial\Omega$,
there exists a function $H_{p,\delta}=H$ with support in $Q^{c}(p,\delta)$
satisfying, for every vector field $L\in E$, the following conditions:
\begin{enumerate}
\item $\left|H\right|\leq1$;
\item $\left\langle \partial\bar{\partial}H;L,\bar{L}\right\rangle (q)\geq\beta F(L_{\tau},q,\delta)-\gamma_{1}\left(\sum_{i=1}^{n-1}\left|a_{i}^{p}\right|^{2}\frac{c_{ii}}{\delta}+\frac{\left|a_{n}\right|^{2}}{\delta^{2}}+1\right)(q)$,
for $q\in Q^{c/2}(p,\delta)\cap\Omega_{3\delta}$,
\item $\left\langle \partial\bar{\partial}H;L,\bar{L}\right\rangle (q)\geq-\frac{\beta}{C}F(L_{\tau},q,\delta)-\gamma_{1}\left(\sum_{i=1}^{n-1}\left|a_{i}^{p}\right|^{2}\frac{c_{ii}}{\delta}+\frac{\left|a_{n}\right|^{2}}{\delta^{2}}+1\right)(q)$,
for $q\in Q^{c/2}(p,\delta)\cap\Omega_{3\delta}$,
\item For $\Lrond\in\Lrond_{3}\left(\brond(p,\delta)\cup\{N\}\right)$,
$\left|\Lrond H\right|(q)\leq\gamma_{2}\prod_{L\in\Lrond}F(L,q,\delta)^{1/2}$,
for $q\in Q^{c/2}(p,\delta)\cap\Omega_{3\delta}$.
\end{enumerate}
\end{spprop}

We will prove this Proposition in the next Section. Now we show how
\thmref{PSH-function-for-geom-separable-with-strong-ext-basis} follows
this Proposition:
\begin{proof}
[Proof of \thmref{PSH-function-for-geom-separable-with-strong-ext-basis}]We
cover $\partial\Omega\cap W(p_{0})$ with a minimal system of pseudo-balls
$^{\pi}\! B^{c/2}(p_{k},\delta)\cap\partial\Omega$, $p_{k}\in\partial\Omega$.
As the pseudo-balls are associated to a structure of homogeneous space,
there exists an integer $S$, independent of $\delta$, such that
each point of $W(p_{0})$ belongs to at most $S$ sets $Q^{c}(p_{j},\delta)$.
Applying \propref{local-almost-PSH-function} with $C=2SC_{1}$ we
get a function $H_{p_{k},\delta}$.

For all point $q\in V(P_{0})\cap\Omega_{3\delta}$ there exists $j_{0}$
such that $q\in Q^{c/2}(p_{j_{0}},\delta)$ and thus (denoting $c_{ii}^{k}$
the coefficient of the Levi form in the direction $L_{i}^{p_{k}}$
and $a_{i}^{k}=a_{i}^{p_{k}}$), by \propref{local-almost-PSH-function},
\begin{equation}
\left\langle \partial\bar{\partial}\sum_{k}H_{p_{k},\delta};L,\bar{L}\right\rangle (q)\geq\frac{\beta}{2}F(L_{\tau},q,\delta)-\gamma_{1}\sum_{k\mbox{ s.t. }q\in Q^{c}(p_{k},\delta)}\left(\sum_{i=1}^{n-1}\left|a_{i}^{k}\right|^{2}\frac{\left|c_{ii}^{k}(q)\right|}{\delta}+\frac{\left|a_{n}\right|^{2}}{\delta^{2}}+1\right).\label{eq:eq-3.5--Claim-Prop-3.8}
\end{equation}

Let us consider the function
\[
H=\sum_{k}H_{p_{k},\delta}+Ae^{-\rho/\delta}+B\left|Z\right|^{2},
\]
for suitable constants $A$ and $B$ and $\alpha$ small enough:
\begin{claim*}
There exist constants $A$, $B$, $\gamma$ and ${\alpha'}_{0}$
depending only on $K$ and the data such that if $\alpha\leq{\alpha'}_{0}$:
\begin{enumerate}
\item $H$ is uniformly bounded, independently of $\delta\leq\delta_{0}$,
on $\Omega_{3\delta}$;
\item For any vector field $L\in E$ and any $q\in\Omega_{3\delta}\cap W(p_{0})$,
$\left\langle \partial\bar{\partial}H;L,\bar{L}\right\rangle (q)\geq\frac{\beta}{2}F(L_{\tau},q,\delta)+\frac{\left|a_{n}\right|^{2}}{\delta^{2}}$;
\item For $q\in\Omega_{3\delta}\cap W(p_{0})$ and all lists $\Lrond\in\Lrond_{3}(E)$,
$\left|\Lrond H\right|(q)\leq\gamma_{2}\prod_{L\in\Lrond}F(L,q,\delta)^{1/2}$.
\end{enumerate}
\end{claim*}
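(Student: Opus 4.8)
The plan is to verify the Claim's three assertions by choosing the constants in the right order and tracking the competing contributions of the three summands in $H=\sum_{k}H_{p_{k},\delta}+Ae^{-\rho/\delta}+B|Z|^{2}$. The first bullet is the easiest: each $H_{p_{k},\delta}$ is bounded by $1$ in modulus (property (1) of \prettyref{prop|local-almost-PSH-function}) and has support in $Q^{c}(p_{k},\delta)$, and by the structure of homogeneous space (\prettyref{thm|homogeneous-space-geometrically-separable}) there is a fixed overlap bound $S$, so $|\sum_{k}H_{p_{k},\delta}|\leq S$; the term $Ae^{-\rho/\delta}$ is bounded by $Ae^{3}$ on $\Omega_{3\delta}$ since $-3\delta\le\rho\le 0$ there, and $B|Z|^{2}$ is bounded on the fixed neighborhood $W(p_{0})$. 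One then rescales (multiply the whole $H$ by a fixed constant) so that $|H|\le 1$ if desired, or simply record the bound as a constant depending only on $K$, $A$, $B$ and the data.

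For the second bullet, the plan is to compute $\langle\partial\bar\partial H;L,\bar L\rangle$ for $L=L_{\tau}+a_{n}N\in E$ and show the negative error terms in \prettyref{eq|eq-3.5--Claim-Prop-3.8} are absorbed. First I would note that $\langle\partial\bar\partial(Ae^{-\rho/\delta});L,\bar L\rangle = \frac{A}{\delta^{2}}e^{-\rho/\delta}\big(|L\rho|^{2}-\delta\,\langle\partial\bar\partial\rho;L,\bar L\rangle\big)$; on $\Omega_{3\delta}$ the factor $e^{-\rho/\delta}$ is bounded below by $1$, and since $|L\rho|=|a_{n}|$ (the tangential part annihilates $\rho$) this produces a term $\gtrsim \frac{A}{\delta^{2}}|a_{n}|^{2}$ minus $A/\delta$ times the Levi form; by pseudo-convexity the Levi-form term is $\ge 0$, so it can only help — more precisely the $-\frac{A}{\delta}\langle\partial\bar\partial\rho;L,\bar L\rangle$ piece is nonpositive, and I would instead dominate it using that $\langle\partial\bar\partial\rho;L,\bar L\rangle=c_{LL}=\sum_{i}|a_{i}^{p}|^{2}c_{ii}+(\text{mixed})\lesssim \delta F(L_{\tau},q,\delta)$ by \prettyref{prop|control-lists} and EB$_{1}$, which is harmless once $\beta$ dominates. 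The decisive point is that for each $k$ with $q\in Q^{c}(p_{k},\delta)$ one has $\sum_{i}|a_{i}^{k}|^{2}\frac{|c_{ii}^{k}(q)|}{\delta}\lesssim \sum_{i}|a_{i}^{k}|^{2}F(L_{i}^{p_{k},\delta},q,\delta)\simeq_{K} F(L_{\tau},q,\delta)$ — the first inequality because $|c_{ii}^{k}|\le K\delta F_{i}$ by \prettyref{prop|control-lists} (applied at $q$, using property (3) of \prettyref{prop|properties-vector-coord-weights-balls-3-5} to transfer the weights from $p_{k}$ to $q$), the second by EB$_{1}$ for $\brond(p_{k},\delta)$ and the hypothesis that all the $\brond(p_{k},\delta)$ lie in $E_{0}$ so that $F(L_{\tau},q,\delta)$ is intrinsically comparable regardless of which basis expresses $L_{\tau}$. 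Summing over the $\le S$ relevant $k$, the right side of \prettyref{eq|eq-3.5--Claim-Prop-3.8} is $\ge \frac{\beta}{2}F(L_{\tau},q,\delta)-\gamma_{1}S\big(C_{K}F(L_{\tau},q,\delta)+\frac{|a_{n}|^{2}}{\delta^{2}}+1\big)$; I then choose $A$ large enough that $Ae^{-\rho/\delta}$ contributes $\ge 2\gamma_{1}S\frac{|a_{n}|^{2}}{\delta^{2}}+ \frac{|a_{n}|^{2}}{\delta^{2}}$ plus a cushion, $B$ large enough that $B|Z|^{2}$ contributes $\ge 2\gamma_{1}S$, and finally — this is where smallness of $\alpha$ has already been used inside \prettyref{prop|local-almost-PSH-function} to make $\beta$ independent of the remaining constants — I shrink $\alpha$ (hence $\delta_{0}$) once more if needed so that $\gamma_{1}S C_{K}\le \beta/4$, leaving $\langle\partial\bar\partial H;L,\bar L\rangle(q)\ge \frac{\beta}{4}F(L_{\tau},q,\delta)+\frac{|a_{n}|^{2}}{\delta^{2}}$, which is the desired inequality after relabelling $\beta$.

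For the third bullet, the plan is routine: for a list $\Lrond\in\Lrond_{3}(E)$ write $L=L_{\tau}+a_{n}N$ for each factor and expand; the contributions of the $H_{p_{k},\delta}$ are controlled by property (4) of \prettyref{prop|local-almost-PSH-function} together with the comparability of $F(\cdot,q,\delta)$ across the $\le S$ overlapping bases (again EB$_{1}$ and \prettyref{prop|properties-vector-coord-weights-balls-3-5}(3)); the term $Ae^{-\rho/\delta}$ gives derivatives of the form $\delta^{-|\Lrond|}e^{-\rho/\delta}\prod(\text{factors bounded by }|L\rho|+\delta\|\cdot\|)$, each bounded by $A\,C\prod_{L\in\Lrond}F(L,q,\delta)^{1/2}$ since $\delta^{-1}\lesssim \delta^{-2/M}\lesssim F_{i}$ (finite type, \prettyref{prop|finite-type-imply-large-F(L,p,delta)}) and $\delta^{-1}=F_{n}^{1/2}$; and $B|Z|^{2}$ gives only first and second derivatives, bounded using the control on the coefficients of the $L_{i}^{0}$ in the $(z)$-coordinates. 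Collecting these gives property (3) with a constant $\gamma$ of the stated dependence.

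\textbf{The main obstacle} I anticipate is not any single estimate but the bookkeeping of the order of quantifiers: $\beta$ and $\gamma_{1}$ come out of \prettyref{prop|local-almost-PSH-function} \emph{after} one fixes the overlap constant $C=2SC_{1}$, which itself depends on $S$, which depends only on the homogeneous-space constant and hence on $K$ and the data — so there is no circularity, but one must present the choices of $\alpha_{0}$, then $C$, then $\beta,\gamma_{1}$, then $A,B$, then a final shrinking of $\alpha$ and $\delta_{0}$, in exactly that sequence. The only genuinely substantive analytic input beyond \prettyref{prop|local-almost-PSH-function} is the comparison $\sum_{i}|a_{i}^{k}|^{2}|c_{ii}^{k}(q)| \lesssim_{K}\delta F(L_{\tau},q,\delta)$ valid simultaneously for all $\le S$ bases covering $q$, and this is exactly what the geometric-separation hypothesis (all $\brond(p_{k},\delta)\subset E_{0}$, normalized) together with \prettyref{prop|control-lists} and \prettyref{prop|properties-vector-coord-weights-balls-3-5} is designed to deliver.
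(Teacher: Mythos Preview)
There is a genuine gap in your absorption strategy for item (2). You propose to bound the negative error $-\gamma_{1}\sum_{k}\sum_{i}|a_{i}^{k}|^{2}\frac{|c_{ii}^{k}(q)|}{\delta}$ by $-\gamma_{1}SC_{K}F(L_{\tau},q,\delta)$ (which is correct) and then absorb it into the positive $\frac{\beta}{2}F(L_{\tau},q,\delta)$ by ``shrinking $\alpha$ once more if needed so that $\gamma_{1}SC_{K}\le\beta/4$''. But none of $\gamma_{1}$, $S$, $C_{K}$, $\beta$ depend on $\alpha$: they are fixed outputs of \prettyref{prop|local-almost-PSH-function} (for the already-chosen constant $C=2SC_{1}$) and of the homogeneous-space overlap bound. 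Shrinking $\alpha$ changes nothing here, and there is no reason to expect $\gamma_{1}SC_{K}\le\beta/4$. The same objection applies to the extra negative Levi term you pick up from $e^{-\rho/\delta}$.

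The paper's argument handles this point differently, and the difference is essential. Despite the displayed $Ae^{-\rho/\delta}$, the computation in the proof is carried out for $e^{\rho/\delta}$ (the minus sign is a misprint); with this sign one has
\[
\langle\partial\bar\partial e^{\rho/\delta};L,\bar L\rangle
= e^{\rho/\delta}\Bigl[\tfrac{1}{\delta}\langle\partial\bar\partial\rho;L,\bar L\rangle+\tfrac{|a_{n}|^{2}}{\delta^{2}}\Bigr],
\]
so the \emph{diagonal} Levi terms $\frac{1}{\delta}\sum_{i}|a_{i}^{k}|^{2}c_{ii}^{k}$ appear with a \emph{positive} coefficient. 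Using pseudo-convexity ($c_{ii}=|c_{ii}|+\mathrm{O}(\delta)$ on $\Omega_{3\delta}$) and the strong-extremality bound $|c_{ij}^{k}|\lesssim\alpha\delta F_{i}^{1/2}F_{j}^{1/2}$ for $i\neq j$ (this is where B($\alpha$) enters), one obtains
\[
\langle\partial\bar\partial e^{\rho/\delta};L,\bar L\rangle(q)\ge -K_{0}+\tfrac{e^{-3}}{2\delta^{2}}|a_{n}|^{2}+\tfrac{e^{-3}}{2\delta}\sum_{i}|a_{i}^{k}|^{2}|c_{ii}^{k}(q)|-4n^{2}K\alpha\, F(L_{\tau},q,\delta).
\]
Now the choice $A=2Se^{3}\gamma_{1}+1$ makes the positive term $\frac{Ae^{-3}}{2\delta}\sum_{i}|a_{i}^{k}|^{2}|c_{ii}^{k}|$ cancel the $-\gamma_{1}$ error in \prettyref{eq|eq-3.5--Claim-Prop-3.8} \emph{directly, term by term}, with no comparison to $F(L_{\tau})$ needed; similarly the $|a_{n}|^{2}/\delta^{2}$ errors cancel, and $B$ absorbs the bounded constant $-K_{0}A-\gamma_{1}S$. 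The only residual $F(L_{\tau})$-error is the off-diagonal piece $-4n^{2}KA\alpha\, F(L_{\tau})$, and \emph{this} is what the final shrinking of $\alpha$ is for: since $A$ is now fixed, taking $\alpha\le\alpha_{0}'$ with $4n^{2}KA\alpha_{0}'\le\beta/4$ is legitimate. Your choice of sign in the exponential loses exactly this cancellation mechanism.
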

\begin{proof}
[Proof of the Claim]For every $k$ such that $q\in Q^{c}(p_{k},\delta)$,
\[
\left\langle \partial\bar{\partial}e^{\rho/\delta};L,\bar{L}\right\rangle (q)=e^{\rho/\delta}\left[\frac{1}{\delta}\left(\frac{1}{2}\sum_{i,j=1}^{n-1}a_{i}^{k}\overline{a_{j}^{k}}c_{ij}^{k}+2\Re\mathrm{e}\left(\sum_{i=1}^{n-1}a_{i}^{k}\overline{a_{n}^{k}}\left\langle \partial\bar{\partial}\rho;L_{i}^{p_{k}},\bar{N}\right\rangle \right)+\left|a_{n}\right|^{2}\left\langle \partial\bar{\partial}\rho;N,\bar{N}\right\rangle \right)+\frac{\left|a_{n}\right|^{2}}{\delta^{2}}\right].
\]

Then, we use the hypothesis of strong extremality and Taylor's formula
to estimate $\left|c_{ij}^{k}\right|$, $i\neq j$, in the set $Q^{c}(p_{k},\delta)\cap\Omega_{3\delta}$.
Using the fact that $c_{ii}=\left|c_{ii}\right|+\mathrm{O}(\delta)$
(recall $\Omega$ is pseudo-convex), this gives a constant $K_{0}$
depending on $K$ and the data such that
\[
\left\langle \partial\bar{\partial}e^{\rho/\delta};L,\bar{L}\right\rangle (q)\geq-K_{0}+\frac{e^{-3}}{2\delta^{2}}\left|a_{n}\right|^{2}+\frac{e^{-3}}{2\delta}\sum_{i=1}^{n-1}\left|a_{i}^{k}\right|^{2}\left|c_{ii}^{k}(q)\right|-4n^{2}K\alpha F(L_{\tau}^{q},q,\delta),
\]
 because, by definition of $c$, in the sets $Q^{c}(p_{k},\delta)$, we have
$F(L_{\tau},q)\leq3F(L_{\tau},p_{k})$ (see \propref{properties-vector-coord-weights-balls-3-5}).

Now if we choose $A=2Se^{3}\gamma_{1}+1$ and $B=K_{0}A+\gamma_{1}$,
the Claim follows easily (\ref{eq:eq-3.5--Claim-Prop-3.8}).
\end{proof}
To finish the proof of \thmref{PSH-function-for-geom-separable-with-strong-ext-basis},
we truncate $H$ to adapt it to good neighborhoods $V(p_{0})$ and $W(p_{0})$
and to the strip $\{\delta_{\Omega}(p)<2\delta\}$, and we add $D\left|z\right|^{2}$
with a large constant $D$. More precisely, the cutting functions are defined as follows:

Let $\vartheta=\vartheta_{1}\vartheta_{2}$ where $\vartheta_{1}(q)=\chi_{1}\left(\frac{1}{2}\frac{\left|q-p_{0}\right|}{r}\right)$,
with $\chi_{1}$ a $\MR{C}^{\infty}$ increasing function equal to
$0$ on $]-\infty,0]$, $1$ on $[1/4,+\infty[$ and $\chi_{1}(t)=t^{4}$
on $[0,1/8]$, and $\vartheta_{2}(q)=\chi_{\delta}(\rho(q))$ with
$\chi_{\delta}(t)=\chi(t/\delta)$, $\chi$ being even, increasing
on $]-\infty,0[$, equal to $0$ on $]-\infty,-4]$, to $1$ on $]-2,0[$
and to $\frac{\left(t+4\right)^{4}}{16}$ for $t\in[-4,-8/3]$.

Then, remarking that $\left\langle \partial\bar{\partial}\vartheta;L,\bar{L}\right\rangle \geq-\mathrm{O}(1)$ the final computation is made as in
\cite[Section 4.2.3]{Charpentier-Dupain-Geometery-Finite-Type-Loc-Diag}.

\end{proof}

\subsubsection{Proof of \propref{local-almost-PSH-function}}

The proof uses essentially the ideas developed in Section 4.1 of \cite{Charpentier-Dupain-Geometery-Finite-Type-Loc-Diag},
except that we have to work locally around the point $p$. Thus the
technique is more complicated (it needs to use the structure of homogeneous
space) and we will give it with some details.

For $p\in W(p_{0})\cap\partial\Omega$ and $\delta\leq\delta_{0}$
fixed, let $\brond(p,\delta)=\{L_{i}^{p,\delta}=L_{i}^{},\,1\leq i\leq n-1\}$
be the $(K,\alpha,p,\delta)$-strongly extremal basis and $\Phi=\Phi_{p}^{\delta}$ be the
adapted change of coordinates at $(p,\delta)$.

For $i=1,\ldots,n-1$ and $l=3,\ldots,M$, let us define
\[
\MR E_{l}^{i}=\{\Re\mathrm{e}(\Lrond(\partial\rho),\,\Im\mathrm{m}(\Lrond(\partial\rho),\,\left|\Lrond\right|=l-1,\,\Lrond=\{L^{1},\ldots,L^{l-1}\},\, L^{k}\in\{L_{i},\overline{L_{i}}\}\},
\]
\[
\MR E^{i}=\bigcup_{l}\MR E_{l}^{i}.
\]
For $\varphi\in\MR E^{i}$, if $\varphi\in\MR E_{l}^{i}$, we denote $\tilde{l}(\varphi)=l$.

Note that $F_{i}(.,\delta)=F(L_{i},.,\delta)\simeq\frac{\left|c_{ii}\right|}{\delta}+\sum_{\varphi_{i}\in\MR E^{i}}\left|\frac{L_{i}\varphi_{i}}{\delta}\right|^{2/\tilde{l}(\varphi_{i})}$.
The functions $\frac{\left|c_{ii}\right|}{\delta}$ and $\left|\frac{L_{i}\varphi_{i}}{\delta}\right|^{2/\tilde{l}(\varphi_{i})}$
are called the \emph{components} of $F_{i}$ and are denoted generically
$f_{i}$. We also define $l(c_{ii})=2$, and, for the other functions
$f_{i}$, $l(f_{i})=\tilde{l}(\varphi_{i})$. In the following proof,
these components cannot be considered individually. Thus, we introduce
the terminology of {}``$(n-1)$-uplet'' of components: $f=(f_{1},\ldots,f_{n-1})$,
where $f_{i}$ are component of $F_{i}$, is called a $(n-1)$-uplet
of components of the weights $F_{i}$. The set of all such $(n-1)$-uplet
is denoted by $\MR{H}$. $\MR{H}$ is ordered by the lexicographic
order.

First we define a cutoff function with support in $Q^{c}(p,\delta)$
and in the set where a component is ''dominant''. More precisely,
if $B$ is a positive number and $f=(f_{i})$ a $(n-1)$-uplet of
components of $F_{i}$, we define, for fixed $c\leq c_{0}$,
\[
\chi_{f,B}=\prod_{i}\chi_{B}\left(\frac{f_{i}\circ\pi}{F_{i}(p,\delta)}\right)\chi_{0}=\chi'_{f,B}\chi_{0},
\]
 where $\chi_{B}(t)=\chi(Bt)$, $\chi:\,[0,+\infty[\mapsto[0,1]$,
being a $\MR C^{\infty}$ function equal to $0$ on $[0,1/2]$ and
$1$ on $[1,+\infty[$, and $\chi_{0}(q)=\chi_{1}\left(\left(\frac{F_{i}(p,\delta)^{1/2}}{c}\Phi_{p}(\pi(q))_{i}\right)_{i}\right)$,
with $\chi_{1}$ a $\MR C^{\infty}$ function identically $1$ on
$B(0,1/2)$ and with compact support in $B(0,1)$.

We say that $f$ is $B$ dominant if $\chi'_{f,B}=1$.

Then, to each $(n-1)$-uplet $f=(f_{i})$ and to each $i$ such that $f_{i}=\left|\frac{L_{i}^{p}\varphi_{i}}{\delta}\right|^{2/l(f_{i})}$,
we associate, for $\lambda>1$, the function
\[
H_{i}(f,\lambda,B)=\lambda^{-3/2}e^{\lambda\psi_{i}}\chi_{f,B},
\]
 where $\psi_{i}(q)=\frac{\varphi_{i}(\pi(q))}{\delta}F_{i}(p,\delta)^{\frac{1-\tilde{l}(\varphi_{i})}{2}}$.
\begin{sllem}
\label{lem:estimates-psi-i-chi}For each constant $B>0$, there exists
a constant $K_{0}$ depending only on $B$, $c$, $K$ and the data
such that, for each $i$, if $q\in Q^{c}(p,\delta)\cap\Omega_{3\delta}$,
for each $L=\sum_{j=1}^{n}a_{j}L_{j}$, $\sum\left|a_{j}\right|^{2}=1$,
we have the following estimates:
\begin{enumerate}
\item $\left|L\psi_{i}(q)\right|\leq K_{0}\left(F(L_{\tau},q,\delta)^{1/2}+\frac{\left|a_{n}\right|}{\delta}\right)$,
and $\left|\bar{L}L(\psi_{i})(q)\right|\leq K_{0}\left(F(L_{\tau},q,\delta)+\frac{\left|a_{n}\right|^{2}}{\delta^{2}}\right)$;
\item $\left|L\chi_{f,B}(q)\right|\leq K_{0}\left(F(L_{\tau},q,\delta)^{1/2}+\frac{\left|a_{n}\right|}{\delta}\right)$,
and $\left|\bar{L}L\chi_{f,B}\right|\leq K_{0}\left(F(L_{\tau},q,\delta)+\frac{\left|a_{n}\right|^{2}}{\delta^{2}}\right)$;
\item $\left|\left[L,\bar{L}\right]\left(\partial(H_{i}(f,\lambda,B)\right)\right|\leq K_{0}\lambda^{-1/2}e^{\lambda\psi_{i}}\left(F(L_{\tau},q,\delta)^{1/2}+\frac{\left|a_{n}\right|}{\delta}\right)$.
\end{enumerate}
\end{sllem}
\begin{proof}
If $q\in\partial\Omega$, the inequality $\left|L\psi_{i}(q)\right|\leq K_{0}\left(F(L_{\tau},q,\delta)^{1/2}+\frac{\left|a_{n}\right|}{\delta}\right)$
follows immediately from \propref{properties-vector-coord-weights-balls-3-5}
and the extremality of the basis $(L_{i})$ at $(p,\delta)$. The
general case for (1) follows.

Property (2) is obtained using the fact that, if $(z)$ is the coordinates system
associated to $\Phi$ and $L_{i}=\sum a_{i}^{j}\frac{\partial}{\partial z_{j}}$,
then $$\left|a_{i}^{j}\right|\lesssim F_{i}^{1/2}(p,\delta)F_{j}^{-1/2}(p,\delta)$$
for $q\in Q^{c}(p,\delta)\cap\partial\Omega$ (\propref{properties-vector-coord-weights-balls-3-5}),
and techniques similar to those used for (1).

Finally (3) is proved similarly, using the estimates of the coefficients of
the brackets $[L_{i}\overline{L_{j}}]$ in $Q^{c}(p,\delta)\cap\partial\Omega$
(\propref{properties-vector-coord-weights-balls-3-5}).
\end{proof}

For $f=(f_{1},\ldots,f_{n-1})$, a $(n-1)$-uplet of components of
the weights $F_{i}$, let us denote by $I$ the set of indexes $i$
such that $f_{i}=\left|\frac{L_{i}^{p}\varphi_{i}}{\delta}\right|^{2/l(f_{i})}$.
Then we consider the function
\[
H(f,\lambda,B)=\sum_{i\in I}H_{i}(f,\lambda,B).
\]

The next Lemma gives some properties of the function $H(f,\lambda,B)$.
To state it we need to introduce the following set:

For $f$ a $(n-1)$-uplet of components of the weights $F_{i}$ and
$B'$ a positive number, we set
\[
U_{B',f}=\left\{ q\in Q^{c}(p,\delta)\mbox{ for which there exists }f'<f\mbox{ such that }f'(q)\mbox{ is }B'\mbox{ dominant}\right\} .
\]

\begin{sllem}
\label{lem:PSH-component-induction-Lemma}Let $f$ be a $(n-1)$-uplet
of component, $A$, $B$ and $\varepsilon$ three positive fixed real
numbers. Then there exist positive constants $\alpha_{0}$, $\lambda$, $A'$,
$B'$, $A'>A$, $B'>B$, $\varepsilon'\leq\varepsilon$ and $K_{1}$, depending only
on $A$, $B$, $\varepsilon$, $K$ and the data, such that, if the
constant $\alpha$ of strong extremality is $\leq\alpha_{0}$, then
the function $H(f,A,B,\varepsilon)=H(f,\lambda,B)=H$ satisfies, on
$Q^{c}(p,\delta)\cap\Omega_{3\delta}$:
\begin{enumerate}
\item $\left|H\right|\leq K_{1}$;
\item If $L=\sum_{i=1}^{n}a_{i}L_{i}^{p}=L_{\tau}+a_{n}N$, $a_{i}\in\mathbb{C}$,
$\sum\left|a_{i}\right|^{2}=1$, then$\left|\left\langle \partial\bar{\partial}H;L,\bar{L}\right\rangle \right|(q)\leq A'\left(F(L_{\tau},q,\delta)+\frac{\left|a_{n}\right|^{2}}{\delta^{2}}\right)$;
\item If $L=\sum_{i=1}^{n}a_{i}L_{i}^{p}=L_{\tau}+a_{n}N$, $a_{i}\in\mathbb{C}$,
$\sum\left|a_{i}\right|^{2}=1$, $q\notin U_{B'}$, $\chi'_{f,B}(q)=1$ and $\chi_{0}(q)\geq\varepsilon$,
then
\[
\left\langle \partial\bar{\partial}H;L,\bar{L}\right\rangle (q)\geq AF(L_{\tau},q,\delta)-K_{2}\left(\sum_{i=1}^{n-1}\left|a_{i}\right|^{2}\frac{\left|c_{ii}(q)\right|}{\delta}+\frac{\left|a_{n}\right|^{2}}{\delta^{2}}+1\right);
\]
\item If $L=\sum_{i=1}^{n}a_{i}L_{i}^{p}=L_{\tau}+a_{n}N$, $a_{i}\in\mathbb{C}$,
$\sum\left|a_{i}\right|^{2}=1$, the condition
$\left\langle \partial\bar{\partial}H;L,\bar{L}\right\rangle (q)\leq-\left(F(L_{\tau},q,\delta)+\frac{\left|a_{n}\right|^{2}}{\delta^{2}}\right)$
implies $q\in U_{B'}$ and $\chi_{0}(q)\geq\varepsilon'$.
\item For all lists $\Lrond\in\Lrond_{3}\left(\brond(p,\delta)\cup\{N\}\right)$,
$\left|\Lrond H(q)\right|\leq K_{2}\left(\prod_{L\in\Lrond}F(L,q,\delta)^{1/2}\right)$.
\end{enumerate}
\end{sllem}
\begin{proof}
Recall that $H=\sum_{i\in I}H_{i}$, thus the properties are trivially
satisfied if $I=\emptyset$ and we suppose $I\neq\emptyset$. The
functions $\left|\psi_{i}\right|$ being bounded by $2$ (see \propref{properties-vector-coord-weights-balls-3-5}),
(1) is satisfied with a constant $K_{1}$ depending only on $\lambda$
and $n$.

Let $i\in I$. Then $\left\langle \partial\bar{\partial}H_{i};L,\bar{L}\right\rangle =\bar{L}LH_{i}+\left[L,\bar{L}\right]\left(\partial H_{i}\right)$,
and as
\[
\bar{L}LH_{i}=\lambda^{-3/2}e^{\lambda\psi_{i}}\left[\left(\lambda^{2}\left|L\psi_{i}\right|^{2}+\lambda\bar{L}L\psi_{i}\right)\chi_{f,B}+\lambda\left(L\psi_{i}\bar{L}\chi_{f,B}+\bar{L}\psi_{i}L\chi_{f,B}\right)+\bar{L}L\chi_{f,B}\right],
\]
 \lemref{estimates-psi-i-chi} implies $\left\langle \partial\bar{\partial}H_{i};L,\bar{L}\right\rangle (q)\geq\lambda^{-3/2}e^{\lambda\psi_{i}}\left(\lambda^{2}\left|L\psi_{i}\right|^{2}\chi_{f,B}-K'_{0}\lambda F(L_{\tau},q,\delta)+\frac{\left|a_{n}\right|^{2}}{\delta^{2}}+1\right)$
and thus shows the existence of a constant $A'$, depending only on the
choice of $\lambda$, $B$, $c$, $K$ and the data, satisfying (2).

Now, if for all $i\in I$, $\left|\lambda\psi_{i}\right|\leq1$, then,
for $\lambda$ large enough, we have $\left\langle \partial\bar{\partial}H;L,\bar{L}\right\rangle \geq-F(L)$.
Thus we suppose that there exists an $i\in I$ such that $\left|\lambda\psi_{i}(q)\right|=\frac{\lambda\left|\varphi_{i}(\pi(q))\right|}{\delta}F_{i}(p,\delta)^{(1-\tilde{l}(\varphi_{i}))/2}\geq1$.
Then there exists a constant $B'>B$, depending on $\lambda$, such
that $\left|\frac{\varphi_{i}(\pi(q))}{\delta}\right|^{2/(\tilde{l}(\varphi_{i})-1)}>\frac{4}{B'}F_{i}(p,\delta)$,
and this implies that there exists a $(n-1)$-uplet $f'<f$ which
is $B'$-dominant at the point $q$. In other words, to each choice
of $\lambda$ we can associate $B'$ such that the first conclusion
in (4) is true. Moreover, $\lambda$, $B$ and $c$ being fixed, $\chi_{1}$
being $\MR{C}^{\infty}$, there exists $\varepsilon'$, depending
on $\lambda$, $B$, $c$ and $\chi_{1}$, such that the hypothesis
of (4) implies the second conclusion (i. e. $\chi_{0}(q)\geq\varepsilon'$).

Let us now show that we can choose $\lambda$ (thus $A'$, $B'$,
$K_{1}$ and $\varepsilon'$ will be fixed) such that (3) is satisfied
if $\alpha$ is small enough. Suppose then $\chi'_{f,B}(q)=1$and
$\chi_{0}(q)>\varepsilon$. The hypothesis of strong extremality and
the invariance of the $F_{i}(q)$ and the $a_{ij}^{k}$ in $B^{c}(p,\delta)$
( Propositions \ref{prop:properties-vector-coord-weights-balls-3-5}
and \ref{prop:3.9-automatic-lists-inside-strong-extremality}) give,
if $\delta\leq\delta(\alpha)$,
\[
\left|L\psi_{i}(q)\right|^{2}\geq\frac{1}{4}\left|\sum_{j\leq i}a_{j}(L_{j}\psi_{i})(q)\right|^{2}-4nC(K)\left(2\alpha\sum_{n-1\geq j>i}\left|a_{j}\right|^{2}F_{j}(p)+\frac{\left|a_{n}\right|^{2}}{\delta^{2}}\right),
\]
and then, by extremality at $p$,
\[
\left|L\psi_{i}(q)\right|^{2}\geq\frac{1}{4}\left|\sum_{j\leq i}a_{j}L_{j}\psi_{i}(q)\right|^{2}-C_{1}(K)\left(\alpha^{2}F(L_{\tau},q,\delta)+\frac{\left|a_{n}\right|^{2}}{\delta^{2}}+1\right).
\]

Now we make use of the following Lemma:
\begin{lem*}
Let $\beta_{i}^{j}$ be complex numbers, $i=1,2,\cdots,n-1$, $j\leq i$,
verifying $\left|\beta_{i}^{i}\right|\geq c\alpha_{i}$ and $\left|\beta_{i}^{j}\right|\leq C\alpha_{j}$
for $j<i$. Then there exists a constant $W=W(c,C,n)$ such that $\sum_{i=1}^{n-1}\left|\sum_{j=1}^{i}\beta_{i}^{j}\right|^{2}\geq W\sum_{i=1}^{n-1}(\alpha_{i})^{2}$.
\end{lem*}
The above lemma implies, using the invariance of $F_{i}(q)$ and $F(L,q)$ in the
ball and the extremality of the basis at $p$, that there exist constants
$W$, $K_{3}$ and $K_{4}$, depending on $B$, $M$, $K$ and the
data, such that:
\[
\sum_{i\in I}\left|L\psi_{i}(q)\right|^{2}+\sum_{i\notin I}\frac{\left|c_{ii}(q)\right|}{\delta}\geq\frac{W}{2K}F(L_{\tau},q,\delta)-\alpha K_{3}\left(F(L_{\tau},q,\delta)+1\right)-K_{4}\frac{\left|a_{n}\right|^{2}}{\delta^{2}},
\]
 and thus, for $\alpha_{0}=W/4KK_{3}$ (depending only on the data
$M$, $K$, $B$, $c$ and $n$),
\[
\sum_{i\in I}\left|L\psi_{i}(q)\right|^{2}+\sum_{i\notin I}\frac{\left|c_{ii}(q)\right|}{\delta}\geq W'F(L_{\tau},q,\delta)-K_{4}\left(\frac{\left|a_{n}\right|^{2}}{\delta^{2}}+1\right).
\]

This finishes the proof of \lemref{PSH-component-induction-Lemma} for a choice of $\lambda$ depending
on $A$, $\varepsilon$, $B$, $M$, $K$ and $c$, $c$ depending
itself only on $M$, $K$ and the data, the property (5) being trivial.
\end{proof}
\begin{proof}
[Proof of \propref{local-almost-PSH-function}]First, note that
there exists a constant $D$, depending on $M$ and $n$, such that,
for $p\in W(p_{0})$ and $\delta\geq\frac{1}{3}\left|\rho(p)\right|$,
there exists a component $f_{i}$ of $F_{i}(p,\delta)$ verifying
$f_{i}(q)\geq\frac{1}{D}F_{i}(p,\delta)$ for all points $q\in B^{c}(p,\delta)$,
$c\leq c_{0}$ (\propref{properties-vector-coord-weights-balls-3-5}).

To define completely our function $H$, we have to define, for each
$(n-1)$-uplet of component $f\in\MR{H}$ (the set of $(n-1)$-uplets
of components of the weights $F_{i}(p,\delta)$), the constants $A_{f}$,
$B_{f}$ and $\varepsilon_{f}$ from which $\lambda(f)$ is constructed.
Let $f^{0}$ be the largest element of $\MR{H}$ for the lexicographic
order. Define $A_{f_{0}}=C4^{Mn+1}$, $B_{f_{0}}=D$ and $\varepsilon_{f_{0}}=1$.
Suppose we have constructed the constants $A_{f}$, $B_{f}$ and $\varepsilon_{f}$
for $f\geq f^{1}$. Consider the constants $A'_{f^{1}}$, $B'_{f^{1}}$
and $\varepsilon'_{f^{1}}$ obtained applying \lemref{PSH-component-induction-Lemma}
for the constants $A_{f^{1}}$, $B_{f^{1}}$ and $\varepsilon_{f^{1}}$,
and define, for $f^{2}$ preceding $f^{1}$, $A_{f^{2}}=3C\sum_{f>f^{2}}A'_{f}$,
$B_{f^{2}}=B'_{f^{1}}$ and $\varepsilon_{f^{2}}=\varepsilon'_{f^{1}}$.
Thus $H=\sum_{f\in\MR{H}}H(f,A_{f},B_{f}\varepsilon_{f})$ is well
defined.

For $q\in Q^{c}(p,\delta)$ define the following subsets of $\MR{H}$:
\[
\begin{array}{rcl}
E_{1}(q) & = & \left\{ f\in\MR{H}\mbox{ such that there exists }f'<f,\mbox{ such that }f'(q)\mbox{ is }B'_{f}\mbox{-dominant and }\chi_{0}(q)\geq\varepsilon'_{f}\right\} ,\\
E_{3}(q) & = & \left\{ f\in\MR{H}\mbox{ such that }\chi'_{f,B_{f}}(q)=1\mbox{ and }\chi_{0}(q)\geq\varepsilon_{f}\right\} ,\\
E_{2}(q) & = & \MR{H}\setminus\left\{ E_{1}(q)\cup E_{3}(q)\right\} .
\end{array}
\]

Note that if $E_{1}(q)$ is not empty, and if $f$ is its smallest
element, then there exists $f'<f$ such that $f'(q)$ is $B'_{f}$
dominant, that is $\chi'_{f',B_{f'}}(q)=1$, and, as $\varepsilon_{f'}\leq\varepsilon'_{f}$,
we also have $\chi_{0}(q)\geq\varepsilon_{f'}$ which means $f'\in E_{3}(q)$,
$f$ being the smallest element of $E_{1}(q)$.

Now suppose first that $q\in Q^{c/2}(p,\delta)$. Then, by definition
of $D$, $E_{3}(q)$ is not empty, and, if $E_{1}(q)$ is also not
empty there exists in $E_{3}(q)$ some strict minorant of $E_{1}(q)$.
Then, by \lemref{PSH-component-induction-Lemma}
\[
\left\langle \partial\bar{\partial}H;L,\bar{L}\right\rangle (q)\geq\left(\sum_{f\in E_{3}(q)}A_{f}-\sum_{f\in E_{1}(q)}A'_{f}-\#E_{2}(q)\right)F(L_{\tau},q,\delta)-\sum K_{2}(A_{f},B_{f},A'_{f})\left(\sum_{i=1}^{n-1}\left|a_{i}\right|^{2}\frac{c_{ii}(q)}{\delta}+\frac{\left|a_{n}\right|^{2}}{\delta^{2}}+1\right),
\]
 for $\alpha$ small enough, depending only on $M$, $K$ and $n$
($\#E_{2}(f)$ denotes the number of elements of $E_{2}(f)$). Then,
the preceding remark and the fact that $\#E_{2}(q)\leq4^{Mn}\leq\frac{1}{4C}A_{f^{0}}$
imply
\[
\left\langle \partial\bar{\partial}H;L,\bar{L}\right\rangle (q)\geq C4^{Mn}F(L_{\tau},q,\delta)-\gamma_{1}\left(\sum_{i=1}^{n-1}\left|a_{i}\right|^{2}\frac{\left|c_{ii}(q)\right|}{\delta}+\frac{\left|a_{n}\right|^{2}}{\delta^{2}}+1\right).
\]

Finally, if $q$ is any point in $B^{c}(p,\delta)$ then $E_{3}(q)$
may be empty, but then $E_{1}(q)$ is also empty, and thus
\[
\left\langle \partial\bar{\partial}H;L,\bar{L}\right\rangle (q)\geq-4^{Mn}F(L_{\tau},q,\delta)-\gamma_{1}\left(\sum_{i=1}^{n-1}\left|a_{i}\right|^{2}\frac{\left|c_{ii}(q)\right|}{\delta}+\frac{\left|a_{n}\right|^{2}}{\delta^{2}}+1\right).
\]

This finishes the proof of \propref{local-almost-PSH-function}, property
(4) being trivial.
\end{proof}

\subsubsection{Proof of \thmref{stron-geom-sep-complete-local-domain}\label{sec:Proof-of-PSH-local-domain}}

If $P$ is a point of the boundary of $D$ then, by the definition of $D$
and \thmref{PSH-function-for-geom-separable-with-strong-ext-basis},
to prove that there exists a pluri-subharmonic function adapted to
the structure of geometrically separated domain near $P$, we have
only to consider the case where $P$ is in the boundary of $\partial\Omega\cap\partial D$.
Thus, with the notations introduced just before, we prove the following
reformulation of \thmref{stron-geom-sep-complete-local-domain}:
\begin{spprop}
\label{prop:PSH-function-local-domain-D}Let $P$ be a point of the
boundary of $\partial\Omega\cap\partial D$, and $V(P)$ the neighborhood
considered in the previous Section. For all $K>0$, there exist constants
$\alpha_{1}$ and $\delta_{1}$ depending on $K$ and the data such
that if $\Omega$ is $K$-geometrically separated at $p_{0}\in\partial\Omega$
and if the extremal bases of $\Omega$ are $(K,\alpha,p,\delta)$-strongly
extremal with $\alpha\leq\alpha_{1}$, then, for $0<\delta\leq\delta_{1}$,
there exists a pluri-subharmonic function $H_{\delta}$ on the local
domain $D$ which is $(\delta,K')$-adapted to $\brond^{0,D}$ at
$P$.\end{spprop}
\begin{proof}
The proof is a modification of the proof of \thmref{PSH-function-for-geom-separable-with-strong-ext-basis}
and we will only indicate the differences. Here we have to consider
both weights associated to the domains $\Omega$ and $D$,
denoted $F^{\Omega}$ and $F^{D}$, which are constructed respectively
with the defining functions $\rho$ and $\rho+\varphi$.

We fix $\delta$ small enough and then we will omit the subscript $\delta$
in the notations of the vector fields. Consider, as in \secref{Proof-of-thm-5.1}
the covering of $\partial D\cap V(P)$ by the pseudo-balls $B^{c/2}(q_{k},\delta)\cap\partial\Omega$
(note that here $P$ plays the role of $p_{0}$ in the previous Sections).

We denote $p_{k}=\pi(q_{k})$ and fix $k$. Let $\left(L_{i}^{k}\right)_{i}=\left(L_{i}\right)_{i}$
be the $\delta$-extremal basis (for $D$) at the point $q_{k}$.
Let $L_{i}^{\rho}$ be the vector field tangent to $\rho$ associated
to $L_{i}$ (i. e. $L_{i}=L_{i}^{\rho}\circ\pi+(\beta\circ\pi)N_{\Omega}\circ\pi$).
We saw (in \secref{Extremal-basis-on-local-domain-D}) that the weights
$F^{D}(L_{i},.,\delta)$ associated to the vector fields $L_{i}$
are equivalent to
\[
F^{\Omega}(L_{i}^{\rho},\pi(.),\delta)+\frac{\varphi'\left(\left|.\right|\right)}{\delta}+\frac{\varphi''\left(\left|.\right|^{2}\right)}{\delta}\left|\left\langle L_{i}^{\rho}\left(\pi(.)\right),.\right\rangle \right|^{2}.
\]

Let $\left(L_{i}^{\Omega,k}\right)_{i}=\left(L_{i}^{\Omega}\right)_{i}$
be the $\delta$-extremal (for $\Omega$) basis at $p_{k}$ so that
the vector fields $L_{i}^{\rho}$ are linear combinations of the $L_{i}^{\Omega}$.
Let
\[
I_{k}=\left\{ i\mbox{ such that }F^{\Omega}\left(L_{i}^{\Omega},p_{k},\delta\right)>\frac{\varphi'\left(\left|q_{k}\right|^{2}\right)}{\delta}\right\} .
\]
 We suppose $I_{k}$ non empty. As the vector fields $L_{i}^{\Omega}$
are ordered so that their weights are decreasing, $I_{k}$ is a segment
of $\mathbb{N}$, $\left\{ 1,2,\ldots,n_{k}\right\} $. Then, we consider
the $n_{k}$-uplets of components of the weights $F_{\Omega}(L_{i}^{\Omega},p_{k},\delta)$,
$i\leq n_{k}$, $f=\left(f_{1},\ldots,f_{n_{k}}\right)$ and the function
\[
\chi_{f,B}=\prod_{i\leq n_{k}}\chi_{B}\left(\frac{f_{i}\circ\pi}{F_{\Omega}\left(L_{i}^{\Omega},p_{k},\delta\right)}\right)\chi_{0},
\]
 where $\chi_{0}(q)=\chi_{1}\left(\frac{F_{D,i}(q,\delta)}{c}\widetilde{\Phi}_{q_{k}}(\pi_{D}(q)\right)$,
$\pi_{D}$ being the projection onto $\partial D$ associated to the
real normal to $D$.

To obtain the good estimates of the derivatives of $\chi_{f,B}$ with
respect to the vector fields $L_{i}$, we first estimate the derivatives
of the functions $f_{i}\circ\pi$ at the point $q_{k}$:
\begin{sllem}
For $i\in I_{k}$, if $\left|f_{i}(p_{k})\right|\geq\frac{1}{2B}F^{\Omega}(L_{i}^{\Omega},p_{k},\delta)$,
for $\Lrond\in\Lrond_{M}\left(L_{1},\ldots,L_{n-1}\right)$, we have
\[
\left|\Lrond\left(f_{i}\circ\pi\right)(q_{k})\right|\lesssim_{B}F^{\Omega}(L_{i}^{\Omega},p_{k},\delta)F^{D}(q_{k},\delta)^{\Lrond/2}.
\]
\end{sllem}
\begin{proof}
Let us consider the case $\left|\Lrond\right|=1$. As $L_{j}=L_{j}^{\rho}\circ\pi+(\beta_{j}\circ\pi)N_{\Omega}\circ\pi$,
for $p=\pi(q)$, 
\[
\left|L_{j}\left(f\circ\pi\right)(q)\right|\leq\left|L_{j}^{\rho}(p)\left(f_{j}\circ\pi\right)(q)\right|+\mathrm{O}\left(\left|\beta_{j}(p)\right|\right).
\]
By (\ref{eq:formula-beta-composed-rho}), $\left|\beta_{j}(p)\right|\lesssim\varphi'\left(\left|p\right|^{2}\right)\lesssim\varphi'\left(\left|q\right|^{2}\right)+\mathrm{O}\left(\varphi\left(\left|q\right|^{2}\right)\right)=\mathrm{O}\left(\varphi'\left(\left|q\right|^{2}\right)\right)$,
thus 
\[
\left|\beta_{j}(p_{k})\right|\lesssim{F^{\Omega}}^{1/2}(L_{i}^{\Omega},p_{k},\delta){F^{D}}^{1/2}(L_{j},q_{k},\delta)
\]
because $i\in I_{k}$. As $L_{j}^{\rho}$ are tangent to $\rho$,
$L_{j}^{\rho}(p)\left(f_{i}\circ\pi\right)(q)=L_{j}^{\rho}(p)(f_{i})(p)$,
and, as $L_{j}^{\rho}$ are in the space spanned by the $L_{i}^{\Omega}$,
by \propref{control-lists-space-generated}, we have
\[
\left|\left(L_{j}^{\rho}(p_{k})(f_{i})\right)(p_{k})\right|\lesssim F^{\Omega}\left(L_{i}^{\Omega},p_{k},\delta\right)F^{\Omega}\left(L_{j}^{\rho},p_{k},\delta\right)^{1/2},
\]
 and thus
\[
\left|\left(L_{j}^{\rho}(p_{k})(f_{i})\right)(q_{k})\right|\lesssim F^{\Omega}\left(L_{i}^{\Omega},p_{k},\delta\right)F^{\Omega}\left(L_{j}^{\rho},p_{k},\delta\right)^{1/2}+\frac{\varphi\left(\left|q_{k}\right|^{2}\right)}{\delta}.
\]

Derivatives of higer order are treated similarly.\end{proof}
\begin{cor*}
Under the same hypothesis, for $q\in Q_{D}^{c}(q_{k},\delta)\cap D_{3\delta}$
and $\Lrond\in\Lrond_{M}\left(L_{1},\ldots,L_{n-1}\right)$, we have
\[
\left|\Lrond\left(f_{i}\circ\pi\right)(q)\right|\lesssim_{B}F^{\Omega}(L_{i}^{\Omega},p_{k},\delta)F^{D}(q_{k},\delta)^{\Lrond/2}.
\]

\end{cor*}

The derivatives of $\chi_{0}$ being trivial, we deduce from (\ref{eq:relation-lists-derivatives})
and Taylor's formula:
\begin{sllem}
For $i\in I_{k}$ and $q\in Q_{D}^{c}(q_{k},\delta)\cap D_{3\delta}$,
for $\Lrond\in\Lrond_{M}\left(L_{1},\ldots,L_{n-1}\right)$, we have
\[
\left|\Lrond\chi_{f,B}(q)\right|\lesssim_{B}F^{D}(q_{k},\delta)^{\Lrond/2}.
\]

\end{sllem}
We now define the basic functions $H(f,\lambda,B)$ used here. Let
$I=\left\{ i\leq n_{k}\mbox{ such that }f_{i}\neq\frac{\left|c_{ii}\right|}{\delta}\right\} $,
and, for $i\in I$, if $f_{i}=\left|\frac{L_{i}\varphi_{i}}{\delta}\right|^{2/l(f_{i})}$, we put
\[
H_{i}(f,\lambda,B)=\lambda^{-3/2}e^{\lambda\psi_{i}}\chi_{f,B}\mbox{ where }\psi_{i}=\frac{\varphi_{i}}{\delta}\left|\frac{L_{i}\varphi_{i}}{\delta}\right|^{\frac{1}{\tilde{l}(\varphi_{i})-1}},
\]
and we define $H$ by $$H=H(f,\lambda,B)=\sum_{i\in I}H_{i}(f,\lambda,B).$$

If $L=\sum_{i=1}^{n}a_{i}L_{i}=L_{\tau}+a_{n}L_{n}$ ($\sum\left|a_{i}\right|^{2}=1$),
as in the proof of \lemref{PSH-component-induction-Lemma}, using
the last Lemma we get
\[
\left\langle \partial\bar{\partial}H_{i};L,\bar{L}\right\rangle (q)\geq\lambda^{-3/2}e^{\lambda\psi_{i}}\left(\lambda^{2}\left|L\psi_{i}\right|^{2}\chi_{f,B}-K_{0}\left(\lambda F^{D}(L_{\tau},q_{k},\delta)+\frac{\left|a_{n}\right|^{2}}{\delta}+1\right)\right),
\]
 for $q\in Q_{D}^{c}(q_{k},\delta)\cap D_{3\delta}$. The estimate
of $\left|L\psi_{i}\right|^{2}$ has now to be done more carefully.

The formula $L_{\tau}(q)=L_{\tau}^{\rho}(p)+\beta(p)N_{\Omega}$ gives
\[
\left|L_{\tau}\psi_{i}\right|^{2}(q)\geq\frac{1}{4}\left|L_{\tau}^{\rho}(p)\psi_{i}\right|^{2}(q)-C\left(\frac{\varphi'\left(\left|q\right|^{2}\right)}{\delta}+\frac{\varphi\left(\left|q\right|^{2}\right)+\delta}{\delta}\right).
\]
Then, decomposing $L_{\tau}^{\rho}$ on the $\delta$-extremal basis at $p_{k}$
($\left(L_{i}^{\Omega}\right)_{i}$), $L_{\tau}^{\rho}=\sum_{j=1}^{n-1}b_{j}L_{j}^{\Omega}$,
we obtain, using the strong extremality hypothesis,
\[
\left|L_{\tau}^{\rho}(p)\psi_{i}\right|^{2}(q)\geq\frac{1}{4}\left|\sum_{j\leq i}b_{j}L_{j}^{\Omega}\psi_{i}\right|^{2}(q)-C\left(\alpha^{2}F^{\Omega}(L_{\tau}^{\rho},p_{k},\delta)+\frac{\varphi\left(\left|q\right|^{2}\right)+\delta}{\delta}\right).
\]

Using the same method, we sum all these inequality to get (writing
$c_{ii}^{\Omega}=\left[L_{i}^{\Omega},\overline{L_{i}^{\Omega}}\right](\partial\rho)$)
\begin{eqnarray*}
\sum_{i\in I}\left|L_{\tau}^{\rho}\psi_{i}\right|^{2}(q) & \geq & \beta F^{\Omega}\left(\sum_{j=1}^{n_{k}}b_{j}L_{j}^{\Omega},p_{k},\delta\right)-C\left(\sum_{i\notin I,\, i\leq n_{k}}\left|b_{i}\right|^{2}\frac{\left|c_{ii}^{\Omega}\right|}{\delta}(q)+\alpha^{2}F^{\Omega}(L_{\tau}^{\rho},p_{k},\delta)+\frac{\varphi\left(\left|q\right|^{2}\right)}{\delta}+1\right)\\
 & \geq & \beta F^{\Omega}(L_{\tau}^{\rho},p_{k},\delta)-C\left(\sum_{i\notin I,\, i\leq n_{k}}\left|b_{i}\right|^{2}\frac{\left|c_{ii}^{\Omega}\right|}{\delta}(q)+\alpha^{2}F^{\Omega}(L_{\tau}^{\rho},p_{k},\delta)+\frac{\varphi'\left(\left|q\right|^{2}\right)}{\delta}+1\right),
\end{eqnarray*}
 and, as $\left|L\psi_{i}\right|^{2}\geq\frac{1}{4}\left|L_{\tau}\varphi_{i}\right|^{2}-C\frac{\left|a_{n}\right|^{2}}{\delta^{2}}$,
we finally obtain
\[
\sum_{i\in I}\left|L\psi_{i}\right|^{2}\geq\beta F^{\Omega}(L_{\tau}^{\rho},p_{k},\delta)-C\left(\sum_{i\notin I,\, i\leq n_{k}}\left|b_{i}\right|^{2}\frac{\left|c_{ii}^{\Omega}\right|}{\delta}(q)+\frac{\left|a_{n}\right|^{2}}{\delta}+\alpha^{2}F^{\Omega}(L_{\tau}^{\rho},p_{k},\delta)+\frac{\varphi'\left(\left|q\right|^{2}\right)}{\delta}+1\right).
\]
 Then the proof is finished as in the previous Section using that,
in $Q_{D}^{c}(q_{k},\delta)\cap D_{3\delta}$, we have
\[
\left\langle \partial\bar{\partial}e^{r/\delta};L,\bar{L}\right\rangle (q)\geq\beta\left(\sum_{i=1}^{n-1}\left|b_{i}\right|^{2}\frac{\left|c_{ii}^{\Omega}\right|}{\delta}(q)+\frac{\left|a_{n}\right|^{2}}{\delta^{2}}+\frac{\varphi'\left(\left|q\right|^{2}\right)+\varphi''\left(\left|q\right|^{2}\right)\left|\left\langle L_{\tau}^{\rho}(p),q\right\rangle \right|^{2}}{\delta}\right)-K\left(\alpha F^{\Omega}(L_{\tau}^{\rho},p_{k},\delta)+1\right).
\]
 Indeed, a direct calculation gives
\begin{equation}
\left\langle \partial\bar{\partial}e^{r/\delta};L,\overline{L}\right\rangle =e^{r/\delta}\left(\frac{2\Re\mathrm{e}\left(a_{n}\left\langle \partial\bar{\partial}r;L_{\tau},\bar{N}\right\rangle \right)}{\delta}+\frac{\left|a_{n}\right|^{2}}{\delta^{2}}\right)(q)+\left\langle \partial\bar{\partial}e^{r/\delta};L_{\tau},\overline{L_{\tau}}\right\rangle (q).\label{eq:hessian-of-r}
\end{equation}

For $q\in\left\{ r\geq-3\delta\right\} $, the first term of (\ref{eq:hessian-of-r})
is $\geq\frac{1}{2e^{3}}\frac{\left|a_{n}\right|^{2}}{\delta^{2}}-K_{0}$.
Let us look at the second term of (\ref{eq:hessian-of-r}).
\[
\left\langle \partial\bar{\partial}e^{r/\delta};L_{\tau},\overline{L_{\tau}}\right\rangle =\frac{e^{r/\delta}}{\delta}\left(\left\langle \partial\bar{\partial}\rho;L_{\tau},\overline{L_{\tau}}\right\rangle +\left\Vert L_{\tau}\right\Vert ^{2}\varphi'\left(\left|q\right|^{2}\right)+\left|\left\langle L_{\tau}^{\rho}(p),q\right\rangle \right|^{2}\varphi''\left(\left|q\right|^{2}\right)\right).
\]

But
\[
\left\langle \partial\bar{\partial}\rho;L_{\tau},\overline{L_{\tau}}\right\rangle (q)=\left\langle \partial\bar{\partial}\rho;L_{\tau}^{\rho}(p),\overline{L_{\tau}^{\rho}}(p)\right\rangle (q)+\mathrm{O}\left(\varphi'\left(\left|q\right|^{2}\right)\left|\left\langle L_{\tau}^{\rho}(p),q\right\rangle \right|^{2}\right)
\]
 and, first we can choose $V$ small enough so that $\varphi'\left(\left|q\right|^{2}\right)\left|\left\langle L_{\tau}^{\rho}(p),q\right\rangle \right|^{2}\ll\varphi'\left(\left|q\right|^{2}\right)+\varphi''\left(\left|q\right|^{2}\left|\left\langle L_{\tau}^{\rho}(p),q\right\rangle \right|^{2}\right)$,
and secondly
\begin{eqnarray*}
\left\langle \partial\bar{\partial}\rho;L_{\tau}^{\rho}(p),\overline{L_{\tau}^{\rho}}(p)\right\rangle (q) & = & \sum b_{i}b_{j}c_{ij}^{\Omega}(q)\\
 & \geq & \sum\left|b_{i}\right|^{2}\left|c_{ii}^{\Omega}\right|(p)+\mathrm{O}\left[\alpha\delta F^{\Omega}(L_{\tau}^{\rho}(p))+\varphi\left(\left|q\right|^{2}\right)+\delta\right].
\end{eqnarray*}
The proof of \propref{PSH-function-local-domain-D} is now complete.\end{proof}

\section{Applications to complex analysis\label{sec:Applications-to-complex-analysis}}

\subsection{Statements of the results for geometrically separated domains\label{sec:Analytic-results-geom-sep}}

In \cite{Charpentier-Dupain-Geometery-Finite-Type-Loc-Diag} and \cite{Charpentier-Dupain-Szego-Barcelone}
we proved that the methods introduced, for the study of the Bergman
and Szeg\"o projection, by A. Nagel, J. P. Rosay, E. M. Stein and S.
Wainger in $\mathbb{C}^{2}$ (\cite{N-R-S-W-Bergman-dim-2}) and by
J. McNeal and E. M. Stein for convex domains (\cite{McNeal-Stein-Bergman,McNeal-Stein-Szego})
can be adapted to pseudo-convex domains having an {}``adapted geometry''.
The work made in the previous Sections shows that it is the case for
completely geometrically separated domains and thus we have the following
sharp estimates:
\begin{stthm}
\label{thm:estimate-bergman-kernel-compl-geom-sep}Suppose $\Omega$
is completely geometrically separated at $p_{0}\in\partial\Omega$.
Let $K_{B}(z,w)$ be the Bergman kernel of $\Omega$. There exists
a neighborhood $W(p_{0})$ of $p_{0}$ such that:
\begin{enumerate}
\item For $p\in W(p_{0})\cap\Omega$, $K_{B}(p,p)\simeq\Pi_{i=1}^{n}F(L_{i}^{p,\delta(p)},p,\delta_{\partial\Omega}(p))$,
where $\delta_{\partial\Omega}(p)$ is the distance from $p$ to $\partial\Omega$.
\item For $p_{1},p_{2}\in W(p_{0})\cap\Omega$, for all integer $N$, there
exists a constant $C_{N}$ depending on $\Omega$ and $N$, such that
for all lists $\Lrond_{Z_{1}}=\{L_{1}^{1},\ldots,L_{1}^{k}\}$ (resp
$\Lrond_{Z_{2}}=\{L_{2}^{1},\ldots L_{2}^{k'}\}$) of length $k\leq N$
(resp. $k'\leq N$) with $L_{1}^{j}\in\brond(\pi(p_{1}),\tau)\cup\{N\}$
(resp. $L_{2}^{j}\in\overline{\brond(\pi(p_{1}),\tau)\cup\{N\}}$),
we have 
\[
\left|\Lrond_{Z_{1}}\overline{\Lrond_{Z_{2}}}K_{B}(Z_{1},Z_{2})(p_{1},p_{2})\right|\leq C_{N}\prod_{i=1}^{n}F(L_{i}^{\pi(p_{1}),\tau},\pi(p_{1}),\tau)^{1+l_{i}/2},
\]
 where $\tau=\delta_{\partial\Omega}(p_{1})+\delta_{\partial\Omega}(p_{2})+\gamma(\pi(p_{1}),\pi(p_{2}))$,
$\gamma(\pi(p_{1}),\pi(p_{2}))$ is the pseudo-distance from $\pi(p_{1})$
to $\pi(p_{2})$ associated to the structure of homogeneous space
and $l_{i}$ is the number of times the vector fields $L_{i}^{\pi(p_{1}),\tau}$
or $\overline{L_{i}^{\pi(p_{1}),\tau}}$ appear in the union of the
lists $\Lrond_{Z_{1}}$ and $\Lrond_{Z_{2}}$.
\end{enumerate}
\end{stthm}
\begin{cor*}
Suppose $\Omega$ satisfies the hypothesis of \thmref{stron-geom-sep-complete-local-domain}.
Let $D$ be the local domain considered in \thmref{stron-geom-sep-complete-local-domain}.
Then the Bergman kernel $K_{D}(z,w)$ of $D$ satisfy all the estimates
stated in the Theorem at any point of its boundary.
\end{cor*}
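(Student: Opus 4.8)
The plan is to deduce the corollary directly from Theorem \ref{thm|stron-geom-sep-complete-local-domain} together with Theorem \ref{thm|estimate-bergman-kernel-compl-geom-sep}, the latter of which has already been proved for an arbitrary domain completely geometrically separated at a boundary point. First I would invoke Theorem \ref{thm|stron-geom-sep-complete-local-domain}: under the stated hypotheses on $\Omega$ and on the smallness of $\alpha$, the local domain $D$ is $K'$-completely geometrically separated at every point $P$ of its boundary, with one constant $K'$ depending only on $K$ and the data. In particular $D$ is a bounded pseudo-convex domain with $\MR C^{\infty}$ boundary, every boundary point of $D$ is of finite $1$-type, there is a uniform basis $\brond^{0,D}$, extremal basis $\brond(q,\delta)$ at every point (Proposition \ref{prop|final-extremal-basis-local-domain}), and, for each small $\delta$, a pluri-subharmonic function on $D$ which is $(\delta,K')$-adapted to $\brond^{0,D}$ at $P$.

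Next, for each fixed $P\in\partial D$ I would apply Theorem \ref{thm|estimate-bergman-kernel-compl-geom-sep} with $(\Omega,p_{0})$ replaced by $(D,P)$. This furnishes a neighborhood $W(P)$ of $P$ in which the Bergman kernel $K_{D}$ of $D$ satisfies the on-diagonal estimate and the off-diagonal derivative estimate of that theorem, the weights being those attached to the extremal basis of $D$ and the pseudo-distance being the one produced by Theorem \ref{thm|homogeneous-space-geometrically-separable} and Proposition \ref{prop|pseudodistance-pseudoballs} applied to $D$. Since $\partial D$ is compact, finitely many such neighborhoods $W(P_{1}),\ldots,W(P_{m})$ cover $\partial D$, and replacing each constant by the maximum over $j$ gives uniform constants on a one-sided neighborhood of $\partial D$ in $\overline D$. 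The locally defined homogeneous-space structures overlap compatibly up to fixed multiplicative constants (the pseudo-balls for different base points are equivalent by Proposition \ref{prop|pseudodistance-pseudoballs}), so they glue into a single quasi-distance $\gamma$ on $\partial D$, with respect to which the off-diagonal estimate holds globally.

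It then remains to treat points in a fixed compact set $E\Subset D$ away from $\partial D$. There the Bergman kernel and all its derivatives are bounded by interior elliptic regularity, while $\delta_{D}(p)$ is bounded below, so every weight $F(L_{i}^{p,\delta_{D}(p)},p,\delta_{D}(p))$ is comparable to $1$; hence the right-hand sides of the two estimates are bounded above and below by positive constants and the inequalities hold trivially. Combining the finitely many boundary neighborhoods with $E$ yields all the estimates of Theorem \ref{thm|estimate-bergman-kernel-compl-geom-sep} for $K_{D}$ at every boundary point of $D$.

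The only genuine content beyond quoting the two theorems is the globalization step: checking that the constants coming from Theorem \ref{thm|estimate-bergman-kernel-compl-geom-sep} may be taken uniform in $P$ — which follows because Theorem \ref{thm|stron-geom-sep-complete-local-domain} provides a uniform complete-separation constant $K'$ — and that the locally defined pseudo-distances patch into a global one. That is where the (mild) work lies; note that the large part of $\partial D$ lying outside $\overline{\partial\Omega\cap\partial D}$ is strictly pseudo-convex, so there the estimates reduce to the classical Fefferman-type bounds and are even easier.
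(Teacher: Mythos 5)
Your argument is correct and is in fact the only reasonable one: the paper gives no explicit proof of the Corollary, treating it as an immediate consequence of applying Theorem \ref{thm|estimate-bergman-kernel-compl-geom-sep} at each boundary point of $D$ once Theorem \ref{thm|stron-geom-sep-complete-local-domain} has supplied complete geometric separation with a uniform constant $K'$. Your added discussion of compactness, the interior being trivial, and the strictly pseudo-convex part of $\partial D$ reducing to classical estimates merely makes explicit what the authors leave implicit, and introduces no new ideas or gaps.
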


Using the methods of Section 5 of \cite{Char-Dup-Barcelone} the following
result on invariant metrics is easily proved:
\begin{stthm}
Suppose $\Omega$ is completely geometrically separated at $p_{0}\in\partial\Omega$.
Let us denote by $B_{\Omega}(z,L)$ (resp. $C_{\Omega}(z,L)$, resp.
$K_{\Omega}(z,L)$) the Bergman (resp. Caratheodory, resp. Kobayashi)
metric of $\Omega$ at the point $z\in\Omega$. Then there exists
a neighborhood $V(p_{0})$ such that, for all vector fields $L\in E$ ($E$ being the vector space spaned
by the basis $\brond^{0}$ (see \defref{def-geometrically-separable-domain}),
$L=L_{\tau}+a_{n}N$, we have, for $q\in V(p_{0})\cap\Omega$,
\[
B_{\Omega}(q,L)\simeq C_{\Omega}(q,L)\simeq K_{\Omega}(q,L)\simeq F(L_{\tau},q,\delta(q))+\frac{\left|a_{n}\right|}{\delta_{\partial\Omega}(q)},
\]
where the constants in the equivalences depend only on the constant of geometric separation
and the data.
\end{stthm}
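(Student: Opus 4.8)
The plan is to follow the scheme of Section~5 of \cite{Char-Dup-Barcelone}, feeding it the structure built in Sections~\ref{sec|extremal-basis}--\ref{sec|Adapted-pluri-subharmonic-function-geom-sep}: the adapted coordinates and polydiscs (\prettyref{thm|existence-coordinate-system}, \prettyref{prop|properties-vector-coord-weights-balls-3-5}), the homogeneous space of pseudo-balls $B^{c}(q,\delta)$ (\prettyref{thm|homogeneous-space-geometrically-separable}), the adapted plurisubharmonic functions $H_{\delta}$ (\prettyref{def|PSH-adapted-geom-separated}), and the Bergman kernel estimates (\prettyref{thm|estimate-bergman-kernel-compl-geom-sep}). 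Throughout, $q\in V(p_{0})\cap\Omega$, $\delta=\delta(q)=\delta_{\Omega}(q)$, $\pi(q)\in\partial\Omega$ is the normal projection, and one works with the extremal basis $\brond(\pi(q),\delta)$, which is $(2K,q,\delta)$-extremal by \prettyref{rem|extremal-basis-inside-vs-boundary}; write $L=L_{\tau}+a_{n}N$ and $L_{\tau}=\sum_{i}a_{i}L_{i}^{\pi(q),\delta}$, so that $F(L_{\tau},q,\delta)\simeq\sum_{i}|a_{i}|^{2}F_{i}(q,\delta)$ by EB$_{\text{1}}$. Since $C_{\Omega}(q,L)\le K_{\Omega}(q,L)$ always, it is enough to prove three facts: an upper bound for $K_{\Omega}(q,L)$, a lower bound for $C_{\Omega}(q,L)$ (these two sandwich both $C_{\Omega}$ and $K_{\Omega}$), and a two-sided estimate for $B_{\Omega}(q,L)$.

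For the upper bound on $K_{\Omega}$ I would inscribe an analytic disc. In the coordinates $z=\Phi_{\pi(q)}^{\delta}(Z)$ adapted to $\brond(\pi(q),\delta)$, property~(4) of \prettyref{def|basis-and-coordinates-adapted} and \prettyref{prop|properties-vector-coord-weights-balls-3-5} show that on the polydisc $\Delta_{c_{0}}$ the function $\rho$ differs from $\mathrm{Re}\,z_{n}$ only by terms bounded by $\delta$ times the natural weight, and $\rho(B^{c_{0}}(\pi(q),\delta))\subset[-\delta/2,\delta/2]$. Letting $v$ be the vector encoding $L$ in these coordinates, the holomorphic disc $\lambda\mapsto q+\lambda v+(\text{polynomial of degree}\le M\text{ in }\lambda)$, the polynomial correction serving to kill the purely holomorphic part of $\rho$, maps $\{|\lambda|<r\}$ into $\Omega$ for $r$ comparable to the reciprocal of $(F(L_{\tau},q,\delta)+|a_{n}|^{2}/\delta^{2})^{1/2}$. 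Hence $K_{\Omega}(q,L)\lesssim F(L_{\tau},q,\delta)+|a_{n}|/\delta$ in the normalization of the statement, and therefore the same upper bound holds for $C_{\Omega}$; the upper bound for $B_{\Omega}$ then follows from the extremal characterization $B_{\Omega}(q,L)^{2}=\sup\{|df(q)\cdot L|^{2}:f\in A^{2}(\Omega),\ f(q)=0,\ \|f\|_{2}\le1\}/K_{B}(q,q)$, with \prettyref{thm|estimate-bergman-kernel-compl-geom-sep}(1) controlling $K_{B}(q,q)$ and \prettyref{thm|estimate-bergman-kernel-compl-geom-sep}(2) (applied with $p_{1}=p_{2}=q$) bounding the derivatives $df(q)\cdot L$ of any $L^{2}$-normalized $f$.

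For the lower bound on $C_{\Omega}$, and the lower bound in the Bergman estimate, I would run H\"ormander's weighted $L^{2}$ estimate with weight $e^{-2NH_{\delta}}$ (plus an auxiliary plurisubharmonic term forcing second-order vanishing at $q$). Fix a cutoff $\chi$ equal to $1$ near $q$ and supported in $B^{c_{0}/2}(\pi(q),\delta)$, and let $\ell$ be the affine function in the adapted coordinates with $\ell(q)=0$ and $d\ell(q)\cdot L$ of size comparable to $(F(L_{\tau},q,\delta)+|a_{n}|^{2}/\delta^{2})^{1/2}$, normalized so that $|\chi\ell|\le\tfrac12$ on the polydisc. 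Property~(2) of \prettyref{def|PSH-adapted-geom-separated} gives $\langle\partial\bar\partial H_{\delta};L,\bar L\rangle\gtrsim F(L,q,\delta)$, a curvature lower bound which, combined with \prettyref{def|PSH-adapted-geom-separated}(3) and \prettyref{prop|properties-vector-coord-weights-balls-3-5} to estimate $|\bar\partial\chi|^{2}$ on $\mathrm{supp}\,\bar\partial\chi$, dominates $|\bar\partial(\chi\ell)|^{2}$; solving $\bar\partial u=\bar\partial(\chi\ell)$ then produces $u$ small in $L^{2}$ and vanishing to second order at $q$, so $f=\chi\ell-u$ is holomorphic, bounded by $1$ after a harmless rescaling, $f(q)=0$, and $|df(q)\cdot L|\gtrsim(F(L_{\tau},q,\delta)+|a_{n}|^{2}/\delta^{2})^{1/2}$. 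This is the Carath\'eodory lower bound; the same $f\in A^{2}(\Omega)$ read through the extremal characterization above, together with \prettyref{thm|estimate-bergman-kernel-compl-geom-sep}(1), gives the matching lower bound for $B_{\Omega}$, so that $B_{\Omega}(q,L)\simeq C_{\Omega}(q,L)\simeq K_{\Omega}(q,L)\simeq F(L_{\tau},q,\delta(q))+|a_{n}|/\delta(q)$. Uniformity of the constants over $W(p_{0})\cap\Omega$ is obtained by covering with the pseudo-balls of \prettyref{thm|homogeneous-space-geometrically-separable}, exactly as in \cite{Char-Dup-Barcelone}.

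The main obstacle is this weighted $\bar\partial$-estimate in the genuinely non-isotropic geometry: the error term $\bar\partial\chi$ is supported precisely where some component of a weight $F_{i}$ ceases to be dominant, and one must still show that the curvature of $H_{\delta}$ controls it there. This is the same delicate bookkeeping that underlies the construction of $H_{\delta}$ itself (Section~\ref{sec|Adapted-pluri-subharmonic-function-geom-sep}), and it is handled by \prettyref{def|PSH-adapted-geom-separated}(2)--(3) together with the polydisc estimates of \prettyref{prop|properties-vector-coord-weights-balls-3-5}; once that point is granted, the proof is a transcription into the completely geometrically separated setting of the argument of Section~5 of \cite{Char-Dup-Barcelone}, which is why it is ``easily proved''.
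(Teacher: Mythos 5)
The paper offers no written proof here -- it only cites Section~5 of \cite{Char-Dup-Barcelone} -- and your sketch reconstructs precisely the disc-inscribing / weighted-$\bar\partial$ / kernel scheme that reference uses, so in outline you are following the intended path: upper bound for $K_{\Omega}$ from a disc inscribed via the adapted coordinates of \prettyref{def|basis-and-coordinates-adapted} and \prettyref{prop|properties-vector-coord-weights-balls-3-5}, two-sided Bergman estimate from \prettyref{thm|estimate-bergman-kernel-compl-geom-sep} and the reproducing formula, and the lower Carath\'eodory bound from H\"ormander with the adapted weight $H_{\delta}$, with $C_{\Omega}\le K_{\Omega}$ closing the sandwich.

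The one step you pass over too quickly is the sup-norm control of $f=\chi\ell-u$. Condition~(1) of \prettyref{def|PSH-adapted-geom-separated} only says $|H_{\delta}|\le1$, which makes the H\"ormander weight $e^{-2NH_{\delta}}$ comparable to $1$; this bounds $\|u\|_{L^{2}(\Omega)}$, not $\|u\|_{L^{\infty}(\Omega)}$, and the interior sub-mean-value inequality degenerates like $\delta_{\Omega}(z)^{-n}$ near $\partial\Omega$, so ``bounded by $1$ after a harmless rescaling'' is not automatic. One genuinely needs an extra ingredient -- an additional term in the weight forcing boundary decay of $u$ (for instance a bounded multiple of $e^{\rho/\delta}$ of the kind already used in the proof of \prettyref{thm|PSH-function-for-geom-separable-with-strong-ext-basis}, or a $-\log(-\rho)$ correction), or a Donnelly--Fefferman/Berndtsson type $L^{\infty}$ estimate, or the device of solving on the level sets $\{\rho<-\varepsilon\}$ and letting $\varepsilon\to0$. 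This point is handled in the cited reference and in \cite{Catlin-Bergman-dim-2,Cho-02-metrics-comparable}, so your proposal is not wrong, but as written the Carath\'eodory lower bound -- the only place where bounded holomorphic functions are actually required -- is asserted rather than proved; you should either import the argument explicitly or note that it is the content of the citation. The rest (the auxiliary log-singular term to force the jet conditions $u(q)=du(q)=0$, the control of $|\bar\partial\chi|^{2}$ on the transition annulus by the curvature of $H_{\delta}$ via \prettyref{def|PSH-adapted-geom-separated}(2)--(3) and \prettyref{prop|properties-vector-coord-weights-balls-3-5}, and the uniformity over $W(p_{0})$ via \prettyref{thm|homogeneous-space-geometrically-separable}) is correctly identified.
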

\begin{rem*}
The last point of \remref{directional-pseudobals} and this Theorem
show that the structure of homogeneous space we associate to a completely
geometrically separated domain is essentially unique.\end{rem*}
\begin{stthm}
\label{thm:extimates-Berg-Sezgo-compl-geom-sep}Suppose $\Omega$
is completely geometrically separated at every point of its boundary.
Then the following results hold:
\begin{enumerate}
\item Let $P_{B}$ be the Bergman projection of $\Omega$. Then:

\begin{enumerate}
\item for $1<p<+\infty$ and $s\geq0$, $P_{B}$ maps continuously the Sobolev
space $L_{s}^{p}(\Omega)$ into itself;
\item for $0<\alpha<+\infty$, $P_{B}$ maps continuously the Lipschitz
space $\Lambda_{\alpha}(\Omega)$ into itself;
\item for $0<\alpha<1/M$, $P_{B}$ maps continuously the Lipschitz space
$\Lambda_{\alpha}(\Omega)$ into the non-isotropic Lipschitz space
$\Gamma_{\alpha}(\Omega)$.
\end{enumerate}
\item Let $P_{S}$ be the Szeg\"o projection of $\Omega$. Then:

\begin{enumerate}
\item for $1<p<+\infty$ and $s\in\mathbb{N}$, $P_{S}$ maps continuously
the Sobolev space $L_{s}^{p}(\partial\Omega)$ into itself;
\item for $0<\alpha<+\infty$, $P_{S}$ maps continuously the Lipschitz
space $\Lambda_{\alpha}(\partial\Omega)$ into itself;
\item for $0<\alpha<1/M$, $P_{S}$ maps continuously the Lipschitz space
$\Lambda_{\alpha}(\partial\Omega)$ into the non-isotropic Lipschitz
space $\Gamma_{\alpha}(\partial\Omega)$.
\end{enumerate}
\end{enumerate}
\end{stthm}
\begin{note*}
\quad\mynobreakpar
\begin{enumerate}
\item Statements (1) (c) and (2) (c) can be extended to all $\alpha>0$
with convenient definitions of the spaces $\Gamma_{\alpha}(\Omega)$
and $\Gamma_{\alpha}(\partial\Omega)$.
\item In view of \exaref{Example-PSH-adaped-to-geom-sep}, the previous
theorem applies in particular for all lineally convex domains of finite type.
\end{enumerate}
\end{note*}
\begin{cor*}
Suppose $\Omega$ satisfy the hypothesis of \thmref{stron-geom-sep-complete-local-domain}.
Let $D$ be the local domain considered in \thmref{stron-geom-sep-complete-local-domain}.
Then all the results stated for $\Omega$ in the previous Theorem
are valid for $D$.
\end{cor*}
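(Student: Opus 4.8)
The plan is to reduce the statement entirely to \prettyref{thm|extimates-Berg-Sezgo-compl-geom-sep} applied to $D$ in place of $\Omega$. That Theorem takes as its sole hypothesis that the ambient domain be a bounded pseudo-convex domain with $\MR C^{\infty}$ boundary which is completely geometrically separated at \emph{every} point of its boundary; once this is checked for $D$ (with a constant uniform over $\partial D$), the six assertions (1)(a)--(c) and (2)(a)--(c) hold verbatim for the Bergman projection $P_{B}$ and the Szeg\H{o} projection $P_{S}$ of $D$, with $D$'s own integer $M$. Thus the whole proof consists in verifying this hypothesis; no analytic estimate has to be redone, the heavy lifting having been carried out in \prettyref{thm|stron-geom-sep-complete-local-domain} (and, behind it, in \prettyref{thm|PSH-function-for-geom-separable-with-strong-ext-basis}).

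First I would recall that, by construction (\prettyref{def|local-domain}), $D=\{r<0\}$ is a bounded pseudo-convex domain with $\MR C^{\infty}$ boundary, and that every point of $\partial D$ is of finite $1$-type: the points of $\partial D\setminus\overline{\partial\Omega\cap\partial D}$ are strictly pseudo-convex, while at the points of $\overline{\partial\Omega\cap\partial D}$ the order of contact of $\partial\Omega$ with $\partial D$ is infinite, so these points are of finite $1$-type bounded by a quantity $\tau_{D}$ depending only on $\tau$ (remark following \prettyref{def|local-domain}). Hence $D$ satisfies the standing hypotheses of \prettyref{sec|Notations-and-organization}, and one fixes for $D$ the integer $M=M'(\tau_{D})$.

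Next I would verify complete geometric separation at each boundary point of $D$, distinguishing two cases. At a strictly pseudo-convex point $P\in\partial D\setminus\overline{\partial\Omega\cap\partial D}$ this is immediate on a small neighborhood: the normalized standard basis of the complex tangent space is extremal, all weights are $\simeq\delta^{-1}$, and a suitable multiple of $r$ (equivalently, of $\left|z\right|^{2}$) furnishes an adapted pluri-subharmonic function, with absolute constants. At a point $P\in\overline{\partial\Omega\cap\partial D}$, \prettyref{thm|stron-geom-sep-complete-local-domain} applies under precisely the hypothesis assumed in the Corollary (namely that $\Omega$ is $K$-geometrically separated at $p_{0}$ and its extremal basis are $(K,\alpha,p,\delta)$-strongly extremal with $\alpha\le\alpha_{1}$) and yields that $D$ is $K'$-completely geometrically separated at $P$, with $K'$ depending only on $K$ and the data and \emph{in particular independent of $P$}. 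Combining the two cases, $D$ is completely geometrically separated at every point of $\partial D$ with one and the same constant.

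Finally I would invoke \prettyref{thm|extimates-Berg-Sezgo-compl-geom-sep} for $D$, which gives exactly statements (1)(a)--(c) and (2)(a)--(c) for $P_{B}$ and $P_{S}$ on $D$ (the range in (c) now reading $0<\alpha<1/M$ with the $M=M'(\tau_{D})$ above). The only delicate point in principle is the uniformity of the separation constant over $\partial D$, and in particular its behaviour along the ``seam'' $\overline{\partial\Omega\cap\partial D}$ where the strictly pseudo-convex r\'egime meets the geometrically separated one; but this is exactly what \prettyref{thm|stron-geom-sep-complete-local-domain} packages, so there is no genuine obstacle here — the difficulty has already been absorbed into the construction of $D$ in \prettyref{sec|Localization-geom-sep-structure} and into the proof of the adapted pluri-subharmonic function in \prettyref{sec|Proof-of-PSH-local-domain}.
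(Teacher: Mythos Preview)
Your proposal is correct and follows exactly the intended route: the Corollary is an immediate consequence of \prettyref{thm|stron-geom-sep-complete-local-domain}, which says $D$ is $K'$-completely geometrically separated at every boundary point, together with a direct application of \prettyref{thm|extimates-Berg-Sezgo-compl-geom-sep} to $D$. Your case distinction between strictly pseudo-convex points and points of $\overline{\partial\Omega\cap\partial D}$ is slightly more explicit than necessary (since \prettyref{thm|stron-geom-sep-complete-local-domain} already asserts the conclusion at \emph{every} point of $\partial D$), but it correctly unpacks what that theorem contains.
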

Using an idea of M. Machedon \cite{Machedon-88-one-degenerate} we
deduce local estimates for the Szeg\"o projection:
\begin{stthm}
Suppose $\Omega$ satisfies the hypothesis of \thmref{stron-geom-sep-complete-local-domain}.
Let $P_{S}$ be the Szeg\"o projection of $\Omega$. Then if $f$ is
a $L^{2}(\partial\Omega)$ function which is locally near $p_{0}$ in
the Sobolev space $L_{s}^{p}$, $1<p<+\infty$ and $s\in\mathbb{N}$,
(resp. in the Lipschitz space $\Lambda_{\alpha}$, $0<\alpha<1/M$)
then its projection $P_{S}(f)$ is locally near $p_{0}$ in $L_{s}^{p}$
(resp. in the non-isotropic Lipschitz space $\Gamma_{\alpha}$). In
particular this applies if the Levi form of $\Omega$ is locally diagonalizable
at $p_{0}$.\end{stthm}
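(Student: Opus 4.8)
The plan is to deduce the local statement for $\Omega$ from the already established \emph{global} theorems for the associated local domain $D$, and to pass from one to the other by a comparison of Szeg\H{o} projections in the spirit of M.~Machedon \cite{Machedon-88-one-degenerate}. First I would fix the local domain. Since $\Omega$ satisfies the hypotheses of \prettyref{thm|stron-geom-sep-complete-local-domain}, one may choose $d$ small and $K_{0}$ large in \prettyref{def|local-domain} so that the resulting smooth bounded pseudo-convex domain $D$, contained in a small neighbourhood of $p_{0}$, is $K'$-completely geometrically separated at every boundary point, coincides with $\Omega$ on a fixed neighbourhood $U$ of $p_{0}$ (hence $\partial D\cap U=\partial\Omega\cap U$), and is strictly pseudo-convex on $\partial D\setminus\overline{\partial\Omega\cap\partial D}$. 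Since $\varphi$ is flat at $\left|z\right|^{2}=\mu^{2}$, all its derivatives $\varphi^{(k)}$ vanish on the interior of the overlap; so by \prettyref{prop|relation-lists-tilde-lists-rho} and \prettyref{prop|final-extremal-basis-local-domain}, at every point of $U$ the extremal basis, the weights $F_{i}$, the polydiscs and the pseudo-distance of $D$ coincide, up to constants, with those of $\Omega$. Consequently, near $p_{0}$ the local Sobolev spaces $L^{p}_{s}$ and the non-isotropic Lipschitz spaces $\Gamma_{\alpha}$ relative to $\partial D$ agree with those relative to $\partial\Omega$. I would also fix cutoffs $\chi_{0},\chi_{1}\in C_{c}^{\infty}(U)$ supported well inside $\partial\Omega\cap\partial D$, with $\chi_{1}\equiv 1$ near $p_{0}$ and $\chi_{0}\equiv 1$ on a neighbourhood of $\mathrm{supp}\,\chi_{1}$.

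Next I would invoke \prettyref{thm|extimates-Berg-Sezgo-compl-geom-sep} for $D$ (legitimate by the first step): the Szeg\H{o} projection $P_{S}^{D}$ maps $L^{p}_{s}(\partial D)$ into itself for $1<p<+\infty$, $s\in\mathbb{N}$, and maps $\Lambda_{\alpha}(\partial D)$ into $\Gamma_{\alpha}(\partial D)$ for $0<\alpha<1/M$. If $f\in L^{2}(\partial\Omega)$ is locally near $p_{0}$ in $L^{p}_{s}$ (resp.\ in $\Lambda_{\alpha}$), then $\chi_{0}f$, transported to $\partial D$ via the identification on $U$ and extended by zero, lies globally in $L^{p}_{s}(\partial D)$ (resp.\ $\Lambda_{\alpha}(\partial D)$), so $P_{S}^{D}(\chi_{0}f)\in L^{p}_{s}(\partial D)$ (resp.\ $\in\Gamma_{\alpha}(\partial D)$); restricted to a neighbourhood of $p_{0}$ this has exactly the desired regularity on $\partial\Omega$. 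It then remains to compare $P_{S}^{\Omega}$ with $P_{S}^{D}$: writing $P_{S}^{\Omega}f=P_{S}^{\Omega}(\chi_{0}f)+P_{S}^{\Omega}((1-\chi_{0})f)$, the second term is $C^{\infty}$ near $p_{0}$ by the off-diagonal smoothness of the Szeg\H{o} kernel at the finite type point $p_{0}$ (a consequence of Kohn's local subelliptic estimate), and for the first I would show $\chi_{1}P_{S}^{\Omega}(\chi_{0}f)=\chi_{1}P_{S}^{D}(\chi_{0}f)$ modulo a function that is $C^{\infty}$ near $p_{0}$, which by the previous step is of the required local regularity on $\partial\Omega$.

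The hard part is precisely this last comparison, i.e.\ the assertion that $S_{\Omega}-S_{D}\in C^{\infty}$ in a neighbourhood of $(p_{0},p_{0})$. Following Machedon, one uses that the Kohn Laplacian $\Box_{b}$ is a local operator: on $U$ one has $\Box_{b}^{\Omega}=\Box_{b}^{D}$ with the same subelliptic symbol, so a parametrix for $P_{S}^{D}$ near $p_{0}$ --- available from the sharp estimates of \prettyref{thm|estimate-bergman-kernel-compl-geom-sep} for $D$ together with the global regularity of $P_{S}^{D}$ --- is also a parametrix for $P_{S}^{\Omega}$ there, the discrepancy being supported in $\partial D\setminus\partial\Omega$, where $\partial D$ is strictly pseudo-convex and everything is elliptic, hence smoothing. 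The delicate point is that, $\Omega$ being geometrically separated only \emph{at} $p_{0}$, no global regularity of $P_{S}^{\Omega}$ is available, so the comparison cannot be read off from a global statement about $\Omega$ itself and must genuinely rest on the good global behaviour of the \emph{modified} domain $D$; making the parametrix bookkeeping precise enough to see that the difference is smoothing near $(p_{0},p_{0})$ is the essential step.

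Finally, the last assertion of the theorem follows because, when the Levi form is locally diagonalizable at $p_{0}$, the Corollary in \prettyref{sec|sufficient-cond-of-extremality} shows that the diagonalizing basis is $(K,\alpha,p,\delta)$-strongly extremal for every $\alpha>0$ (with $\delta_{0}$ depending on $\alpha$), so the hypotheses of \prettyref{thm|stron-geom-sep-complete-local-domain} are satisfied; the conclusion above then removes the arbitrary small loss in the H\"older index obtained in \cite{F-K-M-d-bar-Diag-Levi-Form} and upgrades the estimate to the stronger non-isotropic space $\Gamma_{\alpha}$.
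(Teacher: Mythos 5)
Your proposal is correct and follows essentially the same route as the paper: decompose $f=\chi f+(1-\chi)f$, use subelliptic estimates and Kohn's theory to get $P_{S}((1-\chi)f)\in C^{\infty}$ near $p_{0}$, invoke the comparison $(P_{S}^{\Omega}-P_{S}^{D})(\chi f)\in C^{\infty}$ near $p_{0}$ (the paper cites Kang \cite{Kang-approximation-Szego-Kernels} for this, whereas you sketch the Machedon-style argument that $\square_{b}$ is local and $\partial D\setminus\partial\Omega$ is strictly pseudo-convex, which is the idea underlying that reference), and conclude from the global estimates for $D$ given by the Corollary of \prettyref{thm|extimates-Berg-Sezgo-compl-geom-sep}. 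Your additional remark that the weights, polydiscs and hence the local $L^{p}_{s}$ and $\Gamma_{\alpha}$ norms of $\partial D$ and $\partial\Omega$ agree on the overlap (because $\varphi\equiv0$ there) is an implicit ingredient of the paper's argument that you usefully made explicit.
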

\begin{proof}
if $f\in L^{2}(\Omega)$ and if $\chi\in\MR C^{\infty}(\partial\Omega)$
has compact support in a sufficiently small neighborhood of $p_{0}$
and $\chi=1$ in a neighborhood of $p_{0}$, then the subelliptic
estimates for $\square_{b}$ and Kohn's theory (\cite{Kohn-Estimates-d-bar-b-85,Kohn-Nirenberg-1965})
implies $P_{S}((1-\chi)f)$ is $\MR C^{\infty}$ near $p_{0}$, and,
denoting $P_{S}^{D}$ the Szeg\"o projection of $D$, $(P_{S}-P_{S}^{D})(\chi f)$
is $\MR C^{\infty}$ in a neighborhood of $p_{0}$ (see also \cite{Kang-approximation-Szego-Kernels});
the result follows thus the previous Corollary.
\end{proof}

\subsection{A guide for the proofs of the results of \secref{Analytic-results-geom-sep}}

Let $U$ be a neighborhood of $\partial\Omega$ where we can define
a projection $\pi$ onto $\partial\Omega$ using the integral curve
of the real normal to $\rho$. We will always suppose that $V(p_{0})\subset U$.

The two notions of ``weak homogeneous space'' and ``adapted
pluri-subharmonic function'' plays a crucial role in \cite{Charpentier-Dupain-Geometery-Finite-Type-Loc-Diag,Charpentier-Dupain-Szego-Barcelone}:
\begin{sddefn}
We say that the domain $\Omega$ satisfies the hypothesis of ``weak
homogeneous space'' at a boundary point $p_{0}$ of finite type $\tau$ if
there exist two neighborhoods $V(p_{0})$ and $W(p_{0})\Subset V(p_{0})$
and a constant $K$ such that:
\begin{enumerate}
\item There exists $\delta_{0}>0$ such that, for every $p\in W(p_{0})$,
$\forall\delta\in[-\frac{1}{3}\rho(p),\delta_{0}]$, there exists
a basis of vector fields tangent to $\rho$ in $V(p_{0})$, $\brond(p,\delta)$,
for which there exists a $K$-adapted coordinate system
\item There exists two constants $C$ and $c_{0}$, depending on $K$ and
$\tau$, such that, for $c\leq c_{0}$, the sets $B^{c}\left(\brond(p,\delta),p,\delta\right)$
(associated to the coordinate system), $B_{\MR{C}}^{c}\left(\brond(p,\delta),p,\delta\right)$
and $B_{\mathrm{exp}}^{c}\left(\brond(p,\delta),p,\delta\right)$
satisfy, for all $p\in W(p_{0})\cap\bar{\Omega}$
and all $\delta\in[-\frac{1}{3}\rho(p),\delta_{0}]$, the following conditions:
\begin{enumerate}
\item for $q\in B_{0}^{c}(p,\delta)$, $B_{0}^{c}(\brond(q,\delta),q,\delta)\subset B_{1}^{c}(\brond(p,\delta),p,C\delta)$,
where $B_{0}^{c}$ and $B_{1}^{c}$ denotes one of the sets $B^{c}$,
$B_{\MR{C}}^{c}$ or $B_{\mathrm{exp}}^{c}$.
\item $\mathrm{Vol}\left(B_{0}^{c}(\brond(p,2\delta),p,2\delta)\right)\leq C\mathrm{Vol}\left(B_{0}^{c}(\brond(p,\delta),p,\delta)\right)$.
\end{enumerate}
\end{enumerate}
\end{sddefn}

Note that, in this Definition the weights $F_{i}$ are defined with
$M=M'(\tau)$.
\begin{sddefn}
\label{def:first-definition-PSH-function-adapted-to-fam-basis}Let
$\brond=\{L_{1},\ldots,L_{n-1}\}$ be a basis of vector fields tangent
to $\rho$ in a neighborhood $V(p_{0})$ of a boundary point $p_{0}$
and $0<\delta\leq\delta_{0}$. We say that a pluri-subharmonic function
$H\in\mathrm{PSH}(\Omega)$ is \emph{$(p_{0},K,c,\delta)$-adapted
to this basis} $\brond$ if the following properties are satisfied:

$\left|H\right|\leq1$ in $\Omega$, and, for all point $p\in W(p_{0})\cap\bar{\Omega}$,
$\rho(p)\geq-3\delta$, the two following inequalities are verified
for points $q\in B_{\MR{C}}^{c}(\brond,p,\delta)\cap\Omega$:
\begin{enumerate}
\item For all $L=\sum_{i=1}^{n}a_{i}L_{i}$, $a_{i}\in\mathbb{C}$,
\[
\left\langle \partial\bar{\partial}H,L,\overline{L}\right\rangle \geq\frac{1}{K}\sum_{i=1}^{n}\left|a_{i}\right|^{2}F(L_{i},p,\delta).
\]
\item For $\Lrond\in\Lrond_{3}(\brond\cup\{N\})$,
\[
\left|\Lrond H\right|\leq K\prod_{L\in\Lrond}F(L,p,\delta)^{1/2}.
\]
\end{enumerate}
\end{sddefn}

Note that this Definition depends on the values of the vector fields
$L_{i}^{p}$ at points $q$ in $\Omega$. But, in the situation of
the applications below (i.e. with a finite type hypothesis) it can
be shown that it depends only (up to uniform constants) on the restriction
of the basis on $\partial\Omega$.\smallskip{}

The following Proposition follows from the work in \cite{Charpentier-Dupain-Geometery-Finite-Type-Loc-Diag,Charpentier-Dupain-Szego-Barcelone}:
\begin{spprop}\label{prop:proposition-6-1}
Let $\Omega$ be a bounded pseudo-convex domain and $p_{0}$ be a
boundary point of finite type (resp. a bounded pseudo-convex domain
of finite type). Then, if $\Omega$ satisfies the hypothesis of {}``weak
homogeneous space'' at $p_{0}$ (resp. at every point of its boundary)
and if there exists a pluri-subharmonic function $\MR{H}_{\delta}$
adapted to $\brond(p,\delta)$ for all $p\in W(p_{0})\cap\bar{\Omega}$
and all $\delta\in[-\frac{1}{3},\delta_{0}]$ (resp. if this property
holds at every point $p_{0}$ of $\partial\Omega$) then the conclusions
of \thmref{estimate-bergman-kernel-compl-geom-sep} (resp. \thmref{extimates-Berg-Sezgo-compl-geom-sep}) hold.
\end{spprop}
To prove Theorems \ref{thm:estimate-bergman-kernel-compl-geom-sep}
and \ref{thm:extimates-Berg-Sezgo-compl-geom-sep} it suffices then
to use the properties of extremal bases and to note the two following
facts:
\begin{enumerate}
\item The existence of extremal bases and adapted coordinate systems for
points of $\partial\Omega\cap W(p_{0})$ allows us to define bases
and coordinate systems for points inside $\Omega$ (see \remref{extremal-basis-inside-vs-boundary});
\item if $p_{1}\in W(p_{0})\cap\Omega$, $p=\pi(p_{1})$, the sets $\widetilde{B_{0}^{c}}(\brond(p,\delta),p_{1},\delta)$,
$-\frac{1}{3}\rho(p_{1})<\delta\leq\delta_{0}$, defined by $q\in\widetilde{B_{0}^{c}}(\brond(p,\delta),p_{1},\delta)$
if $\pi(q)\in B_{0}^{c}(\brond(p,\delta),p,\delta)$ and $\left|\rho(q)-\rho(p_{1})\right|<c\delta$
induce a structure of {}``weak homogeneous space''.
\end{enumerate}

\subsection{Main articulations of the proof of \propref{proposition-6-1}}

In Section 2 of \cite{Charpentier-Dupain-Geometery-Finite-Type-Loc-Diag}
we showed that if the Levi form is locally diagonalizable then the
local hypothesis of the Proposition is satisfied, and in \cite{Charpentier-Dupain-Szego-Barcelone,Charpentier-Dupain-Geometery-Finite-Type-Loc-Diag},
even if the statements are given in the case of a locally diagonalizable
Levi form, the proofs of the estimates on the Bergman and Szeg\"o projections
are done only using the hypothesis of the Proposition. We just give
here the main articulations of the proofs:
\begin{itemize}
\item The Bergman kernel estimates on the diagonal is done using Theorem
6.1 of \cite{Catlin-Bergman-dim-2} and the change of coordinates
$\Phi_{p}$ adapted to the basis $\brond(p,\delta(p))$.
\item The estimates on the derivatives of the Bergman kernel outside the
diagonal follow the methods developed by A. Nagel, J. P. Rosay, E.
M. Stein and S. Wainger \cite{N-R-S-W-Bergman-dim-2} and J. Mc Neal
\cite{McNeal-Bergman-C2-hors-diag} for the pseudo-convex domains
of finite type in $\mathbb{C}^{2}$, and used for some generalizations
(see the introduction) in particular by J. Mc Neal \cite{McNeal-convexes-94}
in the case of convex domains. It consists to obtain uniform local
estimates for the Neumann operator $\mathcal{N}$ and then to apply
the ideas developed by N. Kerzman \cite{Kerzman-Bergman} in the study
of the strictly pseudo-convex case. This requires scaling.\\
The starting point is to write the Bergman kernel $K_{B}^{\Omega}$
using the Bergman projection. More precisely, if $\psi_{\zeta}$ is
a radial function centered at $\zeta$ with compact support in $\Omega$
and of integral $1$, and $P_{B}^{\Omega}$ is the Bergman projection
of $\Omega$, then $D^{\mu}\bar{D}^{\nu}K_{B}^{\Omega}(w,\zeta)=D_{w}^{\mu}P_{B}^{\Omega}(\bar{D}_{\zeta}^{\nu}\psi_{\zeta})(w)$.
Then, $P_{B}^{\Omega}$ being related to the $\bar{\partial}$-Neumann
problem by the formula $P_{B}^{\Omega}=\mathrm{Id}-\vartheta\mathcal{N}\bar{\partial}$,
where $\vartheta$ is the formal adjoint to $\bar{\partial}$ and
$\mathcal{N}$ the inverse operator of $\bar{\partial}\bar{\partial}^{*}+\bar{\partial}^{*}\bar{\partial}$,
the estimates on $P_{B}^{\Omega}$ are obtained via estimates on $\mathcal{N}$.
To obtain these estimates, we use the theory developed by J. J. Kohn
and L. Nirenberg \cite{Kohn-Nirenberg-1965} which gives local Sobolev
estimates for $\mathcal{N}$ if there exists a local sub-elliptic
estimates for the $\bar{\partial}$-Neumann problem and the famous
work of D. Catlin (\cite{Catlin-Est.-Sous-ellipt.}), where it is
proved that the existence of an adapted pluri-subharmonic function
implies the existence of a sub-elliptic estimates for the $\bar{\partial}$-Neumann
operator.\\
The study of the Bergman kernel is not directly done in $\Omega$
but in $\Phi_{p}(\Omega)$, where $\Phi_{p}$ is a coordinate system
adapted to the basis $\brond(p,\delta_{\partial\Omega}(p)+\delta_{\partial\Omega}(q)+\gamma(\pi(p),\pi(q)))$,
where $\gamma$ is the pseudo-distance on $\partial\Omega$. One difficulty
is to see that all the constants appearing in the estimates and all
the domains where the estimates are done are uniformly controlled.
\item The estimates for the Bergman and Szeg\"o projectors are obtained adapting
the methods developed by J. Mc Neal and E. M. Stein in \cite{McNeal-Stein-Bergman,McNeal-Stein-Szego}
(and also \cite{N-R-S-W-Bergman-dim-2}), related, in particular,
to the theory of non isotropic smoothing operators, to non convex
domains.\end{itemize}
\begin{rem*}
The results on the Szeg\"o projection are thus obtained adapting the
theory of NIS operators to our settings. The $\Lambda_{\alpha}$ estimates,
for example, for the domains considered by M. Derridj in \cite{Derridj-Holder-blocs-1999}
can also be obtained using the estimate for $\square_{b}$ of Derridj's
paper, the estimate on the Bergman projection derived from the fact
that these domains are completely geometrically separated and the
results on the comparison of the Bergman and Szeg\"o projection obtained
by K. D. Koenig in \cite{Koenig-Comparing-Bergman-Szego-Math-Ann}.
\end{rem*}

\section{Examples and additional remarks}

\subsection{The lineally convex case\label{sec:Examples-The-lineally-convex-case}}

In this Section we show, with some details the statements made on
lineally convex domains in \exaref{Example-extremal-basis}, \exaref{Example-geom-sep-domains}
and \exaref{Example-PSH-adaped-to-geom-sep}.

Suppose $\Omega=\left\{ \rho<0\right\} $ is lineally convex near
$p_{0}\in\partial\Omega$, a point of finite type, and $W$ is a small
neighborhood of $p_{0}$. $\left(Z_{i}\right)_{i}$ is a coordinate
system centered at $p_{0}$ such that $Z_{n}$ is the complex normal
to $\partial\Omega$ at $p_{0}$, and $\frac{\partial\rho}{\partial Z_{n}}\simeq1$
in $W$.

We begin with the statement in \exaref{Example-extremal-basis} (1).
Let $p\in\partial\Omega\cap W$ and $\delta>0$. Let $\left(z_{i}\right)_{i}$
be the $\delta$-extremal basis (considered as a coordinate system)
at $p$ defined by M. Conrad in \cite{Conrad_lineally_convex} (the
main results concerning this basis are summarized in \cite{Diederich-Fischer_Holder-linally-convex}),
which is centered at $p$. To be coherent with our previous notations,
we suppose that the complex normal to $\partial\Omega$ at $p$ is
$z_{n}$ (in M. Conrad paper this normal is $z_{1}$).

To each vector $v=\left(a_{1},\ldots,a_{n-1},0\right)\in\mathbb{C}^{n}$
we associate the $(1,0)$-vector field, tangent to $\rho$,
\begin{equation}
L_{v}=\sum_{i=1}^{n-1}a_{i}\frac{\partial}{\partial z_{i}}+\beta_{v}\frac{\partial}{\partial Z_{n}}:=V+\beta_{v}\frac{\partial}{\partial Z_{n}}\label{eq:lineal-cvx-vector-fields-assoc-extr-basis}
\end{equation}
 (thus $\beta_{v}=-V(\rho)\left(\frac{\partial\rho}{\partial Z_{n}}\right)^{-1}$).

If $v_{i}=\left(\delta_{k}^{i}\right)_{1\leq k\leq n}$, $1\leq i\leq n-1$,
we denote $L_{i}=L_{v_{i}}=\frac{\partial}{\partial z_{i}}+\beta_{i}\frac{\partial}{\partial Z_{n}}$.
Note that the vector fields $L_{i}$ depend on $p$ and $\delta$
($L_{i}=L_{i}(p,\delta)$) and are the vector fields of a basis of the complex tangent
space to $\rho$ in $W$.
\begin{spprop}
\label{prop:extremal-basis-lineally-convex}There exists a constant
$K$ such that, for all $p\in\partial\Omega\cap W$ and all $\delta\leq\delta_{0}$,
$\delta_{0}$ small enough, the basis $\left(L_{i}(p,\delta)\right)_{i}$
is $(K,\delta)$-extremal at $p$.\end{spprop}
\begin{proof}
$p$ and $\delta$ being fixed, we drop them in the notations. First
we express the weights $F(L_{v},p,\delta)$ in terms of the vector
field $V$ of (\ref{eq:lineal-cvx-vector-fields-assoc-extr-basis}).
\begin{lem*}
Let $\Lrond$ be a list composed of $\alpha$ $L_{v}$ and $\beta$
$\overline{L_{v}}$, $\left\Vert v\right\Vert \leq1$. Then
\[
\Lrond\left(\partial\rho\right)=2V^{\alpha}\bar{V}^{\beta}(\rho)+\sum_{\alpha'+\beta'<\alpha+\beta}*V^{\alpha'}\bar{V}^{\beta'}(\rho)
\]
where $*$ are functions of $\MR{C}^{2m-(\alpha+\beta)}$ norm uniformly
bounded in $p$ and $\delta$.\end{lem*}
\begin{proof}
Look first at $c_{v\bar{v}}=2\left[L_{v},\overline{L_{v}}\right](\partial\rho)$:
$c_{vv}=-2\overline{L_{v}}(\beta_{v})\frac{\partial\rho}{\partial Z_{n}}=2\overline{L_{v}}V(\rho)+*V(\rho)=2\bar{V}V(\rho)+*V(\rho)$.
The Lemma is then proved by induction.\end{proof}
\begin{cor*}
$F\left(L_{v},p,\delta\right)\simeq\sum\left|a_{i}\right|^{2}F(L_{i},p,\delta)$
uniformly in $p$ and $\delta$.\end{cor*}
\begin{proof}
It suffices to prove this formula when $\left\Vert v\right\Vert =1$.
By the Lemma
\begin{eqnarray*}
F\left(L_{v},p,\delta\right) & \simeq & \sum_{2\leq\alpha+\beta\leq m}\left|\frac{V^{\alpha}\bar{V}^{\beta}(\rho)}{\delta}\right|^{\frac{2}{\alpha+\beta}}\\
 & = & \sum_{2\leq\alpha+\beta\leq m}\left|\frac{\frac{\partial^{\alpha+\beta}}{\partial\lambda^{\alpha}\partial\bar{\lambda}^{\beta}}(\rho)(p+\lambda v)_{|\lambda=0}}{\delta}\right|^{\frac{2}{\alpha+\beta}}\\
 & \simeq & \left(\frac{2}{\tau(p,v,\delta)}\right)^{2},
\end{eqnarray*}
where $\tau(p,v,\delta)$ is Conrad's notation.

Using properties (iii) and (iv) of Proposition 3.1 of \cite{Diederich-Fischer_Holder-linally-convex}
we get
\[
F\left(L_{v},p,\delta\right)\simeq\sum_{2}^{m}\frac{\left|a_{i}\right|^{2}}{\tau(p,v_{i},\delta)^{2}}.
\]
 As all constants are uniform in $p$ and $\delta$ the Corollary
is proved.
\end{proof}
To finish the proof of \propref{extremal-basis-lineally-convex} we
have to prove property EB$_{\text{2}}$ of \defref{basis-extremal}.
For example, let us look at the bracket $[L_{i},\overline{L_{j}}]$:
\[
[L_{i},\overline{L_{j}}]=\left(-\frac{\partial}{\partial\bar{z}_{j}}+\bar{\beta}_{j}\frac{\partial}{\partial\overline{Z_{n}}}\right)\left(\beta_{i}\right)\frac{\partial}{\partial Z_{n}}+\left(\frac{\partial}{\partial z_{i}}+\beta_{i}\frac{\partial}{\partial Z_{n}}\right)\left(\overline{\beta_{i}}\right)\frac{\partial}{\partial\overline{Z_{n}}}=a\frac{\partial}{\partial Z_{n}}+b\frac{\partial}{\partial\overline{Z_{n}}}.
\]
 Let $\Lrond\in\Lrond_{M}\left(L_{1},\ldots,L_{n-1}\right)$. As,
for all $k$, $F_{k}^{-1/2}\geq\delta$ and $\frac{\partial}{\partial Z_{n}}=\sum\alpha_{i}\frac{\partial}{\partial z_{i}}$
with $\alpha_{i}$ uniformly bounded in $\MR{C}^{M}$ norm, it is
enough to show that
\[
\left(\left|\Lrond a\right|+\left|\Lrond b\right|\right)(p)\lesssim\delta F^{\alpha/2}(p,\delta)F_{i}^{1/2}(p,\delta)F_{j}^{1/2}(p,\delta).
\]
 If $\left|\Lrond\right|=0$, $a(p)=\frac{\partial^{2}\rho}{\partial z_{i}\partial\overline{z_{j}}}(0)\left(\frac{\partial\rho}{\partial Z_{n}}\right)^{-1}(p)$
and, if $\left|\Lrond\right|=1$, $\LB{k}a(p)=\frac{\partial^{3}\rho}{\partial\PBG{z_{k}}\partial z_{i}\partial\overline{z_{j}}}(0)\left(\frac{\partial\rho}{\partial Z_{n}}\right)^{-1}(p)+*\frac{\partial^{2}\rho}{\partial\PBG{z_{k}}\partial\overline{z_{j}}}$.
Thus, in those cases, the result follows from Lemma 3.2 of M. Conrad's
paper \cite{Conrad_lineally_convex} which states
\begin{equation}
\left|\frac{\partial^{\alpha+\beta}\rho}{\partial z^{\alpha}\partial\overline{z}^{\beta}}(p)\right|\lesssim\delta\prod\left(\frac{1}{\tau(p,v_{i},\delta)}\right)^{\alpha_{i}+\beta_{i}}\simeq\delta F(p,\delta)^{(\alpha+\beta)/2},\label{eq:maj-der-rho-linea-cvx}
\end{equation}
 the last equivalence resulting of the proof of the previous Corollary.
The case of a general $\Lrond$ is easily done similarly.
\end{proof}

\medskip{}

Now let us prove the statement made in \exaref{Example-PSH-adaped-to-geom-sep}
(1). The construction of the adapted plurisubharmonic function is
inspired by the McNeal's construction for convex domains, using support
function, written in \cite{McNeal-unif-subel-est-convex}. We use
the support function for lineally convex domains constructed by J.
E. Fornaess and K. Diederich in \cite{Diederich-Fornaess-Support-Func-lineally-cvx}.
The right behavior in the normal direction is obtained, as in \secref{PSH-func-strong-geom-sep},
adding the functions $Ke^{\rho/\delta}$ and $K\left|z\right|^{2}$.

Consider the support function constructed by K. Diederich and J. E.
Fornaess in \cite{Diederich-Fornaess-Support-Func-lineally-cvx} at
the point $p$:
\begin{eqnarray*}
S_{p}(z_{1},\ldots,z_{n}) & = & -\varepsilon\sum_{j=2}^{2m}M^{2^{j}}\sigma_{j}\sum_{\left|\alpha\right|=j,\,\alpha_{n}=0}\frac{1}{\alpha!}\frac{\partial^{j}\rho(0)}{\partial z^{\alpha}}z^{\alpha}\\
 &  & +z_{n}\left(\frac{1}{1-A_{p}(z)}\right)+K_{0}\left(\frac{z_{n}}{1-A_{p}(z)}\right)^{2},
\end{eqnarray*}
 where $A_{p}$ is a $\MR{C}^{\infty}$ function, uniformly bounded
(in $p$), such that $A_{p}(0)=0$. Shrinking $W(p_{0})$ if necessary,
$S_{p}$ is uniformly bounded on $W(p_{0})$.

Then there exists a constant $M_{0}$ ($>8n$ and independent of $p$
and $\delta$) such that, if $S=\frac{M_{0}\Re\mathrm{e}(S_{p})}{\delta}$,
we have:
\begin{enumerate}
\item $\Re\mathrm{e}(S)\leq0$;
\item $\Re\mathrm{e}\left(S(z)\right)\leq-n$, if there exists $i<n$ such
that $\left|z_{i}\right|\geq F_{i}(p,\delta)^{-1/2}$;
\item $-1/4\leq\Re\mathrm{e}\left(S(z)\right)$ if $z\in cP(p,\delta)=\left\{ z\mbox{ such that }\left|z_{i}\right|\leq cF_{i}(p,\delta)^{-1/2},\, i=1,\ldots,n\right\} $.
\end{enumerate}

Let $F$ be the function defined by $F(z)=\sum_{i=1}^{n-1}F_{i}(p,\delta)\left|z_{i}\right|^{2}$,
and $\chi$ be the convex function such that $\chi\equiv0$ on $]-\infty,-1[$
and $\chi(x)=e^{x-1}-1$ on $]-1,+\infty[$. Define then 
\[
H_{1}=\chi\left(F+S-\frac{n}{\delta^{2}}\left|z_{n}\right|^{2}\right).
\]
 Clearly $H_{1}\equiv0$ on a neighborhood of the boundary of $P(p,\delta)$.
Thus we denote by $H$ the function equal to $H_{1}$ in $P(p,\delta)$
and $0$ outside. Then:
\begin{enumerate}
\item $\mathrm{Supp}(H)\subset P(p,\delta)$;
\item $\left|H\right|\leq C_{0}$;
\item On $P(p,\delta)$ we have
\[
\left\langle \partial\bar{\partial}H;\sum_{i=1}^{n-1}a_{i}\frac{\partial}{\partial z_{i}},\overline{\sum_{i=1}^{n-1}a_{i}\frac{\partial}{\partial z_{i}}}\right\rangle \geq\chi'\left(F+S-\frac{n}{\delta^{2}}\left|z_{n}\right|^{2}\right)\sum_{i=1}^{n-1}\left|a_{i}\right|^{2}F_{i}(p,\delta).
\]
\end{enumerate}

We now estimate $\left\langle \partial\bar{\partial}H;L,\bar{L}\right\rangle $
in $P(p,\delta)$ for a vector field $L=_{i=1}^{n}b_{i}L_{i}^{0}$
where $L_{i}^{0}=\frac{\partial}{\partial Z_{i}}-\beta_{i}^{0}\frac{\partial}{\partial Z_{n}}$,
for $i<n$, and $L_{n}^{0}=N$ the complex normal vector (recall that
the extremal bases are linear combinations of these $L_{i}^{0}$).
Denote $L=L_{\tau}+b_{n}N$, so that $L_{\tau}$ is tangent to $\rho$.
Then
\begin{equation}
\left\langle \partial\bar{\partial}H;L,\bar{L}\right\rangle =\left\langle \partial\bar{\partial}H;L_{\tau},\overline{L_{\tau}}\right\rangle +2\Re\mathrm{e}\left(\overline{b_{n}}\left\langle \partial\bar{\partial}H;L_{\tau},\bar{N}\right\rangle \right)+\left|b_{n}\right|^{2}\left\langle \partial\bar{\partial}H;N,\bar{N}\right\rangle .\label{eq:expression-D-Dbar-H-linea-cvx}
\end{equation}

The last term of the right hand side of this equality is $\geq-\mathrm{O}\left(\left|b_{n}\right|^{2}\chi'\left(F+S-\frac{n}{\delta^{2}}\left|z_{n}\right|^{2}\right)\frac{1}{\delta^{2}}\right)$,
and, if $\left(L_{i}\right)$ is the $\delta$-extremal basis at $p$
and $L_{\tau}=\sum_{i=1}^{n-1}a_{i}L_{i}$, we have
\begin{eqnarray*}
\left\langle \partial\bar{\partial}H;L_{\tau},\bar{N}\right\rangle  & = & \left\langle \partial\bar{\partial}H;\sum_{i=1}^{n-1}a_{i}\frac{\partial}{\partial z_{i}},\bar{N}\right\rangle +\left\langle \partial\bar{\partial}H;\left(\sum a_{i}\beta_{i}\right)\frac{\partial}{\partial Z_{n}},\bar{N}\right\rangle .
\end{eqnarray*}
 Using (\ref{eq:maj-der-rho-linea-cvx}) the first term of the
right hand side of this equality is $\mathrm{O}\left(\frac{1}{\delta}\chi'\left(F+S-\frac{n}{\delta^{2}}\left|z_{n}\right|^{2}\right)\right)$,
and, using also the fact that $\beta_{i}(p)=0$ implies
$\beta_{i}=\mathrm{O}\left(\delta F_{i}(p,\delta)^{1/2}\right)$, the second term is
$\mathrm{O}\left(\frac{1}{\delta}\chi'\left(F+S-\frac{n}{\delta^{2}}\left|z_{n}\right|^{2}\right)\sum\left|a_{i}\right|F_{i}(p,\delta)^{1/2}\right)$. Notice that, by extremality,
$\sum\left|a_{i}\right|F_{i}(p,\delta)^{1/2}\simeq F(L_{\tau},p,\delta)^{1/2}$,
thus, there exists a constant $K_{1}$ such that
\[
2\Re\mathrm{e}\left(\overline{b_{n}}\left\langle \partial\bar{\partial}H;L_{\tau},\bar{N}\right\rangle \right)+\left|b_{n}\right|^{2}\left\langle \partial\bar{\partial}H;N,\bar{N}\right\rangle \geq-K_{1}\chi'\left(F+S-\frac{n}{\delta^{2}}\left|z_{n}\right|^{2}\right)\left(\frac{\left|b_{n}\right|^{2}}{\delta^{2}}+\frac{\left|b_{n}\right|}{\delta}F(L_{\tau},p,\delta)^{1/2}\right).
\]

Let us now look at the first term of the right hand side of 
(\ref{eq:expression-D-Dbar-H-linea-cvx}):
\begin{eqnarray*}
\left\langle \partial\bar{\partial}H;L_{\tau},\overline{L_{\tau}}\right\rangle  & = & \chi'\left(F+S-\frac{n}{\delta^{2}}\left|z_{n}\right|^{2}\right)\left\langle \partial\bar{\partial}\left(F+S-n\frac{\left|z_{n}\right|^{2}}{\delta^{2}}\right);L_{\tau},\overline{L_{\tau}}\right\rangle +\\
 &  & +\chi''\left(F+S-\frac{n}{\delta^{2}}\left|z_{n}\right|^{2}\right)\left|L_{\tau}\left(F+S-n\frac{\left|z_{n}\right|^{2}}{\delta^{2}}\right)\right|^{2}\\
 & = & A+B.
\end{eqnarray*}
 Shrinking $W(p_{0})$ if necessary, we have
\begin{eqnarray*}
A & \geq & \chi'\left(F+S-\frac{n}{\delta^{2}}\left|z_{n}\right|^{2}\right)\left[\frac{1}{2}\sum_{i=1}^{n-1}\left|a_{i}\right|^{2}F_{i}(p,\delta)-\frac{n}{\delta^{2}}\left|\left\langle \partial\bar{\partial}\left|z_{n}\right|^{2};L_{\tau},\overline{L_{\tau}}\right\rangle \right|\right]\\
 & \geq & \chi'\left(F+S-\frac{n}{\delta^{2}}\left|z_{n}\right|^{2}\right)\left[\frac{1}{2}\sum_{i=1}^{n-1}\left|a_{i}\right|^{2}F_{i}(p,\delta)-\frac{2n}{\delta^{2}}\left|\sum_{i=1}^{n-1}a_{i}\beta_{i}\right|^{2}\right].
\end{eqnarray*}
 To estimate $B$, write
\[
L_{\tau}\left(F+S-n\frac{\left|z_{n}\right|^{2}}{\delta^{2}}\right)=\sum_{i=1}^{n-1}a_{i}\frac{\partial}{\partial z_{i}}\left(F+S-n\frac{\left|z_{n}\right|^{2}}{\delta^{2}}\right)+\left(\sum_{i=1}^{n-1}a_{i}\beta_{i}\right)\frac{\partial}{\partial Z_{n}}\left(F+S-n\frac{\left|z_{n}\right|^{2}}{\delta^{2}}\right).
\]
 Then the first term of the right hand side of this equality is $\mathrm{O}\left(F(L_{\tau},p,\delta)^{1/2}\right)$
by extremality (use (\ref{eq:maj-der-rho-linea-cvx})), and
\[
\frac{\partial}{\partial Z_{n}}\left(F+S-n\frac{\left|z_{n}\right|^{2}}{\delta^{2}}\right)=\mathrm{O}\left(F(L_{\tau},p,\delta)^{1/2}\right)+\frac{M_{0}}{\delta}\frac{\partial}{\partial Z_{n}}\left(\frac{z_{n}}{1-A_{p}(z)}\right)+\mathrm{O}(1)-\frac{n}{\delta^{2}}\frac{\partial}{\partial Z_{n}}\left(z_{n}\overline{z_{n}}\right).
\]
 But, if $W(p_{0})$ is small enough, $\left|\frac{\partial}{\partial Z_{n}}\left(\frac{z_{n}}{1-A(z)}\right)\right|\in\left[1/2,3/2\right]$,
and, in $P(p,\delta)$, $\frac{1}{\delta^{2}}\left|\frac{\partial}{\partial Z_{n}}\left(z_{n}\overline{z_{n}}\right)\right|\leq\frac{2n}{\delta}$.

Thus, as $\chi''=\chi'$, for $\delta$ small enough, we have, by
the choose of $M_{0}$,
\[
\left\langle \partial\bar{\partial}H;L_{\tau},\overline{L_{\tau}}\right\rangle \geq\chi'\left(F+S-\frac{n}{\delta^{2}}\left|z_{n}\right|^{2}\right)\sum_{i=1}^{n-1}\left|a_{i}\right|^{2}F_{i}(p,\delta).
\]
 Using again the extremality of the basis $\left(L_{i}\right)$, we
conclude that
\[
\left\langle \partial\bar{\partial}H;L,\bar{L}\right\rangle \geq\alpha\chi'\left(F+S-\frac{n}{\delta^{2}}\left|z_{n}\right|^{2}\right)\left[F(L_{\tau},p,\delta)-K_{2}\frac{\left|b_{n}\right|^{2}}{\delta^{2}}+K_{2}\frac{\left|b_{n}\right|}{\delta}F(L_{\tau},p,\delta)^{1/2}\right],
\]
 and, using Cauchy-Schwarz inequality, we get
\[
\left\langle \partial\bar{\partial}H;L,\bar{L}\right\rangle \geq\beta\chi'\left(F+S-\frac{n}{\delta^{2}}\left|z_{n}\right|^{2}\right)\left[F(L_{\tau},p,\delta)-K_{3}\frac{\left|b_{n}\right|^{2}}{\delta^{2}}\right].
\]
In particular, on $cP(p,\delta)$, we have
\[
\left\langle \partial\bar{\partial}H;L,\bar{L}\right\rangle \geq\gamma F(L_{\tau},p,\delta)-K\frac{\left|b_{n}\right|^{2}}{\delta^{2}},
\]
 and $\left\langle \partial\bar{\partial}H;L,\bar{L}\right\rangle \geq-K\frac{\left|b_{n}\right|^{2}}{\delta^{2}}$
on $P(p,\delta)$.

If we note that choosing $c$ sufficiently small we have $F(L_{\tau},p,\delta)\simeq F(L_{\tau},q,\delta)$,
we get:
\begin{spprop}
There exist two constants $\gamma$ and $K$ depending only on the
data such that, if $L=L_{\tau}+b_{n}N=\sum_{i=1}^{n-1}b_{i}L_{i}^{0}+b_{n}N$,
we have $\left\langle \partial\bar{\partial H;L,\bar{L}}\right\rangle \geq-K\frac{\left|b_{n}\right|^{2}}{\delta^{2}}$,
and, if $q\in cP(p,\delta)$, we have $\left\langle \partial\bar{\partial H;L,\bar{L}}\right\rangle (q)\geq\gamma F(L_{\tau},q,\delta)-K\frac{\left|b_{n}\right|^{2}}{\delta^{2}}$.
\end{spprop}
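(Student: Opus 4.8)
The plan is to read the proposition off the estimates already established for $H$ in the preceding paragraphs, the only fresh input being two elementary bounds on the range of the cut-off argument of $\chi$ together with the invariance of the weights inside the polydisc. Recall that $H$ is supported in $P(p,\delta)$, that $\left|H\right|\leq C_{0}$, and that, writing $L=L_{\tau}+b_{n}N$ with $L_{\tau}=\sum_{i=1}^{n-1}a_{i}L_{i}$ in Conrad's $\delta$-extremal basis at $p$, we have shown for $q\in P(p,\delta)$
\[
\left\langle \partial\bar{\partial}H;L,\bar{L}\right\rangle (q)\;\geq\;\beta\,\chi'\!\left(F+S-\frac{n}{\delta^{2}}\left|z_{n}\right|^{2}\right)\left[F(L_{\tau},p,\delta)-K_{3}\frac{\left|b_{n}\right|^{2}}{\delta^{2}}\right],
\]
where $F(z)=\sum_{i=1}^{n-1}F_{i}(p,\delta)\left|z_{i}\right|^{2}$ and $S$ is the rescaled Diederich--Fornaess support function.

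For the first assertion I would observe that on $\mathrm{Supp}(H)=P(p,\delta)$ the argument $t=F+S-\frac{n}{\delta^{2}}\left|z_{n}\right|^{2}$ of $\chi$ is bounded above: there $F\leq n-1$, while $S\leq0$ by property~(1) of $S$ and $-\frac{n}{\delta^{2}}\left|z_{n}\right|^{2}\leq0$, hence $t\leq n-1$. Since $\chi$ is non-decreasing and convex, $0\leq\chi'(t)\leq\chi'(n-1)$ on $P(p,\delta)$. Feeding this into the displayed inequality yields, on $P(p,\delta)$ and hence (trivially, $H$ being zero there) on all of $\Omega$,
\[
\left\langle \partial\bar{\partial}H;L,\bar{L}\right\rangle \;\geq\;-\beta\,\chi'(n-1)\,K_{3}\,\frac{\left|b_{n}\right|^{2}}{\delta^{2}}\;=:\;-K\,\frac{\left|b_{n}\right|^{2}}{\delta^{2}}.
\]

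For the second assertion I would restrict to $q\in cP(p,\delta)$ with $c$ small. There $\left|z_{n}\right|\leq c\delta$, so $-\frac{n}{\delta^{2}}\left|z_{n}\right|^{2}\geq-nc^{2}$, and property~(3) of $S$ gives $S\geq-1/4$; as $F\geq0$ this makes $t\geq-1/4-nc^{2}\geq-1/2$ once $c$ is small, and on $[-1/2,+\infty)$ the quantity $\chi'(t)$ is bounded below by a strictly positive constant $\gamma_{0}$ depending only on $\chi$. Plugging $\chi'\geq\gamma_{0}$ into the displayed inequality gives $\left\langle \partial\bar{\partial}H;L,\bar{L}\right\rangle (q)\geq\beta\gamma_{0}\,F(L_{\tau},p,\delta)-K\,\frac{\left|b_{n}\right|^{2}}{\delta^{2}}$. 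It remains to replace $F(L_{\tau},p,\delta)$ by $F(L_{\tau},q,\delta)$: by \prettyref{prop|extremal-basis-lineally-convex} Conrad's basis is $K$-extremal, so the general machinery of \prettyref{prop|properties-vector-coord-weights-balls-3-5} applies and gives $F(L_{\tau},q,\delta)\simeq F(L_{\tau},p,\delta)$ for $c\leq c_{0}$, with a comparison constant independent of $c$. Absorbing it into $\gamma$ produces the stated inequality $\left\langle \partial\bar{\partial}H;L,\bar{L}\right\rangle (q)\geq\gamma F(L_{\tau},q,\delta)-K\frac{\left|b_{n}\right|^{2}}{\delta^{2}}$ on $cP(p,\delta)$.

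I do not expect any genuine obstacle here: the analytic content --- the estimates of the terms $A$ and $B$ in $\left\langle \partial\bar{\partial}H;L_{\tau},\overline{L_{\tau}}\right\rangle $, the identity $\chi''=\chi'$, and the choice of the large constant $M_{0}$ in the support function --- has already been carried out above, and what remains is precisely the bookkeeping described. The only steps needing a little care are the two range estimates for the argument of $\chi$ on $P(p,\delta)$ and on $cP(p,\delta)$, and the check that the weight comparison $F(L_{\tau},q,\delta)\simeq F(L_{\tau},p,\delta)$ does not degenerate as $c\to0$; both are immediate from the definitions and from \prettyref{prop|properties-vector-coord-weights-balls-3-5} once $c\leq c_{0}$.
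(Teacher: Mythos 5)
Your proposal is correct and takes essentially the same route as the paper, which merely asserts the two inequalities after the final display; you supply the bookkeeping the paper omits, namely that the argument $t=F+S-\tfrac{n}{\delta^2}|z_n|^2$ satisfies $t\le n-1$ on $P(p,\delta)$ (hence $\chi'(t)\le\chi'(n-1)$, giving the lower bound $-K|b_n|^2/\delta^2$ there and trivially outside the support), that $t\ge -\tfrac14-nc^2\ge -\tfrac12$ on $cP(p,\delta)$ for $c$ small (hence $\chi'(t)\ge e^{-3/2}>0$), and that the final replacement $F(L_\tau,p,\delta)\simeq F(L_\tau,q,\delta)$ follows from Proposition \ref{prop|properties-vector-coord-weights-balls-3-5} once $c\le c_0$.
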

To finish the construction of the plurisubharmonic function adapted
to the structure of $\Omega$ at $p$, as in the proof of \thmref{PSH-function-for-geom-separable-with-strong-ext-basis},
we have to add functions of the precedent type to get a local function.
Thus, we cover $\partial\Omega\cap W(p_{0})$ with a minimal system
of polydiscs $\frac{c}{2}P(p_{k}\delta)$, $p_{k}\in\partial\Omega\cap W(p_{0})$
and, then, there exists an integer $J$, independent of $\delta$,
such that every point of $\Omega$ belongs to at most $J$ polydiscs
$P(p_{k},\delta)$. Indeed, there exists a constant $C$ such that
\[
P\left(p,\frac{c}{C}\delta\right)\subset\frac{c}{2}P(p,\delta)\subset P(p,cC\delta)
\]
 and the polydiscs $P(p,\delta)$ are associated to a structure of
homogeneous space.

Consider $H=\sum H_{p_{k}}$ where the function $H_{p_{k}}$ is the
one considered in the previous Proposition relatively to the point
$p=p_{k}$ (notice that $\left\Vert H\right\Vert \leq JC_{0}$). Then,
shrinking eventually $W(p_{0})$ and choosing $\rho$ equivalent to
the distance to the boundary with a constant close to $1$, for all
point $q\in W(p_{0})\cap\left\{ 0>\rho>-\frac{c}{2}\delta\right\} $
there exists $k_{0}$ such that $q\in P(p_{k_{0}},\delta)$ and the
set $E(q)$ of index $k$ so that $q\in cP(p_{k},\delta)$ has at
most $J$ elements and we have
\[
\left\langle \partial\bar{\partial}H;L\bar{L}\right\rangle (q)\geq\gamma F(L_{\tau},q,\delta)-KJ\frac{\left|b_{n}\right|^{2}}{\delta^{2}}.
\]
 Moreover, without conditions on $q$, we have
\[
\left\langle \partial\bar{\partial}H;L\bar{L}\right\rangle (q)\geq-KJ\frac{\left|b_{n}\right|^{2}}{\delta^{2}},
\]
 and $\partial\bar{\partial}H(q)=0$ if $\rho(q)<-2\delta$.

We now evaluate $\left\langle \partial\bar{\partial}e^{\rho/\delta};L,\bar{L}\right\rangle $
in $W(p_{0})$:
\[
\left\langle \partial\bar{\partial}e^{\rho/\delta};L,\bar{L}\right\rangle \geq e^{\rho/\delta}\left[\frac{1}{\delta}\left(\frac{1}{2}\sum_{i,j=1}^{n-1}b_{i}\overline{b_{j}}c_{ij}^{0}+\Re\mathrm{e}\sum_{i=1}^{n-1}b_{i}\overline{b_{n}}\left\langle \partial\bar{\partial}\rho;L_{i}^{0},\bar{N}\right\rangle \right)+\frac{\left|b_{n}\right|^{2}}{\delta^{2}}\right]
\]
 $c_{ij}^{0}$ being the coefficient of the Levi form in the direction
$\left(L_{i}^{0},\overline{L_{j}^{0}}\right)$. As the level set of
$\rho$ are pseudo-convex (in $W(p_{0})$), we get
\[
\left\langle \partial\bar{\partial}e^{\rho/\delta};L,\bar{L}\right\rangle \geq e^{\rho/\delta}\left(\frac{1}{2}\frac{\left|b_{n}\right|^{2}}{\delta^{2}}-K_{1}\right).
\]

Consider now $\tilde{H}=H+K_{1}e^{\rho/\delta}+K_{2}\left|z\right|^{2}$,
for $K_{1}$ and $K_{2}$ large enough (independent of $\delta$).
Then $\tilde{H}$ is plurisubharmonic on $\Omega\cap W(p_{0})$, uniformly
bounded (with respect to $\delta$) and satisfies, on $W(p_{0})\cap\left\{ 0>\rho>-\frac{c}{2}\delta\right\} $
\[
\left\langle \partial\bar{\partial}\tilde{H};L,\bar{L}\right\rangle \geq\gamma F(L_{\tau},.,\delta)+\frac{\left|b_{n}\right|^{2}}{2\delta^{2}}.
\]
 To change $\frac{c}{2}\delta$ in $2\delta$ it suffices to apply
the relations between $F(.,.,\alpha\delta)$ and $F(.,.,\delta)$.

Finally, we extend $\tilde{H}$ to a bounded plurisubharmonic function
in $\Omega$ using the function $\vartheta_{1}$ of the end of the
proof of \thmref{PSH-function-for-geom-separable-with-strong-ext-basis}.

\subsection{Example of non geometrically separated domain\label{sec:example-non-geometrically-separated}}

The example presented here is the domain of $\mathbb{C}^{3}$ introduced
by G. Herbort in \cite{Herbort-logarithm}:
\[
\Omega=\left\{ z\in\mathbb{C}^{3}\mbox{ such that }\Re\mathrm{e}z_{1}+\left|z_{2}\right|^{6}+\left|z_{3}\right|^{6}+\left|z_{2}\right|^{2}\left|z_{3}\right|^{2}<0\right\} .
\]

Let $L_{i}^{0}=\frac{\partial}{\partial z_{i}}+\beta_{i}\frac{\partial}{\partial z_{1}},$
$i=2,3$, $\beta_{2}=-\left(6\left|z_{2}\right|^{4}\overline{z_{2}}+2\left|z_{3}\right|^{2}\overline{z_{2}}\right)$
and $\beta_{3}=-\left(6\left|z_{3}\right|^{4}\overline{z_{3}}+2\left|z_{2}\right|^{2}\overline{z_{3}}\right)$
so that $\left(L_{2}^{0},L_{3}^{0}\right)$ is a basis of $(1,0)$
tangent vector fields in a neighborhood of the origin.

The fact that this domain is not geometrically separated at the origin
is a consequence of the stronger following result:
\begin{spprop}
For all real constants $K$ and $C$, there exists $\delta_{0}>0$
such that for all $\delta$, $0<\delta\leq\delta_{0}$, there does
not exist a basis $\left(L_{1}^{\delta},L_{2}^{\delta}\right)$ of $(1,0)$
tangent vector fields in a neighborhood of the origin of $\MR{C}^{6}$
norm bounded by $C$ satisfying property EB$_{\text{1}}$ of \defref{basis-extremal},
for the constant $K$, at the origin.\end{spprop}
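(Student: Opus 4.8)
The plan is to compute the weights $F(L,0,\delta)$ of tangent fields at the origin explicitly for Herbort's $\rho$, observe that there is a genuine gap — contact order $4$ for generic directions (weight $\asymp\delta^{-1/2}$) versus contact order $6$ along the two ``axis'' directions $\mathbb{C}\partial_{z_2}$ and $\mathbb{C}\partial_{z_3}$ in the complex tangent space at $0$ (weight $\asymp\delta^{-1/3}$) — and then show this gap cannot coexist with the norm equivalence EB$_{\text{1}}$. First I would record the Levi coefficients of the fixed basis $(L_2^0,L_3^0)$ \emph{exactly}: since $\beta_2,\beta_3$ and the non-pluriharmonic part of $\rho$ depend only on $z_2,z_3,\bar z_2,\bar z_3$, one obtains $c_{22}=9|z_2|^4+|z_3|^2$, $c_{33}=9|z_3|^4+|z_2|^2$, $c_{23}=\bar z_2z_3$, $c_{32}=z_2\bar z_3$. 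For any $(1,0)$ field tangent to $\rho$, written $L=pL_2^0+qL_3^0$ with $L(0)=v=v_2\partial_{z_2}+v_3\partial_{z_3}$, all ``coefficient-bracket'' terms are killed by $\langle\partial\rho,\cdot\rangle$, so $c_{LL}=\langle\partial\rho,[L,\bar L]\rangle=|p|^2c_{22}+p\bar q\,c_{23}+\bar pq\,c_{32}+|q|^2c_{33}$, whose degree-$2$ jet at $0$ equals $|v_2|^2|z_3|^2+v_2\bar v_3\bar z_2z_3+\bar v_2v_3z_2\bar z_3+|v_3|^2|z_2|^2$ — crucially independent of the jets of $p$ and $q$.

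\textbf{Weight estimates.} The point of that computation is that the length-$4$ list $\mathcal L=(L,\bar L,L,\bar L)\in\mathcal L_M(L)$ gives $\mathcal L(\partial\rho)(0)=L\bar L(c_{LL})(0)=4|v_2|^2|v_3|^2$, so I get the \emph{jet-free} lower bound $F(L,0,\delta)\ge 2|v_2v_3|\,\delta^{-1/2}$ for every tangent $L$ with $L(0)=v$. In the opposite direction, along $\mathbb{C}\partial_{z_2}$ and $\mathbb{C}\partial_{z_3}$ the $1$-type at $0$ is $6$ (it is governed by the pure terms $|z_2|^6$, $|z_3|^6$), and I would prove $F(L,0,\delta)\le C_1\|L\|_{\mathcal C^6}^2\,\delta^{-1/3}$ there by a slot-counting argument: producing an extra $\partial_{z_3}$ derivative at $0$ out of the coefficient $q$, which vanishes at $0$, costs an additional list entry to differentiate $q$, so the ``diagonalizing'' mixed term $|z_2|^2|z_3|^2$ can resurface in $c_{LL}$ only via lists too long to beat $\delta^{-1/3}$. (More generally $F(L,0,\delta)\asymp_{C'}|v_2v_3|\,\delta^{-1/2}+\|v\|^2\delta^{-1/3}$ for $\|L\|_{\mathcal C^6}\le C'$, i.e.\ the weight is determined by $v$ and $\delta$ up to constants depending on $C'$.)

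\textbf{Deriving the contradiction.} Suppose now that for fixed $K,C$ there are, for arbitrarily small $\delta$, tangent fields $\brond^\delta=(L_1^\delta,L_2^\delta)$ of $\mathcal C^6$-norm $\le C$ forming a basis and satisfying EB$_{\text{1}}$ with constant $K$ at $0$; assuming (as is implicit in the notion of basis in \prettyref{def|basis-extremal}) that the Jacobian is bounded below, the values $v^{(i)}:=L_i^\delta(0)$ form a boundedly transverse pair in the complex tangent space. Since $v^{(1)},v^{(2)}$ span it, there are linear combinations of $L_1^\delta,L_2^\delta$ with \emph{bounded} coefficients whose values at $0$ are $\partial_{z_2}$ and $\partial_{z_3}$; applying the $\delta^{-1/3}$ bound and EB$_{\text{1}}$ (lower inequality) to these forces $F_i:=F(L_i^\delta,0,\delta)\lesssim_C K\,\delta^{-1/3}$. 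On the other hand there is a bounded combination $L=a_1L_1^\delta+a_2L_2^\delta$ ($|a_j|\le1$) whose value $u$ at $0$ satisfies $|u_2u_3|\gtrsim_C1$ (at most two of $v^{(1)}+v^{(2)},\,v^{(1)}-v^{(2)},\,v^{(1)}+iv^{(2)}$ can lie on an axis); for this one the jet-free lower bound gives $F(L,0,\delta)\ge 2|u_2u_3|\delta^{-1/2}\gtrsim_C\delta^{-1/2}$, while EB$_{\text{1}}$ (upper inequality) gives $F(L,0,\delta)\le K(|a_1|^2F_1+|a_2|^2F_2)\lesssim_C K^2\delta^{-1/3}$. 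Hence $\delta^{-1/6}\lesssim_C K^2$, which is impossible once $\delta<\delta_0(K,C)$.

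\textbf{Main obstacle.} The delicate part is the weight analysis of the second step, specifically the upper bound along the axes for fields carrying an arbitrary (but $C$-bounded) $6$-jet: I must be certain no choice of jet reinjects the mixed term at low enough degree, and the slot-counting sketched above has to be carried out carefully, tracking exactly how $c_{LL}$ and its derivatives at $0$ depend on the jets of $p,q$. A secondary nuisance is a possibly near-degenerate candidate basis, which I sidestepped by invoking the Jacobian lower bound; if one does not want to assume it, one instead tests EB$_{\text{1}}$ on the (unbounded-coefficient, but after rescaling still controlled) combination whose value at $0$ lies on $\mathbb{C}\partial_{z_3}$, using that both the identity $\mathcal L(\partial\rho)(0)=4|v_2|^2|v_3|^2$ and the bound $F\ge 2|v_2v_3|\delta^{-1/2}$ are scale-covariant, and concludes that EB$_{\text{1}}$ would force the transversality parameter of $v^{(1)},v^{(2)}$ — a quantity bounded in terms of $C$ — to be $\gtrsim_K\delta^{-1/6}$, again absurd for small $\delta$.
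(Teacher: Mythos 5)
Your proof follows essentially the same route as the paper's: both rest on the explicit Levi-matrix computation for $\Omega$, the resulting weight formula $F(L,0,\delta)\simeq|v_2v_3|\delta^{-1/2}+\delta^{-1/3}$ (the paper's Lemma, with $a(0)=v_2$, $b(0)=v_3$), and the observation that EB$_{\text{1}}$ then forces $\delta^{-1/2}\lesssim_{K,C}\delta^{-1/3}$, absurd for $\delta$ small. The only difference in the endgame is cosmetic — the paper builds $L,L'$ with $L(0)=\partial_{z_2}$, $L'(0)=\partial_{z_3}$ and tests EB$_{\text{1}}$ on the sum $L+L'$, whereas you test it directly on a bounded combination of $L_1^\delta,L_2^\delta$ whose value avoids both axes; both devices work, and both need the Jacobian lower bound you flag to keep the coefficients controlled. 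The upper bound you identify as the ``main obstacle'' is exactly what the paper's Lemma supplies: $\Lrond c_{L\bar L}(0)=0$ for $|\Lrond|\le1$ and non-diagonal $|\Lrond|=2$, $|\Lrond c_{L\bar L}(0)|\lesssim|a(0)b(0)|$ for $|\Lrond|=3$, the $\MR{C}^6$ bound for length-$6$ lists, and finally the elementary inequality $(x/\delta)^{2/5}\le x/\delta^{1/2}+\delta^{-1/3}$ to absorb the length-$5$ contribution — so your slot-counting heuristic is sound and does formalize to that computation.
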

\begin{proof}
Let $L$ be a $(1,0)$ tangent vector field in a neighborhood of the
origin, and, with our usual notations, $F(L,0,\delta)=\sum_{\Lrond\in\Lrond_{6}(L)}\left|\frac{\Lrond(\partial\rho)}{\delta}\right|^{2/\left|\Lrond\right|}(0)$.
We write $L=aL_{2}^{0}+bL_{3}^{0}$.
\begin{lem*}
$F(L,0,\delta)\simeq\frac{\left|a(0)b(0)\right|}{\delta^{1/2}}+\left(\frac{1}{\delta}\right)^{1/3}$.\end{lem*}
\begin{proof}
Because $c_{L\bar{L}}=2\left[L,\bar{L}\right](\partial\rho)$, it
is easy to see that:
\begin{itemize}
\item $c_{L\bar{L}}(0)=Lc_{L\bar{L}}(0)=\bar{L}c_{L\bar{L}}(0)=0$:
\item $LLc_{L\bar{L}}(0)=\bar{L}\bar{L}c_{L\bar{L}}(0)=0$ and $L\bar{L}c_{L\bar{L}}(0)=\bar{L}Lc_{L\bar{L}}(0)=4\left|a(0)b(0)\right|^{2}$;
\item There exists a constant $C_{0}$ depending only of the $\MR{C}^{6}$
norm of $a$ and $b$ (i.e. of $L$) such that, if $\left|\Lrond\right|=3$,
$\left|\Lrond c_{L\bar{L}}(0)\right|\leq C_{0}\left|a(0)b(0)\right|$;
\item There exists a constant $\alpha_{0}$ depending only of the $\MR{C}^{6}$
norm of $L$, such that $F(L,0,\delta)\geq\alpha_{0}\delta^{-1/3}$.
Indeed, the origin being of type $6$, this follows from a result of T.
Bloom \cite{Bloom-81-finite-type-C3} and a compactness argument.
\end{itemize}
Then, the Lemma follows the fact that, for all $x\geq0$, $\left(\frac{x}{\delta}\right)^{2/5}\leq\frac{x}{\delta^{1/2}}+\left(\frac{1}{\delta}\right)^{1/3}$.
\end{proof}
We now finish the proof of the Proposition. Let $\delta$ be small
enough. Suppose that there exists a $(K,\delta)$- extremal
basis at the origin, $\left(L_{1}^{\delta},L_{2}^{\delta}\right)$,
the $\MR{C}^{6}$ norms of the vector fields bounded by $C$. Let
$L=\alpha L_{1}^{\delta}+\beta L_{2}^{\delta}$ and $L'=\alpha'L_{1}^{\delta}+\beta'L_{2}^{\delta}$
with $\alpha,\beta,\alpha',\beta'\in\mathbb{C}$ chosen so that $L(0)=L_{2}^{0}(0)$
and $L'(0)=L_{3}^{0}(0)$. Then, by extremality of $\left(L_{1}^{\delta},l_{2}^{\delta}\right)$
and the Lemma, we get
\[
\left|\alpha\right|^{2}F(L_{1}^{\delta},0,\delta)+\left|\beta\right|^{2}F(L_{2}^{\delta},0,\delta)\simeq_{K}F(L,0,\delta)\simeq_{C,K}\left(\frac{1}{\delta}\right)^{1/3}
\]
 and
\[
\left|\alpha'\right|^{2}F(L_{1}^{\delta},0,\delta)+\left|\beta'\right|^{2}F(L_{2}^{\delta},0,\delta)\simeq_{K}F(L',0,\delta)\simeq_{C,K}\left(\frac{1}{\delta}\right)^{1/3}.
\]
 Similarly, the extremality would imply
\[
F(L+L',0,\delta)\simeq_{K}\left|\alpha+\alpha'\right|^{2}F(L_{1}^{\delta},0,\delta)+\left|\beta+\beta'\right|^{2}F(L_{2}^{\delta},0,\delta)\lesssim_{C,K}\left(\frac{1}{\delta}\right)^{1/3}.
\]

But the Lemma gives $F(L+L',0,\delta)\simeq_{C}\frac{1}{\delta^{1/2}}$
which is a contradiction for $\delta$ small.
\end{proof}

\subsection{Additional remarks}

Let $\Omega$ be geometrically separated at $p\in\partial\Omega$.
In Definitions \ref{def:definition-polydissc-ext-basis} and \ref{def:definition-pseudo-balls-curves-exp}
we defined the pseudo-balls $B^{c}(p,\delta)$, $B_{\MR{C}}^{c}(p,\delta)$
and $B_{\mathrm{exp}}^{c}(p,\delta)$, which are equivalent by \propref{comparison-exp-balls-curves-polydisc},
and we expressed the Bergman kernel at $(p,p)$ with their volumes.

Let $\left(z_{i}\right)$ be the coordinate system adapted to the
extremal basis $\left(L_{i}\right)_{1\leq i\leq n-1}=\left(L_{i}^{p,\delta}\right)$
at $p$. $B(p,\delta)$ is defined (in the coordinate system) using
only the directions of the extremal basis. Let us now define a new
pseudo-ball using all the directions of the linear space generated
by the vector fields $L_{i}$ (i.e. the space $E_{0}$) (compare to
the last point of \remref{directional-pseudobals}):

For $\left|Z\right|=1$, $Z\in\mathbb{C}^{n}$, define $L_{Z}=\sum_{i=1}^{n-1}Z_{i}L_{i}+Z_{n}N$
and (in the coordinate system $\left(z_{i}\right)$)
\[
D_{Z}(p,\delta)=\left\{ \alpha Z\mbox{ such that }\left|\alpha\right|<cF(L_{Z},p,\delta)^{-1/2}\right\} 
\]
 and 
\[
D(p,\delta)=\bigcup_{\left|Z\right|=1}D_{Z}(p,\delta).
\]

Then, property EB$_{\text{1}}$ of extremality for $\left(L_{i}\right)$,
implies that these pseudo-balls $D(p,\delta)$ are equivalent (in the sense that
they define the same structure of homogeneous space) to the previous
ones. Indeed, if $z\in D_{Z}(p,\delta)$, 
\[
\left|z_{i}\right|=\left|\alpha z_{i}\right|\lesssim\left|Z_{i}\left(\sum\left|Z_{i}\right|^{2}F(L_{i}p,\delta)\right)^{-1/2}\right|\leq F(L_{i},p,\delta),
\]
and use Propositions \ref{prop:comparison-exp-balls-curves-polydisc}
and \ref{prop:pseudodistance-pseudoballs}. Conversely, if $z\in B^{c}(p,\delta)$,
$z\neq0$, and $Z=z/\left\Vert z\right\Vert $, then $F(L_{Z},p,\delta)\simeq\sum\frac{\left|z_{i}\right|^{2}}{\left\Vert z\right\Vert ^{2}}F(L_{i},p,\delta)\leq\frac{nc^{2}}{\left\Vert z\right\Vert ^{2}}$,
thus $\left\Vert z\right\Vert ^{2}\lesssim nc^{2}F(L_{Z},p,\delta)$
and we conclude as before.

\medskip{}

Note that this shows that, if $\Omega$ is completely geometrically
separated at $p_{0}\in\partial\Omega$ then the Bergman kernel $K(p,p)$
at a point $p$ near $p_{0}$ is equivalent to the inverse of the
volume of $D(p,\delta)$. 

If $\Omega$ is not geometrically separated at $p_{0}$ choosing a
coordinate system and a basis of tangent $(1,0)$ vector fields conveniently
associated (in a sense to be defined), one can always define a {}``pseudo-ball''
$D(p,\delta)$. 

Let us do this, for example, for the domains considered in the previous
Section, at the origin with the canonical coordinate system $\left(z_{i}\right)$ and
the vector fields $L_{i}^{0}$ (note that $\left(z_{i}\right)$ is
not adapted to the basis $\left(L_{i}^{0}\right)$ in the sense of
\defref{basis-and-coordinates-adapted} because, even if condition
(3) is satisfied, the conditions on the derivatives of $\rho$ are
not).

A direct computation shows that, at the point $p_{\delta}=\left(-\delta,0,0\right)$,
the volume of $D(p_{\delta},\delta)$ is $\mathrm{Vol}\left(D(p_{\delta},\delta)\right)\simeq\left(\delta^{3}\log\left(\frac{1}{\delta}\right)\right)^{-1}$
(uniformly in $\delta$), thus $B(p_{\delta},\delta)$ and $D(p_{\delta},\delta)$
are not equivalent, and the result of G. Herbort (\cite{Herbort-logarithm})
shows that the Bergman kernel $K$ of the domain satisfies $K(p_{\delta},p_{\delta})\simeq\mathrm{Vol}\left(D(p_{\delta},\delta)\right)^{-1}$.

Then it is natural to ask if, for that example, the {}``pseudo-balls''
$D(p,\delta)$ define a structure of homogeneous space. Unfortunately
this is absolutely not the case. Indeed, in $D(0,\delta)$ consider
the two points $p=\left(0,-\alpha\delta^{1/4},0\right)$ and $q=\left(0,0,\alpha\delta^{1/4}\right)$
(for $\alpha$ small enough, these points are in $D(0,\delta)$ for
all $\delta$, $0<\delta\leq\delta_{0}$) and estimate a constant
$K$ so that $q\in D(p,K\delta)$. In the coordinate system centered
at $p$, we have $q=\left(0,\alpha\delta^{1/6},\alpha\delta^{1/6}\right)=\sqrt{2}\alpha\delta^{1/6}\left(0,1/\sqrt{2},1/\sqrt{2}\right)$;
then calculating $c_{L\bar{L}}$ for $L=\frac{1}{\sqrt{2}}L_{1}^{0}+\frac{1}{\sqrt{2}}L_{2}^{0}$
we see that 
\[
F(L,p,K\delta)\gtrsim\frac{\alpha^{2}\delta^{-2/3}}{K}\mbox{ i.e. }F(L,p,K\delta)^{-1/2}\lesssim\sqrt{K}\delta^{1/3}.
\]

Then $q$ belongs to $D(p,K\delta)$ implies $K\gtrsim\delta^{-1/3}$.

\section{Appendix\label{sec:Appendix}}

The following Lemma is an improvement of Lemma 3.9 of \cite{Charpentier-Dupain-Geometery-Finite-Type-Loc-Diag}:
\begin{sllem}
\label{lem:deriv-positives-functions}Let $B_{j}$ be the unit ball
in $\mathbb{C}^{j}$. Let $K_{1}$ be a positive real number, $M$
and $n$ two positive integers. There exists a constant $C(K_{1})$
depending on $K_{1}$, $M$ and $n$ such that, for $j=1,\ldots,n-1$,
if $g$ is a non negative function of class $\MR C^{M}$on $B_{j}$
satisfying $\sup_{B_{j}}\{\left|D^{\alpha\beta}g(w)\right|,\,\left|\alpha+\beta\right|\leq M\}\leq K_{1}$,
where $D^{\alpha\beta}=\frac{\partial^{\left|\alpha+\beta\right|}}{\partial w^{\alpha}\partial\bar{w}^{\beta}}$,
then, for all $(\alpha^{0},\beta^{0})\in\left(\mathbb{N}^{j}\right)^{2}$,
$\left|\alpha^{0}+\beta^{0}\right|<M$, there exists $a\in\mathbb{N}^{j}$,
$2\left|a\right|\leq\left|\alpha^{0}+\beta^{0}\right|$ such that, denoting $\Delta^a$ the
differential operator $\prod_{i=1}^{j}\Delta_{i}^{a_{i}}$,
where $\Delta_{i}=\frac{\partial^{2}}{\partial z_{i}\partial\bar{z}_{i}}$
is the Laplacian in the $z_{i}$ coordinate,
\[
\Delta^{a}g(0)\geq\frac{1}{C(K_{1})}\left|D^{\alpha^{0}\beta^{0}}g(0)\right|^{2^{\left|\alpha^{0}+\beta^{0}\right|}}.
\]
 \end{sllem}
Note that there is no absolute value in the left hand side of the
inequality.
\begin{proof}
We only indicate how the proof of Lemma 3.9 of \cite{Charpentier-Dupain-Geometery-Finite-Type-Loc-Diag}
has to be modified.

Without loss of generality, we can suppose $\left|D^{\alpha_{0}\beta_{0}}g(0)\right|=\max_{\left|\alpha+\beta\right|=\left|\alpha^{0}+\beta^{0}\right|}\left|D^{\alpha\beta}g(0)\right|$.
By induction, it is enough to prove that there exist two constants
$c$ and $C$, depending on $M$ and $n$, such that one of the following
two cases holds:\smallskip{}

\begin{lyxlist}{00.00.000000}
\item [{\emph{First~case}}] there exists $a\in\mathbb{N}^{j}$, $2\left|a\right|=\left|\alpha^{0}+\beta^{0}\right|$
such that $\Delta^{a}g(0)\geq c\left|D^{\alpha^{0}\beta^{0}}g(0)\right|$;
\item [{\emph{Second~case}}] there exists $(\tilde{\alpha},\tilde{\beta})\in\left(\mathbb{N}^{j}\right)^{2}$,
$\left|\tilde{\alpha}+\tilde{\beta}\right|<\left|\alpha^{0}+\beta^{0}\right|$
such that
\[
\left|D^{\tilde{\alpha}\tilde{\beta}}g(0)\right|\geq\frac{1}{C}\left|D^{\alpha^{0}\beta^{0}}g(0)\right|^{-\left|\tilde{\alpha}+\tilde{\beta}\right|+\left|\alpha^{0}+\beta^{0}\right|+1}.
\]
\end{lyxlist}
\smallskip{}

Let $p=\left|\alpha^{0}+\beta^{0}\right|$, $\xi=\mu\varepsilon$,
$\mu\in]0,1[$, $\varepsilon=(\varepsilon_{i})$, $\left|\varepsilon_{i}\right|\leq1$,
and, as in the proof of Lemma 3.9 of \cite{Charpentier-Dupain-Geometery-Finite-Type-Loc-Diag}
let us write Taylor formula:
\begin{eqnarray*}
g(\xi) & = & \sum_{k=0}^{p-1}\mu^{k}\sum_{\left|\alpha+\beta\right|=k}*D^{\alpha\beta}g(0)\varepsilon^{\alpha}\bar{\varepsilon}^{\beta}+\mu^{p}\sum_{\left|\alpha+\beta\right|=p}*D^{\alpha\nu}g(0)\varepsilon^{\alpha}\bar{\varepsilon}^{\beta}+\mu^{p+1}R(\varepsilon,\mu)\\
 & = & A_{1}(\xi)+\mu^{p}A_{2}(\xi)+\mu^{p+1}R(\varepsilon,\mu),
\end{eqnarray*}
 where $*$ are multinomial coefficients and $\left|R\right|\leq K_{1}K_{2}$,
$K_{2}$ depending only on $M$ and $n$.

Remark now that, $g$ being non negative,

\begin{minipage}[c]{8mm}%
({*})$\left\{ \begin{array}{c}
\begin{array}{c}
\\
\\
\end{array}\end{array}\right.$%
\end{minipage}%
\begin{minipage}[c]{15cm}%
If there exists $\mu\simeq\left|D^{\alpha^{0}\beta^{0}}g(0)\right|$
such that $A_{2}(\xi)+\mu R(\varepsilon,\mu)<-c_{1}\left|D^{\alpha^{0}\beta^{0}}g(0)\right|$,
$c_{1}>0$, then the \emph{Second case} hold.\vspace{0.3\baselineskip}%
\end{minipage}

\smallskip{}

In the proof of Lemma 3.9 of \cite{Charpentier-Dupain-Geometery-Finite-Type-Loc-Diag}
we introduced a multi-index $c$ ($\left|c\right|=p$), depending
on $g$, and complex numbers $\varepsilon_{i}$ ($\forall i$, $\left|\varepsilon_{i}\right|\geq c(M,n)$),
depending on $g$ and $K(M,n)$, such that
\begin{equation}
\sum_{\SU{\left|\alpha+\beta\right|=p\AS\alpha+\beta\neq c}}\left|*D^{\alpha\beta}g(0)\varepsilon^{\alpha}\bar{\varepsilon}^{\beta}\right|\leq\frac{\left|D^{\alpha^{0}\beta^{0}}g(0)\right|}{K}\label{eq:multiindex-c-I}
\end{equation}
 and
\begin{equation}
\left|\sum_{\alpha+\beta=c}*D^{\alpha\beta}g(0)\varepsilon^{\alpha}\bar{\varepsilon}^{\beta}\right|\geq4\frac{\left|D^{\alpha^{0}\beta^{0}}g(0)\right|}{K}.\label{eq:multiindex-c-II}
\end{equation}

To finish the proof, we show now that, either we can find $\varepsilon$
and $\mu$ satisfying the hypothesis of ({*}), or we are in the \emph{First
case}.

We take $\mu=\frac{\left|D^{\alpha^{0}\beta^{0}}g(0)\right|}{KK_{1}K_{2}}$.
Then $\left|A_{2}(\xi)+\mu R(\xi)\right|\geq\frac{\left|D^{\alpha^{0}\beta^{0}}g(0)\right|}{K}$
and $A_{2}(\xi)+\mu R(\xi)$ has the sign of $\sum_{\alpha+\beta=c}*D^{\alpha\beta}g(0)\varepsilon^{\alpha}\bar{\varepsilon}^{\beta}$.

If $\sum_{\alpha+\beta=c}*D^{\alpha\beta}g(0)\varepsilon^{\alpha}\bar{\varepsilon}^{\beta}<0$,
then ({*}) is satisfied, thus we consider the case where $\sum_{\alpha+\beta=c}*D^{\alpha\beta}g(0)\varepsilon^{\alpha}\bar{\varepsilon}^{\beta}>0$.

If there is an index $i$ such that $c_{i}$ is odd, taking $\varepsilon'$
defined by $\varepsilon'_{j}=\varepsilon_{j}$ if $j\neq i$ and $\varepsilon'_{i}=-\varepsilon_{i}$,
then
\[
\sum_{\alpha+\beta=c}*D^{\alpha\beta}g(0){\varepsilon'}^{\alpha}\overline{\varepsilon'}^{\beta}\leq-\frac{4}{K}\left|D^{\alpha^{0}\beta^{0}}g(0)\right|,
\]
and, by (\ref{eq:multiindex-c-I}), ({*}) is verified.

So we suppose that for all $i$, $c_{i}=2c'_{i}$, and we write
\[
\sum_{\alpha+\beta=c}*D^{\alpha\beta}g(0){\varepsilon'}^{\alpha}\overline{\varepsilon'}^{\beta}=\sum_{k=0}^{c_{1}}{\varepsilon'}_{1}^{k}\overline{\varepsilon'}_{1}^{c_{1}-k}A_{k}^{1}(\varepsilon'_{2},\ldots,\varepsilon'_{n}),
\]
 with $\left|\varepsilon'_{i}\right|=\left|\varepsilon_{i}\right|$,
and choose $c\ll4/K$. We separate two cases.

First suppose that $A_{c'_{1}}^{1}(\varepsilon_{2},\ldots,\varepsilon_{n})\leq c\left|D^{\alpha^{0}\beta^{0}}g(0)\right|$.
If $c_{1}=0$ then (\ref{eq:multiindex-c-II}) implies
\[
\sum_{\alpha+\beta=c}*D^{\alpha\beta}g(0)\varepsilon^{\alpha}\overline{\varepsilon}^{\beta}\leq-c'\left|D^{\alpha^{0}\beta^{0}}g(0)\right|
\]
 which gives ({*}). Thus suppose $c_{1}\neq0$. Let
\[
\MR E_{0}=\{\varepsilon',\mbox{ such that }\varepsilon'_{i}=\varepsilon_{i},\, i>1,\,\varepsilon'_{1}=\vartheta\varepsilon_{1},\mbox{ with }\vartheta^{c_{1}}=1\}.
\]
Thus
\[
\sum_{\varepsilon'\in\MR E_{0}}\sum_{\alpha+\beta=c}*D^{\alpha\beta}g(0){\varepsilon'}^{\alpha}\overline{\varepsilon'}^{\beta}=c_{1}A_{c'_{1}}^{1}\left|\varepsilon_{1}\right|^{c_{1}}.
\]
Then, by (\ref{eq:multiindex-c-II}), there exists $\varepsilon'\in\MR E_{0}$
such that
\[
\sum_{\alpha+\beta=c}*D^{\alpha\beta}g(0){\varepsilon'}^{\alpha}\overline{\varepsilon'}^{\beta}\leq-c''\left|D^{\alpha^{0}\beta^{0}}g(0)\right|,
\]
(recall $\left|\varepsilon_{1}\right|>c(M,n)$) and ({*}) is verified
as before.

Suppose now $A_{c'_{1}}^{1}(\varepsilon_{2},\ldots,\varepsilon_{n})>c'\left|D^{\alpha^{0}\beta^{0}}g(0)\right|$.
Write
\[
A_{c'_{1}}^{1}=\sum_{k=0}^{c_{2}}\varepsilon_{2}^{k}\overline{\varepsilon_{2}}^{c_{2}-k}A_{k}^{2}(\varepsilon_{3},\ldots\varepsilon_{n}).
\]
 As before, if $c_{2}=0$ or if $A_{c'_{2}}^{2}(\varepsilon_{3},\ldots\varepsilon_{n})\leq c'''\left|D^{\alpha^{0}\beta^{0}}g(0)\right|$
then we can change $\varepsilon_{2}$ such that we obtain
\[
A_{c'_{1}}^{1}(\varepsilon'_{2},\ldots\varepsilon'_{n})\leq-c''''\left|D^{\alpha^{0}\beta^{0}}g(0)\right|,
\]
 and we conclude that ({*}) is satisfied. If $A_{c'2}^{2}(\varepsilon_{3},\ldots\varepsilon_{n})\geq c'''\left|D^{\alpha^{0}\beta^{0}}g(0)\right|$,
we do another time the same thing, on the third variable. Then, by
induction, if the process does not stop, the last step shows that
if ({*}) is not satisfied, then the inequality on $D^{c'c'}g(0)$
implies that we are in the \emph{First case}.
\end{proof}

\bibliographystyle{amsalpha}
\providecommand{\bysame}{\leavevmode\hbox to3em{\hrulefill}\thinspace}
\providecommand{\MR}{\relax\ifhmode\unskip\space\fi MR }
\providecommand{\MRhref}[2]{%
  \href{http://www.ams.org/mathscinet-getitem?mr=#1}{#2}
}
\providecommand{\href}[2]{#2}

\end{document}